\newtheorem{thm}{Theorem}[section]
\newtheorem{lem}[thm]{Lemma}
\newtheorem{lemma}[thm]{Lemma}
\newtheorem{prop}[thm]{Proposition}
\newtheorem{cor}[thm]{Corollary}
\newtheorem{conj}[thm]{Conjecture}
\newtheorem{defn}[thm]{Definition}
\numberwithin{equation}{section}
\theoremstyle{remark}
\newcommand{\PSL}{\operatorname{PSL}}
\newcommand{\PGL}{\operatorname{PGL}}
\newcommand{\OK}{\mathcal{O}_K}
\newcommand{\bd}[1]{\mathbf{#1}}  
\newcommand{\RR}{\mathbb{R}}      
\newcommand{\ZZ}{\mathbb{Z}}      
\newcommand{\CC}{\mathbb{C}}
\newcommand{\Aa}{\mathcal{A}}
\newcommand{\mat}[1]{\left(\begin{matrix} #1 \end{matrix} \right)}  
\newcommand{\al}[1]{\begin{align}#1\end{align}}
\newcommand{\aln}[1]{\begin{align*}#1\end{align*}}
\newcommand{\ff}{\mathfrak{f}}
\newcommand{\FF}{\mathfrak{F}}
\newcommand{\rN}{\mathcal{R}_N}
\newcommand{\rn}{\mathcal{R}_N(n)}
\newcommand{\hrn}{\widehat{\mathcal{R}}_N}
\newcommand{\rnu}{\mathcal{R}_N^U}
\newcommand{\hrnu}{\widehat{\mathcal{R}}_N^U}
\newcommand{\tf}{\mathfrak{t}}
\newcommand{\mn}{\mathcal{M}_N}
\newcommand{\en}{\mathcal{E}_N}
\newcommand{\mnn}{\mathcal{M}_N(n)}
\newcommand{\enn}{\mathcal{E}_N(n)}
\newcommand{\enu}{\mathcal{E}_N^U}
\newcommand{\mnu}{\mathcal{M}_N^{U}}
\newcommand{\sk}[1]{\substack{#1}}
\newcommand{\Tf}{\mathfrak{T}}
\newcommand{\mci}{\mathcal{I}}
\newcommand{\II}{\mathcal{I}}
\newcommand{\BB}{L}
\newcommand{\fs}{\mathfrak{S}}
\newcommand{\fd}{\mathfrak{d}}
\newcommand{\QQ}{\mathbb{Q}}
\newcommand{\PP}{\mathbb{P}}
\title[\TITLERUNNING]{\vspace*{-1.3cm} \TITLE}
\date{\today}
\author[Elena Fuchs]{Elena Fuchs}
\address{%
Department of Mathematics,
UC Davis, One Shields Avenue, Davis, CA 95616}
\email{efuchs@math.ucdavis.edu}
\author[Katherine E. Stange]{Katherine E. Stange}
\address{%
Department of Mathematics, University of Colorado,
Campus Box 395, Boulder, Colorado 80309-0395}
\email{kstange@math.colorado.edu}
\author[Xin Zhang]{Xin Zhang}
\address{%
Department of Mathematics, University of Illinois, 
1409 West Green Street,
 Urbana, IL 61801}
\email{xz87@illinois.edu}
\keywords{local-to-global, Kleinian group, circle method, Apollonian circle packing}
\subjclass[2010]{Primary: 52C26, 30F40, 11D85 Secondary: 20H10, 22E40}
\thanks{%
Fuchs has been supported by NSF DMS-1501970, the Sloan Foundation, and the BSF. 
Stange has been supported by NSF EAGER DMS-1643552 and NSF CAREER CNS-1652238.
}
\begin{document}

\title{Local-global principles in circle packings}


\begin{abstract}
  We generalize work of Bourgain-Kontorovich \cite{BK14} and Zhang \cite{Zh14}, proving an almost local-to-global property for the curvatures of certain circle packings, to a large class of Kleinian groups.  Specifically, we associate in a natural way an infinite family of integral packings of circles to any Kleinian group $\mathcal A\leq\textrm{PSL}_2(K)$ satisfying certain conditions, where $K$ is an imaginary quadratic field, and show that the curvatures of the circles in any such packing satisfy an almost local-to-global principle.  A key ingredient in the proof of this is that $\mathcal A$ possesses a spectral gap property, which we prove for any infinite-covolume, geometrically finite, Zariski dense Kleinian group in $\PSL_2(\mathcal{O}_K)$ containing a Zariski dense subgroup of $\PSL_2(\ZZ)$.
\end{abstract}

\maketitle

\section{Introduction}

Local-to-global questions have been studied throughout the history of number theory. 
Here, we consider the set of curvatures appearing in circle packings which are orbits of thin Kleinian groups:  when is the set of curvatures essentially characterised by congruence conditions alone?  In this context, a \emph{thin Kleinian group} is one commensurable to an infinite index subgroup of a Bianchi group $\PSL_2(\mathcal{O}_K)$, but simultaneously Zariski dense in $\PGL_2$.


This question was first considered in 2003 in a groundwork paper by Graham, Lagarias, Mallows, Wilks and Yan \cite{GLMWY03}.  They observed that for several primitive integral Apollonian packings there appears to be a set of congruence classed modulo $24$ or $48$ such that any large enough integer having such a residue is indeed a curvature in that packing.  They conjectured that this is the case for all packings.  In 2011, the first-named author of the present paper made a detailed study of congruence conditions for Apollonian packings \cite{Fu11}.  Together with Sanden, this author performed extensive numerical experiments and conjectured that in fact all primitive integral Apollonian packings can be described in terms of conditions modulo $24$ \cite{FS11}.

The first step towards trying to prove this conjecture is in \cite{GLMWY03}, where it is shown that at least $cx^{1/2}$ integers less than $x$ appear as curvatures in a given integral Apollonian packing, where $c$ is a constant depending on the packing.  Sarnak then made an observation in \cite{Sa07} which became the basis for all future developments on this question.  In that letter, Sarnak showed that in any primitive Apollonian packing there are, up to a constant, at least $\frac{x}{\sqrt{\log x}}$ integers less than $x$ which appear as curvatures in the packing.  His approach was to observe that if one fixes a circle in the packing and considers only those circles tangent to it, their curvatures, without multiplicity, are exactly the set of numbers that are primitively represented by a shifted binary quadratic form $f(x,y)-a$ whose coefficients depend on the circle that is fixed.  Sarnak's idea was then expanded by Bourgain and Fuchs to prove that in fact a positive fraction of all integers appear in any primitive integral Apollonian packing \cite{BF12}.  The methods of \cite{BF12} were then taken several steps further by Bourgain and Kontorovich in \cite{BK14} to prove an asymptotic local-to-global principal for Apollonian packings: they showed that, if $A$ is the set of positive integers that are admissible as curvatures in a given primitive integral Apollonian packing according to their residue modulo $24$, the subset of $A$ of integers which do \emph{not} appear as curvatures in the packing make up a zero density subset of all integers.

How far can one take the method in \cite{BK14} to prove asymptotic local-to-global principles in the thin setting?  For example, the third-named author of this paper successfully used the tools of \cite{BK14} to prove an asymptotic local-to-global principle in so-called \emph{integral Apollonian 3-packings} \cite{Zh14}.  In this paper, we identify the key necessary conditions for these methods to work, which, when satisfied, guarantee an asymptotic local-to-global principle for an integral circle packing or, viewed differently, an orbit of a thin subgroup of $\textrm{PSL}_2(\mathbb C)$.  As a consequence, we immediately have that an asymptotic local-to-global principle holds for the \emph{$K$-Apollonian packings} described by the second-named author \cite{StangeKApp} and for \emph{superintegral polyhedral packings} described by Kontorovich-Nakamura \cite{KN}.  We provide a concrete example of such a packing and give more details on the packings of Stange and Kontorovich-Nakamura in Section~\ref{sec:examples}.  See Figures \ref{fig:cuboct} and \ref{fig:kapp}.

\begin{figure}
  \includegraphics[width=0.8\textwidth]{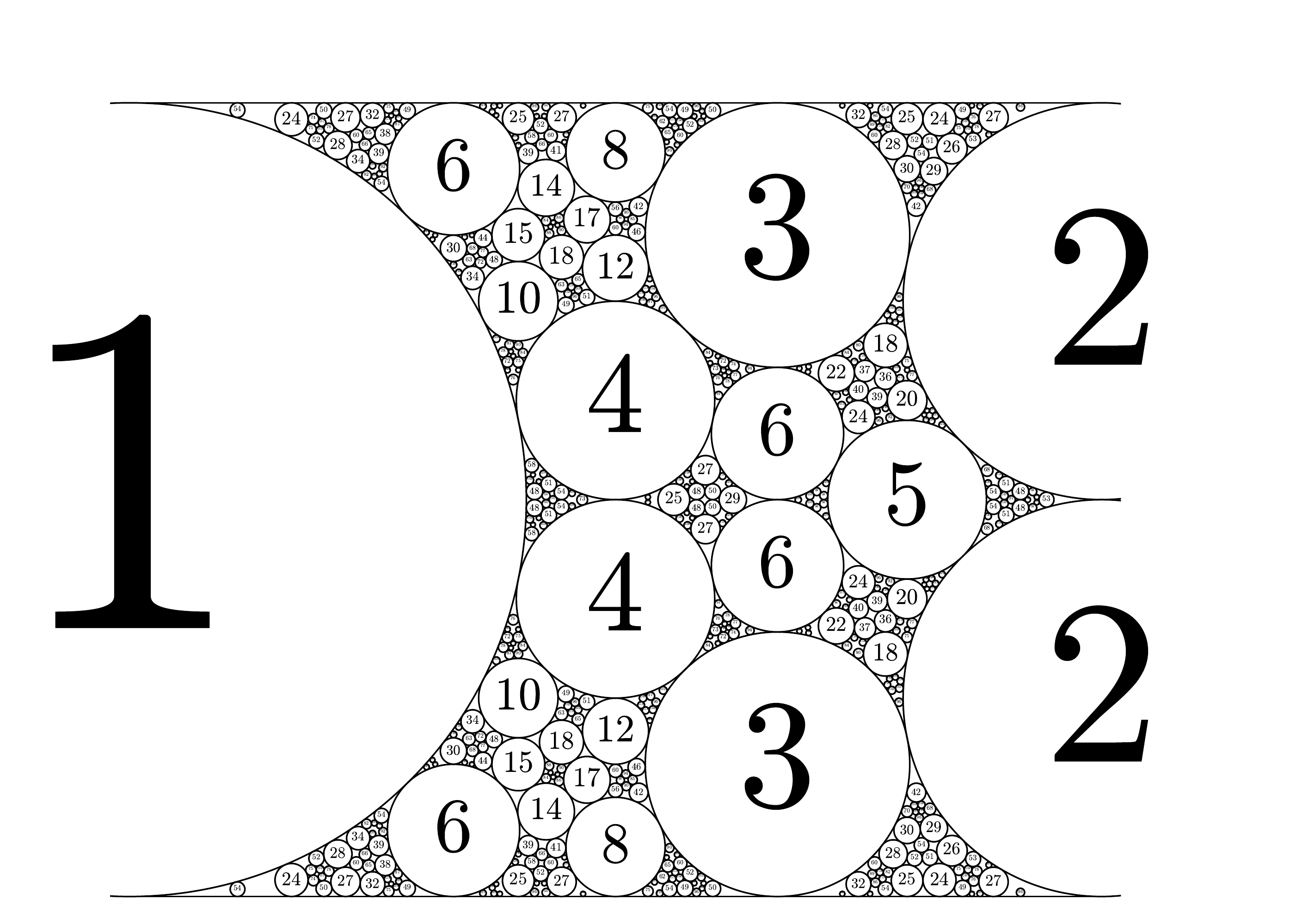}
  \caption{The limit set of an example packing to which Theorem \ref{mainthm} applies (approximation to portion with $0 \le x \le 3$), with curvatures shown (scaled by $3/\sqrt{6}$ to give a primitive integral packing).  See Section \ref{sec:cuboct}.}
  \label{fig:cuboct}
\end{figure}

\begin{figure}
  \includegraphics[width=0.8\textwidth]{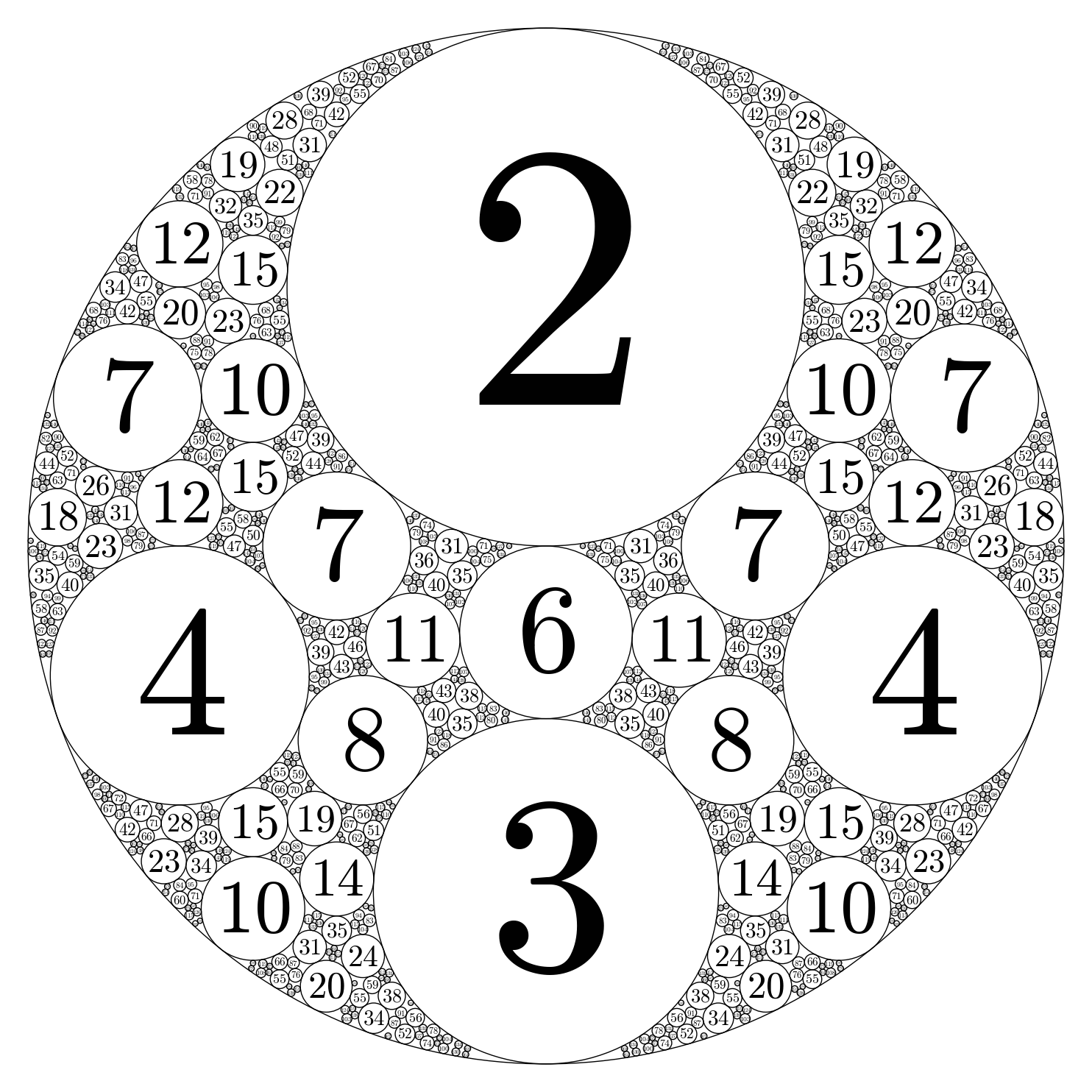}
  \caption{The limit set of an example packing to which Theorem \ref{mainthm} applies, with curvatures shown (scaled by $1/\sqrt{2}$ to give a primitive integral packing).  This is an example of a $K$-Apollonian packing for $K = \mathbb{Q}(\sqrt{-2})$.  See Section \ref{sec:kapp}.}
  \label{fig:kapp}
\end{figure}

In the work on Apollonian packings by Bourgain, Fuchs, Kontorovich, and Zhang, the curvatures in the packings were represented as coordinates of points in an orbit of a thin subgroup of $\textrm{O}_Q(\mathbb Z)$, where $Q$ is a signature $(3,1)$ quadratic form which is simply the Descartes form in the Apollonian case, and an analogue thereof in the 3-packing case.  In both of these cases, one can view the curvatures as curvatures of circles obtained by considering the orbit via M\"obius transformations of a fixed circle (or line) in the complex plane under the action of a thin (Kleinian) subgroup $\mathcal A$ of $\textrm{PSL}_2(\mathcal O_K)$ where $K$ is an imaginary quadratic field.  In the original Apollonian case, $K=\mathbb Q(i)$, and in the 3-packing case $K=\mathbb Q(\sqrt{-2})$.  One can pass between these two interpretations of the set of curvatures via the spin homomorphism $\rho:\textrm{PSL}_2(\mathbb C)\rightarrow \textrm{O}_{\mathbb R}(3,1)$, but the $\textrm{PSL}_2$ setup is more convenient for several reasons: for example, there are numerous choices for the analogue of the Descartes form if one chooses to work in $\textrm{O}_{\mathbb R}(3,1)$; also, $\textrm{SL}_2$ is simply connected, while the orthogonal group is not. 

\begin{defn}\label{KCPdef}
  Let $\mathcal{A}$ be a Kleinian group, and let $C_1, \ldots, C_n$ be circles in the extended complex plane.  Write $\mathcal{A} C_i$ for the orbit of $C_i$ under $\mathcal{A}$, as a subset of the plane (a union of circles).  Then 
  \[
    \bigcup_{i=1}^n \mathcal{A} C_i
  \]
  is called a \emph{Kleinian circle packing}.  Such a packing is called \emph{integral} if, after a universal scaling factor is applied, the set of curvatures can be taken to be a subset of $\ZZ$.
\end{defn}

We define the curvature of a circle $N(\widehat{\RR})$, where $N = \begin{pmatrix} A & B \\ C & D \end{pmatrix}$, to be $2 \Im( \overline{C}D )$; then the radius is $1/|2\Im(\overline{C}D)|$, but the curvature contains some further information in the form of the sign, which can be interpreted as orientation.
In general, the circles in a Kleinian circle packing may overlap, although they do not in the most famous cases, such as the Apollonian circle packing.

Although one might conjecture a local-global principle for a larger class of integral Kleinian circle packings, our methods require that the packing contain `congruence families' of circles, which give rise to integral binary quadratic forms as in the Apollonian case.  Therefore we define a restricted class of groups.

\begin{defn}\label{def:familial}
A Kleinian group $\mathcal{A}$ is called \emph{familial} if:
  \begin{enumerate}
    \item $\PSL_2(\ZZ) \cap \mathcal{A}$ contains a principal congruence subgroup, and
    \item the entries of $\mathcal{A}$ are contained in some fractional ideal $\mathfrak{a}$ of an imaginary quadratic field $K = \QQ(\sqrt{-d})$, $d>0$.
  \end{enumerate}
\end{defn}

Furthermore, the methods require that the group $\mathcal A$ has a spectral gap property: i.e. that the family of graphs $\{\textrm{Cay}(\mathcal A/\mathcal{A}(q), \overline{S})\}_q$ is an expander family.  Here $\mathcal A/\mathcal{A}(q)$ denotes $\mathcal A$ reduced modulo $q$, the set $S$ is a finite generating set of $\mathcal A$, $\overline{S}$ denotes its image under reduction, and $q$ ranges over all positive integers.  In Section~\ref{spectralgapsec}, we show that this is the case for a class of groups including those we intend to consider, i.e., we show the following.

\begin{thm}\label{mainspectralthm}
  Any infinite-covolume, geometrically finite, Zariski dense Kleinian group contained in $\textrm{PSL}_2(\mathcal{O}_K)$, containing a Zariski dense subgroup of $\textrm{PSL}_2(\ZZ)$ has a combinatorial spectral gap.  If, furthermore, the limit set of this Kleinian group has Hausdorff dimension strictly greater than $1$, then it has a geometric spectral gap.
\end{thm}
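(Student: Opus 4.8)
The plan is to prove the two assertions by separate mechanisms: the combinatorial spectral gap comes from super-approximation for thin matrix groups, while the geometric spectral gap is then bootstrapped from it by a transfer-operator (thermodynamic formalism) argument that is available precisely because the critical exponent exceeds~$1$.

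\textbf{Combinatorial spectral gap.} Since $\mathcal{A}$ is geometrically finite it is finitely generated; fix a finite symmetric generating set $S$. Composing the adjoint embedding $\PGL_2\hookrightarrow\mathrm{GL}_3$ with restriction of scalars realizes $\mathcal{A}\le\PSL_2(\OK)$ as a subgroup of $\mathrm{GL}_N(\ZZ)$ with $N=3[K:\QQ]=6$ (clearing a fixed denominator if the entries of $\mathcal{A}$ lie in a proper fractional ideal rather than in $\OK$, and discarding the finitely many primes where the adjoint reduction degenerates). Over $\overline{\QQ}$ the $\QQ$-group $\bd{G}_0:=\mathrm{Res}_{K/\QQ}\PGL_2$ becomes $\PGL_2\times\PGL_2$, the two factors corresponding to the two complex embeddings of $K$; $\mathcal{A}$ is Zariski dense in the first factor by hypothesis, and its image under the second (complex-conjugate) embedding is again a Zariski dense Kleinian group, so the identity component $\bd{H}^{\circ}$ of the $\QQ$-Zariski closure of $\mathcal{A}$ surjects onto both factors. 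By Goursat's lemma and the triviality of $\mathrm{Out}(\PGL_2)$, $\bd{H}^{\circ}$ is therefore either a conjugate of the diagonal $\PGL_2$ or all of $\bd{G}_0$ --- in either case semisimple, in particular perfect. The theorem of Salehi Golsefidy and Varj\'u on expansion in perfect groups then gives that $\{\mathrm{Cay}(\pi_q(\mathcal{A}),\pi_q(S))\}$ is a family of expanders as $q$ ranges over square-free integers, where $\pi_q$ is reduction modulo $q$; since $\pi_q(\mathcal{A})\cong\mathcal{A}/\mathcal{A}(q)$ after the finitely many bad primes are removed, this is the desired statement at square-free level. To promote square-free levels to arbitrary $q=\prod_i p_i^{e_i}$, one applies the standard lifting argument at each prime-power factor --- the successive kernels $\mathcal{A}(p_i^{j})/\mathcal{A}(p_i^{j+1})$ are abelian $p_i$-groups, so a Frattini-type bootstrap deduces expansion at level $p_i^{e_i}$ from expansion at level $p_i$ --- and then combines the factors by the product structure of Cayley graphs under the Chinese Remainder Theorem, as in the super-approximation theorems of Bourgain--Varj\'u and Salehi Golsefidy.

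\textbf{Geometric spectral gap.} Suppose now $\delta:=\delta(\mathcal{A})>1$, the Hausdorff dimension of the limit set. As $\mathcal{A}$ has infinite covolume, $\delta<2$, so by Patterson--Sullivan theory together with the work of Lax--Phillips and Sullivan, $\lambda_0:=\delta(2-\delta)\in(0,1)$ is a genuine $L^2$-eigenvalue of the Laplacian on $X:=\mathcal{A}\backslash\mathbb{H}^3$, isolated below the bottom $1$ of the essential spectrum, with only finitely many further eigenvalues in $(\lambda_0,1)$. The geometric spectral gap is the assertion that there is $\epsilon_0>0$, independent of $q$, with no eigenvalue of $L^2(\mathcal{A}(q)\backslash\mathbb{H}^3)$ in $(\lambda_0,\lambda_0+\epsilon_0)$ beyond those pulled back from $X$. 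I would derive this from the combinatorial spectral gap by the transfer-operator method of Bourgain--Gamburd--Sarnak in the form adapted to $\mathbb{H}^3$ by Magee--Oh--Winter and Bourgain--Kontorovich: the geometrically finite group $\mathcal{A}$ admits a finite Markov coding of a cross-section of the geodesic flow on its convex core (a Bowen--Series/cuspidal symbolic dynamics, with any parabolic fixed points treated by the usual inducing procedure), and the Selberg zeta function of the cover $\mathcal{A}(q)\backslash\mathbb{H}^3$ factors through a family of twisted Ruelle transfer operators $\mathcal{L}_{s,\rho_q}$, twisted by the left-regular representation $\rho_q$ of $\mathcal{A}/\mathcal{A}(q)$ and acting on a fixed space of holomorphic functions. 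Resonances of the cover with $\mathrm{Re}(s)$ just below $\delta$ are the $s$ at which $\mathcal{L}_{s,\rho_q}$ has $1$ as an eigenvalue, and the combinatorial spectral gap supplies exactly the $\ell^2$-flattening/expansion input that forces the iterates $\mathcal{L}_{s,\rho_q}^{k}$ to decay at a rate uniform in $q$ for $\mathrm{Re}(s)$ slightly below $\delta$. This produces a $q$-uniform resonance-free strip $\{\delta-\eta_0<\mathrm{Re}(s)\le\delta\}\setminus\{s=\delta\}$; translating through $\lambda=s(2-s)$ and using that $\delta>1$ places $\lambda_0$ in the discrete $L^2$-spectrum, this strip is precisely the claimed geometric spectral gap, with $\epsilon_0$ an explicit function of $\eta_0$ and $\delta$.

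\textbf{Main obstacle.} The hard part is the geometric half: transporting the Bourgain--Gamburd--Sarnak thermodynamic-formalism machinery to hyperbolic $3$-space and, above all, extracting from the combinatorial expansion a bound on the twisted transfer operators that is genuinely \emph{uniform in the level} $q$, while also handling the possible cusped ends of the convex core, which complicate the symbolic dynamics. On the combinatorial side the only step that requires work rather than citation is the passage from square-free to arbitrary moduli.
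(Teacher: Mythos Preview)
Your argument is essentially correct but follows a genuinely different route from the paper's, particularly for the combinatorial spectral gap. You invoke the Salehi Golsefidy--Varj\'u super-approximation machinery as a black box: embed $\mathcal{A}$ in $\mathrm{GL}_6(\ZZ)$ via restriction of scalars of the adjoint, check that the $\QQ$-Zariski closure has perfect identity component (your Goursat argument is fine, and in fact once $\mathcal{A}\not\subset\PSL_2(\RR)$ the closure is the full $\PGL_2\times\PGL_2$, since the $\PSL_2(\ZZ)$-subgroup forces the diagonal to lie inside), and then cite expansion. The paper instead works by hand: it fixes the Zariski dense $\Gamma<\PSL_2(\ZZ)$ and a single $T\in\mathcal{A}\setminus(\textrm{SL}_2(\RR)\cup i\,\textrm{SL}_2(\RR))$, sets $\mathcal{A}'=\langle\Gamma,T\Gamma T^{-1}\rangle$, and shows via explicit geometric and Lie-algebra constructions that every element of $\mathcal{A}'/\mathcal{A}'(q)$ is a product of a \emph{bounded} number of elements from $\Gamma/\Gamma(q)$ and its conjugate; Varj\'u's Lemma (Lemma~A.4 of \cite{BK14}) then transfers the known spectral gap for $\Gamma$ to $\mathcal{A}'$, and a separate Cheeger-constant argument passes from $\mathcal{A}'$ up to $\mathcal{A}$.

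What each approach buys: yours is shorter and, as you may have noticed, does not actually use the hypothesis that $\mathcal{A}$ contains a Zariski dense subgroup of $\PSL_2(\ZZ)$---Zariski density of $\mathcal{A}$ itself suffices for the Goursat step. The paper's constructive approach, by contrast, yields as a byproduct an \emph{explicit} strong approximation statement (their Theorem~\ref{thm:explicit-strong}): a computable finite set of bad primes and computable exponents $m_p$ at those primes, which is needed elsewhere in the paper to determine the modulus $L_0$ of the local obstruction and to carry out the major-arc analysis. Your black-box route would not deliver this. One caution on your side: the passage from square-free to arbitrary moduli is not quite as automatic as ``Frattini-type bootstrap'' suggests---it requires real work (as in Bourgain--Varj\'u or Salehi Golsefidy's later solo papers), and the paper in fact notes that the general statement was the subject of a then-forthcoming preprint. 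For the geometric spectral gap, both you and the paper ultimately defer to the Bourgain--Gamburd--Sarnak mechanism; the paper simply cites a $\PSL_2(\mathcal{O}_K)$-variant of the relevant result, while you sketch its content.
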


Salehi-Golsefidy and Zhang, the third named author, generalize this result even further using ideas similar to that of the proof of Theorem~\ref{mainspectralthm} in an upcoming preprint \cite{SGZ17}.  The existence of a  geometric spectral gap is a crucial ingredient both in \cite{BK14} and in \cite{Zh14}, and indeed in almost all works that have investigated arithmetic aspects of thin groups.  We indicate exactly how this spectral gap is relevant in Section \ref{lemmata}.  Note that, as explained below, every familial group $\mathcal A$ does satisfy the Hausdorff dimension hypothesis in Theorem~\ref{mainspectralthm}.

We now state the general setup of the paper.  Let $K$ be an imaginary quadratic field.  Henceforth, we will assume that $\mathcal A$ denotes an infinite-covolume, geometrically finite, Zariski-dense, familial Kleinian group in $\PSL_2(K)$.  We will consider an associated packing $\mathcal{P} := M\mathcal{A}C$, where $C$ is any circle tangent to the real line and having the form $C = N(\widehat{\RR})$, where $N, M \in \PSL_2(K)$.  

This last condition, on the tangency of $C$ to the real line, is crucial to the methods of the paper, as it guarantees, together with the congruence subgroup condition of Definition \ref{def:familial}, that a collection of integral binary quadratic forms govern the curvatures of the packing.  

Under these conditions, the packing $\mathcal{P}$ is necessarily integral as in Definition \ref{def:familial} (see Section \ref{familialsec}).  We let $\mathcal{K} \subset \ZZ$ be the set of curvatures, after some a universal scaling factor is applied as in the definition of integrality.  

Let $\mathcal{K}_a$ be the set of integers passing all the local obstructions by $\mathcal{K}$.  In other words, 
\al{\mathcal{K}_a=\{n\in\ZZ\;\;\vert\;\; \forall q\in\ZZ, \exists k\in\mathcal{K}, \text{such that }n\equiv k(\textrm{mod }q)\}}
We call the integers in $\mathcal{K}_a$ \emph{admissible}.

An immediate corollary of the spectral gap statement in Theorem~\ref{mainspectralthm} is the following.
\begin{cor}\label{local}{There exists a positive integer $L_0$ such that $\mathcal{K}_a$ is the union of some congruence classes mod $L_0$.}
\end{cor}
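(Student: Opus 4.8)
The plan is to collapse the infinitely many congruence conditions defining $\mathcal{K}_a$ to a single modulus, using the strong approximation underlying Theorem~\ref{mainspectralthm} together with the classical local theory of quadratic forms. Concretely, I will produce an integer $L_0$ for which $\mathcal{K}\bmod q = \pi_{q\to L_0}^{-1}\bigl(\mathcal{K}\bmod L_0\bigr)$ for every multiple $q$ of $L_0$, where $\mathcal{K}\bmod q$ is the image of $\mathcal{K}$ in $\ZZ/q\ZZ$ and $\pi_{q\to L_0}$ is reduction. This suffices: given any $q$, put $q'=\operatorname{lcm}(q,L_0)$, so $\mathcal{K}\bmod q'=\pi_{q'\to L_0}^{-1}(\mathcal{K}\bmod L_0)$; then $n\bmod L_0\in\mathcal{K}\bmod L_0$ forces $n\bmod q'\in\mathcal{K}\bmod q'$ and a fortiori $n\bmod q\in\mathcal{K}\bmod q$. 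The converse being trivial (take $q=L_0$), $\mathcal{K}_a$ is exactly the preimage in $\ZZ$ of $\mathcal{K}\bmod L_0$, i.e.\ the union of those residue classes mod $L_0$ that meet $\mathcal{K}$.

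The next step is to realise $\mathcal{K}\bmod q$ group-theoretically and reduce to prime powers. Writing $gN=\mat{A&B\\ C&D}$ for $g\in\mathcal{A}$, the bottom row $(C,D)$ depends only on the bottom row of $g$ (and on $N$), and $2\Im(\overline{C}D)$ is a \emph{nondegenerate} quaternary quadratic form in the coordinates of that bottom row --- up to the invertible substitution induced by $N$ it is the form $(u,v)\mapsto 2\Im(\overline{u}v)$ on $\CC^2\cong\RR^4$. After the universal scaling of Definition~\ref{KCPdef} this is a fixed integral form $Q$; this is the precise incarnation of the ``integral binary quadratic forms governing the curvatures'' alluded to above, and it is here that tangency of $C$ to $\widehat{\RR}$, together with the familial hypotheses of Definition~\ref{def:familial}, is used. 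Consequently $\mathcal{K}\bmod q=\{\,Q(v)\bmod q : v\in\mathcal{B}\bmod q\,\}$, where $\mathcal{B}$ is the set of bottom rows of the matrices $gN$, $g\in\mathcal{A}$. Now invoke strong approximation for the finitely generated, Zariski-dense group $\mathcal{A}$ (the arithmetic input behind Theorem~\ref{mainspectralthm}): there is an $L_1$ such that for $(q,L_1)=1$ one has $\mathcal{A}\bmod q=\prod_{p^e\Vert q}\mathcal{A}\bmod p^e$, with $\mathcal{B}\bmod p^e$ the full set of primitive vectors mod $p^e$; and for the remaining primes $p\mid L_1$ the sets $\mathcal{A}\bmod p^e$, hence $\mathcal{B}\bmod p^e$, stabilise as $e\to\infty$, being the full preimage of their reduction mod $p^{e_p}$ for a bounded exponent $e_p$. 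By multiplicativity this transfers to $\mathcal{K}$: $\mathcal{K}\bmod q$ is the CRT product of the $\mathcal{K}\bmod p^e$ over $p^e\Vert q$ for $(q,L_1)=1$, so it suffices to stabilise $\{\mathcal{K}\bmod p^e\}_e$ at a bounded exponent for each prime $p$, with exponent $1$ for all but finitely many $p$.

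The final step --- pushing this stabilisation through $Q$ --- is the one that needs genuine care rather than bookkeeping. On the set of primitive vectors the classical local theory of quadratic forms gives exactly what is needed: for odd $p\nmid\det Q$, Hensel's lemma (valid since $Q$ is submersive at every primitive vector when $p$ is odd and $p\nmid\det Q$) shows that a residue primitively represented by $Q$ mod $p$ is primitively represented mod $p^e$; in fact, being a nondegenerate quaternary form, $Q$ represents every residue mod $p^e$ primitively for such $p$, so these primes impose no constraint. For the finitely many primes $p\mid 2\det Q$ the set of primitively represented residues modulo $p^e$ stabilises at an exponent $e_p$ bounded explicitly in terms of $v_p(2\det Q)$ --- the analysis at $p=2$ and at $p\mid\det Q$ being the classical fiddly one, exactly as in the appearance of the moduli $24$ and $48$ for Apollonian packings. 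Taking $L_0=\prod_{p\,\mid\,2L_1\det Q} p^{e_p}$ (with all $e_p\ge1$) and combining the three steps yields $\mathcal{K}\bmod q=\pi_{q\to L_0}^{-1}(\mathcal{K}\bmod L_0)$ for every $L_0\mid q$. The real obstacle throughout is \emph{uniformity}: only the strong approximation supplied by Theorem~\ref{mainspectralthm} --- which fails for a general thin group but holds under the familial and geometric-finiteness hypotheses --- forces the residue sets $\mathcal{B}\bmod p^e$, and hence the curvature residues, to stabilise rather than proliferate without bound as $p$ and $e$ grow; absent this, no single modulus $L_0$ could exist.
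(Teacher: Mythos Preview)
Your approach is in the same spirit as the paper's: both invoke strong approximation for $\mathcal{A}$ (the arithmetic content behind Theorem~\ref{mainspectralthm}, made precise in Lemma~\ref{0948} and Theorem~\ref{thm:explicit-strong}) followed by a Hensel-type lift to show that the set of curvature residues modulo $p^e$ stabilises at a bounded exponent for each prime, with no obstruction at all but finitely many primes. The paper itself treats Corollary~\ref{local} as an immediate consequence of strong approximation and only spells out the Hensel step later, in the proof of Lemma~\ref{0939}, as part of the major-arc analysis; there the lifting is carried out on the full eight real coordinates of $\gamma$ subject to the determinant constraints $F_2=F_3=0$, by checking surjectivity of the Jacobian of $(F_1,F_2,F_3)$.

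There is, however, a slip in your setup that matters for general $M$. The packing is $\mathcal{P}=M\mathcal{A}C$, so the relevant curvature is that of $MgN(\widehat{\RR})$, namely $2\Im(\overline{C_{MgN}}D_{MgN})$; this depends on the bottom row of $MgN$, which is a linear function of \emph{all} entries of $g$, not merely its bottom row. Your quaternary form on bottom rows of $gN$ is therefore the curvature of the wrong packing $\mathcal{A}C$, and your argument as written is valid only when $M=I$. The repair is easy --- either work with the bottom row of $MgN$ directly (absorbing the finitely many primes in the denominator of $M$ into the bad set, after which strong approximation again makes this row surject onto primitive vectors modulo good $p^e$), or follow the paper's route and Hensel-lift on the system $(F_1,F_2,F_3)$ in eight variables. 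As a smaller aside, the tangency of $C$ to $\widehat{\RR}$ is not actually needed for Corollary~\ref{local}: tangency is what produces the shifted \emph{binary} forms $\ff_{M\gamma}$ required for the circle method, and these are not your quaternary $Q$; the corollary itself needs only strong approximation plus Hensel.
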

Of course, this also follows by strong approximation (see \cite{Rapinchuk}) for $\textrm{SL}_2$.  However, the proof of our Theorem~\ref{mainspectralthm} not only gives the existence of $L_0$ but also gives an algorithm to quickly determine its exact value:  in particular, the prime factors of $L_0$ will come from the level of the congruence subgroup contained in $\mathcal A$, any failure of primitivity of the packing $\mathcal{P}$, and the primes $2$ and $3$, as well as the matrix $M$ if $M$ is fractional.  See Theorem \ref{thm:explicit-strong} and \eqref{eqn:L0}, for details.

Now let $\mathcal{K}_a(N)=\mathcal{K}_a\cap [0,N]$ be the set of admissible integers up to $N$, and similarly denote $\mathcal{K}(N)=\mathcal{K}\cap [0,N]$.  Then Corollary \ref{local} directly implies that 
\al{\#\mathcal{K}_a(N)=c_{M,\mathcal{A},C}N+O(1),}
where $c_{M,\mathcal{A},C}$ is the proportion of admissible congruence classes.
We predict that all sufficiently large admissible integers are actually curvatures, or in other words,
\begin{conj}\label{0359}
\al{\#\mathcal{K}(N)=c_{M,\mathcal{A},C}N+O(1).}
\end{conj}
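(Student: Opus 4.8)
The plan is to reduce Conjecture~\ref{0359} to a statement about representations by a family of integral binary quadratic forms, then to run the Hardy--Littlewood circle method as in \cite{BK14,Zh14} (generalized to $\mathcal{A}$) to recover the main term $c_{M,\mathcal{A},C}N$, and finally to close the remaining gap---between a density-one conclusion and the $O(1)$ error demanded by the conjecture---by exhibiting a genuine, non-thin ternary quadratic structure inside $\mathcal{K}$. Concretely, the tangency of $C$ to $\widehat{\RR}$ together with condition~(1) of Definition~\ref{def:familial} implies, as recalled in the introduction, that for each circle $D = gMC \in \PC$ ($g \in \mathcal{A}$) tangent to $\RR$ there are an integral binary quadratic form $f_D$ and a shift $a_D \in \ZZ$ (the curvature of $D$) with $\{\, a_D + f_D(v) : v \in \ZZ^2 \text{ primitive},\ v \equiv v_D \pmod{q} \,\} \subseteq \mathcal{K}$, where $q$ is the level of the principal congruence subgroup inside $\PSL_2(\ZZ) \cap \mathcal{A}$ and $f_D$ is definite of discriminant proportional to $-a_D^2$. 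Thus it suffices to prove that every sufficiently large $n \in \mathcal{K}_a$ lies in one of these value sets.

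Next I would set up, along the lines of \cite{BK14} and \cite{Zh14}, the representation function $\rn$ counting weighted ways of writing $n$ as such a curvature coming from a group element of norm $\le N$, and establish $\rn = \fs(n)\,\JJ_N(n) + O(N^{\delta - \eta_0})$ for some $\eta_0 > 0$. Here the singular series $\fs(n)$ is a product of local densities, positive precisely for $n \in \mathcal{K}_a$ by Corollary~\ref{local}, and the archimedean factor obeys $\JJ_N(n) \gg N^{\delta}/(\log N)^{O(1)}$ on that range; the error term is where the geometric spectral gap of Theorem~\ref{mainspectralthm} enters (available since familial groups have Hausdorff dimension $> 1$). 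Summing over $n$ this already yields $\#\mathcal{K}(N) = c_{M,\mathcal{A},C}N + O(N^{1 - \eta_0})$, an almost local-global statement, but it does not preclude sporadic exceptional $n$ with $\rn = 0$: for a thin group the minor-arc estimate cannot be made pointwise rather than on average, so a new ingredient is needed for the $O(1)$ error.

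The ingredient I would pursue is to extract an honest quadratic form of rank $\ge 3$ from $\mathcal{K}$. As $D$ ranges over an infinite congruence family of circles in $\PC$ tangent to $\RR$---itself a sub-orbit under a cyclic (or parabolic) subgroup of $\mathcal{A}$---the pair $(a_D, f_D)$ varies polynomially in a depth parameter $z$, so that $\bigcup_D \{a_D + f_D(v)\}$ is, up to finitely many adjustments, the primitive value set of an integral quadratic polynomial $F(x,y,z)$ in three variables. The next step is to pin down $F$ up to $\mathrm{GL}_3(\ZZ)$-equivalence together with the congruence restrictions on $(x,y,z)$ inherited from condition~(1), and to compute its level; this computation should reproduce the modulus $L_0$ of Corollary~\ref{local} and identify the conditions defining $\mathcal{K}_a$ with the $p$-adic representability of $F$ at every $p$. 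Granting this, one invokes the theory of representations by ternary quadratic forms---Duke's bound for Fourier coefficients of half-integral-weight cusp forms and the work of Schulze-Pillot, i.e.\ equidistribution of Heegner points---after the standard homogenization, to conclude that $F$ represents every sufficiently large integer it represents everywhere locally and that is not a spinor exception. One then rules out the finitely many spinor-exceptional square classes, either by checking that $F$ lies in a one-class spinor genus or, failing that, by producing a second ternary family inside $\mathcal{K}$ (from circles tangent to $\widehat{\RR}$ at a different rational point) covering the complementary classes. Combining these gives $\mathcal{K} \supseteq \mathcal{K}_a \cap [n_0, \infty)$ for some $n_0$, hence $\#\mathcal{K}(N) = c_{M,\mathcal{A},C}N + O(1)$.

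The main obstacle is the existence of such a non-thin ternary structure. It is visible in the classical Apollonian gasket---where it underlies the Bourgain--Fuchs positive-density theorem \cite{BF12}---but for a general familial Kleinian group $\mathcal{A}$ it is not clear that varying the base circle yields a two-parameter sub-collection whose growth exponent is full rather than $2\delta$; if it is not, only the circle-method route survives, the error remains a power of $N$, and Conjecture~\ref{0359} as stated appears beyond current technology. Even granting $F$, the secondary difficulty---matching its local conditions exactly with $\mathcal{K}_a$ and controlling the spinor genus, which is precisely where failures of primitivity of $\PC$ and the primes $2$ and $3$ enter---is delicate, and I would expect most of the technical labor to live there while the conceptual bottleneck remains the first point.
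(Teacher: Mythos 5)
You are attempting to prove a statement that the paper itself leaves as a conjecture: the authors prove only Theorem \ref{mainthm}, with error $O(N^{1-\eta})$, and state explicitly that Conjecture \ref{0359} is unlikely to follow from their method ``without significant new ideas.'' Your proposal does not supply such ideas in a form that constitutes a proof; it is a program whose pivotal step is unestablished, as you yourself concede at the end. Two specific points. First, the intermediate claim that one can prove a pointwise asymptotic $\rn=\fs(n)\JJ_N(n)+O(N^{\delta-\eta_0})$ is not available: the spectral-gap input (Lemmas \ref{bk1}--\ref{bk3}) only controls the minor-arc contribution on average over $n$ (the $\ell^2$ bound of Theorem \ref{0430}), which is exactly why the paper's conclusion is density-one rather than all-sufficiently-large; you acknowledge this, so the entire weight of the argument falls on the second, new ingredient.

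Second, that ingredient --- extracting ``an honest quadratic form of rank $\ge 3$'' from $\mathcal{K}$ --- does not work as described. If $D$ varies along a one-parameter (parabolic or cyclic) suborbit of $\mathcal{A}$ with parameter $z$, then by \eqref{0746}--\eqref{1112} both the shift $\fd_\gamma$ and the coefficients of $\widetilde{\ff}_{M\gamma}$ are quadratic polynomials in the matrix entries, hence quadratic in $z$; the resulting polynomial $F(x,y,z)$ has total degree $4$, not $2$, and its value set is a thin set, not the primitive value set of an integral ternary quadratic form. So Duke's theorem and Schulze--Pillot's work on ternary forms cannot be invoked, and the discussion of spinor exceptions is moot. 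This is not an artifact of generality: even in the classical Apollonian packing no such non-thin ternary structure is known --- Bourgain--Fuchs work with a two-parameter family of shifted \emph{binary} forms to get positive density, and Bourgain--Kontorovich still obtain only a density-one local-to-global statement --- and the paper's introduction notes that dropping the binary-form structure leaves one with ``quadratic forms in four related variables,'' precisely the thin situation your construction lands in. Hence the proposal has a genuine gap at its central step, and Conjecture \ref{0359} remains open.
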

In place of the full conjecture, we prove the following theorem:
\begin{thm}\label{mainthm}
  Let $\mathcal A$ and $\mathcal{P} = M \mathcal{A} C$ be as above.  There exists a positive number $\eta$, depending only on $M$, $\mathcal{A}$ and $C$, such that 
\al{\#\mathcal{K}(N)=c_{M,\mathcal{A},C}N+O(N^{1-\eta})}
 \end{thm}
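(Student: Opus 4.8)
The plan is to follow the circle-method architecture of Bourgain--Kontorovich \cite{BK14} and Zhang \cite{Zh14}, organized around the fact (to be established in the earlier sections) that the curvatures of $\mathcal{P}$ are governed by a family of integral binary quadratic forms arising from the congruence subgroup inside $\mathcal{A}$. Concretely, one fixes a base circle in the packing; the circles tangent to it (or, more precisely, the circles in a `congruence family') have curvatures that run over the values $f_\gamma(x,y) - a$ of a shifted integral binary quadratic form whose coefficients depend on an element $\gamma$ in an orbit of $\mathcal{A}$. One then forms the representation function
\al{
\mathcal{R}_N(n) = \sum_{\gamma} \sum_{(x,y)} w\!\left(\tfrac{\gamma}{T}\right) w'\!\left(\tfrac{(x,y)}{X}\right) \mathbf{1}\{f_\gamma(x,y) = n\},
}
with smooth weights localizing $\gamma$ to norm $\asymp T$ and $(x,y)$ to a box of size $\asymp X$, the parameters $T,X$ chosen as powers of $N$ so that the totals match, and the goal is to show $\mathcal{R}_N(n) > 0$ for all admissible $n \le N$ outside an exceptional set of size $O(N^{1-\eta})$. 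Expanding $\mathbf{1}\{f_\gamma(x,y)=n\}$ via additive characters splits $\mathcal{R}_N(n)$ into a major-arc contribution, expected to be $\gg N^{\epsilon - 1/2} T X^2 \cdot \mathfrak{S}(n)$ with $\mathfrak{S}(n)$ a singular series positive precisely on the admissible classes mod $L_0$ (Corollary~\ref{local}), plus a minor-arc error.

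The key steps, in order, are: (1) \emph{Set-up and orbit counting.} Establish the parametrization of curvatures by the forms $f_\gamma$, verify integrality (Section~\ref{familialsec}), and record the archimedean orbit-counting asymptotics for $\#\{\gamma \in \mathcal{A} : \|\gamma\| \le T\}$, which grow like $T^{2\delta}$ where $\delta$ is the Hausdorff dimension of the limit set; since $\mathcal{A}$ is familial, $\delta > 1$, so the geometric spectral gap of Theorem~\ref{mainspectralthm} is available. (2) \emph{Major arcs.} Use the spectral gap — in the combinatorial/level-aspect form to get equidistribution of $\gamma \bmod q$ over the quotient graphs, and in the archimedean form to get an error term with a power saving — to evaluate the contribution of the major arcs centered at rationals $\beta/q$ with $q \le N^{\epsilon_0}$, producing the main term $\gg N^{1-\epsilon}\mathfrak{S}(n)$ on admissible $n$. (3) \emph{Minor arcs.} Bound the complementary contribution in $L^2$: by Parseval, $\int_{\text{minor}} |\widehat{\mathcal{R}_N}(\theta)|^2 d\theta \le (\sup_{\text{minor}}|\widehat{\mathcal{R}_N}|) \cdot \|\mathcal{R}_N\|_2$, so one needs (a) a pointwise bound on the exponential sum $\widehat{\mathcal{R}_N}(\theta)$ on the minor arcs with a power saving $T^{-\eta'}$, obtained by Kloosterman-refinement / Poisson summation in the $(x,y)$ variables combined with the spectral gap to break the $\gamma$-sum, exactly as in \cite{BK14,Zh14}, and (b) a bound on the second moment $\sum_n \mathcal{R}_N(n)^2$, which is a count of \emph{pairs} of solutions and reduces to bilinear-forms/shifted-convolution estimates for the forms $f_\gamma$. (4) \emph{Conclusion.} Combining, the set of admissible $n \le N$ with $\mathcal{R}_N(n)=0$ has size $\ll N^{1-\eta}$ by Chebyshev, and since $\mathcal{R}_N(n) > 0$ forces $n \in \mathcal{K}$, we get $\#\mathcal{K}(N) \ge \#\mathcal{K}_a(N) - O(N^{1-\eta}) = c_{M,\mathcal{A},C} N - O(N^{1-\eta})$; the reverse inequality is trivial, giving the theorem.

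The main obstacle is the minor-arc analysis, specifically obtaining a genuine power saving in the pointwise bound for $\widehat{\mathcal{R}_N}(\theta)$ uniformly over the minor arcs while only assuming the qualitative spectral gap produced by Theorem~\ref{mainspectralthm} (rather than an explicit numerical gap). One must carefully track how the spectral gap feeds into the cancellation in the $\gamma$-sum after opening the congruence conditions, and interface this with the Poisson/Kloosterman treatment of the quadratic form in $(x,y)$ — the delicate point being that the modulus $q$ in the minor arc and the level of the congruence subgroup interact, so the `bad' moduli must be absorbed into the definition of $L_0$ as flagged in the discussion after Corollary~\ref{local}. A secondary difficulty is ensuring the two-variable form $f_\gamma$ behaves uniformly as $\gamma$ ranges over the thin orbit (its discriminant and primitivity can vary), which requires restricting to a suitable sub-orbit or sub-packing — the `congruence families' of circles — and checking that this restriction still captures a positive proportion $c_{M,\mathcal{A},C}$ of admissible values; this is where the familial hypothesis of Definition~\ref{def:familial} does its real work.
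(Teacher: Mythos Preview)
Your overall architecture (quadratic-form parametrisation, circle method, major arcs via the spectral gap and singular series, minor arcs in $L^2$, Chebyshev to bound the exceptional set) matches the paper. The discrepancy is in step (3), the minor-arc mechanism, and it is not merely cosmetic.

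You propose to control the minor arcs by factoring
\[
\int_{\mathrm{minor}}|\widehat{\mathcal{R}_N}(\theta)|^2\,d\theta
\;\le\;
\bigl(\sup_{\mathrm{minor}}|\widehat{\mathcal{R}_N}|\bigr)\cdot\|\mathcal{R}_N\|_{2},
\]
and then seek a pointwise bound on $\widehat{\mathcal{R}_N}$ plus a second-moment bound. This is \emph{not} the route taken in \cite{BK14}, \cite{Zh14}, or the present paper, and the paper explicitly contrasts its method with the ``$\sup\times L^2$'' approach of \cite{BK10}, \cite{BK142}, \cite{Zh16}, noting that the latter only succeeds when the critical exponent $\delta$ is very close to $2$. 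Here one must allow any $\delta>1$, and a pointwise minor-arc bound strong enough to beat the trivial estimate for $q$ just above the major-arc threshold $Q_0$ is not available. Separately, your proposed step (b) --- bounding $\sum_n\mathcal{R}_N(n)^2$ by counting pairs --- is, via Parseval, the same quantity as $\int_0^1|\widehat{\mathcal{R}_N}|^2$, so invoking it to control the minor-arc $L^2$ is circular unless you give an independent combinatorial count, which is itself a substantial task.

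What the paper actually does is bound $\int_{\mathrm{minor}}|\widehat{\mathcal{R}_N^U}|^2$ \emph{directly}: one opens the square into a double sum over $\gamma,\gamma'\in\FF_T$, applies Poisson in $(x,y)$ to produce sums $\mathcal{S}_\gamma(q,u,r,\xi,\zeta)$, and then exploits cancellation in the \emph{average over numerators} $\sideset{}'\sum_{r(q)}\mathcal{S}_\gamma\overline{\mathcal{S}_{\gamma'}}$, which is a Kloosterman--Sali\'e type sum (Lemmas~\ref{0856}--\ref{0857}). The extra $r$-averaging yields the $q^{-5/4+\epsilon}$ saving that makes the argument go through for all $\delta>1$; the spectral gap enters via Lemma~\ref{bk1} to control the diagonal and near-diagonal $\gamma,\gamma'$ terms. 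Your sketch has the right ingredients (Poisson, Kloosterman, spectral gap) but assembles them in the wrong order: the Kloosterman gain must be harvested \emph{inside} the $L^2$ expansion, not fed into a pointwise sup.
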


We feel that it is unlikely that our method can prove Conjecture \ref{0359} without significant new ideas.  

We mention a remark of Chris Leininger: in fact our geometric finiteness assumption can be relaxed to be \emph{€œfinitely generated}.  It is a corollary of the Tameness Theorem \cite{Agol} that any finitely-generated Zariski dense subgroup of the Bianchi group containing a congruence subgroup of $\textrm{PSL}_2(\mathbb Z)$ must be geometrically finite.

Kontorovich and Nakamura define a family of dense circle packings of the plane defined by hyperbolic reflection groups built from uniform polyhedra and their growths \cite{KN}.  For infinitely many of their examples, Kontorovich and Nakamura verify in their paper that such packings satisfy the hypotheses of Theorem \ref{mainthm}, and hence have a local-to-global principle.

Of course, it is possible to construct examples of integral Kleinian packings which \emph{fail} to satisfy the hypotheses of Theorem \ref{mainthm}.  For example, one may take a non-congruence subgroup of $\PSL_2(\ZZ)$, and adjoin another element to obtain a non-Fuchsian group; then consider the orbit of a $K$-rational circle tangent to $\widehat{\RR}$.  In such a case, one cannot guarantee the existence of a suitable family of binary quadratic forms: one only obtains quadratic forms in four related variables.  It is therefore an interesting open question to develop methods which will prove an analogue to Theorem \ref{mainthm} for such packings.

In Section \ref{sec:examples}, in order the demonstrate the variety of examples to which our work applies, we verify that the hypotheses of Theorem \ref{mainthm} hold for the $K$-Apollonian packings of the second-named author \cite{StangeKApp}, and also for an explicit example of a cuboctahedral packing (which also arises in the work of Kontorovich and Nakamura; Figure \ref{fig:cuboct}).

The main method in the proof of this theorem is the Hardy-Littlewood circle method.  In the major arc analysis of the circle method, the main ingredient is an effective counting of group elements for $\mathcal A$ and its congruence subgroups originally achieved by Vinogradov \cite{Vi13}.  In doing this, we require a geometric spectral gap for $\mathcal A$ in order to have a uniform control over the error terms.  In Section~\ref{spectralgapsec} we establish a combinatorial spectral gap for $\mathcal A$, which in turn implies a geometric spectral gap for $\mathcal A$ by the methods in \cite{BGS11}, proving Theorem \ref{mainspectralthm}.  Moreover, we require that $\delta=\delta(\mathcal A)$, the critical exponent of $\mathcal{A}$, which is also the Hausdorff dimension of the limit set of $\mathcal{A}$, is strictly greater than 1, which is guaranteed by our assumption that $\mathcal A$ is familial, and a limit set classification theorem of Bishop-Jones \cite[Corollary 1.8]{Bishop}. \par

Besides the existence of the spectral gap, which is crucial for minor arcs as well as major arcs, the main ingredient in the minor arc analysis is the quadratic form structure, which allows us to do abelian harmonic analysis of two free variables.  Certain Kloosterman-type sums naturally appear here, where we apply standard methods to gain power savings.  In fact the power saving here, as well as in \cite{BK14} and \cite{Zh14}, is so significant that one does not need further restriction on the critical exponent $\delta$ (besides $\delta>1$), in contrast to the works \cite{BK10}, \cite{BK142}, and \cite{Zh16}, which require the critical exponent to be very big in order to get enough cancellation in the minor arc analysis.

Note that our methods, while similar to that in \cite{BK14} and \cite{Zh14}, require several new ingredients and careful generalizations to work.  One crucial such ingredient is the spectral gap of Theorem~\ref{mainspectralthm}.  This theorem applies to a much wider class of groups than our local-to-global analysis, and generalizes the case of the Apollonian group, proven by P. Varj\'u in the appendix of \cite{BK14}.  In proving this theorem, we do not, for instance, have any concrete information about the generators of the group we work with, or exactly at which primes and to what level there are local obstructions for the group.  Indeed, in the proof of Theorem~\ref{mainspectralthm}, we are able to derive, in the case of the groups considered within this paper, exactly what the local obstructions should be: something that was done explicitly for the Apollonian group in \cite{Fu11}.  

Secondly, the fact that we work with an arbitrary imaginary quadratic field $K$ (as opposed to $\mathbb{Q}(i)$ and $\mathbb{Q}(\sqrt{-2})$ as in \cite{BK14} and \cite{Zh14}, respectively), and an abstract subgroup of $\textrm{PSL}_2(\mathcal O_K)$ makes the local analysis in the major arcs section (Section~\ref{sectionmajor}) much less straightforward: where the authors of \cite{BK14} and \cite{Zh14} could depend on concrete local information about the groups they work with, we derive this without relying on explicit information about the local obstructions.  

Thirdly, in both \cite{BK14} and \cite{Zh14}, the level of the congruence subgroup contained in the Apollonian group in question is $2$, which means that the curvatures of the circles are exactly the set of integers represented by a corresponding class of shifted binary quadratic forms.  In our paper this is no longer the case and it is possible that the curvatures we consider (after appropriate scaling to make them integral) comprise a subset of values of the corresponding class of shifted forms.  In fact, while the methods here deal with this nicely, this would make executing the positive density proof in \cite{BF12} significantly more cumbersome in our setting than in the original setup of the classical Apollonian group.

We have made a special effort to make our exposition of these methods particularly accessible, in the hope that it may benefit students and experts alike.

\vspace{0.1in}

\noindent{\bf Notation:} Sections~\ref{integralitysec} through \ref{sectionminor} are notation-heavy.  For ease of reading, we include a table of the major notation used in those sections in Table~\ref{notationtable} of Section~\ref{notations}.  We also note that whenever the constant $\eta$ appears,  it is assumed to satisfy not only the current claim,  but also all claims in  previous contexts.

\vspace{0.1in}

\noindent{\bf Acknowledgements:} We would like to thank Hee Oh for raising the question of how general the methods in \cite{BF12} and \cite{BK14} are, which is what motivated this paper.  We also thank Nathan Dunfield, Alireza Salehi-Golsefidy, Alex Kontorovich, Chris Leininger, Kei Nakamura and Hee Oh for helpful conversations.

\vspace{0.1in}

\noindent{\bf Figures:} Figures were produced with Sage Mathematics Software \cite{sagemath}.

\section{Integrality of $\mathcal{A}$}\label{integralitysec}

For the purpose of our methods, we intend to replace $\mathcal{A}$ with $\tilde{\mathcal A} :={\mathcal{A}}\cap \textrm{PSL}_2(\ZZ[\sqrt{-d}])$, where $d$ is as in Definition \ref{def:familial}, since we would like to work with an integral group.  The next lemma asserts that $\tilde{\mathcal A}$ is finite index in ${\mathcal{A}}$.

Without loss of generality, we can replace $\mathcal{A}$ with any finite-index subgroup for the purposes of Theorem \ref{mainthm}.  This is because 
a finite number of orbits of the subgroup comprise the full orbit of ${\mathcal A}$, and the congruence obstructions from these orbits can be combined to give the obstruction for the union.

For this reason, we are free to assume throughout the paper that $\mathcal{A}$ is torsion-free, by Selberg's theorem, saying that any matrix group contains a finite-index torsion-free subgroup \cite{Se60}, and, by the following lemma, that it is a subgroup of $\textrm{PSL}_2(\ZZ[\sqrt{-d}])$.

\begin{lemma}\label{finiteindexintegral}
  Let ${\mathcal{A}}$ be as defined in the introduction, and let $\tilde{\mathcal A}={\mathcal{A}}\cap \textrm{PSL}_2(\ZZ[\sqrt{-d}])$.  Then $[{\mathcal{A}}:\tilde{\mathcal A}]$ is finite.
\end{lemma}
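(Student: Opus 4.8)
The plan is to use only the second condition in Definition~\ref{def:familial} — that all entries of all elements of $\mathcal{A}$ lie in a fixed fractional ideal $\mathfrak{a}$ of $K$ — to conclude that these entries have uniformly bounded denominators, and then to build a finite-valued map on $\mathcal{A}$ whose fibres lie inside single cosets of $\tilde{\mathcal A}$.

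Write $\mathcal{O}:=\ZZ[\sqrt{-d}]$. Since a fractional ideal of $K$ is a finitely generated $\ZZ$-submodule of $K$, there is a positive integer $N$ with $N\mathfrak{a}\subseteq\mathcal{O}$; hence every entry of every $\gamma\in\mathcal{A}$ lies in $N^{-1}\mathcal{O}$, equivalently $N\gamma$ has entries in $\mathcal{O}$. Define
\[
  \psi\colon \mathcal{A}\longrightarrow M_2\bigl(\mathcal{O}/N^{3}\mathcal{O}\bigr),\qquad \psi(\gamma)=\bigl(N\gamma \bmod N^{3}\mathcal{O}\bigr),
\]
choosing for each $\gamma\in\PSL_2$ a fixed representative matrix (a different choice only changes $\psi(\gamma)$ by an overall sign, which is harmless below). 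The target of $\psi$ is finite, so $\psi$ takes only finitely many values.

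The key claim is that $\psi(\gamma_1)=\psi(\gamma_2)$ forces $\gamma_1\gamma_2^{-1}\in\tilde{\mathcal A}$. Writing $\gamma_1=\mat{a_1 & b_1 \\ c_1 & d_1}$ and $\gamma_2=\mat{a_2 & b_2 \\ c_2 & d_2}$, the hypothesis says $a_1-a_2,\,b_1-b_2,\,c_1-c_2,\,d_1-d_2$ all lie in $N^{2}\mathcal{O}$. A direct expansion of $\gamma_1\gamma_2^{-1}=\gamma_1\mat{d_2 & -b_2 \\ -c_2 & a_2}$, using $\det\gamma_2=1$, shows that each of its four entries equals the corresponding entry of the identity matrix plus $N^{2}$ times an $\mathcal{O}$-linear combination of the entries of $\gamma_2$; since those entries lie in $N^{-1}\mathcal{O}$, the correction terms lie in $N^{2}\cdot N^{-1}\mathcal{O}=N\mathcal{O}\subseteq\mathcal{O}$. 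Thus $\gamma_1\gamma_2^{-1}$ has all entries in $\mathcal{O}$, and since it lies in $\mathcal{A}$ it lies in $\mathcal{A}\cap\PSL_2(\mathcal{O})=\tilde{\mathcal A}$, proving the claim. Consequently every fibre of $\psi$ is contained in a single right coset $\tilde{\mathcal A}\gamma$; as the right cosets of $\tilde{\mathcal A}$ partition $\mathcal{A}$, their number is at most the number of fibres of $\psi$, which is at most $\#M_2(\mathcal{O}/N^{3}\mathcal{O})=N^{24}$. Hence $[\mathcal{A}:\tilde{\mathcal A}]<\infty$.

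There is no serious obstacle here beyond the fractional-ideal bound itself; the one point requiring care is the choice of modulus. One must reduce $N\gamma$ modulo a power of $N$ large enough — here $N^{3}$, i.e.\ congruence of the entries themselves modulo $N^{2}$ — that the cross terms $N^{2}\cdot(\text{entry of }\gamma_2)$ arising in $\gamma_1\gamma_2^{-1}$ are genuinely in $\mathcal{O}$ and not merely in $N^{-1}\mathcal{O}$; reducing modulo $N$ or $N^{2}$ does not suffice for the final paragraph's argument.
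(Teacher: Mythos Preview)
Your proof is correct and follows essentially the same strategy as the paper's: exploit the uniform bound on denominators coming from the fractional-ideal hypothesis, and show that any two elements of $\mathcal{A}$ agreeing modulo a sufficiently high power of that bound differ by an element of $\tilde{\mathcal{A}}$. The paper phrases this via $p$-adic expansions prime-by-prime, while you package it as a single reduction map $\psi$ into a finite ring; the underlying computation $\gamma_1\gamma_2^{-1}=I+(\gamma_1-\gamma_2)\gamma_2^{-1}$ is the same. One small inaccuracy in your closing remark: reducing $N\gamma$ modulo $N^2$ (so that $\gamma_1-\gamma_2\in N\,M_2(\mathcal{O})$) already suffices, since the cross terms then land in $N\cdot N^{-1}\mathcal{O}=\mathcal{O}$; your choice of $N^3$ is safe but not sharp.
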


\begin{proof}Recall that $K=\QQ(\sqrt{-d})$. If ${\mathcal A}\subset\textrm{PSL}_2(\ZZ[\sqrt{-d}])$, then the statement is trivial.  Hence, suppose ${\mathcal A}\not\subset\textrm{PSL}_2(\ZZ[\sqrt{-d}])$, such that the denominators featured in its elements are bounded above, as assumed in the previous section.  Let $q=p_1^{e_1}\cdots p_k^{e_k}$, where $p_1,\dots,p_k$ are distinct primes, be the least common multiple of all denominators featured among entries of elements of ${\mathcal A}$.   

  Let $H_1=\textrm{PSL}_2(\frac{1}{q} \mathbb Z[\sqrt{-d}])$, let $H_2={\mathcal A}$, and let $H_3= \textrm{PSL}_2(\mathbb Z[\sqrt{-d}])$.  Note that $H_1$ is not a group, but contains both $H_2$ and $H_3$.  Furthermore, it is covered by some union of cosets of $H_3$ in $\textrm{PSL}_2(K)$.
If $H_1$ is covered by a finite union of cosets of $H_3$, then $H_2 = H_2 \cap H_1$ is covered by a finite union of cosets of $H_2 \cap H_3$, i.e. $[\mathcal{A}:\tilde{\mathcal{A}}]$ is finite.  

Therefore, we will cover $H_1$ by a finite union of cosets of $H_3$.  To show this, note that if the $p_i$-adic expansions of $\gamma_1, \gamma_2 \in \textrm{PSL}_2(\frac{1}{q} \mathbb Z[\sqrt{-d}])$ agree in the $p_i^{-e_i}, p_i^{-e_i+1},\dots,p_i^{e_i}$ terms for all $1\leq i\leq k$, then the ``coefficients" of the entries of $\gamma_1\gamma_2^{-1}$ are $p_i$-adic integers for all $i$.  Here, what we mean by $p_i$-adic expansions of $\gamma$ is what one gets when one considers for each entry of $\gamma$ of the form $a+b\sqrt{-d}$ the $p_i$-adic expansion of $a$ and $b$.  By ``coefficients" of an entry $a+b\sqrt{-d}$ of $\gamma$ we mean precisely $a$ and $b$.  Since $\gamma_1\gamma_2^{-1}\in  \textrm{PSL}_2(\frac{1}{q^2} \mathbb Z[\sqrt{-d}])$, this in fact implies that $\gamma_1\gamma_2^{-1}\in\textrm{PSL}_2(\mathbb Z[\sqrt{-d}])$.  Since there are only finitely many possibilities for the $p_i^{-e_i}, p_i^{-e_i+1},\dots,p_i^{e_i}$ terms in the $p_i$-adic expansion of any number, where $i$ ranges over finitely many indices, we have that there are in fact finitely many cosets of $\textrm{PSL}_2(\mathbb Z[\sqrt{-d}])$ in $\textrm{PSL}_2(\frac{1}{q} \mathbb Z[\sqrt{-d}])$, as desired.
\end{proof}

We remark that a converse also holds:  if $\mathcal{A}$ has its intersection with the Bianchi group as a subgroup of finite index, then $\mathcal{A}$ has bounded denominators.

Therefore, from this point on we assume $\mathcal{A}$ is a torsion-free subgroup of $\PSL_2(\mathbb Z[\sqrt{-d}])$.  

\section{Families of quadratic forms}\label{familialsec}

We now describe the set of curvatures $\mathcal{K}$ as a union of values of a family of quadratic forms.
Write $\Delta$ for the discriminant of $\mathcal O_K$. If $d\equiv 1,2$(mod 4), then $\Delta=-4d$, and if $d\equiv 3$(mod 4), then $\Delta=-d$.  
Letting $\gamma=\mat{A_{\gamma}&B_{\gamma}\\C_{\gamma}&D_{\gamma}}\in \PSL_2(\CC)$, direct computation shows that $\gamma$ sends the horizontal line $\widehat{\RR}$ to a circle of curvature 
\al{\kappa(\gamma(\widehat{\RR}))=2 \Im (\overline{C_{\gamma}}D_{\gamma}) \in \RR.}
If $\gamma \in \PSL_2(\mathcal{O}_K)$, then $\kappa(\gamma(\widehat{\RR})) \in \sqrt{-\Delta}\ZZ$.

We may assume without loss of generality that $N(\widehat{\RR}) = \widehat{\RR}+\sqrt{\Delta}/2$.  For, $\PSL_2(\QQ)$ is transitive on circles of $\PSL_2(K) \widehat{\RR}$ tangent to $\widehat{\RR}$.  Therefore we may choose $N_0$ satisfying $N_0(\widehat{\RR}) = \widehat{\RR} + \sqrt{\Delta}/2$, and $NN_0^{-1} \in \PSL_2(\QQ)$.  Then we have 
\[
  M \mathcal{A} N = (M N N_0^{-1}) (N_0 N^{-1} \mathcal{A} N N_0^{-1}) N_0.
\]
But $M' = MNN_0^{-1} \in PSL_2(K)$, $N_0 \in PSL_2(\mathcal{O}_K)$, and $\mathcal{A}' = N_0N^{-1} \mathcal{A} N N_0^{-1}$ is again Zariski dense, infinite covolume, geometrically finite and familial.  Therefore let us assume $N(\widehat{\RR}) = \widehat{\RR} + \sqrt{\Delta}/2$.  By Lemma~\ref{finiteindexintegral}, we may again pass to a finite index subgroup $\mathcal A$ of $\mathcal A'$ and work with this group in order to prove Theorem~\ref{mainthm}.

With this choice of $N$, for any $\gamma \in \mathcal{A}$, and $M$ as above, the curvatures of the orbit $M \gamma \PSL_2(\ZZ) ( \widehat{\RR} + \frac{\sqrt{\Delta}}{2} )$ are given by the shifted quadratic form
\begin{equation}
  \label{0746} 
  \mathfrak{\widehat{f}}_{M\gamma}(a,c) = \sqrt{-\Delta}\left\vert C_{M \cdot\gamma} a+D_{M\gamma} c\right\vert^2+2\Im (\overline{C_{M\gamma} }D_{M\gamma})
\end{equation}
in terms of the entries $a$ and $c$ of $\begin{pmatrix} a & b \\ c & d \end{pmatrix} \in \PSL_2(\ZZ)$.  Therefore the packing $M \mathcal{A}( \widehat{\RR} + \frac{\sqrt{\Delta}}{2} )$ contains the curvatures of  
\[
  \left\{ \mathfrak{\widehat{f}}_{M\gamma}(Lx+1, Ly) : \gcd(x,y)=1 \right\},
\]
where $L$ is the level of the congruence subgroup contained in ${\mathcal{A}}$.  Write
\[
  \mathfrak{f}_{M\gamma}(a,c) = \frac{1}{\sqrt{-\Delta}} \mathfrak{\widehat{f}}_{M\gamma}(a,c).
\]
Then $\mathfrak{f}_{M\gamma}(a,c)$ is a shifted binary rational quadratic form, i.e.
\begin{equation}\label{1112}
  \mathfrak{f}_{M\gamma}(a,c) 
  = 
  \widetilde{\mathfrak{f}}_{M\gamma}(a,c) + \mathfrak{d}_\gamma
\end{equation}
where
\[
  \widetilde{\mathfrak{f}}_{M\gamma}(a,c) = \left| C_{M\gamma} a + D_{M\gamma} c \right|^2, \quad \mbox{and} \quad
  \mathfrak{d}_\gamma = 2\frac{\Im (\overline{C_{M\gamma} }D_{M\gamma})}{\sqrt{-\Delta}}.
\]
In particular, $\widetilde{f}_{M\gamma}$ has discriminant $\Delta\mathfrak{d}_\gamma^2 = -4\left(\Im(\overline{C_{M\gamma}} D_{M\gamma} )\right)^2 < 0$.\par

Unlike in the Apollonian case, it is possible that not all of these forms are primitive integral binary quadratic forms.  However, their deviation from such forms, which is a function of the denominators introduced by $M$, is uniformly bounded.  

\begin{lemma}\label{lem:primitive}
  Let $M \in \PSL_2(K)$ and let $d_1$ be such that $d_1 M \in \PGL_2(\mathcal{O}_K)$.  Up to multiplying and/or dividing by integers dividing $d_1^4$, the form $\widetilde{f}_{M\gamma}$ becomes a primitive integral binary quadratic form.
\end{lemma}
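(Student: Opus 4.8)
The plan is to analyze the form $\widetilde{\mathfrak{f}}_{M\gamma}(a,c) = |C_{M\gamma}a + D_{M\gamma}c|^2$ by tracking the denominators that $M$ introduces. Write $\gamma = \mat{A_\gamma & B_\gamma \\ C_\gamma & D_\gamma} \in \PSL_2(\ZZ[\sqrt{-d}])$, so the entries of $\gamma$ are algebraic integers, and write $M = \mat{p & q \\ r & s}$ with $d_1 M \in \PGL_2(\OK)$, i.e. $d_1 p, d_1 q, d_1 r, d_1 s \in \OK$. Then $C_{M\gamma} = rA_\gamma + sC_\gamma$ and $D_{M\gamma} = rB_\gamma + sD_\gamma$, so both $d_1 C_{M\gamma}$ and $d_1 D_{M\gamma}$ lie in $\OK$. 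Consequently $d_1^2 C_{M\gamma}$, $d_1^2 D_{M\gamma}$, and the conjugates $d_1^2 \overline{C_{M\gamma}}$, $d_1^2 \overline{D_{M\gamma}}$ are algebraic integers, and the coefficients of the binary quadratic form
\[
  d_1^4\,\widetilde{\mathfrak{f}}_{M\gamma}(a,c) = |d_1^2 C_{M\gamma}|^2 a^2 + 2\Re(d_1^4 \overline{C_{M\gamma}} D_{M\gamma})\, ac + |d_1^2 D_{M\gamma}|^2 c^2
\]
are each of the form $\alpha \bar\alpha$ or $\alpha\bar\beta + \bar\alpha\beta$ for algebraic integers $\alpha,\beta \in \OK$; since these are rational and algebraic-integral, they lie in $\ZZ$. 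Hence $d_1^4 \widetilde{\mathfrak{f}}_{M\gamma}$ is an integral binary quadratic form.

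Next I would reduce to the primitive case by dividing out the content. Let $g = \gcd$ of the three coefficients of $d_1^4\widetilde{\mathfrak{f}}_{M\gamma}$; dividing by $g$ produces a primitive integral binary quadratic form. It remains to bound $g$, and for this I would argue that $g \mid d_1^4$ up to a bounded factor by comparing with the case $M = \id$: the form $\widetilde{\mathfrak{f}}_\gamma(a,c) = |C_\gamma a + D_\gamma c|^2$ associated to $\gamma \in \PSL_2(\ZZ[\sqrt{-d}])$ itself has bounded content — indeed, since $\gamma$ is invertible over $\OK$, the ideal $(C_\gamma, D_\gamma)$ is coprime to a fixed modulus (controlled by the level $L$ and the index $[\OK : \ZZ[\sqrt{-d}]]$), which forces the content of $\widetilde{\mathfrak{f}}_\gamma$ to divide a fixed integer independent of $\gamma$. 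Passing from $\widetilde{\mathfrak{f}}_\gamma$ to $\widetilde{\mathfrak{f}}_{M\gamma}$ only multiplies entries by the entries of $M$, i.e. introduces denominators dividing $d_1$ in $C_{M\gamma}, D_{M\gamma}$; clearing these and re-extracting content changes $g$ only by divisors of $d_1^4$. This yields the claim: up to multiplying and/or dividing by integers dividing $d_1^4$, the form $\widetilde{\mathfrak{f}}_{M\gamma}$ is a primitive integral binary quadratic form.

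The main obstacle is the uniformity in $\gamma$: one must ensure the content bound on $\widetilde{\mathfrak{f}}_\gamma$ (before introducing $M$) does not grow with $\gamma$. The key point is that $\gamma \in \PSL_2(\ZZ[\sqrt{-d}])$ means $A_\gamma D_\gamma - B_\gamma C_\gamma = 1$, so any common ideal divisor of $C_\gamma$ and $D_\gamma$ in $\OK$ divides $1$; combined with the bounded index $[\OK : \ZZ[\sqrt{-d}]]$ and the fixed level $L$, the rational content of $|C_\gamma a + D_\gamma c|^2$ is forced into a finite set of possibilities, hence bounded by an absolute constant. I expect the bookkeeping of this last step — translating coprimality of $(C_\gamma, D_\gamma)$ in $\OK$ into a genuine bound on $\gcd$ over $\ZZ$ of the three coefficients — to be the only subtle part; everything else is the routine denominator-clearing described above.
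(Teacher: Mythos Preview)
Your integrality step is fine and matches the paper (though $d_1^2$ already suffices: $d_1^2\widetilde{\ff}_{M\gamma}(a,c)=|d_1C_{M\gamma}\,a+d_1D_{M\gamma}\,c|^2$ has coefficients in $\ZZ$).

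The primitivity argument, however, has a real gap. You try to bound the content of $d_1^4\widetilde{\ff}_{M\gamma}$ by first bounding the content of $\widetilde{\ff}_\gamma$ (using $\det\gamma=1$, so $(C_\gamma,D_\gamma)=\OK$) and then ``passing to $M$''. But $\widetilde{\ff}_{M\gamma}$ is \emph{not} a scalar multiple of $\widetilde{\ff}_\gamma$: the bottom row $(C_{M\gamma},D_{M\gamma})=(rA_\gamma+sC_\gamma,\,rB_\gamma+sD_\gamma)$ is a genuinely different pair, and there is no reason the content bound for one form transfers to the other. Your sentence ``clearing these and re-extracting content changes $g$ only by divisors of $d_1^4$'' is exactly the assertion that needs proof, and the determinant of $\gamma$ alone does not supply it.

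The paper fixes this by applying the determinant condition to $M\gamma$ rather than to $\gamma$. Writing $C'=d_1C_{M\gamma}$, $D'=d_1D_{M\gamma}$ (and similarly $A',B'$), the relation $A_{M\gamma}D_{M\gamma}-B_{M\gamma}C_{M\gamma}=1$ becomes $A'D'-B'C'=d_1^2$, so $d_1^2\in(C',D')$ and hence $N\big((C',D')\big)\mid d_1^4$. The remaining (and nontrivial) step, which you do not address, is to show that the content $e$ of $|C'x+D'y|^2$ always divides $N\big((C',D')\big)$; the paper proves this by a short prime-by-prime argument, treating inert and split primes separately. Once you have that, the content is a divisor of $d_1^4$ with no extraneous bounded factors --- so your invocations of the level $L$ and the index $[\OK:\ZZ[\sqrt{-d}]]$ are unnecessary and in fact not part of the correct bound.
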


\begin{proof}
  We have that $M\gamma \in \PSL_2(K)$.  In particular, we have
  \begin{equation}
    \label{eqn:det}
    A_{M\gamma} D_{M\gamma}  - B_{M\gamma} C_{M\gamma} = 1.
  \end{equation}
  By assumption, $C_{M\gamma}, D_{M\gamma} \in \frac{1}{d_1}\mathcal{O}_K$.  Write
  \[
    C_{M\gamma} = \frac{C'_{M\gamma}}{d_1}, \quad
    D_{M\gamma} = \frac{D'_{M\gamma}}{d_1}.
  \]
  Where $C'_{M\gamma}, D'_{M\gamma} \in \mathcal{O}_K$.  In particular, the ideal generated by $C'_{M\gamma}$ and $D'_{M\gamma}$ has norm at most $d_1^4$ by \eqref{eqn:det}.  

  For any $C, D \in \mathcal{O}_K$, if the integral form
  \[
    |Cx + Dy|^2 = C\overline{C} x^2 + (C\overline{D} + \overline{C}D)xy + D\overline{D}y^2
  \]
  is imprimitive by a factor of, say, $e$ dividing all its coefficients, then $e \mid N(C, D)$ (the norm of the ideal).  To see this, suppose $p$ is prime and $p^k \mid |Cx + Dy|^2$ for all $(x,y)$.  If $p$ is inert, then this implies $(C,D) \subset p^{\lceil k/2 \rceil}\mathcal{O}_K$, so $p^k \mid N(C,D)$.  If $p = \mathfrak{p}\overline{\mathfrak{p}}$ is split, then $C$, $D$ and $C+D$ are each contained in some ideal $\mathfrak{p}^s\overline{\mathfrak{p}}^t$, where $s+t = k$.  Call these three pairs $(s,t) = (s_1,t_1), (s_2,t_2), (s_3,t_3)$, ordered so that $s_1 < s_2 < s_3$.  Then 
  \[
    (C,D) \subset \mathfrak{p}^{s_2}, \overline{\mathfrak{p}}^{t_2},
  \]
  since any two of $C,D,C+D$ generate $(C,D)$.
  Hence, $p^k \mid N(C,D)$.
  
  Therefore $|C_{M\gamma}'x + D_{M\gamma}'y |^2$ is imprimitive by a factor dividing $d_1^4$.

  This shows that the integrality and/or primitivity of $\widetilde{f}_{M\gamma}$ is achieved by multiplication and/or division by a factor of at most $d_1^4$, where $d_1$ is independent of $\gamma$.
 \end{proof}

Finally, the integral curvatures we seek to study are given by the union of the integers represented by these shifted forms, i.e.
\begin{equation}\label{quadformrep}
  \mathcal{K}_{M {\mathcal{A}} (\widehat{\RR} + \frac{\sqrt{\Delta}}{2})} = \bigcup_{\gamma \in {\mathcal{A}}} \left\{ \mathfrak{f}_{M\gamma}(Lx+1,Ly) : \gcd(x,y) = 1 \right\}.
\end{equation}
\section{setup of the circle method}
\label{sec:setup}

Throughout the circle method, there are the following growing parameters:
\[
 T,X,N,T_1,T_2,J,Q_0,K_0, U, H.
 \]
 Their precise relationships, used to tune the result, are boxed throughout the paper, and these are: \eqref{eqn:growing-par}, \eqref{eqn:growing-par2}, \eqref{eqn:setJ}, \eqref{0737} and \eqref{1145}.  We collect these equations here for reference:
 \begin{equation}
   \label{eqn:allgrowing}
   \boxed{
   \begin{aligned}[c]
   N &= T^2X^2, \notag \\
   T &= N^{1/200}, \notag \\
   T &=T_1T_2, \notag \\
   T &_2 =T_1^{\nu}, \notag\\
    J &= T^2X, \notag \\
  \end{aligned}
  \hspace{2em}
  \begin{aligned}[c]
  Q_0&=T^{\frac{2\delta-2\Theta}{80}} \notag \\
  K_0&=Q_0^3, \notag \\
H&=Q_0^{\frac{\eta_0}{4}},  \\
U&=H^{\frac{\eta_0}{20}}. 
\end{aligned}
}
 \end{equation}

Each element $\gamma\in\mathcal{A}$ corresponds to a shifted quadratic form of two variables that represents curvatures of circles tangent to $M\gamma(\mathbb R+\frac{\sqrt{\Delta}}{2})$, given in \eqref{1112}.
Note that $M$ is fixed throughout the paper.  

Our goal is to show that almost all admissible integers are represented by some such shifted form.  To do this, we consider this problem applied to growing subsets of $\mathcal A$, and the shifted binary forms corresponding to the elements in these subsets.  

We now define these growing subsets.  We choose three growing parameters $N$, $T$, and $X$ such that
\begin{equation}
  \label{eqn:growing-par}
  \boxed{  N=T^2X^2,\quad T=N^{\frac{1}{200}}.  }
\end{equation}
Since $T$ is a small power of $N$, we have that $X$ is almost of the scale of $N^{\frac{1}{2}}$.  We further write
\begin{equation}
  \label{eqn:growing-par2}
  \boxed{T=T_1T_2, T_2=T_1^{\nu}}
\end{equation}
where $\nu>0$ is a large number depending only on the spectral gap of $\mathcal{A}$, and we define the following set (counting with multiplicity):

\al{\label{FF}
\FF=\FF_T=\left\{\gamma=\gamma_1\gamma_2:\begin{array}{ccc}\gamma_1,\gamma_2\in\Aa\\ T_1/2\leq \Vert M\gamma_1\Vert  \leq T_1 \\ T_2/2\leq \Vert \gamma_2\Vert  \leq T_2 \\ \Im(\overline{C_{M\gamma}} D_{M\gamma})\geq T/100 \end{array}\right\}
}  
Here $\Vert \cdot\Vert $ stands for the Frobenius norm.

The reason that we define $\FF_T$ using two parameters $T_1$ and $T_2$ is that this is the necessary setup for Lemma~\ref{bk1} which is a result of Bourgain-Kontorovich from \cite{BK14}, and one that we will be using in the minor arcs analysis in this paper.  Lemma~\ref{bk2} and Lemma~\ref{bk3} are also stated within this setup, although for these two results one can set up the problem with just a growing ball of radius $T$.

Finally, we wish to let two integers $x$ and $y$ range over two sets of integers that are $\asymp X$.  
For this reason we introduce a smooth function $\psi$ supported on $[1,2]$, such that $\psi\geq 0$ and $\int_\RR \psi (x)dx=1$.
If $(Lx+1,Ly)=1$, then $\ff_{M\cdot \gamma} (Lx+1,Ly)$ is a curvature.   

We then define 
\al{ \label{RNdef}
\mathcal{R}_N(n)= \sum_{\gamma\in \FF_T}\sum_{\substack{x,y\in\ZZ \\(Lx+1,Ly)=1} }\psi\left(\frac{Lx+1}{X}\right)\psi\left(\frac{Ly}{X}\right) \bd{1}\{\ff_{M\gamma}(Lx+1,Ly)=n \} }
If $\rn>0$ then $n$ is a curvature.  Our goal is to show that $\rn>0$ for almost all $n$, in the sense described in Theorem~\ref{mainthm}.

From Theorem~2.2 in \cite{Vi13}, the size of $\FF_T$ is $\asymp T^{2\delta}$ (recall that $\delta$ is the critical exponent of $\mathcal{A}$).  Given the definition of $\FF_T$, the function $\mathcal{R}_N$ is supported on $n \asymp N$.  We obtain
$$\Vert \rN\Vert _{l_1}= \sum_{n\asymp N}\rn  \asymp T^{2\delta}X^2.$$
It is expected that this is roughly equidistributed on the set of admissible integers, so that
\al{\label{0848}\rn \gg \frac{T^{2\delta}X^2}{N}=T^{2\delta-2}}
for every admissible $n$.

It would be ideal to show \eqref{0848}.  However, current technology does not enable us to prove this.  Instead, we will show that $\rn \gg T^{2\delta-2}$ for every admissible integer in $[N/2,N]$ outside of an exceptional set of size $O(N^{1-\eta})$ for some $\eta>0$.   

Notice that in the definition of $\mathcal R_N$ in \eqref{RNdef}, the second sum is over pairs of integers $(Lx+1,Ly)$ which satisfy a coprimality condition that is hard to track directly in computations.  We hence rewrite this sum as one over all pairs $(Lx+1,Ly)$ using M\"obius orthogonality:
\al{\sum_{d|n}\mu(d) =
\begin{cases}
1 & \text{if } n=1,\\
0 & \text{if } n>1.
\end{cases}
}
Then
\al{\nonumber \mathcal{R}_N(n)&= \sum_{\gamma\in \FF_T}\sum_{x,y\in\ZZ}\sum_{u|(Lx+1,Ly)}\mu(u)\psi\left(\frac{Lx+1}{X}\right)\psi\left(\frac{Ly}{X}\right) \bd{1}\{\ff_{M\gamma}(Lx+1,Ly)=n \}.}
Notice that if $u|(Lx+1,Ly)$, then $(u,\BB)=1$, so $y\equiv 0 \pmod{u}$ and $\BB x\equiv -1 \pmod{u}$.  Let $u^*$ be the integer from $[1,\BB-1]$ such that $uu^{*}\equiv 1 \pmod{\BB}$.  Then we can write 
\al{\mathcal{R}_N(n)=\sum_{(u,\BB)=1}\mu(u)\sum_{\gamma\in\FF_T}\sum_{x,y\in\ZZ} \psi\left(\frac{\BB ux+uu^{*}}{X}\right)\psi\left(\frac{\BB uy}{X}\right)\bd{1}\{\ff_{M\gamma}(\BB ux+uu^{*},{\BB uy})=n \}.\ \label{0850}}
With this manipulation, the innermost sum becomes one over free variables $x,y$, allowing us to use abelian harmonic analysis to analyze it.  \\

To facilitate our analysis we will study a relative of $\rN$ which we denote by $\rnu$, where $U$ is a small power of $N$, and determined at \eqref{1145}.   We restrict the $u$-sum in \eqref{0850} to $u<U$ and define 
\al{\label{0922}\rnu(n)=\sum_{\substack{u<U\\(u,\BB)=1}}\mu(u)\sum_{\gamma\in\FF_T}\sum_{x,y\in\ZZ} \psi\left(\frac{\BB ux +uu^{*}}{X}\right)\psi\left(\frac{\BB uy}{X}\right)\bd{1}\{\ff_{M\gamma}(\BB ux+uu^{*},\BB uy)=n \}}

The following lemma shows that the difference between $\rN$ and $\rnu$ is small in $l_1$: 
\begin{lem}\label{1130}
  $$\vert\vert \rN-\rnu\vert\vert_{l_1} \ll \frac{T^{2\delta}X^2}{U}.$$
\end{lem}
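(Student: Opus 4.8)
The plan is to estimate the $l_1$ norm of the difference $\rN - \rnu$ by noting that, term by term in the Möbius/$u$-sum expansion, the two quantities agree on all $u < U$ with $(u,\BB)=1$; hence the difference is exactly the tail of that sum, namely the contribution of $u \geq U$. Concretely, from \eqref{0850} and \eqref{0922} we have
\al{\rN(n) - \rnu(n) = \sum_{\substack{u \geq U \\ (u,\BB)=1}} \mu(u) \sum_{\gamma \in \FF_T} \sum_{x,y \in \ZZ} \psi\left(\frac{\BB u x + u u^*}{X}\right) \psi\left(\frac{\BB u y}{X}\right) \bd{1}\{\ff_{M\gamma}(\BB u x + u u^*, \BB u y) = n\}.}
Summing over $n \asymp N$ and using $|\mu(u)| \leq 1$ together with the triangle inequality, it suffices to bound
\al{\sum_{\substack{u \geq U \\ (u,\BB)=1}} \sum_{\gamma \in \FF_T} \sum_{x,y \in \ZZ} \psi\left(\frac{\BB u x + u u^*}{X}\right) \psi\left(\frac{\BB u y}{X}\right).}

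First I would carry out the $x,y$ count for fixed $u$ and $\gamma$. Since $\psi$ is supported on $[1,2]$ and non-negative, the factor $\psi\left(\frac{\BB u y}{X}\right)$ forces $\BB u y \asymp X$, so $y$ ranges over an interval of length $\asymp X/(\BB u)$; likewise $\BB u x \asymp X$ restricts $x$ to an interval of length $\asymp X/(\BB u)$. Bounding $\psi$ by $O(1)$, the inner double sum over $x,y$ is $\ll 1 + (X/u)^2 \ll (X/u)^2$ once $u \ll X$ (and if $u \gg X$ the sum is empty since there are no integers $y$ with $\BB u y \asymp X$, because one would need $u \ll X$). Here I am suppressing the dependence on $\BB = L$, which is a fixed constant. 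Summing this over $\gamma \in \FF_T$ and invoking $|\FF_T| \asymp T^{2\delta}$ (Theorem~2.2 of \cite{Vi13}, as recorded in the excerpt), the contribution of a single $u$ is $\ll T^{2\delta} X^2 / u^2$.

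Finally I would sum over $u \geq U$: since $\sum_{u \geq U} u^{-2} \ll U^{-1}$, this yields the claimed bound
\al{\Vert \rN - \rnu \Vert_{l_1} \ll \sum_{u \geq U} \frac{T^{2\delta}X^2}{u^2} \ll \frac{T^{2\delta}X^2}{U}.}
The argument is essentially a routine divisor-tail estimate; the only point requiring a little care is making sure the $x,y$ count is genuinely $O((X/u)^2)$ uniformly (rather than $O(1 + (X/u)^2)$ in a way that would spoil the sum over large $u$), which is handled by observing that the $y$-sum is empty unless $u \ll X$, so the $+1$ never dominates in the relevant range. I do not anticipate a serious obstacle here; this lemma is a preparatory reduction, and the real work lies in the subsequent major/minor arc analysis of $\rnu$ itself.
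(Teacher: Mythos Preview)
Your proof is correct and follows essentially the same approach as the paper: identify the difference as the tail $u \geq U$ of the M\"obius sum, drop the indicator by summing over $n$, bound the $x,y$-count by $\ll (X/u)^2$ via the support of $\psi$, and then sum $\sum_{u\geq U} u^{-2}\ll U^{-1}$. Your added remark that the $y$-sum is empty when $u\gg X$ (so the ``$+1$'' never dominates) is a nice bit of extra care not spelled out in the paper.
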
                
\begin{proof}
From \eqref{0850} and \eqref{0922},
\al{\nonumber&\sum_{n\in\ZZ}\vert \rN(n)-\rnu(n)\vert \\
\nonumber&\leq \sum_{n\in\ZZ}\left\vert \sum_{u\geq U}\mu(u)\sum_{\gamma\in\FF_T}\sum_{x,y\in\ZZ} \psi\left(\frac{\BB ux+uu^{*}}{X}\right)\psi\left(\frac{\BB uy}{X}\right)\bd{1}\{\ff_{M\gamma}(\BB ux+uu^{*},{\BB uy})=n \} \right\vert\\
\nonumber&\leq \sum_{n\in\ZZ}\sum_{u\geq U}\sum_{\gamma\in\FF_T}\sum_{x,y\in\ZZ} \psi\left(\frac{\BB ux +uu^{*}}{X}\right)\psi\left(\frac{\BB uy}{X}\right)\bd{1}\{\ff_{M\gamma}(\BB ux+uu^{*},\BB uy)=n \}\\
\nonumber&\leq\sum_{u\geq U}\sum_{\gamma\in\FF_T}\sum_{x,y\in\ZZ} \psi\left(\frac{\BB ux +uu^{*}}{X}\right)\psi\left(\frac{\BB uy}{X}\right)\\
\nonumber&\ll \sum_{u\geq U}T^{2\delta}\frac{X^2}{u^2} \ll \frac{T^{2\delta}X^2}{U}.
}
\end{proof}

We will study $\rN$ (and $\rnu$) via its Fourier transform: 
\al{\label{1248A} \hrn(\theta)= \sum_{\gamma\in \FF_T}\sum_{\substack{x,y\in\ZZ \\(Lx+1,Ly)=1} }\psi\left(\frac{Lx+1}{X}\right)\psi\left(\frac{Ly}{X}\right) e\left(\ff_{M\gamma}(Lx+1,Ly)\theta\right) 
}
using the fact that we can recover $\rN$ from $\hrn$ via the Fourier inversion formula:
\al{ \label{0537}\rN(n)=\int_0^1\hrn(\theta)e(-n\theta)d\theta.
}
It is in evaluating this integral in  \eqref{0537} that the circle method will be applied.  

By Dirichlet's approximation theorem, given any positive integer $J$, for every real number $\theta\in[0,1)$, there exist integers $r,q$ such that $1\leq q \leq J$ and $\left\vert\theta-\frac{r}{q}\right\vert <\frac{1}{q\cdot J}$.  The integer $J$ is called the depth of approximation, and we will take
  \begin{equation}
    \label{eqn:setJ}
    \boxed{J = T^2X.}
  \end{equation}  
  The general philosophy of the circle method is that most of the contribution to the integral \eqref{0537} should come from neighborhoods of rationals with small denominator.  Such neighborhoods are called \emph{major arcs}.  One shows that \eqref{0537} is bounded below, by bounding the major arcs below, and then bounding the minor arcs, considered an error term, above. 
 
  In our case the major arcs are comprised of $\theta\in[0,1]$ such that $|\theta-\frac{r}{q}|\leq\frac{K_0}{N}$, where $q\leq Q_0$.  Here  $Q_0$ and $K_0$ are small powers of $N$ which depend on the spectral gap and are given in \eqref{0737}.  Write $\beta = \theta - \frac{r}{q}$.

To define what we call the major arc contribution, we first introduce the hat function
\aln{\tf(x):= \max\{ 0, 1-|x| \},}  
whose Fourier transform is
\aln{\hat{\tf}(y)=\left(\frac{\text{sin}(\pi y)}{\pi y}\right)^2.}
In particular, $\hat{\tf}$ is nonnegative (we take $\hat{\tf}(0)=1$).

From $\tf$, we construct a spike function $\mathfrak{T}$, with period $1$ on $\RR$, to capture the major arcs: 
\al{\label{spike}\mathfrak{T}(\theta):=\sum_{q< Q_0}\sideset{}'\sum_{r(q)}\sum_{m\in\ZZ}\tf\left(\frac{N}{K_0}\left(\theta+m-\frac{r}{q}\right)\right).}
Our main term is then
\al{\label{mainterm}\mathcal{M}_N(n):=\int_0^1\mathfrak{T}(\theta)\hrn(\theta)e(-n\theta)d\theta}
and the error term is
\al{\label{errorterm}\enn:=\int_0^1(1-\Tf(\theta))\hrn(\theta)e(-n\theta)d\theta.}
Similarly, we define
\al{\label{maintermu}\mathcal{M}_N^U(n):=\int_0^1\mathfrak{T}(\theta)\hrnu(\theta)e(-n\theta)d\theta}
and
\al{\label{errortermu}\mathcal{E}_N^U(n):=\int_0^1(1-\mathfrak{T}(\theta))\hrnu(\theta)e(-n\theta)d\theta.}

In Lemma \ref{1130} we have shown that $\Vert \rN-\rnu\Vert _{l_1}$ is small.  Running the same argument as in the proof of Lemma \ref{1130}, one can bound the difference between ${\hrn}$ and ${\hrnu}$ in $l_1$ norm.  Then, one obtains 
\begin{lem}\label{1133}
$$\Vert \mn-\mnu\Vert _{l_1}\leq \frac{T^{2\delta}X^2}{U}.$$
\end{lem}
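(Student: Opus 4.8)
The plan is to repeat verbatim the estimate in the proof of Lemma~\ref{1130}, but with the sum over $n$ replaced by an integration over $\theta \in [0,1]$ and the indicator function $\bd{1}\{\ff_{M\gamma}(\dots) = n\}$ replaced by the exponential $e\left(\ff_{M\gamma}(\dots)\theta\right)$; the point is that $\Tf$ is a fixed bounded function, so the difference between $\mn$ and $\mnu$ is controlled by an $l_1$-bound on the difference of the Fourier transforms exactly as $\rN - \rnu$ was controlled pointwise.

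First I would write, from \eqref{mainterm}, \eqref{maintermu}, \eqref{1248A} and its $U$-truncated analogue $\hrnu$, that
\al{\label{eqn:mn-mnu-diff}
  \mn(n) - \mnu(n) = \int_0^1 \Tf(\theta) \left( \hrn(\theta) - \hrnu(\theta) \right) e(-n\theta)\, d\theta,
}
so that, summing the absolute values over $n$ and using $|e(-n\theta)| = 1$ together with the fact that $\Tf \geq 0$ is supported on a union of major arcs of total length $\ll Q_0^2 K_0/N \leq 1$ (in any case $|\Tf(\theta)| \leq 1$ on its support after choosing $K_0$ small relative to $N$, which holds by \eqref{eqn:allgrowing}), one reduces to bounding $\Vert \widehat{\rN} - \hrnu \Vert$ in the appropriate sense. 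More precisely, I would not even pass through a pointwise bound on $n$: instead, exactly as in Lemma~\ref{1130}, expand $\hrn - \hrnu$ using \eqref{0850}, restrict to $u \geq U$, drop the sign of $\mu(u)$ and the oscillation $e(\dots)$, and bound the resulting count of triples $(\gamma, x, y)$ with $\gamma \in \FF_T$ and $(\BB u x + u u^*, \BB u y)$ in the support of the product of $\psi$'s. Since $|\FF_T| \asymp T^{2\delta}$ by Theorem~2.2 of \cite{Vi13}, and for each fixed $u$ the number of admissible $(x,y)$ is $\ll X^2/u^2$, one gets $\ll \sum_{u \geq U} T^{2\delta} X^2/u^2 \ll T^{2\delta} X^2 / U$, which is the claimed bound.

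The slightly delicate bookkeeping point — and the only thing that needs care — is the interchange of the $n$-sum (now absent, replaced by a $\theta$-integral) with the $u$-sum: in \eqref{eqn:mn-mnu-diff} one first takes absolute values inside the integral, $|\mn(n) - \mnu(n)| \leq \int_0^1 |\Tf(\theta)| \, |\hrn(\theta) - \hrnu(\theta)| \, d\theta$, then sums over $n$ and swaps $\sum_n$ with $\int_0^1$; but the function $n \mapsto \mn(n) - \mnu(n)$ is supported on $n \asymp N$ (as $\FF_T$ forces this), so the $n$-sum is finite and the swap is trivially justified. After the swap one is left with $\int_0^1 |\Tf(\theta)| \sum_n |\hrn(\theta) - \hrnu(\theta)|$, which makes no sense as written; the honest route is instead to bound $\sum_n |\mn(n) - \mnu(n)|$ directly by expanding $\mn(n) - \mnu(n)$ as a sum over $u \geq U$, $\gamma \in \FF_T$, $x, y$ of terms $\int_0^1 \Tf(\theta) \psi(\cdots)\psi(\cdots) e\left( (\ff_{M\gamma}(\cdots) - n)\theta \right) d\theta$, taking absolute values, using $|\int_0^1 \Tf(\theta) e(m\theta)\,d\theta| \leq \int_0^1 |\Tf(\theta)|\,d\theta \ll 1$ (this integral is $O(1)$ uniformly, being the $l_1$-mass of the spike function, which is $\ll Q_0^2 K_0 / N \ll 1$), and then collapsing the $n$-sum, which contributes at most $O(1)$ per nonzero term since each $(u,\gamma,x,y)$ determines $n = \ff_{M\gamma}(\cdots)$ uniquely. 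What remains is precisely $\sum_{u \geq U} \sum_{\gamma \in \FF_T} \sum_{x,y} \psi(\cdots)\psi(\cdots) \ll \sum_{u \geq U} T^{2\delta} X^2/u^2 \ll T^{2\delta}X^2/U$, identical to the final display in the proof of Lemma~\ref{1130}. So the main (minor) obstacle is simply organizing the argument so that the $\Tf$-integral is absorbed into an $O(1)$ constant before the counting step, rather than trying to carry it through; once that is done the estimate is word-for-word the one already established.
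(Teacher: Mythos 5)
You have correctly located the delicate point, but your repair of it is where the argument breaks. Writing $W(k):=\int_0^1\Tf(\theta)e(-k\theta)\,d\theta=\frac{K_0}{N}\hat{\tf}\!\left(\frac{K_0k}{N}\right)\sum_{q<Q_0}c_q(k)$, your expansion reads
\[
\mn(n)-\mnu(n)=\sum_{\substack{u\ge U\\(u,\BB)=1}}\mu(u)\sum_{\gamma\in\FF_T}\sum_{x,y\in\ZZ}\psi\left(\frac{\BB ux+uu^{*}}{X}\right)\psi\left(\frac{\BB uy}{X}\right)W\bigl(n-\ff_{M\gamma}(\BB ux+uu^{*},\BB uy)\bigr),
\]
and after taking absolute values and summing over $n$ you must bound, for each fixed tuple $(u,\gamma,x,y)$, the quantity $\sum_{n\in\ZZ}\bigl|W(n-\ff_{M\gamma}(\cdots))\bigr|=\sum_{k\in\ZZ}|W(k)|$. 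In Lemma \ref{1130} the $n$-sum collapses only because the summand contains $\bd{1}\{\ff_{M\gamma}(\cdots)=n\}$; here that indicator has been replaced by the kernel $W(n-\ff_{M\gamma}(\cdots))$, which is nonzero for essentially every $n$ (it is spread over $|n-\ff_{M\gamma}(\cdots)|\ll N/K_0$), so $n$ is \emph{not} determined by the tuple, and your claim that the $n$-sum contributes $O(1)$ per tuple is exactly the unproved (and false) step. Indeed $\sum_k|W(k)|$ is the $l_1$-norm of the Fourier coefficients of the spike function, not $\int_0^1\Tf$: it is at least $\sup_\theta\Tf(\theta)=1$, and in fact of size about $Q_0$ up to logarithms, since for $k=dp$ with $d$ a prime in $(Q_0/2,Q_0)$ and $p>Q_0$ prime one has $\sum_{q<Q_0}c_q(k)=d+\sum_{e<Q_0}\mu(e)\asymp Q_0$, and such $k$ have density $\gg(\log N)^{-2}$ in the range $|k|\ll N/K_0$ where $\hat{\tf}\gg1$.

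Consequently your route proves at best $\Vert\mn-\mnu\Vert_{l_1}\ll Q_0^{1+o(1)}\,T^{2\delta}X^2/U$ (and the variant you also mention, bounding $|W|$ pointwise by $\int_0^1\Tf\asymp K_0Q_0^2/N$ and summing over the $\asymp N$ relevant $n$, loses $K_0Q_0^2$). This is fatal rather than cosmetic: by \eqref{0737} and \eqref{1145} we have $U=Q_0^{\eta_0^2/80}\ll Q_0$, so the loss swallows the entire $1/U$ saving and the resulting bound is useless for Lemma \ref{lem1135} and \eqref{0443}, where $1/U$ must deliver a genuine power of $N$. This is precisely the passage the paper's own (very terse) proof claims to handle by ``the same argument'' applied to $\hrn-\hrnu$; your instinct that the naive interchange ``makes no sense as written'' was right, but a correct treatment cannot take absolute values in $n$ against the kernel $W$. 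One workable route is instead to fix $n$, treat the tail $u\ge U$ by the major-arc analysis of Section \ref{sectionmajor} (Lemma \ref{bk2} to separate the archimedean factor, the singular-series bound $\fs_{Q_0}(n)\ll\log n$, and the archimedean estimate), obtaining a bound of shape $\frac{X^2}{u^2}\cdot\frac{T^{2\delta}}{N}\,N^{o(1)}$ per $n$ and per $u$, and then sum over $n\asymp N$ and $u\ge U$; this yields $\ll T^{2\delta}X^2N^{o(1)}/U$, which suffices. As written, though, the ``collapsing the $n$-sum'' step is a genuine gap.
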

\noindent Together, Lemma  \ref{1130} and Lemma \ref{1133} then imply that 
\begin{lem}\label{lem1135}
$$\Vert \en-\enu\Vert _{l_1}\leq \frac{T^{2\delta}X^2}{U}.$$
\end{lem}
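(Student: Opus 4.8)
The plan is to deduce Lemma~\ref{lem1135} from Lemma~\ref{1130} and Lemma~\ref{1133} by an elementary triangle-inequality argument on the defining integral representations. Recall that by Fourier inversion \eqref{0537}, for every $n$ we have $\rN(n) = \mn(n) + \en(n)$ and $\rnu(n) = \mnu(n) + \enu(n)$; this follows from the decomposition $1 = \Tf(\theta) + (1-\Tf(\theta))$ inside the integrals \eqref{mainterm}, \eqref{errorterm}, \eqref{maintermu}, \eqref{errortermu}. Hence pointwise
\[
  \en(n) - \enu(n) = \bigl(\rN(n) - \rnu(n)\bigr) - \bigl(\mn(n) - \mnu(n)\bigr).
\]

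First I would record this pointwise identity, then sum absolute values over $n \in \ZZ$ and apply the triangle inequality in $\ell_1$:
\[
  \Vert \en - \enu \Vert_{\ell_1} \leq \Vert \rN - \rnu \Vert_{\ell_1} + \Vert \mn - \mnu \Vert_{\ell_1}.
\]
By Lemma~\ref{1130} the first term on the right is $\ll T^{2\delta}X^2/U$, and by Lemma~\ref{1133} the second term is $\leq T^{2\delta}X^2/U$. Adding these gives $\Vert \en - \enu \Vert_{\ell_1} \ll T^{2\delta}X^2/U$, which after adjusting the implied constant (or simply noting that the bounds in the two cited lemmas can be taken with the same constant, as the paper's $\ll$ conventions allow) yields the stated inequality. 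One small point worth spelling out is the interchange of the (infinite) sum over $n$ with the integral over $\theta$ that is implicit in bounding $\Vert \mn - \mnu \Vert_{\ell_1}$ and in the pointwise identity: since $\FF_T$ is a finite set and $\psi$ has compact support, for each fixed $\theta$ only finitely many $n$ contribute, and all the functions involved are supported on $n \asymp N$, so there are no convergence subtleties — everything is a finite sum.

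There is essentially no main obstacle here; the content has already been extracted in Lemmas~\ref{1130} and~\ref{1133}, and Lemma~\ref{lem1135} is a formal consequence. The only thing to be slightly careful about is making sure the error-term decomposition is set up consistently — i.e. that $\rN = \mn + \en$ and likewise for the $U$-truncated versions — so that subtracting the major-arc difference from the full difference genuinely isolates the minor-arc difference. Once that bookkeeping is in place, the $\ell_1$ triangle inequality finishes the proof in two lines.
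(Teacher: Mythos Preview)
Your proposal is correct and is exactly the argument the paper has in mind: the paper simply states that Lemmas~\ref{1130} and~\ref{1133} together imply Lemma~\ref{lem1135}, and your pointwise identity $\en - \enu = (\rN - \rnu) - (\mn - \mnu)$ followed by the $\ell_1$ triangle inequality is the intended (and only natural) way to unpack that implication.
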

Appropriate lower bounds on $\mn(n)$ and average upper bounds on $ \enu$ are then combined to prove the main theorem.  Specifically, in Section \ref{sectionmajor} we show that 
\begin{thm}\label{0429}For any $n\in [N/2, N]\cap \mathcal{K}_a$, we have
$$\mn(n)\gg T^{2\delta-2}.$$
\end{thm}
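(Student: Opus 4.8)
The plan is to run the standard major-arc analysis of the Hardy--Littlewood circle method, but with the spike function $\mathfrak{T}$ already built in so that the main term $\mn(n)$ is essentially a sum over rationals $r/q$, $q < Q_0$, of a local density factor times a Gaussian-type archimedean integral. First I would substitute the definition \eqref{spike} of $\mathfrak{T}$ into \eqref{mainterm} and unfold the $q$- and $r$-sums; writing $\theta = r/q + \beta$ with $|\beta| \le K_0/N$, the integral factors (up to the tails of the hat function, which are negligible by the rapid decay of $\hat{\tf}$) as
\al{\label{propineq1}
\mn(n) \approx \sum_{q<Q_0} \sideset{}'\sum_{r(q)} e(-nr/q) \int_{|\beta|\le K_0/N} \hrn\!\left(\tfrac{r}{q}+\beta\right) e(-n\beta)\,\hat{\tf}\!\left(\tfrac{K_0}{N}\beta\right) d\beta .
}
Next I would insert the definition \eqref{1248A} of $\hrn$ and separate variables: the exponential $e(\ff_{M\gamma}(Lx+1,Ly)(r/q+\beta))$ splits into a purely arithmetic piece depending on $(x,y,\gamma) \bmod q$ and an archimedean piece $e(\ff_{M\gamma}(Lx+1,Ly)\beta)$. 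The arithmetic piece, after summing over $\gamma \in \FF_T$ reduced mod $q$, over residues of $(x,y)$ mod $q$, and over $r$, produces a singular-series-type factor $\mathfrak{S}_q(n)$; the archimedean piece, after Poisson summation in $x,y$ and summation over the smooth density of $\FF_T$ (using Vinogradov's counting, i.e. the estimate $\#\FF_T \asymp T^{2\delta}$ together with effective equidistribution), produces a singular integral of size $\asymp T^{2\delta}X^2/N = T^{2\delta-2}$.

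The two things that make this work are exactly the two hypotheses emphasized in the introduction. The spectral gap of $\mathcal A$ (Theorem \ref{mainspectralthm}) is what lets us count $\gamma \in \FF_T$ in fixed congruence classes mod $q$ with a power-saving error term uniform in $q$ up to $Q_0 = T^{(2\delta-2\Theta)/80}$; without this the $q$-sum could not be extended far enough to see the full singular series. The condition $\delta > 1$ (guaranteed by familiality via Bishop--Jones) is what makes the main term $T^{2\delta-2}$ genuinely larger than the accumulated error terms, since the errors will be of the shape $T^{2\delta - 2 - c}$ or involve factors of $Q_0$, $K_0$ to small powers of $T$. I would then show the singular series $\mathfrak{S}(n) = \prod_p \mathfrak{S}_p(n)$, restricted to $q < Q_0$, is bounded below by a positive constant precisely when $n$ is admissible: this is where Corollary \ref{local} and the explicit determination of $L_0$ enter — for $n \in \mathcal{K}_a$ every local factor $\mathfrak{S}_p(n)$ is positive, and for $p \nmid L_0$ one has $\mathfrak{S}_p(n) = 1 + O(p^{-3/2})$ so the tail converges and the truncated product is $\gg 1$.

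The main obstacle, I expect, is the local analysis of $\mathfrak{S}_p(n)$ for the ``bad'' primes dividing $L_0$ — the primes dividing the level $L$, the primes of imprimitivity coming from $M$ (controlled by $d_1^4$ as in Lemma \ref{lem:primitive}), and $p = 2, 3$. Because we work with an abstract familial $\mathcal A \le \PSL_2(\mathcal O_K)$ over an arbitrary imaginary quadratic $K$, we have no explicit generators and cannot, unlike \cite{BK14} and \cite{Zh14}, simply compute these factors by hand; instead one must argue abstractly (via strong approximation for $\mathcal A$ reduced mod prime powers, together with the structure of the shifted forms $\ff_{M\gamma}$) that $n \in \mathcal{K}_a$ forces each such local factor to be nonzero, and then bound it below by a constant depending only on $M, \mathcal A, C$. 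A secondary technical nuisance is that, since $L$ need not be $2$, the forms $\ff_{M\gamma}$ may only represent a \emph{subset} of the values one naively expects, so the congruence bookkeeping relating $\FF_T$-orbit representatives to residues mod $q$ must be done carefully; but this affects only constants, not the order of magnitude, and the power-saving from the spectral gap absorbs the resulting error terms.
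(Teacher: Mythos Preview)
Your outline is broadly on target --- unfold $\mathfrak{T}$, separate arithmetic from archimedean, bound the archimedean integral below by effective counting, and bound the truncated singular series below for admissible $n$ --- but several of the specific mechanisms you describe are not the ones the paper uses, and one of your claims is quantitatively wrong in a way that matters.

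First, the separation of the arithmetic and archimedean pieces is \emph{not} done by Poisson summation in $x,y$. Poisson is reserved for the minor arcs (Section~\ref{sectionminor1}). In the major arcs the paper keeps $x,y$ fixed throughout: the $(x,y)$-sum sits outside, and both $\fs_{Q_0}(n)=\fs_{Q_0;x,y}(n)$ and $\mathfrak{M}(n)=\mathfrak{M}_{x,y}(n)$ depend on $x,y$ (with bounds uniform in $x,y$). The actual device for separation is to decompose $\FF_T$ into cosets of $\mathcal{A}(q)$ and invoke Lemma~\ref{bk2} (the spectral-gap equidistribution lemma), which says that each coset contributes an equal share of the archimedean exponential sum up to $O(T^{2\Theta}K_0)$. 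This is what produces the factored form \eqref{0152}. Lemma~\ref{bk3} then gives $\mathfrak{M}(n)\gg T^{2\delta}/N$. You allude to ``counting $\gamma\in\FF_T$ in fixed congruence classes mod $q$,'' which is the right idea, but Lemmas~\ref{bk2} and~\ref{bk3} are the precise inputs and you should name them.

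Second, your claim that $\mathfrak{S}_p(n)=1+O(p^{-3/2})$ for $p\nmid L_0$, and hence that the truncated singular series is $\gg 1$, is incorrect. The paper computes $\tau_p(n)$ exactly via a Gauss-sum argument (Lemma~\ref{taupnlemma}) and finds $B_p(n)=p\tau_p(n)-1$ is $O(1/p^2)$ when $p\nmid n$ but only $O(1/p)$ when $p\mid n$. The product over primes dividing $n$ therefore gives $\fs(n)\gg 1/\log n$, not $\gg 1$ (Lemma~\ref{12099}). This is harmless --- the log loss is absorbed into the power saving --- but it is a genuine feature of the analysis, not a slip. The control of higher prime powers ($B_{p^k}(n)=0$ for large $k$) is obtained by a Hensel-type lifting argument (Lemma~\ref{0939}) built on strong approximation (Lemma~\ref{0948}); you gesture at this but it is one of the main novelties here compared to \cite{BK14,Zh14}, since one cannot compute the bad-prime factors by hand.
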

In Section \ref{sectionminor1}, Section \ref{sectionminor2}, and Section \ref{sectionminor3} we work towards giving an $l_2$ bound for $\enu$:
\begin{thm}\label{0430}
$$\Vert \enu \Vert _{l_2}^2 \ll T^{4\delta-4}N^{(1-\eta)^2}.$$
\end{thm}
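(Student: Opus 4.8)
The plan is to bound $\|\enu\|_{l_2}^2$ by expanding the definition \eqref{errortermu} as an integral over the minor arcs and applying Parseval. Concretely, since $\enu(n) = \int_0^1 (1-\Tf(\theta))\hrnu(\theta)e(-n\theta)\,d\theta$, we have
\al{\nonumber \|\enu\|_{l_2}^2 = \sum_{n} \left| \int_0^1 (1-\Tf(\theta))\hrnu(\theta)e(-n\theta)\,d\theta \right|^2.}
Because $\rnu$ is supported on $O(N)$ integers, one can realize the right-hand side (up to harmless factors) as $\int_0^1 |(1-\Tf(\theta))\hrnu(\theta)|^2 \, d\theta$ truncated appropriately, or more precisely bound it by $\int_{\mathfrak{m}} |\hrnu(\theta)|^2\,d\theta$ where $\mathfrak{m}$ is the minor-arc region $\{\theta : 1-\Tf(\theta) \neq 0\}$, i.e. the complement of the major arcs $|\theta - r/q| \le K_0/N$, $q < Q_0$. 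So the whole theorem reduces to a pointwise (or near-pointwise, in an $L^2$-averaged sense) bound on $|\hrnu(\theta)|$ for $\theta$ in the minor arcs, together with control of the measure of $\mathfrak{m}$.

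The next step is the key minor-arc estimate. For $\theta$ in a minor arc, write $\theta = r/q + \beta$ with $q$ between $Q_0$ and $J = T^2 X$ (by Dirichlet). The exponential sum $\hrnu(\theta)$ has the shape $\sum_{\gamma \in \FF_T} \sum_{x,y} (\text{weights}) \, e(\ff_{M\gamma}(\cdot)\theta)$, where $\ff_{M\gamma}$ is a shifted binary quadratic form. The strategy — following \cite{BK14} and \cite{Zh14} — is to first apply the spectral gap (this is where Lemma~\ref{bk1} and the $T = T_1 T_2$ decomposition enter) to replace the sum over $\gamma \in \FF_T$ by something with genuine equidistribution in the relevant congruence quotients, gaining a power saving from the expander estimate. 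Then one does abelian Fourier analysis / Poisson summation in the two free variables $x, y$ (this is possible precisely because of the binary quadratic form structure emphasized in Section~\ref{familialsec}); completing the square in the quadratic form produces Gauss sums and Kloosterman-type sums in the $q$-aspect, to which one applies Weil's bound (or the classical Kloosterman bound) to extract cancellation. Combining the expander gain with the square-root cancellation in the Kloosterman sums yields a bound of the form $|\hrnu(\theta)| \ll T^{2\delta} X \cdot N^{-\eta'}$ or similar on the minor arcs, for some $\eta' > 0$ depending only on $\delta$, $\Theta$ (the spectral gap exponent), and the $\nu$ in \eqref{eqn:growing-par2}.

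Finally one assembles the pieces: $\|\enu\|_{l_2}^2 \ll \sup_{\theta \in \mathfrak{m}} |\hrnu(\theta)|^2 \cdot |\mathfrak{m}| + (\text{contribution near major arcs of intermediate size})$, and since $|\mathfrak{m}| \le 1$ while the sup is $\ll (T^{2\delta}X)^2 N^{-2\eta'}$, and $T^{2\delta}X \asymp T^{2\delta - 1} N^{1/2}$, this gives $\|\enu\|_{l_2}^2 \ll T^{4\delta - 2} N^{1 - 2\eta'} = T^{4\delta-4} \cdot T^2 N^{1-2\eta'}$; absorbing the $T^2 = N^{1/100}$ into the power saving (shrinking $\eta$) yields the claimed $\|\enu\|_{l_2}^2 \ll T^{4\delta-4} N^{(1-\eta)^2}$. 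Here one must be careful to treat the ``intermediate'' range of $q$ (between $Q_0$ and $J$) uniformly — this is handled by a dyadic decomposition in $q$, with the Kloosterman cancellation beating the growing number of fractions $r/q$ in each dyadic block. The main obstacle, and the technical heart of Sections~\ref{sectionminor1}--\ref{sectionminor3}, is establishing the minor-arc bound on $|\hrnu(\theta)|$ uniformly in $q$ up to depth $J$: one needs the expander estimate and the Kloosterman-sum cancellation to interact correctly with the weights coming from $\psi$ and the Möbius-truncated $u$-sum, and to make sure the power saving $\eta'$ genuinely exceeds $\tfrac{1}{200}$ so that the $T^2$ loss is recovered. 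I expect the bookkeeping of how the spectral gap exponent $\Theta$ propagates through the completion-of-square step to be the subtlest point.
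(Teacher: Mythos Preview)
Your list of ingredients is essentially right --- Plancherel, Poisson in $x,y$, Kloosterman-type sums, dyadic decomposition in $q$, and the spectral gap --- but the architecture you propose has two genuine gaps that would prevent the argument from closing.

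\textbf{First, the ``$\sup \times |\mathfrak{m}|$'' assembly fails.} You propose to bound $\int_{\mathfrak{m}} |\hrnu|^2$ by $\sup_{\mathfrak{m}}|\hrnu|^2$. But a pointwise bound on $|\hrnu(\tfrac{r}{q}+\beta)|$ coming from Poisson in $x,y$ (Lemma~\ref{1121}) gives at best $|\hrnu|\ll T^{2\delta}X^2/q$ up to $U$-powers, and for $q$ near $Q_0$ this is far too large: you would need $q\gg X$ for the sup bound to yield the target. The paper does use a pointwise bound, but \emph{only} for the range $q<Q_0$ (the piece $\mathcal{I}_1$), and there the saving comes from the factor $(1-\Tf)^2$ together with the bound $|\hrnu|\ll T^{2\delta-2}U/|\beta|$ of \eqref{1205}. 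For $Q_0\le q<X$ (the piece $\mathcal{I}_2$) and $X\le q\le J$ (the piece $\mathcal{I}_3$), the paper instead \emph{opens the square}: one writes $\sum_r'|\hrnu|^2$ as a double sum over $(\gamma,\gamma')\in\FF_T\times\FF_T$, and the sum over numerators $r$ then produces the bilinear Kloosterman--Sali\'e sum $\mathcal{S}(q,u,\gamma,\xi,\zeta,\gamma',\xi',\zeta')$ of \eqref{1113}. The $3/4$-bound (Lemmas~\ref{0856},~\ref{0857}) is applied to this bilinear object, not to a single-$\gamma$ exponential sum. Without opening the square you never see the $r$-sum that generates the Kloosterman cancellation.

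\textbf{Second, the spectral gap is not used the way you describe.} You say the plan is to ``replace the sum over $\gamma\in\FF_T$ by something with genuine equidistribution in the relevant congruence quotients''. That is the \emph{major-arc} mechanism (Lemma~\ref{bk2}); it is never invoked in the minor arcs. In the minor arcs, after opening the square and applying the Kloosterman bound, the dangerous terms are those with $\fd_\gamma=\fd_{\gamma'}$ (or with $(\fd_\gamma,\fd_{\gamma'})$ large). These are controlled by Lemma~\ref{bk1}, which says that the number of $\gamma\in\FF_T$ with $\fd_\gamma$ in a fixed residue class mod $q$ is $\ll T^{2\delta}/q^{\eta_0}$. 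So the spectral gap enters as a \emph{counting} input on the diagonal of the double $\gamma,\gamma'$ sum, not as an equidistribution replacement. This is exactly why the $T=T_1T_2$ decomposition is needed (it is the hypothesis of Lemma~\ref{bk1}), and it is the step that forces the splitting into $\mathcal{I}_Q^{(=)}$, $\mathcal{I}_Q^{(\neq,\geq)}$, $\mathcal{I}_Q^{(\neq,<)}$ in Section~\ref{sectionminor2} and the analogous splitting in Section~\ref{sectionminor3}.

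In short: the correct skeleton is Plancherel $\to$ split into $\mathcal{I}_1+\mathcal{I}_2+\mathcal{I}_3$ by size of $q$ $\to$ for $\mathcal{I}_1$ use the pointwise bound \eqref{1205} together with the $(1-\Tf)^2$ factor $\to$ for $\mathcal{I}_2,\mathcal{I}_3$ open $|\hrnu|^2$, execute the $r$-sum to get bilinear Kloosterman sums, apply Lemmas~\ref{0856}--\ref{0857}, and then invoke Lemma~\ref{bk1} on the near-diagonal $(\gamma,\gamma')$ terms. Your final bookkeeping paragraph (absorbing $T^2=N^{1/100}$) is also unnecessary once the argument is set up this way: the bounds \eqref{0756}, \eqref{1155}, \eqref{0755}, \eqref{0757} already land directly on $T^{4\delta-4}N^{1-\eta}$ with the parameter choices \eqref{1145}.
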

The value of $\eta$ will be described in the course of the proof.

Then using the H\"older inequality together with Lemma \ref{lem1135}, we have 
\al{\label{0443}\Vert \en\Vert _{l_1}\ll T^{2\delta-2}N^{1-\eta}. }
We are now able to prove Theorem \ref{mainthm} assuming Theorem \ref{0429} and \eqref{0443}.
\begin{proof}[Proof of Theorem \ref{mainthm}:]
  Let $\mathfrak{E}(N)$ be the set of exceptional numbers $[N/2,N]$ (those admissible but not occurring as curvatures).  Then, by \eqref{0443}, 
\aln{\sum_{n\in\mathfrak{E}(N)}|\en(n) |\leq  \Vert \en\Vert _{l_1}\ll T^{2\delta-2}N^{1-\eta}.}
For $n\in \mathfrak{E}(N)$, $\rn=0$ and, by Theorem \ref{0429}, $\mn(n)\gg T^{2\delta-2}$.  Therefore 
\aln{\vert\mathcal{E}_N(n)\vert= \vert\rn-\mathcal{M}_N(n) \vert  \gg T^{2\delta-2}. }
Thus 
\aln{\#\mathfrak{E}(N) \cdot T^{2\delta-2}\ll \sum_{n\in\mathfrak{E}(N)}|\en(n) |\ll T^{2\delta-2} N^{1-\eta},}
so that 
\al{\label{0157}\#\mathfrak{E}(N)\ll N^{1-\eta}. }
\par 
This is the desired result for the interval $[N/2,N]$, and we extend it to the full interval $[0,N]$ as follows. Divide $[0,N]$ into a union of subintervals dyadically: $[0,N]=[N/2,N]\cup[N/4,N/2]\cup[N/8,N/4]\cup\cdots$. Applying \eqref{0157} to each subinterval (replacing $N$ by $N/2^m$ for $0\leq m<\log_2(N)$) and collecting the error terms,  we obtain Theorem \ref{mainthm} as desired.
\end{proof}

\section{\label{lemmata}preliminary lemmata}
In this section we introduce several lemmata due to Bourgain-Kontorovich which will be used in later sections.  
Note that they are not stated exactly as the lemmata which we cite from \cite{BK14}, which are stated in the framework of counting in orbits of the Apollonian group in $\textrm{O}_{\mathbb R}(3,1)$ acting on Descartes quadruples in $\mathbb Z^4$, while we use the lemmata in the context of subgroups of $\textrm{PSL}_2(\mathcal O_K)$ acting on a circle.  However, the proofs of these lemmata in \cite{BK14} are very general, and apply almost verbatim to the context in which we phrase them below, with their group $\Gamma$ replaced by $\mathcal A$ in our case, and the set of first coordinates of points in the orbit of $\Gamma$ acting on a vector replaced by the curvatures of the circles one gets as in the orbit of $\mathcal A$ that we consider.  We note also that in Lemma~\ref{bk2} we sum over cosets of $\mathcal A(q)$ while Bourgain-Kontorovich sum over cosets of a larger subgroup.  However, this is not necessary to execute the circle method as we do here.  Finally, as stated below, Bourgain-Kontorovich's bounds involving $T^{\delta}$ and $T^{\Theta}$ are adjusted to involve $T^{2\delta}$ and $T^{2\Theta}$, respectively.  This is because we work in $\textrm{PSL}_2$ and not in $\textrm{SO}_{\mathbb R}(3,1)$ as is the case in \cite{BK14}, and the spin homomorphism from $\textrm{PSL}_2$ to $\textrm{SO}_{\mathbb R}(3,1)$ is quadratic in the entries of the matrices of $\textrm{PSL}_2$.

These results are the point at which the spectral gap for $\mathcal{A}$ feeds into our analysis.  The first two of these are statements about equidistribution modulo $q$.  The first says that the curvatures cannot have too strong a preference for a given congruence class modulo $q$, as $\gamma$ varies.  It is used in the minor arc analysis.

\begin{lem}[Bourgain-Kontorovich \cite{BK14}, Lemma 5.2]\label{bk1}
There exists a positive constant $\nu$ and some $\eta_0 >0$ which only depend on the spectral gap of $\Aa$, such that for any $1\leq q<N$ and any $r(${mod} $q$),
\aln{\sum_{\gamma\in\FF_T}\bd{1}\left\{\frac{2\Im(C_{M\cdot\gamma}\overline{D_{M\cdot \gamma}})}{\sqrt{-\Delta}}\equiv r (\textrm{mod }q)\right\}\ll\frac{T^{2\delta}}{q^{\eta_0}},}
where $T = T_1T_2$ (for notations see the definition of $\FF$ in \eqref{FF}).
The implied constant is independent of $r$.
\end{lem}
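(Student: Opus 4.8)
The plan is to transplant the proof of \cite[Lemma 5.2]{BK14} to the $\PSL_2(\OK)$ setting. First note that $\Im(C_{M\gamma}\overline{D_{M\gamma}})=-\Im(\overline{C_{M\gamma}}D_{M\gamma})$, so after replacing $r$ by $-r$ the claim becomes an equidistribution statement modulo $q$ for the integer $\mathfrak{d}_\gamma=2\Im(\overline{C_{M\gamma}}D_{M\gamma})/\sqrt{-\Delta}$ of \eqref{1112}, as $\gamma$ ranges over $\FF_T$ with multiplicity. The residue $\mathfrak{d}_\gamma\bmod q$ depends only on $M\gamma\bmod q$, hence only on $\gamma$ modulo $q':=d_1^{O(1)}q$, where $d_1$ is the bounded denominator of $M$ from Lemma~\ref{lem:primitive}; since $q\asymp q'$ we may work with $q'$ throughout. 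Partitioning $\FF_T$ into the fibres of $\gamma\mapsto\gamma\bmod q'$ in $\mathcal{A}/\mathcal{A}(q')$ (equivalently, expanding $\bd{1}\{\mathfrak{d}_\gamma\equiv r\}$ into additive characters mod $q'$) reduces the lemma to two ingredients: an effective count of $\gamma\in\FF_T$ in cosets of $\mathcal{A}(q')$, and a bound on the number of cosets $\gamma_0\bmod q'$ with $\mathfrak{d}_{\gamma_0}\equiv r$.

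For the coset count, write the orbit sum over congruence classes and apply Vinogradov's effective counting for $\mathcal{A}$ and its congruence subgroups \cite{Vi13} together with the geometric spectral gap of Theorem~\ref{mainspectralthm}. Here the two-parameter shape of $\FF_T$ in \eqref{FF} is essential: decomposing $\gamma=\gamma_1\gamma_2$ with $\|M\gamma_1\|\asymp T_1$, $\|\gamma_2\|\asymp T_2=T_1^{\nu}$ and counting $\gamma_2$ for each fixed $\gamma_1$, the spectral gap controls the $\gamma_2$-sum in congruence classes, and taking $\nu$ large (depending only on the gap) yields, uniformly over $\gamma_0\in\mathcal{A}/\mathcal{A}(q')$,
\[
\#\big(\FF_T\cap\gamma_0\mathcal{A}(q')\big)=\frac{\#\FF_T}{[\mathcal{A}:\mathcal{A}(q')]}\Big(1+O\big((q')^{C}T^{-\epsilon_0}\big)\Big)
\]
for constants $C,\epsilon_0>0$ depending only on the gap. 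For the count of bad cosets: $\mathfrak{d}$ is, up to a unit, the functional $\mathcal{C}_1\mathcal{D}_2-\mathcal{C}_2\mathcal{D}_1$ in the real coordinates of the bottom row $(\mathcal{C},\mathcal{D})$ of $M\gamma_0$, so by strong approximation (which identifies $\mathcal{A}/\mathcal{A}(p^k)$ with $\mathrm{SL}_2(\OK/p^k)$ for all but finitely many $p$) and a codimension-one point count, $\#\{\gamma_0\bmod q':\mathfrak{d}_{\gamma_0}\equiv r\}\ll[\mathcal{A}:\mathcal{A}(q')]\,q^{-\eta_1}$ for some fixed $\eta_1>0$; this uses only that $\mathfrak{d}$ is non-constant on the reductions of $\mathcal{A}$, which follows from its Zariski density (it is precisely the non-Fuchsian part of $\mathcal{A}$ that makes $\mathfrak{d}$ vary). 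Combining, for $q\le T^{\kappa}$ with $\kappa=\kappa(C,\epsilon_0)$ small one gets
\[
\sum_{\gamma\in\FF_T}\bd{1}\{\mathfrak{d}_\gamma\equiv r\}\ll\frac{T^{2\delta}}{q^{\eta_1}}+T^{2\delta}(q')^{C+1}T^{-\epsilon_0}\ll\frac{T^{2\delta}}{q^{\eta_1}}.
\]

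For the complementary band $T^{\kappa}\le q<N$ the equidistribution error is too large, so I argue by size. On $\FF_T$, $\|M\gamma\|\le\|M\gamma_1\|\,\|\gamma_2\|\ll T_1T_2=T$, hence $|\mathfrak{d}_\gamma|\ll T^{2}$, so the congruence $\mathfrak{d}_\gamma\equiv r\ (q)$ pins $\mathfrak{d}_\gamma$ to one of $\ll T^{2}/q+1$ integer values $m$; and for each $m$ the fibre $\#\{\gamma\in\FF_T:\mathfrak{d}_\gamma=m\}\ll T^{2\delta-2+\epsilon}$ is of the average order, which again follows from the effective orbit count (a level set of $\mathfrak{d}$ is a codimension-one slice, equivalently a representation count of the shifted form $\mathfrak{f}_{M\gamma}$ with its shift removed, in the spirit of the upper bound behind Theorem~\ref{0429}). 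This gives $\ll T^{2\delta-\kappa+\epsilon}$, which since $N=T^{200}$ is $\ll T^{2\delta}q^{-\eta_0}$ as soon as $\eta_0\le(\kappa-\epsilon)/200$. Taking $\eta_0=\min\{\eta_1,(\kappa-\epsilon)/200\}$, and $\nu$ large enough that the cited effective counting and the minor-arc error accounting of Section~\ref{sectionminor1} are valid, completes the argument.

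The main obstacle is making the coset count effective and uniform over the full range $1\le q<N$, with an error that is simultaneously a power saving in $T$ and only polynomial in $q$ — this is exactly what the geometric spectral gap of Theorem~\ref{mainspectralthm} buys, routed through the two-parameter decomposition of $\FF_T$ — and then splicing the equidistribution regime to the large-$q$ pigeonhole regime without a gap; the near-average fibre bound $\#\{\gamma\in\FF_T:\mathfrak{d}_\gamma=m\}\ll T^{2\delta-2+\epsilon}$, i.e.\ that the orbit does not concentrate on a level set of $\mathfrak{d}$, is the other delicate point, and again rests on Zariski density. All of this is carried out in \cite{BK14}; the work here is the translation, replacing their orbit of Descartes quadruples by $M\mathcal{A}(\widehat{\RR}+\tfrac{\sqrt{\Delta}}{2})$, their first-coordinate functional by $\gamma\mapsto\mathfrak{d}_\gamma$, their group by $\mathcal{A}$ with spectral gap supplied by Theorem~\ref{mainspectralthm}, and their $T^{\delta},T^{\Theta}$ by $T^{2\delta},T^{2\Theta}$ since the spin homomorphism is quadratic in the matrix entries.
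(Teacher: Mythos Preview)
The paper does not give its own proof of this lemma: it is stated as a result of Bourgain--Kontorovich, with the remark preceding Lemma~\ref{bk1} that the proofs in \cite{BK14} ``are very general, and apply almost verbatim'' once one replaces their Apollonian group acting on Descartes quadruples by $\mathcal{A}$ acting on $\widehat{\RR}+\tfrac{\sqrt{\Delta}}{2}$, their first-coordinate functional by $\gamma\mapsto\mathfrak{d}_\gamma$, and $T^\delta,T^\Theta$ by $T^{2\delta},T^{2\Theta}$. Your sketch is exactly this adaptation spelled out, and it captures the two essential ingredients correctly: the product decomposition $\gamma=\gamma_1\gamma_2$ with $T_2=T_1^\nu$ so that the spectral gap (via Vinogradov's effective count and Theorem~\ref{mainspectralthm}) controls the $\gamma_2$-sum in congruence classes uniformly, and a pigeonhole argument for the large-$q$ regime.

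One small correction: your appeal to ``the upper bound behind Theorem~\ref{0429}'' for the fibre estimate $\#\{\gamma\in\FF_T:\mathfrak{d}_\gamma=m\}\ll T^{2\delta-2+\epsilon}$ is misdirected, since Theorem~\ref{0429} is a \emph{lower} bound on $\mathcal{M}_N(n)$. The fibre bound you need is not proved separately in this paper; in the \cite{BK14} argument it comes from the same effective equidistribution applied at modulus $q\asymp T^\kappa$ (giving a saving $T^{-\kappa\eta_1}$, which is all that is required once $\eta_0$ is taken small enough relative to $N=T^{200}$), not from an independent codimension-one lattice-point count. Indeed, the paper's own applications of Lemma~\ref{bk1} (at \eqref{0756}, \eqref{1155}, \eqref{0755}) only ever invoke it with moduli well below $N$, and extract a saving of $T^{-\eta_0}$ or $H^{-\eta_0}$ rather than the sharper $T^{-2}$ your sketch suggests.
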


The second lemma states that the behaviour of the form $\ff_{M\gamma}$ on $\gamma$ from any given congruence class is independent of the congruence class, in the sense that each class contributes equally to an exponential sum.  It is used for the setup of the major arc analysis, to separate the non-archimedean and archimedean contributions.  Write $\mathcal{A}(q)$ for the kernel of reduction modulo $q$.
\begin{lem}[Bourgain-Kontorovich \cite{BK14}, Lemma 5.3]\label{bk2}Let $1<K<T_2^{\frac{1}{10}}$, fix $|\beta|<\frac{K}{N}$, and fix $x,y\asymp X$. Then for any $\gamma_0\in\mathcal{A}$, any $q\geq1$, we have
  $$\sum_{\gamma\in\FF_T\cap\gamma_0\mathcal{A}(q)}e(\beta\ff_{M\gamma}(Lx+1,Ly))=\frac{1}{[\mathcal{A}:\mathcal{A}(q)]}\sum_{\gamma\in\FF_T}e(\beta\ff_{M\gamma}(Lx+1,Ly))+O(T^{2\Theta_1}K),$$
where $\Theta_1<\delta$ depends only on the spectral gap for $\Aa$, and the implied constant does not depend on $q,\gamma_0,x$ or $y$.
\end{lem}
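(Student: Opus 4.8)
The plan is to follow the argument of \cite{BK14} for their Lemma~5.3, transported to the present setting, where it applies essentially verbatim with their orthogonal‑group orbit replaced by the $\PSL_2(\OK)$‑orbit on a circle and the factor $T^{\Theta}$ replaced by $T^{2\Theta}$ (the spin homomorphism being quadratic in the matrix entries). Two ingredients drive the proof: an effective lattice‑point count in archimedean annuli for $\mathcal{A}$ and its congruence subgroups, uniform in $q$ — this is Theorem~2.2 of \cite{Vi13}, whose uniformity in the modulus is exactly the combinatorial/geometric spectral gap furnished by Theorem~\ref{mainspectralthm} — together with the fact that the phase $\beta\ff_{M\gamma}(Lx+1,Ly)$ varies slowly in $\gamma$ over the archimedean support of $\FF_T$. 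For the latter, observe that for $\gamma=\gamma_1\gamma_2\in\FF_T$ we have $\|M\gamma\|\le\|M\gamma_1\|\,\|\gamma_2\|\ll T$, so with $x,y\asymp X$ and $N=T^2X^2$ one gets $\ff_{M\gamma}(Lx+1,Ly)=|C_{M\gamma}(Lx+1)+D_{M\gamma}(Ly)|^2+\fd_\gamma\ll N$, hence $|\beta\ff_{M\gamma}(Lx+1,Ly)|\ll K$; since $M$ is fixed, $C_{M\gamma}$ and $D_{M\gamma}$ are linear in the entries of $\gamma$ with bounded coefficients, so $\partial_\gamma\ff_{M\gamma}(Lx+1,Ly)\ll TX^2$, and after rescaling the ball of radius $T$ to unit size the weight $\gamma\mapsto e(\beta\ff_{M\gamma}(Lx+1,Ly))$ has normalized Lipschitz (indeed Sobolev) norm $\ll|\beta|N=O(K)$. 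Thus, at the archimedean scale $T$, this is a test function of ``frequency $O(K)$''.

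I would then proceed as follows. Fix $\gamma_1$ with $T_1/2\le\|M\gamma_1\|\le T_1$; summing over $\gamma_2$, the condition $\gamma=\gamma_1\gamma_2\in\FF_T\cap\gamma_0\mathcal{A}(q)$ becomes $T_2/2\le\|\gamma_2\|\le T_2$, the geometric constraint $\Im(\overline{C_{M\gamma_1\gamma_2}}D_{M\gamma_1\gamma_2})\ge T/100$, and $\gamma_2\equiv\gamma_1^{-1}\gamma_0\pmod q$ — a single congruence class. Smooth the three sharp cutoffs at a small power scale; the boundary discrepancy is controlled by the count of $\FF_T$ from \cite{Vi13}, and the smoothed weight keeps a normalized Sobolev norm bounded by a fixed power of $T$, with the oscillatory factor still contributing $O(K)$. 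Next apply effective equidistribution of the $\gamma_2$‑annulus across the cosets of $\mathcal{A}(q)$: the smoothed weighted sum over the single congruence class equals $\frac{1}{[\mathcal{A}:\mathcal{A}(q)]}$ times the unrestricted smoothed weighted sum, up to an error of the form (Sobolev norm)$\,\cdot T_2^{2\Theta_1'}$ with $\Theta_1'<\delta$ depending only on the spectral gap, uniformly in $q$ and in the coset. Summing back over the $\asymp T_1^{2\delta}$ admissible $\gamma_1$ reassembles the main term as $\frac{1}{[\mathcal{A}:\mathcal{A}(q)]}\sum_{\gamma\in\FF_T}e(\beta\ff_{M\gamma}(Lx+1,Ly))$. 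The hypothesis $1<K<T_2^{1/10}$ keeps the oscillatory weight a low‑frequency perturbation at scale $T_2$, so the equidistribution step retains a fixed power saving $\delta-\Theta_1'>0$; combined with $T=T_1T_2$ and the freedom to take $\nu$ large in \eqref{eqn:growing-par2}, this lets one choose the smoothing scale and an exponent $\Theta_1$ with $\Theta_1'<\Theta_1<\delta$ absorbing every error — including the boundary losses amplified by the $\asymp T_1^{2\delta}$ outer sum — into $O(T^{2\Theta_1}K)$.

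The main obstacle is uniformity in $q$: for large $q$ the main term $\frac{1}{[\mathcal{A}:\mathcal{A}(q)]}\sum_{\FF_T}e(\beta\ff_{M\gamma})$ is minuscule, so the equidistribution of congruence classes must carry an error rate independent of $q$ — precisely the content of Theorem~\ref{mainspectralthm}, and what makes the lemma nonvacuous in the range where it is later used. A secondary difficulty is the bookkeeping needed to match the product structure of $\FF_T$ (two norm annuli, tied together only through the congruence and through the non‑smooth constraint $\Im(\overline{C_{M\gamma}}D_{M\gamma})\ge T/100$) against a single application of equidistribution; the smoothing of that constraint must be fine enough that its boundary layer, once multiplied by the $\asymp T_1^{2\delta}$ choices of $\gamma_1$, still loses to $T^{2\Theta_1}K$. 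Everything else is routine harmonic analysis, exactly parallel to \cite{BK14}, and to the companion Lemma~\ref{bk1} proved by the same mechanism.
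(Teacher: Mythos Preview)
Your proposal is correct and in fact more detailed than the paper itself, which does not give an independent proof: the paper simply cites \cite[Lemma~5.3]{BK14} and notes (in the preamble to Section~\ref{lemmata}) that the argument there applies almost verbatim once one replaces the orthogonal-group orbit by the $\PSL_2(\OK)$-orbit on a circle, with $T^\Theta$ becoming $T^{2\Theta}$ via the spin homomorphism. Your sketch of fixing $\gamma_1$, reducing the inner $\gamma_2$-sum to a single congruence class, and applying Vinogradov's effective equidistribution with the spectral gap supplying uniformity in $q$ is precisely the mechanism of the Bourgain--Kontorovich proof; your identification of the role of the hypothesis $K<T_2^{1/10}$ (keeping the phase a low-frequency weight at the $T_2$-scale) and of the parameter $\nu$ in absorbing the $T_1^{2\delta}$ outer sum is also accurate.
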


The last lemma is used to bound the archimedean piece of the major arc analysis.  It uses the spectral gap to control the error in counting $\gamma \in \FF_T$ where $\ff_{M\gamma}$ takes certain values.
\begin{lem}[Bourgain-Kontorovich \cite{BK14}, Lemma 5.4]\label{bk3} Fix $N/2\leq n\leq N,1<K\leq T_2^{\frac{1}{10}}$, and $x,y\asymp X$.  Then 
$$\sum_{\gamma\in\FF_T}\bd{1}{\left\{|\ff_{M\cdot \gamma}(Lx+1,Ly)-n|<\frac{N}{K}\right\}}\gg\frac{T^{2\delta}}{K}+T^{2\Theta_2},$$
where $\Theta_2<\delta$ depends only on the spectral gap for $\Aa$.  The implied constant is independent of $x,y$ and $n$.
\end{lem}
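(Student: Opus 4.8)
Since this lemma is attributed to \cite{BK14}, the plan is to verify that the proof of \cite[Lemma~5.4]{BK14} uses only ingredients available to us---an effective lattice-point count for $\mathcal A$, the spectral gap of Theorem~\ref{mainspectralthm}, and the non-degeneracy consequences of $\delta>1$ and Zariski density---and then to transcribe it, with their $\Gamma$ replaced by $\mathcal A$, the Descartes count replaced by the curvature count of \eqref{1112}, and the exponents $\delta,\Theta$ promoted to $2\delta,2\Theta_2$ to account for the spin map being quadratic in the matrix entries (we work in $\PSL_2$ rather than $\mathrm{SO}_{\RR}(3,1)$). Concretely, I would proceed in three steps.

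First, two reductions. For $\gamma\in\FF_T$ we have $\Vert M\gamma\Vert\asymp T$, hence $|\mathfrak{d}_\gamma|=2|\Im(\overline{C_{M\gamma}}D_{M\gamma})|/\sqrt{-\Delta}\ll T^2=N^{1/100}$, which is far smaller than the window half-width $N/K\geq N/T_2^{1/10}$; so replacing $\ff_{M\gamma}$ by $\widetilde{\ff}_{M\gamma}(Lx+1,Ly)=|C_{M\gamma}(Lx+1)+D_{M\gamma}(Ly)|^2$ perturbs the window by a negligible amount. Moreover, as remarked after \eqref{FF}, the two-scale structure of $\FF_T$ is irrelevant here: it suffices to prove the bound with $\FF_T$ replaced by the ball $\mathcal B_T=\{\gamma\in\mathcal A:\Vert M\gamma\Vert\asymp T,\ \Im(\overline{C_{M\gamma}}D_{M\gamma})\geq T/100\}$, after which a dyadic decomposition in $\Vert M\gamma_1\Vert,\Vert\gamma_2\Vert$ and a comparison of positive proportions recovers $\FF_T$.

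Second, the counting input. Parametrize $g=M\gamma$ by its norm $t=\Vert g\Vert\asymp T$ together with the direction $\omega\in\PP^1(\CC)=\partial\mathbb{H}^3$ cut out by the bottom row $(C_g,D_g)$. By Vinogradov's effective count \cite{Vi13}---which is exactly where the spectral gap of Theorem~\ref{mainspectralthm} enters---for any region $R$ defined by finitely many smooth inequalities in $(t,\omega)$ one has
\[
  \#\{\gamma\in\mathcal A:\ M\gamma\in R\}=c\,T^{2\delta}\,\mu_{\mathrm{PS}}(R)+O(T^{2\Theta_2}),
\]
where $\mu_{\mathrm{PS}}$ is the Patterson--Sullivan measure and $\Theta_2<\delta$ depends only on the spectral gap. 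Taking $R$ to be $\{t\asymp T,\ \Im(\overline{C_g}D_g)\geq T/100,\ |C_g(Lx+1)+D_g(Ly)|^2\in(n-N/K,\,n+N/K)\}$ reduces the lemma to the uniform lower bound $\mu_{\mathrm{PS}}(R)\gg 1/K$ over $n\in[N/2,N]$, $x,y\asymp X$ and $1<K\leq T_2^{1/10}$; the leftover $O(T^{2\Theta_2})$ is absorbed into the claimed $T^{2\delta}/K+T^{2\Theta_2}$. Finally, for the archimedean density: after rescaling, the window condition reads $(t/T)^2\,\Phi_{\xi,\zeta}(\omega)\in(n/N-1/K,\,n/N+1/K)$, where $(\xi,\zeta)=((Lx+1)/X,Ly/X)\asymp(1,1)$ and $\Phi_{\xi,\zeta}(\omega)=|c(\omega)\xi+d(\omega)\zeta|^2$ for a unit representative $(c(\omega),d(\omega))$ of the bottom row; this is a fixed real-analytic, non-constant function on $\PP^1(\CC)$ with derivatives bounded uniformly for $(\xi,\zeta)$ in a compact box. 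One then shows that the push-forward of $\mu_{\mathrm{PS}}$ (coupled with the essentially uniform measure in $t$) under the value map $(t,\omega)\mapsto(t/T)^2\Phi_{\xi,\zeta}(\omega)$ has density bounded below on $[1/2,1]$, so that every slab of relative width $\asymp 1/K$ carries mass $\gg 1/K$.

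The hard part will be exactly this last step in the required uniformity: one must bound below the push-forward of the Patterson--Sullivan measure under the whole family of quadratic value maps, for every admissible $n$ and every $x,y\asymp X$, using only that $\mu_{\mathrm{PS}}$ is non-atomic and fully supported on the limit set (Zariski density) and that the limit set has Hausdorff dimension $\delta>1$ (Bishop--Jones \cite{Bishop}, guaranteed by the familial hypothesis), via a co-area/transversality argument---rather than any explicit description of $\mathcal A$ or its limit measure, as was available in the Apollonian case of \cite{BK14}. This is the place where the spectral gap of Theorem~\ref{mainspectralthm} for an abstract familial group is indispensable; the bookkeeping of two scales $T_1,T_2$ versus a single ball, and the passage from $T^\delta,T^\Theta$ to $T^{2\delta},T^{2\Theta_2}$, are routine by comparison.
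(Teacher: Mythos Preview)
The paper does not supply its own proof of this lemma: it is stated in Section~\ref{lemmata} with an attribution to \cite[Lemma~5.4]{BK14}, together with the remark that the proofs in \cite{BK14} ``are very general, and apply almost verbatim to the context in which we phrase them below,'' subject to replacing the orthogonal-group counting by the $\PSL_2$ curvature count and replacing $T^\delta,T^\Theta$ by $T^{2\delta},T^{2\Theta}$ because the spin map is quadratic. The paper also notes explicitly that for this lemma (unlike Lemma~\ref{bk1}) the two-scale structure of $\FF_T$ is inessential and a single growing ball suffices. Your proposal is precisely in this spirit and in fact goes further than the paper by sketching how the transfer would actually be carried out (Vinogradov's effective count with error $O(T^{2\Theta_2})$, followed by a uniform lower bound on the Patterson--Sullivan mass of the relevant shell).

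One small correction to your attribution of ingredients: the spectral gap of Theorem~\ref{mainspectralthm} feeds into the \emph{error term} of the effective count (producing $T^{2\Theta_2}$), not into the Patterson--Sullivan density step you flag as hard. The uniform lower bound $\mu_{\mathrm{PS}}(R)\gg 1/K$ comes instead from the non-degeneracy/full-support properties of $\mu_{\mathrm{PS}}$ guaranteed by Zariski density and $\delta>1$ (Bishop--Jones), exactly as you list earlier; so your final sentence slightly misplaces where the spectral gap is indispensable. With that caveat, your outline matches the paper's intended route.
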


Let $\Theta$ be the larger of the $\Theta$'s that satisfy Lemma \ref{bk2} and Lemma \ref{bk3} respectively, then we set the two parameters $Q_0,K_0$ as
\al{\label{0737}
\boxed{
  Q_0=T^{\frac{2\delta-2\Theta}{80}}, K_0=Q_0^3.}
}

\section{\label{sectionmajor}Major arc analysis}
In this section we prove Theorem \ref{0429} bounding $\mathcal{M}_N(n)$ below.  We give a brief overview of the argument, before treating all the details.  First, we will write 
    \[
      \mathcal{M}_N(n) = \sum_{x,y \mbox{ in a region }} \fs_{Q_0}(n) \mathfrak{M}(n) + \mbox{error},
    \]
    where $\mathfrak{M}(n)$ is the \emph{Archimedean part} and $\fs_{Q_0}(n)$ is the \emph{non-Archimedean part} (depending on a parameter $Q_0$ controlling the size of the major arcs); both depend on $x,y$.  We need Lemma \ref{bk2} (dependent on the spectral gap) in order to accomplish this separation of Archimedean from non-Archimedean.  
    
    The Archimedean part is bounded below by Lemma \ref{bk1}, and most of the attention of this section is given to bounding $\fs_{Q_0}(n)$ below.  This requires a careful local analysis that is one of the novelties of our treatment as compared with previous works \cite{BK14, Zh14}.
  
    The limit
    $\fs(n) = \lim_{Q_0 \rightarrow \infty} \fs_{Q_0}(n)$ is the \emph{singular series}, whose purpose is to be supported only on the admissible values of $n$, and bounded below where it is supported.  We break it down as
    \[
      \fs(n) = \sum_{q=1}^\infty B_q(n) = \prod_p (1 + B_p(n) + B_{p^2}(n) + \cdots ).
    \]
    In turn,
    \[
      B_q(n) = \sum_{r (q) } \tau_q(r)c_q(r-n),
    \]
    where $\tau_q(r)$ is the probability of the quadratic form $\ff_{M\gamma}$ taking on the value $r$ modulo $q$, as $\gamma$ ranges among cosets of $\mathcal{A}$ modulo $q$, and $c_q$ is a Ramanujan sum, which is multiplicative with respect to $q$.  For a prime $p$, one should think of $B_p(n)$ as measuring some deviation from the equidistribution of the probabilities $\tau_p(n)$ modulo $p$; $B_{p^k}(n)$ for larger $k$ gives finer information about the behaviour of these probabilities as we lift to powers of $p$.  This is captured by the relationship
    \[
      1 + B_p(n) + B_{p^2}(n) + \cdots + B_{p^k}(n) = p^k \tau_{p^k}(n).
    \]
    This factor is non-zero if and only if $n$ is represented as a curvature modulo $p^k$.
    
    The goal, then, is to understand $B_{p^k}(n)$.  First, we use strong approximation for $\mathcal{A}$ to show that the $B_{p^k}(n)$ eventually vanish as $k$ increases.  In particular, $B_{p^k}(n) = 0$ once we have uniform lifting in the sense of strong approximation (Lemmas \ref{0939} and \ref{0948}).  We find that for all but finitely many primes, $B_{p^k}(n) = 0$ for $k \ge 2$.  Therefore $\fs(n)$ is controlled by the product over good primes $\prod'_{p}(1 + B_p(n))$.

    The final step is to control $B_p(n)$:  we show that for $p \nmid n$, $B_p(n) = O(1/p)$, while for $p \mid n$, $B_p(n) = O(1/p^2)$.  This requires a direct counting argument, finding all solutions modulo $p$ to the requirement that the curvature be equal to $n$; at its core is an argument using Gauss sums.  In other words, we show that equidistribution of curvatures modulo $p$ does not fail too badly.

    Now we begin.  From \eqref{mainterm}, \eqref{spike} and \eqref{1248A}, we have
\al{\label{0128}\nonumber
\mathcal{M}_N(n)=&\int_0^1\mathfrak{T}(\theta)\hrn(\theta)e(-n\theta)d\theta\\
\nonumber=&\int_{-\infty}^{\infty}\sum_{q<Q_0}\sum_{r(q)}{}'\frak{t}\left(\frac{N}{K_0}\beta\right)\hrn\left(\beta+\frac{r}{q}\right)e\left(-n\left(\beta+\frac{r}{q}\right)\right)d\beta\\
\nonumber=&\sum_{\sk{x,y\in\ZZ\\(Lx+1,Ly)=1}}\psi\left(\frac{Lx+1}{X}\right)\psi\left(\frac{Ly}{X}\right)\sum_{q<Q_0}\sum_{r(q)}{}'\sum_{\gamma\in\FF_T}e\left(\frac{r}{q}(\ff_{M\gamma}(Lx+1,Ly)-n)\right)\\&\cdot\int_{-\infty}^{\infty}\tf\left(\frac{N}{K_0}\beta\right)e(\beta(\ff_{M\gamma}(Lx+1,Ly)-n)d\beta
}
Now we decompose the set $\FF$ as left cosets of $\Aa(q)=\left\{\mat{a&b\\c&d}\in\Aa\Big\vert \mat{a&b\\c&d}\equiv I \pmod q\right\}$ and apply Lemma \ref{bk2} with $K=K_0$ to obtain  
\al{\label{0127}\nonumber&\sum_{\gamma\in\FF}e\left(\frac{r}{q}(\ff_{M\gamma}(Lx+1,Ly)-n)\right)\cdot\int_{-\infty}^{\infty}\tf\left(\frac{N}{K_0}\beta\right)e(\beta(\ff_{M\gamma}(Lx+1,Ly)-n))d\beta\\
\nonumber=&\sum_{\gamma_0\in \Aa/\Aa(q)}e\left(\frac{r}{q}(\ff_{M\gamma_0}(Lx+1,Ly)-n)\right)\int_{-\infty}^{\infty}\mathfrak{t}\left(\frac{N}{K_0}\beta\right) \sum_{\gamma\equiv\gamma_0(q)}e(\beta(\ff_{M\gamma}(Lx+1,Ly)-n))d\beta\\
\nonumber=&\frac{1}{[\Aa:\Aa(q)]}\sum_{\gamma_0\in \Aa/\Aa(q)}e\left(\frac{r}{q}(\ff_{M\gamma_0}(Lx+1,Ly)-n)\right)\int_{-\infty}^{\infty}\mathfrak{t}\left(\frac{N}{K_0}\beta\right)\sum_{\gamma\in\FF_T}e(\beta(\ff_{M\gamma}(Lx+1,Ly)-n))d\beta\\
\nonumber&+O\left(\frac{T^{2\Theta}K_0^2Q_0^6}{N}\right)\\
\nonumber=&\frac{K_0}{N}\cdot\frac{1}{[\Aa:\Aa(q)]}\sum_{\gamma_0\in \Aa/\Aa(q)}e\left(\frac{r}{q}(\ff_{M\gamma_0}(Lx+1,Ly)-n)\right)\cdot \sum_{\gamma\in\FF_T}\hat{\mathfrak{t}}\left(\frac{K_0}{N}(\ff_{M\gamma}(Lx+1,Ly)-n)\right)\\
&+O\left(\frac{T^{2\Theta}K_0^2Q_0^6}{N}\right),
}
where Lemma~\ref{bk2} is applied to obtain the third line above.  Inserting \eqref{0127} into \eqref{0128}, we get
\al{\label{0152}
\mnn=\sum_{\sk{x,y\in\ZZ\\(Lx+1,Ly)=1}}\psi\left(\frac{Lx+1}{X}\right)\psi\left(\frac{Ly}{X}\right)\fs_{Q_0}(n)\mathfrak{M}(n)+O\left(\frac{T^{2\Theta}X^2K_0^2Q_0^8}{N}\right)
}
where 

\al{\fs_{Q_0}(n)=\fs_{Q_0;x,y}(n):= \sum_{q<Q_0} \frac{1}{[\Aa:\Aa(q)]}\sum_{\gamma_0\in \Aa/\Aa(q)}c_q\left(\ff_{M\cdot \gamma_0}(Lx+1,Ly)-n\right)  }
and
\al{\mathfrak{M}(n)=\mathfrak{M}_{x,y}(n):=\frac{K_0}{N}\sum_{\gamma\in\FF}\hat{\mathfrak{t}}\left(\frac{K_0}{N}(\ff_{M\gamma}(Lx+1,Ly)-n)\right).}
Here $c_q$ is the Ramanujan sum defined by
\al{\label{1101}c_q(n)=\sideset{}'\sum_{r(q)}e\left(\frac{rn}{q}\right).}
Fixing $n$, we have that $c_q(n)$ is multiplicative with respect to $q$, and locally,
\al{\label{093444}c_{p^k}(n)=\begin{cases}0&\text{if } p^m\Vert n,m\leq k-2,\\
-p^{k-1}& \text{if } p^{k-1}\Vert n,\\
p^{k-1}(p-1)& \text{if } p^k|n.
\end{cases}}

The error term in \eqref{0152} is $O(T^{2\delta-2-\epsilon})$ by our choice of $K_0$ (see \eqref{eqn:growing-par} and \eqref{0737}), where $\epsilon$ is any small positive number at most $\frac{33}{20}(\delta-\Theta) > 0$.  
Applying Lemma \ref{bk3} with $K=K_0$, we can give a lower bound for the Archimedean piece $\mathfrak{M}(n)$ for any $N/2\leq n\leq N$:
\al{
\mathfrak{M}(n)\gg \frac{T^{2\delta}}{N}.
}
Therefore, Theorem \ref{0429} is proved once we show that $\fs_{Q_0}(n)\gg 1$ for every $n$ admissible (or, what actually suffices, once we have proven it up to log factors, since our aim is to get a power saving, which absorbs all log powers). 
The rest of this section is devoted to proving the following.
\begin{prop}\label{1203}
We have 
$\mathfrak{S}_{Q_0}(n)\gg \frac{1}{\log n}$
if $n$ is admissible,
and 
$\mathfrak{S}_{Q_0}(n)\ll \frac{\log n}{Q_0}$
if $n$ is not admissible.
\end{prop}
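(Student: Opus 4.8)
The plan is to analyze the truncated singular series $\fs_{Q_0}(n) = \sum_{q<Q_0} B_q(n)$ by first establishing an honest Euler product for the completed series $\fs(n) = \sum_{q=1}^\infty B_q(n)$, and then controlling the tail $\fs(n) - \fs_{Q_0}(n)$. Here $B_q(n) = \frac{1}{[\Aa:\Aa(q)]}\sum_{\gamma_0 \in \Aa/\Aa(q)} c_q(\ff_{M\gamma_0}(Lx+1,Ly)-n)$, which we rewrite as $\sum_{r(q)} \tau_q(r) c_q(r-n)$ where $\tau_q(r)$ is the proportion of cosets $\gamma_0 \bmod q$ for which $\ff_{M\gamma_0}(Lx+1,Ly) \equiv r \pmod q$. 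Since $\Aa$ surjects onto a product of its reductions at prime powers (strong approximation, which also underlies Corollary \ref{local}), the probabilities $\tau_q$ are multiplicative in the sense that $\tau_{q_1 q_2}(r) $ factors through the CRT decomposition for coprime $q_1,q_2$; combined with multiplicativity of the Ramanujan sum $c_q$ in $q$, this gives $B_q(n)$ multiplicative in $q$ and hence the formal factorization $\fs(n) = \prod_p \big(1 + B_p(n) + B_{p^2}(n) + \cdots\big)$, with the partial-sum identity $1 + B_p(n) + \cdots + B_{p^k}(n) = p^k \tau_{p^k}(n)$ obtained by summing \eqref{093444} against $\tau_{p^k}$.

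The first key step is to show each local factor stabilizes: by strong approximation there is a level $L_0$ (built from the level $L$ of the congruence subgroup inside $\Aa$, the denominators of $M$, the primitivity defect of the packing bounded in Lemma \ref{lem:primitive}, and the primes $2,3$) such that for $p \nmid L_0$ and $k \geq 1$ the lifting of solutions from $p$ to $p^k$ is uniform, forcing $B_{p^k}(n) = 0$ for $k \geq 2$; for the finitely many bad $p$, the factor $1 + B_p(n) + B_{p^2}(n) + \cdots$ is a finite sum that stabilizes after finitely many terms (Lemmas \ref{0939} and \ref{0948} are exactly the inputs here). The second key step is the local estimate at good primes: a direct count of solutions modulo $p$ to $\ff_{M\gamma}(Lx+1,Ly) \equiv n$, executed via Gauss sums for the binary quadratic form $\widetilde{\ff}_{M\gamma}$ of discriminant $< 0$, gives $\tau_p(n) = \frac{1}{p}\big(1 + O(1/\sqrt p)\big)$ type behavior, and more precisely yields $B_p(n) = O(1/p)$ when $p \nmid n$ and $B_p(n) = O(1/p^2)$ when $p \mid n$ — the improvement for $p \mid n$ coming because $c_p(-n) = p-1$ is then as large as possible but is beaten by the extra cancellation in $\tau_p$. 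Crucially, admissibility of $n$ means exactly that every local factor $p^k \tau_{p^k}(n)$ is nonzero, and one shows it is bounded below (away from the finitely many bad primes, $1 + B_p(n) = 1 + O(1/p^2)$ for $p\mid n$ and the $p\nmid n$ factors contribute a convergent product); thus $\fs(n) \gg 1$ for admissible $n$, while $\fs(n) = 0$ when $n$ is inadmissible since some factor vanishes.

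Finally I would pass from $\fs(n)$ to the truncation $\fs_{Q_0}(n)$. Using $|B_q(n)| \leq \sum_{r(q)} \tau_q(r) |c_q(r-n)|$ together with the bounds $|c_q(m)| \le \gcd(q,m)$ and the good-prime estimates above, one gets $\sum_{q \geq Q_0} |B_q(n)| \ll \frac{(\log n)^{O(1)}}{Q_0}$ (the divisor-type factors from $\gcd(q,n)$ producing the logs, which is why the Proposition is stated up to log factors — harmless against the power saving $Q_0 = T^{(2\delta-2\Theta)/80}$). This immediately gives $\fs_{Q_0}(n) \ge \fs(n) - O(\log n/Q_0) \gg 1/\log n$ for admissible $n$ (absorbing the lower bound $\fs(n)\gg 1$ and the tail into a crude $1/\log n$), and $|\fs_{Q_0}(n)| = |\fs_{Q_0}(n) - \fs(n)| \ll \log n / Q_0$ for inadmissible $n$ since then $\fs(n) = 0$.

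The main obstacle is the local analysis at bad primes and the uniform lower bound for the product of good-prime factors without explicit knowledge of $\Aa$'s generators or its local obstructions: one must extract from strong approximation alone both the stabilization level $L_0$ and enough quantitative control on $\tau_{p^k}(n)$ to conclude $\fs(n) \gg 1$ for every admissible $n$ — in contrast to \cite{BK14, Zh14}, where the congruence image was known concretely. The Gauss sum computation for $B_p(n)$ at good primes, while technical, is comparatively routine; it is making it uniform over the (finitely many, but $\Aa$-dependent) bad primes and over the parameters $x,y$ that requires care.
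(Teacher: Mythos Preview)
Your overall architecture matches the paper's: Euler product for $\fs(n)$ via multiplicativity of $B_q$, stabilization of local factors through strong approximation (Lemmas \ref{0939}, \ref{0948}), a local estimate for $B_p(n)$ at good primes, and then a tail bound $|\fs(n)-\fs_{Q_0}(n)|\ll (\log n)/Q_0$. However, there is a genuine error in the local estimate that breaks both the lower bound on $\fs(n)$ and the tail bound.

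You have the two cases of $B_p(n)$ reversed. The correct statement (the paper's Lemma \ref{Bpnlemma}) is $B_p(n)=O(1/p^2)$ when $p\nmid n$ and $B_p(n)=O(1/p)$ when $p\mid n$. Your heuristic ``$\tau_p(n)=\tfrac{1}{p}(1+O(1/\sqrt p))$'' from a generic Gauss-sum bound would only give $B_p(n)=p\,\tau_p(n)-1=O(1/\sqrt p)$, which is too weak even for your stated conclusion; the paper instead computes $\tau_p(n)$ \emph{exactly} by counting in $\textrm{SL}_2(\mathcal{O}_K/p)$ (Lemma \ref{taupnlemma}), e.g.\ $\tau_p(n)=p/(p^2\pm 1)$ when $p\nmid n$, whence $B_p(n)=O(1/p^2)$. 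The direction of the effect is the opposite of your explanation: the residue $n\equiv 0\pmod p$ is the special one where $\tau_p$ deviates \emph{more} from $1/p$, so $B_p(n)$ is larger there, not smaller. This matters because with your bounds the infinitely many primes $p\nmid n$ each contribute $1+O(1/p)$, and $\prod_p(1+O(1/p))$ does not converge; so ``the $p\nmid n$ factors contribute a convergent product'' and ``$\fs(n)\gg 1$'' are both unjustified. With the correct bounds the $p\nmid n$ factors converge and only the $O(\log n/\log\log n)$ primes $p\mid n$ carry a $1+O(1/p)$, which is why the paper obtains only $\fs(n)\gg 1/\log n$. The tail estimate has the same issue: the paper's Lemma \ref{1202} decomposes $q=q_1q_2q_3$ and uses multiplicativity together with $|B_{q_3}(n)|\ll 1/q_3^2$ for $q_3$ coprime to $L_1 n$ to sum to $O((\log n)/Q_0)$; your proposed route via $|B_q(n)|\le\sum_{r(q)}\tau_q(r)\,|c_q(r-n)|$ discards the cancellation (already for $q=p$ prime it gives only $|B_p(n)|\le 2$) and cannot recover the $1/Q_0$ saving.
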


To understand $\fs_{Q_0}(n)$, we first push $Q_0$ to $\infty$.  We define a formal singular series
\al{\label{0303}
\fs(n)=\sum_{q=1}^{\infty}B_q(n),
} 
where 
\al{B_q(n)= \frac{1}{[\Aa:\Aa(q)]}\sum_{\gamma_0\in \Aa/\Aa(q)}c_q\left(\ff_{M\gamma_0}(Lx+1,Ly)-n\right). }
So to understand $\fs_{Q_0}(n)$ or $\fs(n)$ one must understand $B_q(n)$ for each $q$.

We rewrite $B_q(n)$ as  
\al{\label{1230}B_q(n)=\sum_{r(q)}\tau_q(r)c_q\left(r-n\right),}
where 
\al{\label{0412}\tau_q(r)=&\frac{1}{[\Aa:\Aa(q)]}\sum_{\gamma_0\in\Aa/\Aa(q)}\bd{1}\{\ff_{M\gamma_0}(Lx+1,Ly)\equiv r \pmod{q}\}\\
\label{0413}=&\frac{1}{[\Aa:\Aa(q)]}\sum_{\gamma_0\in\Aa/\Aa(q)}\bd{1}\left\{\kappa\left(M\cdot\gamma_0\left(\RR+\frac{\sqrt{\Delta}}{2}\right)\right)  \equiv r \pmod{q} \right\}.
}
The term $\tau_q(r)$ can be viewed as the probability that a curvature is congruent to $r$ mod $q$, as $\gamma$ ranges over $\mathcal{A}$.  To get from \eqref{0412} to \eqref{0413} we used the fact that $\ff_{M\gamma}(Lx+1,Ly)=\kappa\left(M\cdot\gamma w_{x, y}\left(\RR+\frac{\sqrt{\Delta}}{2}\right)\right)$
for some $w_{x, y}\in\Gamma(\BB)$ with left column $(Lx+1,Ly)^{T}$.\\

First we need the multiplicativity of $\Aa$ which will lead to the multiplicativity of $B_q(n)$:
\begin{lem} \label{0258}Write $q=\prod_i{p_i^{n_i}}$, then 
\aln{\Aa(q)\cong \prod_i \Aa(p_i^{n_i}).}
\end{lem}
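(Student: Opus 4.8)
The statement should be read as the \emph{multiplicativity of the reduction of $\mathcal{A}$}: what is actually needed downstream (for the factorization $B_q(n)=\prod_i B_{p_i^{n_i}}(n)$) is the group isomorphism $\mathcal{A}/\mathcal{A}(q)\cong\prod_i\mathcal{A}/\mathcal{A}(p_i^{n_i})$, and I would prove it in two stages. Write $\mathcal{O}=\mathbb{Z}[\sqrt{-d}]$. The first stage is purely the Chinese Remainder Theorem: for $q=\prod_i p_i^{n_i}$ with the $p_i$ distinct, $\mathcal{O}/q\cong\prod_i\mathcal{O}/p_i^{n_i}$ as rings, whence $\mathrm{SL}_2(\mathcal{O}/q)\cong\prod_i\mathrm{SL}_2(\mathcal{O}/p_i^{n_i})$ and, after dividing by $\{\pm I\}$, a compatible identification of the relevant $\mathrm{PSL}_2$-reductions. (One uses here that $\mathcal{A}$ is torsion-free to lift it faithfully into $\mathrm{SL}_2$ where convenient; the center $\{\pm I\}$ creates only bookkeeping at $p=2$, which folds into the finite bad set below.) Under this identification the reduction-mod-$q$ homomorphism on $\mathcal{A}$ is the diagonal map $\gamma\mapsto(\gamma\bmod p_i^{n_i})_i$, so its kernel is $\bigcap_i\mathcal{A}(p_i^{n_i})=\mathcal{A}(q)$; this already yields the injection $\mathcal{A}/\mathcal{A}(q)\hookrightarrow\prod_i\mathcal{A}/\mathcal{A}(p_i^{n_i})$, and it remains only to show it is onto.

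The second stage, surjectivity, I would obtain from strong approximation for $\mathcal{A}$ (the same input underlying Corollary~\ref{local}): since $\mathcal{A}$ is finitely generated and Zariski dense, the closure of $\mathcal{A}$ in $\prod_p\mathrm{PSL}_2(\mathcal{O}\otimes\mathbb{Z}_p)$ is the product of its local closures, each of which is the full local group away from a finite bad set $\Sigma$. Projecting onto the coordinates $p_1,\dots,p_k$ and truncating at levels $n_i$ then identifies the image of $\mathcal{A}$ in $\prod_i\mathrm{PSL}_2(\mathcal{O}/p_i^{n_i})$ with all of $\prod_i(\mathcal{A}\bmod p_i^{n_i})$. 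Equivalently, one can induct on $k$ via Goursat's lemma: a subgroup of $(\mathcal{A}/\mathcal{A}(q'))\times(\mathcal{A}/\mathcal{A}(p_k^{n_k}))$ with $q'=p_1^{n_1}\cdots p_{k-1}^{n_{k-1}}$ that surjects onto both factors is a fiber product over a common quotient, and strong approximation forces that quotient to be trivial — for the good primes because the two sides are then the full groups, whose composition factors (simple groups $\mathrm{PSL}_2$ of residue fields of the relevant characteristics, together with $\ell$-groups for $\ell\mid q'$ resp. $\ell=p_k$) share no common simple or abelian quotient, and for the finitely many primes in $\Sigma$ by appealing to the explicit description of the strong-approximation modulus $L_0$ produced in Section~\ref{spectralgapsec}.

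The main obstacle is precisely this surjectivity at the bad primes: multiplicativity of $\mathcal{A}\bmod q$ can genuinely fail when the reductions of $\mathcal{A}$ modulo two distinct bad primes share a common quotient, so one cannot dispense with a strong-approximation input, and this is why the lemma is placed after the machinery of Section~\ref{spectralgapsec} rather than stated as a one-line CRT remark. Once the isomorphism $\mathcal{A}/\mathcal{A}(q)\cong\prod_i\mathcal{A}/\mathcal{A}(p_i^{n_i})$ is established, the multiplicativity $B_q(n)=\prod_i B_{p_i^{n_i}}(n)$ is immediate, since $c_q=\prod_i c_{p_i^{n_i}}$ by CRT and, under the identification, $\ff_{M\gamma_0}(Lx+1,Ly)\bmod p_i^{n_i}$ depends only on the $i$-th component of $\gamma_0$, so the sum defining $B_q(n)$ splits as the product of the sums defining the $B_{p_i^{n_i}}(n)$.
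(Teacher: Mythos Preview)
Your CRT-plus-Goursat framework is exactly right for the good primes, and you correctly identify the real obstacle: at the finitely many bad primes the reductions $\mathcal{A}/\mathcal{A}(p_i^{n_i})$ can share a common simple quotient, and then the diagonal image in the product is a proper fiber product, so surjectivity (hence multiplicativity) genuinely fails. The gap in your argument is the sentence ``for the finitely many primes in $\Sigma$ by appealing to the explicit description of the strong-approximation modulus $L_0$ produced in Section~\ref{spectralgapsec}.'' That section (Theorem~\ref{thm:explicit-strong}) only gives you the splitting $\mathcal{A}/\mathcal{A}(q)\cong \mathcal{A}/\mathcal{A}(q_{\textrm{bad}})\times \prod_{p_i\nmid P_{\textrm{bad}}}\mathcal{A}/\mathcal{A}(p_i^{n_i})$; it says nothing about whether $\mathcal{A}/\mathcal{A}(q_{\textrm{bad}})$ itself factors over the bad primes, and in general it need not. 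So your proof, as written, does not close.

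The paper's resolution is different and much shorter: it does \emph{not} prove the lemma for the given $\mathcal{A}$. Instead, immediately after the statement it observes that the lemma can fail a priori, and in that case one simply replaces $\mathcal{A}$ by a finite-index congruence subgroup for which the multiplicative structure holds (such a subgroup exists by strong approximation for $\mathrm{SL}_2$); this is permissible because Section~\ref{integralitysec} already reduced the main theorem to any finite-index subgroup. In effect the lemma is a standing hypothesis, arranged rather than proved. Your direct approach would be a genuine strengthening if it worked, but at the bad primes it cannot; the paper sidesteps the obstruction by shrinking $\mathcal{A}$ rather than analyzing it.
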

Lemma \ref{0258} will lead immediately to the multiplicativity of $B_q(n)$ with respect to $q$.  Apriori Lemma \ref{0258} is not true for a general group $\Aa$.  If this is the case, we replace $\mathcal{A}$ by some congruence subgroup of $\Aa$ which satisfies the multiplicative property (such a subgroup exists by strong approximation in $\textrm{SL}_2$).  As noted in Section \ref{integralitysec}, we may move to a finite index subgroup without loss of generality.

Given this multiplicativity, we split \eqref{0303} into an Euler product
\al{\fs(n)=\prod_p(1+B_p(n)+B_{p^2}(n)+\cdots  ).   }
The arithmetic meaning of each factor of the Euler product is illustrated by the following formula: 
\begin{equation}
  \label{1015}
1+B_p(n)+B_{p^2}(n)+\cdots+ B_{p^k}(n)
=p^{k} \tau_{p^k}(n).
\end{equation}
To see this, let $s_\gamma$ be such that $p^{s_\gamma} \mid\mid \mathfrak{f}_{M\gamma}(Lx+1,Ly) - n$.  Then,
\begin{align*}
  &1 + \sum_{m=1}^k B_{p^m}(n)  \\
  &= \frac{1}{[\mathcal{A}:\mathcal{A}(p^k)]} \sum_{\gamma \in \mathcal{A}/\mathcal{A}(p^k)} \left( 1 + \sum_{m=1}^k c_{p^m}( \mathfrak{f}_{M\gamma}(Lx+1,Ly) - n ) \right) \\
  &= \frac{1}{[\mathcal{A}:\mathcal{A}(p^k)]} \sum_{\gamma \in \mathcal{A}/\mathcal{A}(p^k)} 
  \left\{ \begin{array}{ll} 0 & s_\gamma < k \\ p^k & {s_\gamma} \ge k \\ \end{array} \right. .
\end{align*}
Therefore, $1+B_p(n)+B_{p^2}(n)+\cdots+ B_{p^k}(n)$ is non-zero if and only if $n$ is represented (mod $p^k$).

Our goal for the rest of the section is to access $\fs(n)$ (and prove Proposition \ref{1203}) by analysing the values of $B_{p^k}(n)$.  First, we will show that
\begin{lem}\label{0939} There is an integer $P_{\textrm{bad}}\geq 1$ such that 
\begin{enumerate}
\item For any $p\nmid P_{\textrm{bad}}$ and $k\geq 2$, $B_{p^k}(n)=0$.
\item For each of the finitely many primes $p\mid P_{\textrm{bad}}$,  $\exists k_p'$ such that $B_{p^k}(n)=0$ for any $k\geq k_p'$.   
\end{enumerate}
\end{lem}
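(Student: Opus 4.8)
The plan is to use strong approximation for $\mathcal{A}$ to show that, for $p$ outside a finite bad set, the reduction maps $\mathcal{A} \to \PSL_2(\ZZ/p^k\ZZ)$ are all surjective with a uniform ``lifting'' property, and then to translate this into the vanishing of $B_{p^k}(n)$ for $k \geq 2$. Concretely, by formula \eqref{1015}, it suffices to show that once $n$ is represented as a curvature modulo $p$ (equivalently, modulo $p^{k-1}$), then the proportion of $\gamma_0 \in \mathcal{A}/\mathcal{A}(p^k)$ with $\ff_{M\gamma_0}(Lx+1,Ly) \equiv n \pmod{p^k}$ is exactly $p$ times the proportion modulo $p^{k-1}$; that is, $p^k \tau_{p^k}(n) = p^{k-1}\tau_{p^{k-1}}(n)$, which by \eqref{1015} gives $B_{p^k}(n) = 0$. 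The key point is a smoothness/submersion statement: the curvature function $\kappa\big(M\gamma(\widehat{\RR}+\tfrac{\sqrt{\Delta}}{2})\big)$, viewed as a polynomial map on $\PSL_2$, has the property that its differential is nonvanishing modulo $p$ at the relevant points, so Hensel lifting applies fibrewise.

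First I would fix the finite set $P_{\textrm{bad}}$ to contain: the primes dividing the level $L$ of the congruence subgroup inside $\mathcal{A}$; the primes dividing the denominator $d_1$ of $M$ (so that $M\gamma$ is $p$-integral and Lemma \ref{lem:primitive} gives a $p$-primitive form); the primes dividing $\Delta$; the primes $2$ and $3$; and any primes where strong approximation for the Zariski-dense subgroup $\mathcal{A}$ fails to give surjectivity of $\mathcal{A} \to \PSL_2(\ZZ/p\ZZ)$ — this last set is finite by Matthews–Vaserstein–Weisfeiler / the strong approximation theorem, which applies since $\mathcal{A}$ is Zariski dense in $\PSL_2$. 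Having moved (as permitted in Section~\ref{integralitysec}) to a finite-index subgroup so that Lemma \ref{0258} holds, for $p \nmid P_{\textrm{bad}}$ the group $\mathcal{A}$ surjects onto $\PSL_2(\ZZ/p^k\ZZ)$ for every $k$, and moreover the reduction $\PSL_2(\ZZ/p^k\ZZ) \to \PSL_2(\ZZ/p^{k-1}\ZZ)$ has all fibres of equal size $p^3$.

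The second and main step is the local submersion argument. Fix $x,y$, and note $\ff_{M\gamma}(Lx+1,Ly) = \widetilde{\ff}_{M\gamma}(Lx+1,Ly) + \fd_\gamma = |C'(Lx+1)+D'(Ly)|^2 + 2\,\Im(\overline{C'}D')/\sqrt{-\Delta}$ where $C',D'$ are $p$-integral and (by the choice of $P_{\textrm{bad}}$) the ideal $(C',D')$ is a $p$-unit. One checks that, as a function of the entries $C',D' \in \OK/p^k$ subject to $C',D'$ not both divisible by $p$, this map is a submersion onto $\ZZ/p^k$: if, say, $C'$ is a $p$-unit, then varying $D'$ along $D' \mapsto D' + tC'$ for $t \in \OK$ changes $\ff_{M\gamma}$ by a nondegenerate (for $p > 2$) quadratic-plus-linear expression in $t$ whose derivative is a $p$-unit for suitable $t$; a symmetric argument handles the case $D'$ a $p$-unit. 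Combined with surjectivity of $\mathcal{A}$ onto the relevant matrix group modulo $p^k$ and the equal fibre sizes above, Hensel's lemma gives that each solution modulo $p^{k-1}$ lifts to exactly $p^{1}$ solutions times the uniform lifting factor, yielding $p^k\tau_{p^k}(n) = p^{k-1}\tau_{p^{k-1}}(n)$ and hence $B_{p^k}(n)=0$ for all $k \geq 2$.

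For the finitely many bad primes $p \mid P_{\textrm{bad}}$, strong approximation still gives that the image of $\mathcal{A}$ in $\PSL_2(\ZZ_p)$ is an open subgroup, hence contains the principal congruence subgroup at some level $p^{\ell_p}$; for $k$ large the fibres of $\mathcal{A}/\mathcal{A}(p^k) \to \mathcal{A}/\mathcal{A}(p^{k-1})$ are again all of equal size, and the submersion argument above (now valid once $k \geq \ell_p + O(1)$, absorbing the finitely many obstructed levels and the prime $2$) gives the stabilization $p^k\tau_{p^k}(n) = p^{k-1}\tau_{p^{k-1}}(n)$, i.e. $B_{p^k}(n) = 0$ for $k \geq k_p'$ with $k_p'$ depending only on $p$ (and on $M$, $\mathcal{A}$, $\Delta$). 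The main obstacle I anticipate is making the submersion/Hensel step uniform and clean at $p = 2$ and at primes dividing $\Delta$, where the quadratic form $|Cx+Dy|^2$ and the shift $\fd_\gamma$ interact with ramification; the safe route is simply to throw all such primes into $P_{\textrm{bad}}$ and handle them by the ``large $k$'' stabilization argument rather than trying to prove $B_{p^k}(n) = 0$ for $k \geq 2$ on the nose.
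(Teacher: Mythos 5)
Your proposal is correct and follows essentially the same route as the paper: the paper also deduces the lemma from strong approximation for $\mathcal{A}$ (Lemma \ref{0948}) together with a Hensel-lifting argument showing that the curvature function is a submersion on the group variety (there phrased via the Jacobian of $(F_1,F_2,F_3)$ rather than your unipotent perturbation $D'\mapsto D'+tC'$), so that each solution modulo $p^{k-1}$ lifts with probability exactly $1/p$ and hence $B_{p^k}(n)=0$, with primes from the denominator of $M$ relegated to $P_{\textrm{bad}}$ and handled by stabilization at large $k$ just as you suggest. Only a cosmetic correction: for good primes the reductions surject onto $\textrm{SL}_2(\mathcal{O}_K/(p^k))$ (fibres of size $p^{6}$), not $\PSL_2(\ZZ/p^k\ZZ)$, though this does not affect your argument since only the uniform $1/p$ conditional lifting probability is used.
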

Indeed, Lemma \ref{0939} follows from the following fact for $\Aa(q)$ given by strong approximation in $\textrm{SL}_2$:  
\begin{lem}\label{0948}
  There is an integer $L_1 \ge 1$ such that
\begin{enumerate}
\item For any $p\nmid L_1$ and $k\geq 1$, \al{\label{2305}\Aa(p^{k-1})/\Aa(p^k)=\textrm{SL}_2(\mathcal{O}_K)(p^{k-1})/\textrm{SL}_2(\mathcal{O}_K)(p^{k}) }
\item For each of the finitely many primes $p\mid L_1$,  $\exists k_p$ such that  
\eqref{2305} holds for any $k\geq k_p$.
\end{enumerate}
\end{lem}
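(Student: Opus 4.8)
This lemma simply repackages strong approximation for $\mathcal A$, and the plan is to deduce it from the fact that the closure of $\mathcal A$ in the relevant profinite group is open.

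First I would pass from $\mathcal A$ to its preimage $\widetilde{\mathcal A}$ in $\textrm{SL}_2(\mathcal O_K)$.  Because $\mathcal A$ is geometrically finite it is finitely generated, and it is Zariski dense by standing hypothesis, so $\widetilde{\mathcal A}$ is a finitely generated Zariski-dense subgroup of $\textrm{SL}_2(\mathcal O_K)$; moreover for every odd prime $p$ one has $-I\notin\widetilde{\mathcal A}(p^k)$, so the congruence filtrations of $\widetilde{\mathcal A}$ and $\mathcal A$ coincide there, and it is harmless to argue with $\widetilde{\mathcal A}$ so long as we place $2$ among the bad primes.  I would then invoke the strong approximation theorem in the form: the closure of $\widetilde{\mathcal A}$ in $\textrm{SL}_2(\mathcal O_K\otimes\widehat{\ZZ})=\prod_p\textrm{SL}_2(\mathcal O_K\otimes\ZZ_p)$ is open (cf.\ \cite{Rapinchuk}; this is where finite generation and Zariski density are used).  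Openness produces an integer $L_1$ --- a multiple of $2$ --- such that for every rational prime $p\nmid L_1$ the closure surjects onto the full local factor $\textrm{SL}_2(\mathcal O_K\otimes\ZZ_p)$, while for each of the finitely many $p\mid L_1$ it projects to an \emph{open} subgroup $G_p\leq\textrm{SL}_2(\mathcal O_K\otimes\ZZ_p)$.

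For $p\nmid L_1$ the reduction $\widetilde{\mathcal A}\to\textrm{SL}_2(\mathcal O_K/p^k)$ is onto for every $k$, so $\widetilde{\mathcal A}/\widetilde{\mathcal A}(p^{k})\cong\textrm{SL}_2(\mathcal O_K/p^{k})$; comparing the short exact sequences attached to the towers $\widetilde{\mathcal A}(p^k)\subset\widetilde{\mathcal A}(p^{k-1})\subset\widetilde{\mathcal A}$ and $\textrm{SL}_2(\mathcal O_K)(p^k)\subset\textrm{SL}_2(\mathcal O_K)(p^{k-1})\subset\textrm{SL}_2(\mathcal O_K)$, the five lemma gives \eqref{2305} at once.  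For $p\mid L_1$ I would use that $\textrm{SL}_2(\mathcal O_K\otimes\ZZ_p)$ is profinite with its principal congruence subgroups forming a neighbourhood basis of the identity, so the open subgroup $G_p$ contains $\textrm{SL}_2(\mathcal O_K\otimes\ZZ_p)(p^{m_p})$ for some $m_p\geq 0$, and set $k_p=m_p+1$.  For $k\geq k_p$ the image of $\widetilde{\mathcal A}$ modulo $p^k$ then contains the level-$p^{k-1}$ congruence subgroup of $\textrm{SL}_2(\mathcal O_K/p^k)$; on the other hand, the image of $\widetilde{\mathcal A}(p^{k-1})$ modulo $p^k$ is exactly the intersection of (the image of $\widetilde{\mathcal A}$) with that congruence subgroup, and its kernel is $\widetilde{\mathcal A}(p^k)$.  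Putting these two facts together identifies $\widetilde{\mathcal A}(p^{k-1})/\widetilde{\mathcal A}(p^k)$ with $\textrm{SL}_2(\mathcal O_K)(p^{k-1})/\textrm{SL}_2(\mathcal O_K)(p^k)$, which is \eqref{2305}.  In both cases descending from $\widetilde{\mathcal A}$ to $\mathcal A$ is immediate by the remark on odd primes (for $p=2$ it is already covered, since $2\mid L_1$).

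The only nontrivial ingredient is strong approximation itself, which I would quote rather than reprove; everything after that is elementary group theory (diagram chases and the structure of profinite groups).  The one place that needs care is bookkeeping the finite set of bad primes --- those at which the closure fails to be the full local group, together with $2$ for the $\textrm{PSL}_2$-versus-$\textrm{SL}_2$ passage --- and recording them in $L_1$.
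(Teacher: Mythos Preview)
Your proposal is correct and matches the paper's approach exactly: the paper does not give a standalone proof of this lemma but simply attributes it to ``strong approximation in $\textrm{SL}_2$'' (citing \cite{Rapinchuk}), which is precisely what you do, with the added care of spelling out the $\textrm{PSL}_2$-versus-$\textrm{SL}_2$ bookkeeping and the profinite diagram chase. The paper does separately establish an explicit refinement (Theorem~\ref{thm:explicit-strong}) via a constructive Lie-algebra lifting argument in Section~\ref{spectralgapsec} that pins down the bad primes and the exponents $k_p$, but that is additional information beyond what Lemma~\ref{0948} itself asserts.
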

We refer to primes that divide $P_{\textrm{bad}}$ as \emph{bad primes}, and those that do not divide $P_{\textrm{bad}}$ as \emph{good primes}.  We given an explicit form of Lemma \ref{0948} in Theorem \ref{thm:explicit-strong}, which allows the computation of a valid $L_1$.  
We use a Hensel lifting argument to deduce Lemma \ref{0939} from Lemma \ref{0948}. 

\begin{proof}[Proof of Lemma \ref{0939}]
First we rewrite $B_{p^k}(n):$
\al{\label{1102}
\nonumber B_{p^k}(n)=&\frac{1}{[\Aa:\Aa(p^k)]}\sum_{\gamma\in \Aa/\Aa(p^k)}c_{p^k}(F_1(\gamma)-n)\\
=&\frac{1}{[\Aa:\Aa(p^k)]}\sum_{\gamma_0\in \Aa/\Aa(p^{k-1})}\sum_{\substack{\gamma\in\Aa/\Aa(p^k)\\\gamma\equiv\gamma_0(p^{k-1})}}c_{p^k}(F_1(\gamma)-n)}
where $F_1(\gamma)=\kappa(M\gamma(\widehat{\RR}+\frac{\sqrt{\Delta}}{2}))$ and we view $F_1$ as an algebraic function over the real and imaginary parts of the entries of $\gamma$.
As we have assumed $\mathcal{A} \subset \PSL_2(\ZZ[\sqrt{-d}])$ in Section \ref{integralitysec}, we may write $$\gamma=\mat{a_1+a_2\sqrt{d}\bd{i}&b_1+b_2\sqrt{d}\bd{i}\\c_1+c_2\sqrt{d}\bd{i}&d_1+d_2\sqrt{d}\bd{i}}.$$
We assume for the moment that $M$ is also in $\PSL_2(\ZZ[\sqrt{-d}])$, and write
$$M=\mat{M_{11}+M_{12}\sqrt{d}\bd{i}&M_{21}+M_{22}\sqrt{d}\bd{i}\\M_{31}+M_{32}\sqrt{d}\bd{i}&M_{41}+M_{42}\sqrt{d}\bd{i}}.$$
Then
\begin{align}\label{F1}
F_1(\gamma)=&F_1(a_1,a_2, b_1,b_2, c_1,c_2,d_1,d_2)\\
=&(M_{31}a_1+M_{41}c_1)^2+d(M_{32}a_2+M_{42}c_2)^2
+ (M_{31}a_1+M_{41}c_1)(M_{32}b_2+M_{42}d_2)\\&-(M_{31}b_1+M_{41}d_1)(M_{32}a_2+M_{42}c_2)  .
\end{align}
As $\gamma\in \textrm{PSL}_2(\mathcal{O}_K)$, these variables are also subject to the following conditions: 
\begin{eqnarray}\label{F2F3}
&&F_2(a_1,a_2, b_1,b_2, c_1,c_2,d_1,d_2):=a_1d_1-a_2d_2d-b_1c_1+b_2c_2d-1=0\nonumber\\
&&F_3(a_1,a_2, b_1,b_2, c_1,c_2,d_1,d_2):=a_1d_2+a_2d_1-b_1c_2-b_2c_1=0
\end{eqnarray}

If $M$ is integral, one can check that the Jacobian matrix $$J=\frac{\partial (F_1,F_2,F_3)}{\partial (a_1,a_2, b_1,b_2, c_1,c_2,d_1,d_2)}$$  maps $\ZZ_p^8$ onto $\ZZ_p^3$ as a linear transformation, at each point of the  affine variety $\mathbb{V}[\QQ_p]$ defined by the following equations: $$\begin{cases}F_1=n\\F_2=0\\F_3=0\end{cases}.$$
For any $k\geq k_p$ ($k_p$ can be taken to be 2 if $p$ is good), the lifting $\mathbb{V}[\ZZ/p^{k-1}\ZZ]\rightarrow \mathbb{V}[\ZZ/p^{k}\ZZ]$ becomes regular by Lemma \ref{0948}, and for $\gamma_0$ such that $F_1(\gamma_0) \equiv n \pmod{p^{k-1}}$, this gives 
\begin{equation}
  \label{eqn:prob}
  \text{Prob}\left(
  F_1(\gamma)\equiv n\pmod{p^k}
  \;\Big\vert\; 
  \gamma\in\Aa/\Aa(p^k),\gamma\equiv\gamma_0 \pmod{p^{k-1}}
  \right)=\frac{1}{p}.
\end{equation}

Returning to \eqref{1102}, from \eqref{093444} the innermost sum of \eqref{1102} is zero unless $F_1(\gamma_0)\equiv n \pmod{p^{k-1}}$.  
However, in this case, if $k \geq k_p$, by \eqref{eqn:prob} (at $k$ and $k+1$) and \eqref{093444}, each innermost sum of \eqref{1102} is still zero.
Therefore $B_{p^k}(n)=0$, for $k\geq k_p$.

Above, we assumed $M \in \PSL_2(\ZZ[\sqrt{-d}])$, and we found that the $k_p'$ of Lemma \ref{0939} agree with the $k_p$ of Lemma \ref{0948}.  If instead $M$ is fractional in the sense that the $M_{ij}$ have denominator $q_0$, then we need to multiply $F_1$ by $q_0^2$ to make it integral.  For any $p^{n_p}||q_0$, one can check that $\ZZ_p^3\subset \frac{1}{p^{2n_p}}J(\ZZ_p^8)$.  In this case, $B_{p^k}(n)=0$ for $k\geq k_p+2n_p$.
\end{proof}

The obstruction number $L_0$ in the statement of Corollary \ref{local} is thus given by
\begin{equation}
  \label{eqn:L0}
  L_0=\prod_p p^{k_p+2n_p}.
\end{equation}
The computation of upper bounds on $k_p$ is given by Theorem \ref{thm:explicit-strong}, and some examples are given in Section \ref{sec:examples}.

At this point, in order to prove Proposition \ref{1203}, as there are only finitely many bad primes, we have shown that it suffices to analyse the contribution of $1 + B_p(n)$ for good odd primes $p$.

\begin{lem}\label{Bpnlemma} Suppose $n$ is admissible.  Let $p$ be an odd prime not dividing $P_{bad}$.  Then
$B_{p}(n)=O(1/p^2)$ if $p\nmid n$ and $B_{p}(n)=O(1/p)$ if $p\mid n$, where the implied constants are independent of $n$.
\end{lem}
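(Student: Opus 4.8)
The plan is to reduce the bound to an explicit congruence count at a good prime and then evaluate that count with Gauss sums. The first step is the elementary identity $B_p(n)=p\,\tau_p(n)-1$: since $p$ is prime, the Ramanujan sum of \eqref{1101} equals $p-1$ when $p\mid m$ and $-1$ otherwise, so inserting this into \eqref{1230} and using $\sum_{r(p)}\tau_p(r)=1$ collapses that sum to $p\,\tau_p(n)-1$. Hence the lemma is equivalent to $\tau_p(n)=\tfrac1p+O(p^{-3})$ for $p\nmid n$ and $\tau_p(n)=\tfrac1p+O(p^{-2})$ for $p\mid n$.

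Next I would realize $\tau_p(n)$ as a count of primitive vectors. Enlarging the finite set of bad primes if necessary, a good prime $p$ may be taken odd, unramified in $K$ (so $p\nmid\Delta$), and coprime to the level $L$ and to every denominator of $M$; for such $p$, Lemma \ref{0948} with $k=1$ gives $\mathcal{A}/\mathcal{A}(p)\cong\PSL_2(\OK/(p))$, and left multiplication by $M\bmod p$ permutes this group. By \eqref{0413} (equivalently \eqref{0412}), $\tau_p(n)$ is the proportion of $\gamma\in\mathcal{A}/\mathcal{A}(p)$ with $\tfrac1{\sqrt{-\Delta}}\kappa\!\left(M\gamma\bigl(\widehat{\RR}+\tfrac{\sqrt\Delta}{2}\bigr)\right)\equiv n\pmod p$, and a direct computation—writing $\widehat{\RR}+\tfrac{\sqrt\Delta}{2}=N(\widehat{\RR})$ and reading off the bottom row of $M\gamma N$—identifies this with $g(C_{M\gamma},D_{M\gamma})$, where $g(C,D)=C\overline{C}+(\overline{C}D-C\overline{D})/\sqrt\Delta$ and $(C_{M\gamma},D_{M\gamma})$ is the bottom row of $M\gamma$. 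As $\gamma$ runs over $\PSL_2(\OK/(p))$ this bottom row runs uniformly over the primitive vectors of $(\OK/(p))^2$; moreover $g$ is conjugation-invariant (hence $\mathbb{F}_p$-valued) and unchanged under $(C,D)\mapsto(-C,-D)$, so
\[
\tau_p(n)=\frac{\#\{(C,D)\in(\OK/(p))^2\ \text{primitive}:\ g(C,D)\equiv n\pmod p\}}{\#\{(C,D)\in(\OK/(p))^2\ \text{primitive}\}}.
\]

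Then I would count. Since $\sqrt\Delta$ is a unit in $R:=\OK/(p)$, the substitution $E=D/\sqrt\Delta$ followed by $F=C+E$ is a primitivity-preserving bijection of $R^2$ carrying $g$ to the quaternary form $Q(F,E)=N_K(F)-N_K(E)$, $N_K$ the norm form of $\OK$. Identifying $R$ with $\mathbb{F}_{p^2}$ or $\mathbb{F}_p\times\mathbb{F}_p$ as $p$ is inert or split, $N_K$ is a nondegenerate binary form over $\mathbb{F}_p$, so $Q\cong N_K\perp(-N_K)$ is a rank-$4$ hyperbolic form in both cases. Expanding $\mathbf{1}\{Q=n\}$ with additive characters and separating the $F$- and $E$-variables produces, at each nonzero frequency, the product of two Gauss sums of $N_K$, each equal to $\left(\tfrac{\Delta}{p}\right)p$; since $\left(\tfrac{\Delta}{p}\right)^2=1$ this product is $p^2$, whence $\#\{(F,E)\in R^2: Q\equiv n\}=p^3+p\,c_p(n)$, i.e.\ $p^3-p$ for $p\nmid n$ and $p^3+p^2-p$ for $p\mid n$—uniformly in the splitting type.

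Finally, if $(C,D)$ is non-primitive then $C$ and $D$ lie in a common maximal ideal $\mathfrak{m}$ of $R$, so $C\overline{C}$ and $\overline{C}D-C\overline{D}$ lie in $\mathfrak{m}\overline{\mathfrak{m}}=0$; thus non-primitive pairs contribute to $n\equiv0$ only. For $p\nmid n$ every solution of $g\equiv n$ is therefore primitive, and dividing $p^3-p$ by the number of primitive pairs—$p^4-1$ if $p$ is inert, $(p^2-1)^2$ if $p$ splits—gives $B_p(n)=p\,\tau_p(n)-1=\mp(p^2\pm1)^{-1}=O(p^{-2})$. For $p\mid n$, after removing the $|R|^2-\#\{\text{primitive}\}$ non-primitive solutions of $g\equiv0$, the ratio yields $B_p(n)=\tfrac{p-1}{p^2+1}$ (inert) or $-\tfrac1{p+1}$ (split), both $O(p^{-1})$; each is a function of $p$ alone, so the implied constants are absolute (and admissibility of $n$ is not actually used, since $\tau_p(n)>0$ at every good prime). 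The main obstacle is the second step—checking that at a good prime the curvature mod $p$ factors through the bottom row of $M\gamma$, that the $M$- and $(x,y)$-dependence washes out, and that $\mathcal{A}$ surjects onto $\PSL_2(\OK/(p))$; after that, the Gauss-sum evaluation is routine and the split/inert dichotomy intervenes only, harmlessly, in counting primitive pairs.
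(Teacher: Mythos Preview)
Your proof is correct and follows essentially the same approach as the paper: both derive $B_p(n)=p\,\tau_p(n)-1$ from the Ramanujan-sum identity, realize $\tau_p(n)$ as the proportion of primitive bottom-row vectors in $(\mathcal{O}_K/(p))^2$ with prescribed curvature, evaluate that count with Gauss sums, and subtract the non-primitive contribution. The only difference is in packaging---you diagonalize over $\mathcal{O}_K/(p)$ via $E=D/\sqrt{\Delta}$, $F=C+E$ to reach $N_K(F)-N_K(E)$, whereas the paper completes the square in the four real coordinates $c_1,c_2,d_1,d_2$ directly; both routes give the same explicit values of $\tau_p(n)$ (and your observation that admissibility of $n$ plays no role is correct).
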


To prove this, we first prove the following.

\begin{lem}\label{taupnlemma}
  Let $p$ be an odd prime not dividing $P_{bad}$.
\al{\tau_{p}(n)=\left\{ \begin{array}{ll}
 \frac{1}{p}+O(\frac{1}{p^3})&\text{if } n\not\equiv 0\pmod{p}\\
 \frac{1}{p}+O(\frac{1}{p^2})&\text{if }n\equiv 0\pmod{p}
 \end{array} \right. ,}
 where the implied constants are independent of $n$.
\end{lem}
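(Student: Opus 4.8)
The plan is to express $\tau_p(n)$ as a point count on a quadric over $\mathbb{F}_p$ and to evaluate it; the first step is a reduction to counting bottom rows of matrices. Since $p\nmid P_{\textrm{bad}}$, Lemma~\ref{0948} at $k=1$ identifies $\mathcal{A}/\mathcal{A}(p)$ with $\textrm{SL}_2(\OK/p)$ (whether one takes $\textrm{SL}_2$ or $\textrm{PSL}_2$ is immaterial, since the normalized curvature is a quadratic form and hence $\pm I$-invariant). As observed just after \eqref{0413}, $\ff_{M\gamma}(Lx+1,Ly)=\kappa\bigl(M\gamma\,w_{x,y}(\widehat{\RR}+\tfrac{\sqrt{\Delta}}{2})\bigr)$ for some $w_{x,y}\in\Gamma(L)\subseteq\mathcal{A}$, and right translation by $w_{x,y}$ permutes $\mathcal{A}/\mathcal{A}(p)$; hence $\tau_p(n)$ does not depend on $x,y$, and equals the proportion of $\gamma\in\textrm{SL}_2(\OK/p)$ for which $\kappa(C,D)\equiv n\pmod p$, where $(C,D)$ is the bottom row of $M\gamma N$, $N$ is the fixed matrix with $N(\widehat{\RR})=\widehat{\RR}+\tfrac{\sqrt{\Delta}}{2}$, and $\kappa(C,D):=\tfrac{2\,\Im(\overline{C}D)}{\sqrt{-\Delta}}$ is the integral curvature. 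As $p$ is good, $M$ and $N$ are invertible mod $p$, so $\gamma\mapsto M\gamma N$ is a bijection of $\textrm{SL}_2(\OK/p)$; therefore the bottom row of $M\gamma N$ is equidistributed over the set $\mathcal{U}$ of unimodular rows $(C,D)\in(\OK/p)^2$ (those generating the unit ideal), and
\[
  \tau_p(n)=\frac{\#\{(C,D)\in\mathcal{U}:\ \kappa(C,D)\equiv n\pmod p\}}{\#\mathcal{U}}.
\]

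Next I would identify $\kappa$ with a determinant. Writing $C=c_1+c_2\sqrt{-d}$ and $D=d_1+d_2\sqrt{-d}$, one computes $\Im(\overline{C}D)=(c_1d_2-c_2d_1)\sqrt{d}$; since $\sqrt{-\Delta}\in\{2\sqrt{d},\sqrt{d}\}$ this gives $\kappa(C,D)=u\,(c_1d_2-c_2d_1)$ for a fixed unit $u\in(\ZZ/p)^{\times}$ (here we use $p$ odd, and $p\nmid 2$, the latter guaranteeing $\ZZ[\sqrt{-d}]/p=\OK/p$). Thus $\kappa(C,D)\equiv n$ is the condition that the $2\times2$ matrix over $\mathbb{F}_p$ with rows $(c_1,d_1)$ and $(c_2,d_2)$ have determinant $u^{-1}n$, and the elementary count (equivalently the Gauss-sum evaluation alluded to in the section overview) gives
\[
  \#\{(c_1,c_2,d_1,d_2)\in\mathbb{F}_p^4:\ c_1d_2-c_2d_1\equiv m\}
  =\begin{cases} p^3-p=|\textrm{SL}_2(\mathbb{F}_p)|, & p\nmid m,\\ p^3+p^2-p, & p\mid m.\end{cases}
\]

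It then remains to remove the non-unimodular rows and divide. If $p$ is inert in $K$, the only non-unimodular row is $(0,0)$, with $\kappa=0$; so $\#\mathcal{U}=p^4-1$, and $\tau_p(n)=\tfrac{p^3-p}{p^4-1}=\tfrac{p}{p^2+1}=\tfrac1p+O(p^{-3})$ when $p\nmid n$, while $\tau_p(n)=\tfrac{p^3+p^2-p-1}{p^4-1}=\tfrac{p+1}{p^2+1}=\tfrac1p+O(p^{-2})$ when $p\mid n$. If $p=\mathfrak{p}\overline{\mathfrak{p}}$ splits, the non-unimodular rows are $\{C\equiv D\equiv 0\ (\mathfrak{p})\}\cup\{C\equiv D\equiv 0\ (\overline{\mathfrak{p}})\}$; in the coordinates $c_i,d_i$ each of these is a $2$-dimensional $\mathbb{F}_p$-subspace of the shape $\{c_1=\epsilon\lambda c_2,\ d_1=\epsilon\lambda d_2\}$ with $\lambda^2\equiv -d\pmod p$ and $\epsilon\in\{\pm1\}$, on which $c_1d_2-c_2d_1$ vanishes identically; so all $2p^2-1$ non-unimodular rows have $\kappa=0$, $\#\mathcal{U}=p^4-(2p^2-1)=(p^2-1)^2$, and $\tau_p(n)=\tfrac{p^3-p}{(p^2-1)^2}=\tfrac{p}{p^2-1}=\tfrac1p+O(p^{-3})$ when $p\nmid n$, while $\tau_p(n)=\tfrac{p^3-p^2-p+1}{(p^2-1)^2}=\tfrac1{p+1}=\tfrac1p+O(p^{-2})$ when $p\mid n$. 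In all four cases the implied constant is absolute, which is the claim. (Ramified primes divide $\Delta$; together with $p=2$, the primes dividing the denominator of $M$, and the finitely many primes where strong approximation fails, they are absorbed into $P_{\textrm{bad}}$, enlarging it at most by a harmless finite factor.)

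The only genuinely substantive point is the split case, where one must verify that the $\approx 2p^2$ ``extra'' non-unimodular rows all lie on the quadric $\kappa=0$; this (with its single-point analogue when $p$ is inert) is precisely the mechanism that makes $\tau_p(n)$ exceed the expected value $1/p$ by a term of size $1/p^2$ rather than $1/p^3$ when $p\mid n$ --- the discrepancy that Lemma~\ref{Bpnlemma} subsequently exploits. Everything else is bookkeeping, the main care being simply to check that every arithmetic pathology has indeed been confined to $P_{\textrm{bad}}$.
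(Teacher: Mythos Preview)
Your argument is correct and follows the same overall architecture as the paper's proof: identify $\mathcal{A}/\mathcal{A}(p)$ with $\textrm{SL}_2(\OK/p)$ via strong approximation, reduce to counting unimodular bottom rows with prescribed curvature, subtract the non-unimodular contribution, and divide. The exact values you obtain in the four cases match the paper's.

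The genuine difference is in the parameterization and hence the main count. The paper parameterizes by the entries of $\gamma$ itself (with $M=I$), so the curvature condition becomes $c_1^2+dc_2^2+(c_1d_2-c_2d_1)\equiv n$, an inhomogeneous quadric that they evaluate by completing squares and applying Gauss sums; the case $M\neq I$ is handled afterwards by observing that $M$ acts as an invertible change of variables. You instead absorb both $M$ and $N$ into the variable via the bijection $\gamma\mapsto M\gamma N$ of $\textrm{SL}_2(\OK/p)$, which makes the curvature of the bottom row literally the determinant $c_1d_2-c_2d_1$; the count then reduces to the elementary enumeration of $2\times 2$ matrices over $\mathbb{F}_p$ with given determinant, bypassing Gauss sums entirely and handling general $M$ at no extra cost. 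Your verification that all non-unimodular rows satisfy $\kappa=0$ in the split case (because each prime ideal $\mathfrak{p}\subset\OK/p$ is the $\mathbb{F}_p$-line $c_1=\pm\lambda c_2$ on which the determinant vanishes) is the same step the paper performs, only cleaner to see in your coordinates. Both proofs quietly exclude ramified primes (the paper's completion of squares needs $p\nmid 2d$, just as your case split omits them); your remark that these finitely many primes can be absorbed into $P_{\textrm{bad}}$ is the appropriate fix.
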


There are at least two proofs of this fact. One is the proof we give below, which works directly in the group $\textrm{SL}_2(\mathcal O_K)$.  Another approach is to consider the image under the spin homomorphism $\rho$ of $\mathcal A$ in $\textrm{O}_Q(\mathbb Z)$ where $Q(x_1,x_2,x_3,x_4)=x_2^2+dx_3^2+x_1x_4$, and note that the set of curvatures we are interested in is, up to a factor of $d$, exactly the set of fourth coordinates of points in the orbit $\rho(\mathcal A) v^T$, where $v=(-d,0,1,0)$.  By strong approximation, modulo $p$ the orbit $\rho(\mathcal A) v^T$ is simply the set of all solutions to $Q(x_1,x_2,x_3,x_4)\equiv d\pmod p$, and $\tau_p(n)$ is easily computed by counting representations modulo $p$ of $d$ by specific quadratic forms.  This passing between $\textrm{SL}_2(\mathbb C)$ and $\textrm{O}_{\mathbb{R}}(3,1)$ is a nod to the description of Apollonian circle packings in \cite{BK14}, \cite{GLMWY03} and \cite{Zh14}, where curvatures can be seen by looking at orbits of certain thin subgroups of $\textrm{O}_F(\mathbb Z)$ as described in the introduction.  Since we describe Apollonian packings somewhat more geometrically, we present the proof from that point of view.

\begin{proof}
  Let $p$ be an odd prime not dividing $P_{\textrm{bad}}$.  Let $\gamma \in \mathcal{A}$.  Write $\gamma_p$ for the reduction of $\gamma$ in $\mathcal{A}/\mathcal{A}(p)$.  By Lemma \ref{0948}, we have that $\gamma_p$ ranges over all of
  \[
    \textrm{SL}_2(\ZZ[\sqrt{-d}])/\textrm{SL}_2(\ZZ[\sqrt{-d}])(p) = \textrm{SL}_2(\ZZ[\sqrt{-d}]/(p) ).
  \]
  Therefore, we have
  \[
    \tau_p(n) = \frac{ \# \mathbb{V}[\ZZ/p\ZZ] }{ \# \textrm{SL}_2( \ZZ[\sqrt{-d}]/(p) ) }.
  \]
  For any commutative ring $R$ with identity, the allowable first columns of $\textrm{SL}_2(R)$ is a set
  \[
    U(R) = \{ \mbox{pairs }(a,b) \;\vert\; \mbox{as ideals, } (a,b) = R \}.
  \]
  We have that $\# U(R) = \# \mathbb{P}^1(R) \cdot \# R^*$.  Furthermore,
  \[
    \# \textrm{SL}_2(R) = \# U(R) \cdot \# \operatorname{Stab}_{*}( \textrm{SL}_2(R) )
    = \# U(R) \cdot \# R,
  \]
  where we write $\operatorname{Stab}_*$ for the stabilizer of any one element of $U$.
  In our case, $R = \ZZ[\sqrt{-d}]/(p)$, this implies
  \[
    \tau_p(n) = \frac{ \# \mathbb{V}[\ZZ/p\ZZ] }{ p^2 \# \mathbb{P}^1\left( \mathbb{Z}[\sqrt{-d}]/(p) \right) \#  \left( \mathbb{Z}[\sqrt{-d}]/(p) \right)^* }.
  \]
  We have
  \begin{equation}\label{sizep1}
    \# \mathbb{P}^1\left( \ZZ[\sqrt{-d}]/(p) \right) 
    =\left\{ \begin{array}{ll}
 p^2+1&\text{if }\left(\frac{-d}{p}\right)=-1\\
 p^2+2p+1&\text{if }\left(\frac{-d}{p}\right)=1
 \end{array} \right. .
\end{equation}
and
  \begin{equation}\label{sizep1b}
    \# \left( \ZZ[\sqrt{-d}]/(p) \right)^* 
    =\left\{ \begin{array}{ll}
 p^2-1&\text{if }\left(\frac{-d}{p}\right)=-1\\
 p^2-2p+1&\text{if }\left(\frac{-d}{p}\right)=1
 \end{array} \right. .
\end{equation}
  It remains to compute $\# \mathbb{V}[\ZZ/p\ZZ]$.  But we have
  \begin{align*}
    \# \mathbb{V}[\ZZ/p\ZZ] &= \# \{ \lambda \in \textrm{SL}_2\left( \ZZ[\sqrt{-d}]/(p) \right) : F_1(\lambda) = n \} \\
    &= \# \{ v \in U\left( \ZZ[\sqrt{-d}]/(p) \right) : F_1(v) = n \} \cdot \# \operatorname{Stab}_{*}\left( \textrm{SL}_2\left(\ZZ[\sqrt{-d}]/(p) \right) \right) \\
    &= p^2 \# \{ v \in U\left( \ZZ[\sqrt{-d}]/(p) \right) : F_1(v) = n \}.
  \end{align*}
  In the above, we use the notation $F_1(v) = F_1(\lambda)$ for any $\lambda$ having bottom row $v$ (upon which $F_1$ depends exclusively).

  Therefore, it remains to compute
  \[
\# \left\{ v \in U \left( \ZZ[\sqrt{-d}]/(p) \right) : F_1(v) = n \right\} .
\]
If we assume that $M = I$, then we can write the equation $F_1(\lambda)=F_1(v)=n$ explicitly in terms of
$$\lambda=\mat{a_1+a_2\sqrt{d}\bd{i}&b_1+b_2\sqrt{d}\bd{i}\\c_1+c_2\sqrt{d}\bd{i}&d_1+d_2\sqrt{d}\bd{i}}$$
as
\begin{equation}
  \label{eqn:f1v}
  c_1^2 + c_2^2 d + (c_1d_2 - c_2d_1) - n \equiv 0 \pmod p.
\end{equation}
We count the number of solutions by evaluating
the following exponential sum:
\begin{align*}
&\frac{1}{p}\sum_{s(p)}\sum_{c_1,c_2,d_1,d_2(p)}e_p\left(s(c_1^2+c_2^2d+c_1d_2-c_2d_1-n)  \right)\nonumber\\
=&\frac{1}{p}\sum_{s(p)}\sum_{c_1,c_2,d_1,d_2(p)}e_p\left(s(c_1+d_2/2)^2+sd(c_2-d_1/{2d})^2-sd_2^2/4-sd_1^2/{4d}-sn ) \right)\nonumber\\
=&\frac{1}{p}\sum_{s\equiv 0(p)}\cdots+\frac{1}{p}\sum_{s\not\equiv 0(p)}\cdots \nonumber\\
=&p^3+ \frac{1}{p}\cdot\sum_{s\neq 0(p)} p^2\left(\frac{s}{p}\right)\left(\frac{sd}{p}\right)\left(\frac{-s}{p}\right)\left(\frac{-s/d}{p}\right)\cdot e_p(-sn)\nonumber\\
=& \left\{ \begin{array}{ll} p^3+p(p-1) & \text{ if }n\equiv  0\pmod{p}\nonumber\\
  p^3-p\hspace{1.25cm} & \text{       if }n\not\equiv 0 \pmod{p} \\ \end{array} \right.
\end{align*}
where $\left(\frac{\cdot}{\cdot}\right)$ is the Legendre symbol and we obtained the second to last step by applying Gauss sums first to $c_1,c_2$, then to $d_1,d_2$.\par
To obtain $\#\mathbb{V}[\ZZ/p\ZZ]$ we need to subtract the contribution from solutions not in $U\left( \ZZ[\sqrt{-d}]/(p) \right)$.  It turns out if $n\equiv 0 \pmod p$ then all such are solutions to \eqref{eqn:f1v}; if $n\not\equiv 0 \pmod p$ then none such are solutions.  We thus arrive at the following result:
\begin{equation*}\label{110854b}
  \# V[\ZZ/p\ZZ]=p^2\cdot \left\{ \begin{array}{ll}
 p^3+p(p-1)-1&\text{if }\left(\frac{-d}{p}\right)=-1\text{ and }n\equiv0(p)\\
 p^3-p&\text{if }\left(\frac{-d}{p}\right)=-1\text{ and }n\not\equiv0(p)\\
p^3-p^2-p+1&\text{if }\left(\frac{-d}{p}\right)=1\text{ and }n\equiv0(p)\\
p^3-p&\text{if }\left(\frac{-d}{p}\right)=1\text{ and }n\not\equiv0(p)
 \end{array} \right. . 
\end{equation*}
Now, if $M \neq I$, the effect of $M$ on the equation \eqref{eqn:f1v} is to apply an invertible linear transformation to $\left(\ZZ[\sqrt{-d}]/(p)\right)^2$ (recall that we are dealing only with good primes $p$).  This takes $U\left( \ZZ[\sqrt{-d}]/(p) \right)$ to $U\left( \ZZ[\sqrt{-d}]/(p) \right)$.  Therefore, the number of solutions $\#\mathbb{V}[\ZZ/p\ZZ]$ is unaffected.

Therefore, we obtained the formula for $\tau_p(n)$:
\begin{equation*}\label{110854c}
\tau_p(n)=\left\{ \begin{array}{ll}
 \frac{p+1}{p^2+1}&\text{if }\left(\frac{-d}{p}\right)=-1\text{ and }n\equiv0(p)\\
 \frac{p}{p^2+1}&\text{if }\left(\frac{-d}{p}\right)=-1\text{ and }n\not\equiv0(p)\\
\frac{1}{p+1}&\text{if }\left(\frac{-d}{p}\right)=1\text{ and }n\equiv0(p)\\
\frac{p}{p^2-1}&\text{if }\left(\frac{-d}{p}\right)=1\text{ and }n\not\equiv0(p)
 \end{array} \right. .
\end{equation*}
and indeed we have that $\tau_p(n)=\frac{1}{p}+O(\frac{1}{p^3})$ if $n\not\equiv 0\pmod{p}$ and $\tau_p(n)=\frac{1}{p}+O(\frac{1}{p^2})$ if $n\equiv 0\pmod{p}$ as desired.
\end{proof}


\noindent\emph{Proof of Lemma ~\ref{Bpnlemma}:}

\noindent Recall from \eqref{1230} that 
\begin{align*}
 B_p(n)&=\sum_{r(p)}\tau_p(r)c_p(r-n)\\
&=\tau_p(n)(p-1)+\sum_{\substack{r(p)\\r\not\equiv n(p)}}-\tau_p(r)\\
&=\tau_p(n)(p-1)-(1-\tau_p(n))\\
&=p\tau_p(n)-1
\end{align*}
Now apply Lemma~\ref{taupnlemma}.
\qed

We now combine everything to obtain an estimate of $\fs(n)$.

\begin{lem} \label{12099}The term $\fs(n)\neq 0$ if and only if $n$ is admissible, and when $n$ is admissible, we have $\fs(n)\gg \frac{1}{\log n}$.
\end{lem}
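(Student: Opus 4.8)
The plan is to combine the local analysis already assembled — the vanishing of $B_{p^k}(n)$ for $k$ large (Lemma~\ref{0939}), the Euler product factorization from multiplicativity (Lemma~\ref{0258}), the formula \eqref{1015} relating partial products to $\tau_{p^k}(n)$, and the bounds $B_p(n)=O(1/p)$ or $O(1/p^2)$ of Lemma~\ref{Bpnlemma} — into a convergent product estimate. First I would split
\[
  \fs(n)=\prod_{p\mid P_{\textrm{bad}}}\Bigl(1+B_p(n)+B_{p^2}(n)+\cdots\Bigr)\;\cdot\;\prod_{p\nmid P_{\textrm{bad}}}\bigl(1+B_p(n)\bigr),
\]
using that for good $p$ all higher $B_{p^k}(n)$ vanish. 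The finitely many bad-prime factors are each a finite sum (truncated at $k_p'$) and, by \eqref{1015}, equal $p^{k_p'}\tau_{p^{k_p'}}(n)$; this is a nonnegative rational that is nonzero precisely when $n$ is represented modulo $p^{k_p'}$, i.e.\ modulo every power of $p$ since lifting is uniform beyond $k_p'$. By definition of admissibility (and the fact that $\mathcal K_a$ is cut out by congruences mod $L_0=\prod p^{k_p+2n_p}$, Corollary~\ref{local}), each such factor is bounded below by a positive constant depending only on the packing when $n$ is admissible, and vanishes when $n$ fails the local obstruction at $p$.

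For the good-prime product, I would argue as follows. Admissibility forces $n$ to be represented modulo $p$ for every good odd $p$, so by \eqref{1015} at $k=1$ we have $1+B_p(n)=p\tau_p(n)>0$; hence every factor is strictly positive. By Lemma~\ref{Bpnlemma}, $|B_p(n)|\ll 1/p$ in general and $\ll 1/p^2$ when $p\nmid n$. Therefore
\[
  \prod_{\substack{p\nmid P_{\textrm{bad}}\\ p\nmid n}}\bigl(1+B_p(n)\bigr)
  =\exp\Bigl(\sum_{p\nmid n}\log(1+B_p(n))\Bigr)
\]
converges absolutely and is bounded above and below by positive absolute constants, since $\sum_p 1/p^2<\infty$. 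The remaining factors are those $p\mid n$; there are $O(\log n/\log\log n)$ of them, and for each $1+B_p(n)=p\tau_p(n)\geq \tfrac{p}{p^2+1}\cdot p\gg 1$ is bounded below, but possibly as small as a constant and there are many of them, so their product is $\gg (c)^{\omega(n)}\gg (\log n)^{-O(1)}$ for a suitable constant $c\in(0,1)$ — more precisely, using $1+B_p(n)=p\tau_p(n)\ge \frac{p^2}{p^2+1}>\tfrac12$ for $p\ge 2$, the product over $p\mid n$ is at least $2^{-\omega(n)}\gg n^{-o(1)}$, which is $\gg 1/\log n$ and is absorbed by the power saving. Combining the three pieces gives $\fs(n)\gg 1/\log n$ when $n$ is admissible, and $\fs(n)=0$ when $n$ is inadmissible because then some bad-prime factor (or the $k=1$ good-prime factor) vanishes, as $n$ fails to be represented modulo the relevant prime power.

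The main obstacle is the lower bound on the product over $p\mid n$: the factors $1+B_p(n)$ there are only bounded below by a constant, not by $1-O(1/p^2)$, so naively their product could decay like a constant to the power $\omega(n)$. The fix is to observe that $1+B_p(n)=p\tau_p(n)$ is not merely positive but bounded below by a constant strictly exceeding a value that makes $\prod_{p\mid n}(1+B_p(n))$ decay at most like $(\log n)^{-O(1)}$ (indeed from the explicit $\tau_p(n)$ table, $p\tau_p(n)\ge p^2/(p^2+1)$, so the product over \emph{all} $p\mid n$ telescopes against the convergent tail and only $O(\log n/\log\log n)$ factors near the constant threshold remain), which suffices since Theorem~\ref{mainthm} only needs a lower bound up to $\log$ factors. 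I would also need to double-check the edge case where $M$ is fractional: there the relevant good primes exclude those dividing the denominator $q_0$, but these are finitely many and get folded into $P_{\textrm{bad}}$, so the argument is unchanged.
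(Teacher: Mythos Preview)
Your overall strategy is the same as the paper's: split the Euler product into bad and good primes, handle the finitely many bad factors via \eqref{1015}, and for good primes split further according to whether $p\mid n$ or not, with the $p\nmid n$ factors giving a convergent product by the $O(1/p^2)$ bound. The equivalence between $\fs(n)\neq 0$ and admissibility is also handled correctly.

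The gap is in your treatment of the product over good primes $p\mid n$. Your key inequality $p\tau_p(n)\ge p^2/(p^2+1)$ is false: from the explicit table in the proof of Lemma~\ref{taupnlemma}, when $\left(\frac{-d}{p}\right)=1$ and $p\mid n$ one has $\tau_p(n)=1/(p+1)$, hence $p\tau_p(n)=p/(p+1)$, and $p/(p+1)<p^2/(p^2+1)$ for every $p\ge 2$. So you cannot conclude that the product over $p\mid n$ is bounded below by an absolute constant. Your fallback is also broken: $2^{-\omega(n)}=n^{-O(1/\log\log n)}$ decays faster than any fixed power of $\log n$, so $2^{-\omega(n)}\gg n^{-o(1)}$ does \emph{not} imply $\gg 1/\log n$, and the crude bound $1+B_p(n)>1/2$ is genuinely insufficient for the lemma as stated.

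The correct fix is to use Lemma~\ref{Bpnlemma} directly: $B_p(n)=O(1/p)$ for $p\mid n$ with a uniform implied constant, so $1+B_p(n)\ge 1-C/p$ for some absolute $C>0$ once $p$ is large enough. Then
\[
\prod_{\substack{p\mid n\\ p\,\text{good}}}(1+B_p(n))\;\ge\;\prod_{p\mid n}\Bigl(1-\frac{C}{p}\Bigr)\;\gg\;\Bigl(\frac{\phi(n)}{n}\Bigr)^{C}\;\gg\;(\log\log n)^{-C}\;\gg\;\frac{1}{\log n},
\]
using the classical lower bound $\phi(n)/n\gg 1/\log\log n$. Equivalently, from the explicit table one has $p\tau_p(n)\ge p/(p+1)=1-1/(p+1)$ in all cases, and the same Mertens-type estimate applies. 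This is exactly what the paper means by ``the contribution from the former gives growth $1/\log n$.''
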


\begin{proof}
  We have already observed that $n$ is admissible if and only if it is represented modulo all integers, which occurs if and only if $\fs(n) \neq 0$.
  If $n$ is admissible, Lemma 6.3 demonstrates that its growth is controlled by the product $\prod_{p\; \mbox{\tiny good}}(1 + B_p(n))$.  Lemma \ref{Bpnlemma} shows that $1 + B_p(n) = 1 + O(1/p)$ or $1 + O(1/p^2)$; the contribution from the latter converges, and the contribution from the former gives growth $1/\log n$.
\end{proof}

Finally, we show that the difference between $\fs_{Q_0}(n)$ and $\fs(n)$ is indeed small:  
 \begin{lem}\label{1202}
 We have 
$$\vert\fs_{Q_0}(n)-\fs(n)\vert\ll \frac{\log n}{Q_0}.$$
\end{lem}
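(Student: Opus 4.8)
The goal is to bound the tail $\fs_{Q_0}(n) - \fs(n) = -\sum_{q \ge Q_0} B_q(n)$, so it suffices to show $\sum_{q \ge Q_0} |B_q(n)| \ll (\log n)/Q_0$. The idea is to exploit multiplicativity of $B_q(n)$ in $q$ (Lemma \ref{0258}) together with the pointwise bounds on the local factors $B_{p^k}(n)$ already established: Lemma \ref{0939} tells us $B_{p^k}(n) = 0$ for $k \ge 2$ at good primes and for $k \ge k_p'$ at the finitely many bad primes, and Lemma \ref{Bpnlemma} gives $B_p(n) = O(1/p)$ (with the sharper $O(1/p^2)$ when $p \mid n$) at good odd primes. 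So the ``$q$-squarefree, good primes'' part of $B_q(n)$ is bounded by a multiplicative function $\prod_{p \mid q} C/p$ for an absolute constant $C$, and the bad-prime and higher-power contributions only multiply this by an $n$-independent constant.

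First I would write $B_q(n)$, via Lemma \ref{0258}, as $B_q(n) = \prod_{p^k \| q} B_{p^k}(n)$ (with the convention $B_1 = 1$). Using Lemma \ref{0939}, for $q \ge$ some fixed bound the factors at good primes force $q$ to be essentially squarefree away from the finitely many bad primes; precisely, write $q = q_0 q_1$ with $q_0 \mid P_{\mathrm{bad}}^\infty$ and $q_1$ coprime to $P_{\mathrm{bad}}$ and squarefree (else $B_q(n) = 0$). The factor $|B_{q_0}(n)|$ is bounded by an absolute constant $C_0$ depending only on $P_{\mathrm{bad}}$ (finitely many primes, each with finitely many relevant powers, and each $B_{p^k}(n)$ is trivially bounded since $\tau_{p^k}$ is a probability and $c_{p^k}$ is bounded by $p^k$ — one checks $|B_{p^k}(n)| \le$ something uniform). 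For the squarefree coprime part, $|B_{q_1}(n)| = \prod_{p \mid q_1} |B_p(n)| \le \prod_{p \mid q_1} \frac{C}{p}$ by Lemma \ref{Bpnlemma}.

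Then I would estimate
\[
  \sum_{q \ge Q_0} |B_q(n)| \le C_0 \sum_{\substack{q_1 \ge Q_0/\!\max(q_0) \\ q_1 \text{ squarefree, coprime to } P_{\mathrm{bad}}}} \prod_{p \mid q_1} \frac{C}{p},
\]
summed also over the $O(1)$ possible $q_0$. The tail of $\sum_{q_1 \ge Y} \prod_{p\mid q_1} \frac{C}{p}$ is handled by a standard Rankin-type trick: for $0 < \sigma < 1$,
\[
  \sum_{q_1 \ge Y} \prod_{p \mid q_1} \frac{C}{p} \le Y^{\sigma - 1} \sum_{q_1} \frac{1}{q_1^{\sigma}}\prod_{p \mid q_1}\frac{C}{p} \le Y^{\sigma-1} \prod_{p}\left(1 + \frac{C}{p^{1+\sigma}}\right) \ll_{\sigma} Y^{\sigma - 1}.
\]
Taking $\sigma$ fixed, this gives a power saving in $Q_0$, which is far stronger than the claimed $\ll (\log n)/Q_0$ — but I must be careful: the constant $C$ in Lemma \ref{Bpnlemma} is absolute and independent of $n$, while the product $\prod_p(1 + C/p^{1+\sigma})$ converges absolutely and is independent of $n$, so there is genuinely no $n$-dependence except through the range. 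Actually the cleanest route giving exactly the stated bound: split $\prod_{p\mid q_1}\frac{C}{p}$ using that $B_p(n) = O(1/p^2)$ for $p\mid n$, so only primes $p \nmid n$ contribute the slowly-convergent $1/p$ factor, and $\sum_{q_1 \ge Q_0,\, q_1 \mid \text{(product of primes} \nmid n)} \mu^2(q_1)\prod \frac{C}{p}$; here Mertens gives $\sum_{q_1 \le n} \prod_{p \mid q_1}\frac{C}{p} \ll (\log n)^{C}$, and since $B_q(n) = 0$ for $q$ with a prime factor exceeding $O(n)$ (as $\tau_{p}(n)$ lifting stabilizes), the tail from $Q_0$ to $n^{O(1)}$ is $\ll (\log n)^{O(1)}/Q_0$ after a partial summation; absorbing log powers is harmless since, as the authors note, ``our aim is to get a power saving, which absorbs all log powers.''

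**Main obstacle.** The delicate point is justifying that $B_q(n)$ vanishes for $q$ large enough relative to $n$ (so that the $q$-sum is effectively finite), and tracking that the $B_p(n)$ bound from Lemma \ref{Bpnlemma} is truly uniform in $n$, so the only $n$-dependence in the tail estimate is the logarithmic Mertens factor. Once that uniformity is in hand, the Rankin/Mertens tail bound is routine and in fact delivers more than $\ll (\log n)/Q_0$.
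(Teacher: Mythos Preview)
Your ``cleanest route'' is exactly the paper's argument, but the write-up around it contains two genuine errors that you should fix.

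\textbf{The Rankin step is wrong.} You claim
\[
\sum_{q_1\ge Y}\prod_{p\mid q_1}\frac{C}{p}\le Y^{\sigma-1}\sum_{q_1}\frac{1}{q_1^{\sigma}}\prod_{p\mid q_1}\frac{C}{p}
\le Y^{\sigma-1}\prod_p\Bigl(1+\frac{C}{p^{1+\sigma}}\Bigr),
\]
but the first inequality goes the wrong way: for $q_1\ge Y$ and $0<\sigma<1$ one has $q_1^{-\sigma}Y^{\sigma-1}\le Y^{-1}\le q_1^{-1}\cdot 1$, not $\ge 1$. A correct Rankin insertion $(q_1/Y)^{\alpha}\ge 1$ leads instead to $Y^{-\alpha}\sum_{q_1}C^{\omega(q_1)}/q_1^{1-\alpha}$, which diverges. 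The underlying series $\sum_{q_1}\mu^2(q_1)C^{\omega(q_1)}/q_1=\prod_p(1+C/p)$ is infinite, so no tail bound of this shape can exist without further input. The $O(1/p)$ bound alone is simply not summable.

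\textbf{The vanishing claim is false.} You write ``$B_q(n)=0$ for $q$ with a prime factor exceeding $O(n)$.'' This is not true: for a good prime $p\nmid n$ one has $B_p(n)=p\tau_p(n)-1=\pm 1/(p^2\mp 1)\neq 0$ for all $p$, however large. Nothing about $\tau_p$ ``stabilizes'' here. This remark is both incorrect and unnecessary.

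\textbf{What actually works (and what the paper does).} Discard the Rankin detour and execute the three-way split you yourself describe. Write $q=q_1q_2q_3$ with $q_1\mid L_1^{\infty}$ (bad primes, giving $|B_{q_1}(n)|\ll 1$), $q_2$ squarefree with $q_2\mid n$ and $(q_2,L_1)=1$ (giving $|B_{q_2}(n)|\ll 1/q_2$ from the $O(1/p)$ bound), and $q_3$ squarefree with $(q_3,L_1n)=1$ (giving $|B_{q_3}(n)|\ll 1/q_3^2$ from the sharper $O(1/p^2)$ bound). Then
\[
\sum_{q\ge Q_0}|B_q(n)|
\ll \sum_{q_1\mid L_1}\ \sum_{\substack{q_2\mid n\\(q_2,L_1)=1}}\frac{1}{q_2}
\sum_{\substack{(q_3,L_1n)=1\\ q_3\ge Q_0/(q_1q_2)}}\frac{1}{q_3^2}
\ll \frac{1}{Q_0}\sum_{q_2\mid n}1,
\]
and the last sum is the divisor count of $n$, which is the source of the $\log n$ (really $n^{\varepsilon}$) factor. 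This is clean and requires no claim about $B_p(n)$ vanishing for large $p$.
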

Recall here that $Q_0$ is a small power of $N$.
\begin{proof}
  Let $L_1$ be as in Lemma \ref{0948}.  Write $q=q_1q_2q_3$, where $q_1=(q,L_1),q_2=(q/q_1,n)$, so that $(q_3,L_1 n)=1$.  Noting that $B_{q_1}(n)$ has a universal upper bound, and recalling that $B_q(n)$ is multiplicative with respect to $q$, we have
\aln{\vert\fs_{Q_0}(n)-\fs(n)\vert&\leq \sum_{q>Q_0}\vert B_q(n) \vert\\
&=\sum_{q_1|L_1}|B_{q_1}(n)|\sum_{\substack{(q_2,L_1)=1\\q_2|n}}|B_{q_2}(n)|\sum_{\substack{(q_3,L_1n)=1\\q_1q_2q_3\geq Q_0}}|B_{q_3}(n)|\\
&\ll\sum_{q_1|L_1}\sum_{\substack{(q_2,L_1)=1\\q_2|n}}\frac{1}{q_2}\sum_{\substack{(q_3,L_1n)=1\\q_3\geq \frac{Q_0}{q_1q_2}}}\frac{1}{q_3^2}\ll  \sum_{q_2|n}\frac{1}{q_2}\frac{q_2}{Q_0}\ll \frac{\log n}{Q_0}
}
as desired.
\end{proof}
Lemma \ref{12099} and Lemma \ref{1202} together imply Proposition \ref{1203}.  Therefore, by the discussion preceding Proposition~\ref{1203}, we have shown Theorem \ref{0429}.

\section{\label{sectionminor}minor arcs}

The aim of this section is to prove
\al{\label{0825}
\int_0^1(1-\mathfrak{T}(\theta))^2\vert\hrnu(\theta)\vert^2d\theta \ll T^{2\delta-2}N^{1-\eta}.
}
By Plancherel's theorem, \eqref{0825} leads to Theorem \ref{0430}.

We bound the integral \eqref{0825} above by $\mathcal{I}_1 + \mathcal{I}_2 + \mathcal{I}_3$, where $J$ is the depth of approximation (see \eqref{eqn:setJ}) and
 \al{
   &\mathcal{I}_1=\sum_{q<Q_0}\sideset{}'\sum_{r(q)}\int_{\frac{r}{q}-\frac{1}{qJ}}^{\frac{r}{q}+\frac{1}{qJ}}|(1-\Tf(\theta))\hrnu(\theta)|^2d\theta,\\
   &\mathcal{I}_2=\sum_{Q_0\leq q<X}\sideset{}'\sum_{r(q)}\int_{\frac{r}{q}-\frac{1}{qJ}}^{\frac{r}{q}+\frac{1}{qJ}}|(1-\Tf(\theta))\hrnu(\theta)|^2d\theta,\\
   &\label{i3}\mathcal{I}_3=\sum_{X\leq q\leq J}\sideset{}'\sum_{r(q)}\int_{\frac{r}{q}-\frac{1}{qJ}}^{\frac{r}{q}+\frac{1}{qJ}}|(1-\Tf(\theta))\hrnu(\theta)|^2d\theta.
} 
The integrand is periodic on $\RR$ modulo $1$, and by Dirichlet's Theorem on Diophantine approximation, the domains of these integrals cover the circle $\RR$ modulo $1$.

The first integral $\mathcal{I}_1$ concerns small $q$ in the range of the major arc analysis, the second integral $\mathcal{I}_2$ concerns $q$ in the intermediate range $Q_0\leq q<X$, and the last integral $\II_3$ concerns large $q$. 

In Section~\ref{sectionminor1} we show 

\al{\II_1\ll T^{4\delta-4}N^{1-\eta}.}

Then, in Sections~\ref{sectionminor2} and \ref{sectionminor3} we divide $[Q_0,X]$ dyadically and prove 
\al{\mathcal{I}_Q:=\sum_{Q<q\leq 2Q}\sideset{}'\sum_{r(q)}\int_{\frac{r}{q}-\frac{1}{qJ}}^{\frac{r}{q}+\frac{1}{qJ}}|\hrnu(\theta)|^2d\theta\ll T^{4\delta-4}N^{1-\eta},}
where $Q_0\leq Q<X$ and $X\leq Q\leq J$ respectively. In doing this, we deal with the ranges of $Q$ corresponding to $\mathcal I_1$ and $\mathcal I_2$ separately and this will give the desired upper bounds on those sums.

It is evident that whether or not $M$ is fractional has little effect in the minor arc analysis: the main player here is the congruence subgroup $\Gamma(L)$ which gives rise to shifted quadratic forms.  We can simply replace the shifted quadratic form by a constant multiple of the form, and the analysis will run in exactly the same way.

\subsection{Lemmata for minor arcs}

In this section, we include some lemmata which will be used in the minor arc analysis.  The reader can choose to continue to the next section and refer back here for statements.  These lemmata relate to the evaluation and bounds for exponential sums of the form 
\al{{S}(q,A,B,C,D,E)=\sum_{x,y(q)}e(Ax^2+Bxy+Cy^2+Dx+Ey).}
and certain of their averages.  For simplicity we assume $q$ is odd.  
For $z\in\QQ_p$, we define 
\begin{displaymath}
\textrm{deg}_{p^m}(z)=\max_{-\infty<k\leq m}\{k: p^{-k}z\in\ZZ_p\}.
\end{displaymath}

We need the following lemma, which is a direct corollary of Gauss sums (see Page 13 of \cite{Dav}).
\begin{lem}\label{0343} For $a,b \in \ZZ$, we have
\al{\nonumber
&\sum_{x\in \ZZ/p^m\ZZ}e_{p^m}(ax^2+bx)=\\
&\begin{cases} p^{m}\cdot \bd{1}\{p^m|b\}& \text{if }p^{m}|a \\ p^{m/2}(p^m, a)^{1/2}i^{\epsilon\left(\frac{p^m}{(p^m,a)}\right)}\left(\begin{array}{ccc}\frac{a}{(p^{m-1},a)}\\ p\end{array}\right)e_{p^{}}\left(-\frac{b^2}{4a}\right)\cdot\bd{1}\{\text{deg}_{p^m}(b)\geq \textrm{deg}_{p^m}(a)\} & \text{if }p^{m}\nmid a  \end{cases},
}
where $\epsilon(n)=0$ if $n\equiv 1 \pmod 4$ and $\epsilon(n)=1$ if $n\equiv 3 \pmod 4$, and $\left(\begin{array}{ccc}\cdot\\\cdot\end{array}\right)$ is the Legendre symbol.
\end{lem}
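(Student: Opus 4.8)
The plan is to recognize Lemma~\ref{0343} as a bookkeeping consequence of the classical one-variable quadratic Gauss sum (Davenport, p.~13), reached by first peeling off the portion of the variable controlled by the $p$-power dividing $a$, and then completing the square. I would dispose of the trivial branch at once: if $p^m\mid a$, then $ax^2\equiv 0\pmod{p^m}$, so the sum reduces to $\sum_{x\in\ZZ/p^m\ZZ}e_{p^m}(bx)=p^m\,\bd 1\{p^m\mid b\}$ by orthogonality of additive characters, which is the first case of the formula.

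For the main branch, suppose $p^m\nmid a$ and write $(p^m,a)=p^j$ with $0\le j<m$, so $a=p^j a'$ with $\gcd(a',p)=1$. I would decompose each residue as $x=y+p^{m-j}z$, with $y$ running over $\ZZ/p^{m-j}\ZZ$ and $z$ over $\ZZ/p^{j}\ZZ$. Since $p^j a' x^2\equiv p^j a' y^2\pmod{p^m}$ and $bx\equiv by+b p^{m-j}z\pmod{p^m}$, the sum factors as
\[
  \sum_{y(p^{m-j})} e_{p^{m-j}}(a' y^2)\, e_{p^m}(by) \;\cdot\; \sum_{z(p^j)} e_{p^j}(bz),
\]
and the $z$-sum equals $p^j\,\bd 1\{p^j\mid b\}$. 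As $p^j=(p^m,a)$, the condition $p^j\mid b$ is precisely $\deg_{p^m}(b)\ge\deg_{p^m}(a)$, which produces the indicator factor in the statement and makes the sum vanish otherwise. When $p^j\mid b$, write $b=p^j b'$ so $e_{p^m}(by)=e_{p^{m-j}}(b'y)$, and the whole sum collapses to $p^j\sum_{y(p^{m-j})} e_{p^{m-j}}(a' y^2+b' y)$ with $\gcd(a',p)=1$.

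Since $p$ is odd, $2a'$ is invertible modulo $p^{m-j}$; substituting $y\mapsto y-(2a')^{-1}b'$ completes the square and gives $p^j\, e_{p^{m-j}}\!\big(-(4a')^{-1}b'^2\big)\sum_{y(p^{m-j})} e_{p^{m-j}}(a' y^2)$. The remaining sum is the classical quadratic Gauss sum, which by \cite{Dav} equals $\big(\tfrac{a'}{p}\big)^{m-j}\varepsilon_{p^{m-j}}\,p^{(m-j)/2}$, where $\varepsilon_n=1$ for $n\equiv1\pmod 4$ and $\varepsilon_n=i$ for $n\equiv3\pmod 4$, i.e.\ $\varepsilon_n=i^{\epsilon(n)}$. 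Then I would collect factors: the power of $p$ is $p^{j+(m-j)/2}=p^{m/2}(p^m,a)^{1/2}$; the Archimedean unit is $i^{\epsilon(p^{m-j})}=i^{\epsilon(p^m/(p^m,a))}$; and since $(p^{m-1},a)=p^j$ one has $a'=a/(p^{m-1},a)$, matching the Legendre factor $\big(\tfrac{a/(p^{m-1},a)}{p}\big)$ (the exponent $m-j$ being irrelevant when the symbol is $+1$, and otherwise retained as $\big(\tfrac{a'}{p}\big)^{m-j}$, which I would reconcile with the stated form as part of the write-up). Finally, rewriting $(4a')^{-1}b'^2$ in the normalization $b^2/4a$ of the statement completes the identification with the second branch.

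The argument has no genuine difficulty beyond careful bookkeeping: the two delicate points are (i) that $p$ must be odd so that completing the square is legitimate (this is exactly the hypothesis imposed on $q$ in the surrounding section), and (ii) correctly tracking the two distinct gcd's, $(p^m,a)$ governing the magnitude and the $\deg$-indicator while $(p^{m-1},a)$ governs the Legendre symbol, together with the parity of $m-\operatorname{ord}_p(a)$ entering the passage from the Jacobi symbol in the classical Gauss sum to the Legendre symbol in the statement. I would verify these normalizations directly against \cite{Dav} and flag the precise modulus appearing in the phase $e_p(-b^2/4a)$ so that the citation is applied unambiguously.
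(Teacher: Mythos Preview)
Your proposal is correct and is exactly what the paper intends: the paper does not give a proof, merely the remark that Lemma~\ref{0343} ``is a direct corollary of Gauss sums (see Page 13 of \cite{Dav}),'' and your argument is precisely the standard derivation---peel off $p^j=(p^m,a)$ via $x=y+p^{m-j}z$, complete the square, and invoke the classical evaluation of $\sum_y e_{p^{m-j}}(a'y^2)$. Your flagging of the two bookkeeping subtleties (the exponent $m-j$ on the Legendre symbol coming from the Jacobi-symbol form of the Gauss sum, and the exact modulus in the phase $e_p(-b^2/4a)$) is well-placed: the paper's stated formula is to be read with $e_p$ extended $p$-adically and with its Legendre-symbol convention in force, and verifying this against Davenport is exactly the right finishing step.
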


The Legendre symbol $\left(\begin{array}{ccc}a\\ p\end{array}\right)=1$ if $a$ is a quadratic residue, and $-1$ if it is a quadratic non-residue.  By convention we also let $\left(\begin{array}{ccc}a\\ p\end{array}\right)=1$ if $a\equiv 0 \pmod p$.  The Legendre symbol is multiplicative only on the set of nonzero congruence classes mod $p$.

Write $g(x,y)=Ax^2+Bxy+Cy^2$ and $\Delta_g=B^2-4AC$. 
Let $k_g=\textrm{deg}_{p^m}(A,B,C)$. If $k_g<m$ and $p^m| \Delta_g/p^{k_g}$, we say $g$ is \emph{degenerate} at $p^m$; in this case $g$ is essentially a quadratic form of only one variable.   

From Lemma \ref{0343}, we obtain:

\begin{lem}\label{0626}Let $p$ be an odd prime.  Let $k_g=\textrm{deg}_{p^m}(\text{gcd}(A,B,C))$. If $k_g=m$, then 
\al{\nonumber
S(p^m,A,B,C,D,E)=p^{2m}\bd{1}\{p^m| \{D, E\}\}.
}
If $k_g<m$, then
\al{\nonumber S(p^m,A,B,C,D,E)=&p^mp^{\frac{k_g}{2}}\left(p^{m},\frac{\Delta_g}{p^{k_g}}\right)^{\frac{1}{2}}i^{\epsilon(p^{m-k_g})}i^{\epsilon\left(\frac{p^m}{(p^m,\Delta_g/p^{k_g})}\right)}e_{p^m}\left(\frac{g(E,-D)}{\Delta_g}\right)\\
&\cdot (-1)^{\upsilon(g)} \chi(p^m,A,B,C,D,E)
}
where $\upsilon (g)=0$ if $g$ is non-degenerate at $p^m$ and $\left(\begin{array}{ccc}-\frac{\Delta_g}{(\Delta_g, p^{2m-2})}\\p\end{array}\right)=1$, or $g$ is degenerate and the quadratic form $g(x,y)/p^{k_g}$ can represent nonzero quadratic residue mod $p$; $\upsilon(g)=1$ otherwise. The function $\chi(p^m;A,B,C, D,E)=1$ if $S(p^m, A,B,C,D,E)\neq 0$, and  $\chi(p^m;A,B,C, D,E)=0$ if $S(p^m, A,B,C,D,E)\neq 0$.
\end{lem}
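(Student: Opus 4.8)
\emph{Proof sketch.} The plan is to diagonalize the quadratic part over $\ZZ_p$, so that the sum factors as a product of two one-variable Gauss sums, apply Lemma~\ref{0343} to each, and then match the resulting product against the claimed closed form. First dispose of the case $k_g=m$: then $p^m$ divides $A,B,C$, so $e_{p^m}(Ax^2+Bxy+Cy^2)\equiv1$ and $S(p^m,A,B,C,D,E)=\sum_{x,y(p^m)}e_{p^m}(Dx+Ey)=p^{2m}\bd1\{p^m\mid D,\ p^m\mid E\}$, giving the first assertion. So assume $k_g<m$. Since $p$ is odd, there is $P\in\mathrm{SL}_2(\ZZ_p)$ carrying $g=Ax^2+Bxy+Cy^2$ to a diagonal form $\alpha x^2+\beta y^2$ with $v_p(\alpha)=k_g\le v_p(\beta)=:\ell$, and then $\ell=v_p(\Delta_g)-k_g$ because $\Delta_g$ is unchanged by $P$ and equals $-4\alpha\beta$ in the new coordinates. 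The substitution $(x,y)\mapsto P(x,y)$ is a bijection of $(\ZZ/p^m)^2$, hence leaves $S$ unchanged, and it leaves each factor of the claimed right-hand side unchanged as well: $k_g$, $\Delta_g$, the support $\chi$, and the property ``$g/p^{k_g}$ represents a nonzero square mod $p$'' are obviously invariant, while $g(E,-D)$ is invariant because $(D,E)\mapsto P^{\top}(D,E)$ and $P\mat{0&1\\-1&0}P^{\top}=(\det P)\mat{0&1\\-1&0}$. We may therefore assume $B=0$, $v_p(A)=k_g$, $v_p(C)=\ell$.

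With $B=0$ the sum splits as $S(p^m,A,0,C,D,E)=\bigl(\sum_{x(p^m)}e_{p^m}(Ax^2+Dx)\bigr)\bigl(\sum_{y(p^m)}e_{p^m}(Cy^2+Ey)\bigr)$. Since $v_p(A)=k_g<m$, Lemma~\ref{0343} evaluates the first factor: it vanishes unless $v_p(D)\ge k_g$, and otherwise equals $p^{(m+k_g)/2}\,i^{\epsilon(p^{m-k_g})}\left(\frac{A/p^{k_g}}{p}\right)e_{p^m}(-D^2/4A)$. For the second factor there are two cases. If $\ell\ge m$ --- which is precisely the degenerate case --- Lemma~\ref{0343} gives $p^m\bd1\{p^m\mid E\}$. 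If $\ell<m$ it gives $p^{(m+\ell)/2}\,i^{\epsilon(p^{m-\ell})}\left(\frac{C/p^{\ell}}{p}\right)e_{p^m}(-E^2/4C)$, vanishing unless $v_p(E)\ge\ell$. Multiplying the two factors and using $v_p(\Delta_g)=k_g+\ell$, so that $(p^m,\Delta_g/p^{k_g})=p^{\min(\ell,m)}$, the overall power of $p$ and the two roots of unity $i^{\epsilon(p^{m-k_g})}$ and $i^{\epsilon(p^m/(p^m,\Delta_g/p^{k_g}))}$ match the stated formula exactly (in the degenerate case the latter root of unity is $i^{\epsilon(1)}=1$, consistent with there being no such factor from the degenerate $y$-sum).

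It remains to match the phase, the sign $(-1)^{\upsilon(g)}$, and the indicator $\chi$, and this is where essentially all the work sits. Since $B=0$ we have $g(E,-D)=AE^2+CD^2$, so $-D^2/4A-E^2/4C=g(E,-D)/\Delta_g$, and the product of the two phase factors $e_{p^m}(-b^2/4a)$ coming from Lemma~\ref{0343} is the claimed $e_{p^m}\bigl(g(E,-D)/\Delta_g\bigr)$ (with the argument read via its $p$-adic fractional part). For the sign, in the nondegenerate case the two Legendre symbols combine to $\left(\frac{A/p^{k_g}}{p}\right)\left(\frac{C/p^{\ell}}{p}\right)=\left(\frac{-\Delta_g/p^{k_g+\ell}}{p}\right)=\left(\frac{-\Delta_g/(\Delta_g,p^{2m-2})}{p}\right)$, the last step because $k_g+\ell=v_p(\Delta_g)\le 2m-2$ here; by the definition of $\upsilon(g)$ this is $(-1)^{\upsilon(g)}$. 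In the degenerate case only $\left(\frac{A/p^{k_g}}{p}\right)$ remains, and since $v_p(C/p^{k_g})=\ell-k_g\ge1$ there, $g/p^{k_g}\equiv(A/p^{k_g})x^2\pmod p$, so $g/p^{k_g}$ represents a nonzero square iff $A/p^{k_g}$ is one, i.e.\ iff $\left(\frac{A/p^{k_g}}{p}\right)=1$, again $(-1)^{\upsilon(g)}$. Finally $\chi$ is exactly the indicator of the conjunction of the nonvanishing conditions furnished by Lemma~\ref{0343} (namely $v_p(D)\ge k_g$ together with either $v_p(E)\ge\ell$ or $p^m\mid E$), and since the remaining factors are nonzero this makes the identity consistent. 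The main obstacle is purely this bookkeeping: keeping the degenerate/nondegenerate dichotomy and the parities of $m-k_g$ and $m-\ell$ straight so they feed correctly into $\upsilon(g)$ and the power of $p$, and settling the $p$-adic interpretation of $e_{p^m}(g(E,-D)/\Delta_g)$ when that argument fails to be a $p$-adic integer.
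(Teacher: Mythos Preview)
Your proof is correct and follows essentially the same strategy as the paper: reduce to the diagonal case by a unimodular change of variables over $\ZZ_p$, factor the sum into two one-variable Gauss sums, evaluate each via Lemma~\ref{0343}, and then reassemble the pieces to match the stated closed form. If anything, you are slightly more explicit than the paper in justifying that the quantities $k_g$, $\Delta_g$, $g(E,-D)$, and the criterion defining $\upsilon(g)$ are invariant under the $\mathrm{SL}_2(\ZZ_p)$ change of coordinates (the paper simply asserts that the resulting expression is ``intrinsic''), and you correctly flag the one genuinely delicate point---the $p$-adic interpretation of the phase $e_{p^m}(g(E,-D)/\Delta_g)$ in the degenerate case---which the paper also does not address and which in any case does not affect the downstream applications, where only the modulus of $S$ is used.
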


\begin{proof}

It is a case-by-case proof, and the statement of Lemma \ref{0626} is a synthesis of all cases. \par
If $k_g=m$, the proof is trivial.  We thus assume 
If $k_g<m$.  Then after a linear unimodular change of variables,  we can rewrite 
\al{{S}(p^m,A,B,C,D,E)={S}(p^m,A',0,C',D',E')}
where $\textrm{deg}_{p^m}(A')=k_g$ and $C'=\frac{-\Delta_g}{4A'}$.  For instance, if $\textrm{deg}_{p^m}(A)=\textrm{deg}_{p^m}(\textrm{gcd}(A,B,C))$, then we can let $x'=x+\frac{B}{2A}y,y'=y$, then $Ax^2+Bxy+Cy^2+Dx+Ey=Ax'^2+(C-\frac{B^2}{4A})y'^2+Dx'+(E-\frac{BD}{2A})y'$. 
If, instead, $\textrm{deg}_{p^m}(B) < \textrm{deg}_{p^m}(A), \textrm{deg}_{p^m}(C)$, then we can apply the change $x' = x+y$, $y' = x-y$ to reduce to the previous case.

Now we can evaluate ${S}(p^m,A,B,C,D,E)={S}(p^m,A',0,C',D',E')$ from Lemma \ref{0343},

We have
\al{\label{1214}\nonumber& {S}(p^m,A,B,C,D,E)={S}(p^m,A',0,C',D',E')\\
\nonumber=&p^m(p^m,A')^{\frac{1}{2}}(p^m,C')^{\frac{1}{2}}i^{\epsilon\left(\frac{p^m}{(p^m,A')}\right)}  i^{\epsilon\left(\frac{p^m}{(p^m,C')}\right)}\\
&\cdot \left(\begin{array}{ccc}\frac{A'}{(A', p^{m-1})}\\p\end{array}\right)\left(\begin{array}{ccc}\frac{C'}{(C', p^{m-1})}\\ p\end{array}\right)  \bd{1}\left\{\begin{array}{ccc}\textrm{deg}_{p^m}(D')\geq \textrm{deg}_{p^m}(A')\\ \textrm{deg}_{p^m}(E')\geq \textrm{deg}_{p^m}(C')\end{array}\right\} e_{p^m}\left(\frac{g(E,-D)}{\Delta_g}\right),}

We interpret \eqref{1214} in an intrinsic way.  First, while all other factors are nonzero, the factor 
\al{\label{0421} \bd{1}\left\{\begin{array}{ccc}\textrm{deg}_{p^m}(D')\geq \textrm{deg}_{p^m}(A')\\ \textrm{deg}_{p^m}(E')\geq \textrm{deg}_{p^m}(C')\end{array}\right\}}
is the same as the indicator function indicating whether $S$ is zero or not.  So we have $\eqref{0421}=\chi(p^m,A,B,C,D,E)$.

For the term $A'$, we know $\text{deg}_{p^m}(A')=\deg_{p^m}(\text{gcd}(A,B,C))=k_g < m$, and that $\text{deg}_{p^m}(C')=\text{deg}_{p^m}(\Delta_g/A')$. \par

If $p^{m}\nmid C'$, then 
$\left(\begin{array}{ccc}\frac{A'}{(A', p^{m-1})}\\p\end{array}\right)\left(\begin{array}{ccc}\frac{C'}{(C', p^{m-1})}\\ p\end{array}\right)=\left(\begin{array}{ccc} {-\frac{\Delta_g}{4(\Delta_g,p^{2m-2})}}\\{p} \end{array}\right)$. 

If $p^{m}\mid C'$, then $\left(\begin{array}{ccc}\frac{C'}{(C', p^{m-1})}\\p\end{array}\right)=1$, and  
\al{
\left(\begin{array}{ccc}\frac{A'}{(A', p^{m-1})}\\p\end{array}\right)=\begin{cases}1 & \text{if nonzero quadratic residue is represented by }\\ &g(x,y)/{p^{k_g}} \text{in } \ZZ/p\ZZ\\
-1 & \text{otherwise}.\end{cases}
}
\end{proof}

We note here that the function $\chi(p^m,A,B,C,D,E)$ concerns whether the $p^m-$degrees of the $x,y$ coefficients are bigger than or equal to that of the $x^2,y^2$ coefficients after diagonalizing the quadratic part of $Ax^2+Bxy+Cy^2+Dx+Ey$.  We list the following two noteworthy properties of $\chi$: 
\begin{enumerate}
\item $\chi(p^m,A,B,C,D,E)$ is invariant under scaling of the quadratic part or the linear part, i.e. for any $(r,p)=1$,   \al{\label{0451}\chi(p^m,A,B,C,D,E)=\chi(p^m,rA,rB,rC,D,E)=\chi(p^m,A,B,C,rD,rE).}
\item $\chi(p^m,A,B,C,D,E)$ is invariant under changing variables of $x,y$. If $x=x_1+a,y=y_1+b$, then 
\al{\nonumber\label{0442}Ax^2+Bxy&+Cy^2+Dx+Ey=Ax_1^2+Bx_1y_1+Cy_1^2+(2Aa+Bb+D)x_1\\&+(2Cb+Ba+E)y_1+ Aa^2+Bab+Cb^2+Da+Eb.}
Comparing the coefficients of the quadratic parts and linear parts of \eqref{0442}, we have
\al{\label{0452}\chi(p^m,A,B,C,D,E)=\chi(p^m,A,B,C,2Aa+Bb+D, 2Cb+Ba+E).
}
\end{enumerate}

In Section \ref{sectionminor1} we will encounter the exponential sum 
\al{\mathcal{S}_\gamma(q,u,r,\xi,\zeta)=\frac{1}{q^2}\sum_{x_0,y_0(q)}e_q\left(r\ff_{M\gamma}\left(\BB ux_0+uu^{*},\BB uy_0\right)+x_0\xi+y_0\zeta\right).
}

Write $$\ff_{M\gamma}(x,y)=\tilde{\ff}_{M\gamma}(x,y) + \fd_\gamma = A'' x^2+B'' xy+C'' y^2+\fd_\gamma.$$  The quadratic form has discriminant $\Delta\fd_\gamma^2$.  We assume that $M$ is integral, so that $\ff_{M\gamma}$ is primitive and integral by Lemma \ref{lem:primitive}. If $M$ is not integral, then one needs to multiply the curvature formula by a universal constant, to obtain integrality.  By Lemma \ref{lem:primitive}, the gcd of the coefficients of $\ff_{M\gamma}$ after this normalization is bounded for all $\gamma$, and consequently all the estimates from Lemma \ref{1121}, \ref{0856}, \ref{0857} stand, up to a constant factor.  \par
We first give a bound for $\mathcal{S}_\gamma(q,u,r,\xi,\zeta)$:
\begin{lem}\label{1121}
 Assume that $(r,q)=1$.   Then
  \aln{\vert S_{\gamma}(q,u,r,\xi,\zeta)\vert \leq  \frac{|\Delta|^{1/2}u^2L^2(q,\fd_\gamma^2)}{q}.
  }
\end{lem}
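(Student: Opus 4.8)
\emph{Proof plan.} The plan is to recognize $S_\gamma(q,u,r,\xi,\zeta)$ as a normalization of a two‑variable quadratic exponential sum of exactly the type handled by Lemma~\ref{0626}, compute its discriminant, and then bound it one prime at a time.

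First I would substitute $x=Lux_0+uu^{*}$ and $y=Luy_0$ into $\ff_{M\gamma}(x,y)=A''x^2+B''xy+C''y^2+\fd_\gamma$ and expand; collecting terms, the argument of $e_q$ in the definition of $S_\gamma$ takes the shape
\[
  A x_0^2 + B x_0 y_0 + C y_0^2 + D x_0 + E y_0 + (\text{const}),
\]
with $A=rL^2u^2A''$, $B=rL^2u^2B''$, $C=rL^2u^2C''$, and $D,E\in\ZZ$ depending only on $r,u,\xi,\zeta$; their precise values are immaterial, since they enter only through the vanishing indicator $\chi$ of Lemma~\ref{0626}, which has modulus at most $1$. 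The constant contributes a phase of modulus $1$, so $|S_\gamma(q,u,r,\xi,\zeta)|=q^{-2}\,|S(q,A,B,C,D,E)|$. Since $\tilde{\ff}_{M\gamma}$ has discriminant $\Delta\fd_\gamma^2$, the discriminant of the quadratic part above is $\Delta_g=B^2-4AC=r^2L^4u^4\,\Delta\fd_\gamma^2$.

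Next, using that $q$ is odd, I would factor $S(q,A,B,C,D,E)$ into a product over prime powers $p^m\|q$ by the Chinese Remainder Theorem — the usual change of variables only rescales the coefficients by $p$‑adic units, which changes neither the content $k_g=\textrm{deg}_{p^m}(A,B,C)$ nor $v_p(\Delta_g)$ — and apply Lemma~\ref{0626} to each factor. Because $(r,q)=1$ and $\ff_{M\gamma}$ is primitive (Lemma~\ref{lem:primitive}), for $p\mid q$ one gets $k_g=\min(m,\,2v_p(L)+2v_p(u))$ and $v_p(\Delta_g)=v_p(\Delta)+2v_p(\fd_\gamma)+4v_p(L)+4v_p(u)$. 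In the degenerate branch $k_g=m$, Lemma~\ref{0626} gives $|S(p^m,\cdots)|\le p^{2m}$, but there $p^m\le p^{2v_p(L)+2v_p(u)}$; in the main branch $k_g<m$ it gives $|S(p^m,\cdots)|\le p^m\,p^{k_g/2}(p^m,\Delta_g/p^{k_g})^{1/2}$. In either case the factor beyond the trivial $p^m$ is at most $p^{\,2v_p(L)+2v_p(u)+v_p(\Delta)/2+\min(m,\,2v_p(\fd_\gamma))/2}$, where the one nontrivial step is the elementary inequality $\min(m,a+b)\le a+\min(m,b)$ for $a,b\ge 0$, used to pull the $v_p(\fd_\gamma)$‑part of $v_p(\Delta_g)$ out of the minimum.

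Finally, multiplying the local bounds over $p\mid q$ and using $\prod_{p\mid q}p^{v_p(L)}\mid L$, $\prod_{p\mid q}p^{v_p(u)}\mid u$, $\prod_{p\mid q}p^{v_p(\Delta)}\mid \Delta$, and $\prod_{p\mid q}p^{\min(m,\,2v_p(\fd_\gamma))}=(q,\fd_\gamma^2)$, I get $|S(q,A,B,C,D,E)|\le q\,L^2u^2|\Delta|^{1/2}(q,\fd_\gamma^2)^{1/2}\le q\,L^2u^2|\Delta|^{1/2}(q,\fd_\gamma^2)$, and dividing by $q^2$ gives the lemma. I expect the main obstacle to be the bookkeeping in the prime‑power step: one must keep straight which of the many factors composing $A,B,C,D,E$ are $p$‑adic units (hence invisible to $k_g$ and $\chi$), which contribute to the content $k_g$, and which land in $\Delta_g$, and separately check that the degenerate branch of Lemma~\ref{0626} never breaks the claimed bound. (When $M$ is fractional one first clears denominators at the cost of a factor bounded in terms of $d_1^4$ via Lemma~\ref{lem:primitive}, which is absorbed into the implied constant.)
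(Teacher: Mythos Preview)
Your proposal is correct and follows essentially the same route as the paper's proof: expand the sum to the form $S(q,A,B,C,D,E)$, reduce to prime powers, apply Lemma~\ref{0626} in the non-degenerate branch (handling the degenerate case $p^m\mid u^2L^2$ trivially), and then reassemble via multiplicativity. The paper expresses the local bound as $\frac{1}{p^m}(p^m,u^2L^2)^{1/2}(p^m,u^2L^2\fd_\gamma^2\Delta)^{1/2}$, which is exactly your $p^{k_g/2}(p^m,\Delta_g/p^{k_g})^{1/2}$ rewritten in gcd notation; your use of the elementary inequality $\min(m,a+b)\le a+\min(m,b)$ to extract the $(q,\fd_\gamma^2)$ factor is precisely the step the paper takes in passing from $(q,u^2L^2\fd_\gamma^2\Delta)^{1/2}$ to $|\Delta|^{1/2}u^2L^2(q,\fd_\gamma^2)$.
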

\begin{proof}[Proof of Lemma \ref{1121} for $q$ odd]
  For the proof when $q$ is even, see the discussion at the end of this section.

First we consider the case $q=p^m$.  If $p^m\mid u^2L^2$, then we trivially bound $|\mathcal S_\gamma|\leq 1$ and we automatically get the lemma.  We thus assume $p^m\nmid u^2L^2$, then we apply the second case of Lemma \ref{0626} to analyze $\mathcal S_\gamma$. 

We write
\al{\nonumber\label{0554}\mathcal{S}_\gamma(p^m,u,r,\xi,\zeta)=&\frac{1}{p^{2m}}\sum_{x_0,y_0(p^m)}e_{p^m}\left(r\ff_{M\gamma}\left(\BB ux_0+uu^{*},\BB uy_0\right)+x_0\xi+y_0\zeta\right)\\
 \nonumber=&\frac{1}{p^{2m}}\sum_{x_0,y_0(p^m)}e_{p^m}(rL^2u^2(A'' x_0^2+B''x_0y_0+C''y_0^2)\\
 &+(2rA''Lu^2u^{*}+\xi)x_0+(rB''Lu^2u^*+\zeta)y_0+ru^2A''{u^*}^2+r\fd_\gamma)
}

Therefore,  having the primitivity of $\ff_{M\gamma}$ in mind and applying Lemma \ref{0626} to  \eqref{0554}, we obtain (here $p^{k_g} = (p^m, u^2L^2)$):
\begin{align}
  \label{0839}
\nonumber&\mathcal{S}_\gamma(p^m,u,r,\xi,\zeta)\nonumber \\
= &\frac{1}{p^m}
e_{p^m}(r\fd_{\gamma} + ru^2A''u^*{}^2)
(p^m,u^2L^2)^{\frac{1}{2}}
(p^m,u^2L^2\fd_\gamma^2\Delta)^{\frac{1}{2}}\nonumber \\
&i^{\epsilon\left(\frac{p^m}{(p^m,u^2L^2)}\right)}
i^{\epsilon\left(\frac{p^m}{(p^m,u^2L^2\fd_\gamma^2\Delta)}\right)}
\cdot(-1)^{\upsilon(ru^2L^2\ff_{M\gamma})} \nonumber \\
&\cdot e_{p^m}\left(
\frac{
  A''(rB''Lu^2u^* + \zeta)^2
  -B''( 2rA''Lu^2u^* + \xi)(rB''Lu^2u^*+\zeta)
  +C''(2rA''Lu^2u^* + \xi)^2
}
{
  rL^2u^2( B''^2 - 4 A''C'')
}
\right)\nonumber \\
&\cdot \chi\left(p^m,ru^2L^2A'',ru^2L^2B'',ru^2L^2C'', 2rA''Lu^2u^*+\xi, rB''Lu^2u^* + \zeta \right) \nonumber \\
= &\frac{1}{p^m}
e_{p^m}\left(r\fd_{\gamma} - \frac{u^*\xi}{L}\right)
(p^m,u^2L^2)^{\frac{1}{2}}
(p^m,u^2L^2\fd_\gamma^2\Delta)^{\frac{1}{2}}\nonumber \\
&i^{\epsilon\left(\frac{p^m}{(p^m,u^2L^2)}\right)}
i^{\epsilon\left(\frac{p^m}{(p^m,u^2L^2\fd_\gamma^2\Delta)}\right)}
\cdot(-1)^{\upsilon(ru^2L^2\ff_{M\gamma})}
\cdot e_{p^m}\left(\frac{\tilde{\ff}_{M\gamma}(\zeta,-\xi)}{ru^2L^2\Delta\fd_\gamma^2}\right)\nonumber \\
&\cdot \chi\left(p^m,ru^2L^2A'',ru^2L^2B'',ru^2L^2C'', 2rA''Lu^2u^*+\xi, rB''Lu^2u^* + \zeta \right)
\end{align}

From \eqref{0839} we thus have
$$| S_\gamma(p^m,u,r,\xi,\zeta) |\leq \frac{1}{p^m}  (p^m,u^2L^2)^{\frac{1}{2}}  (p^m,u^2L^2\fd_\gamma^2\Delta)^{\frac{1}{2}}.$$
Using the multiplicativity of $\mathcal{S}_\gamma$, we obtain 
\al{\label{1024}\nonumber
|\mathcal{S}_\gamma(q,u,r,\xi,\zeta)|&\leq \prod_{p_i^{n_i}\Vert q}\frac{(p^{n_i},u^2L^2)(p^{n_i},u^2L^2\fd_\gamma^2\Delta)^{\frac{1}{2}}}{p^{n_i}}\\&\leq \frac{(q,u^2L^2)^{\frac{1}{2}}(q,u^2L^2\fd_\gamma^2\Delta)^{\frac{1}{2}}}{q}\leq \frac{|\Delta|^{1/2}u^2L^2(q,\fd_\gamma^2)}{q}.
}
\end{proof}

We will also encounter a certain average of such sums.  
Let 
\al{\label{1113}\mathcal{S}(q,u,\gamma,\xi,\zeta,\gamma',\xi',\zeta')=\sideset{}'\sum_{r(q)}\mathcal{S}_{\gamma}(q,u,r,\xi,\zeta)\overline{\mathcal{S}_{\gamma'}(q,u,r,\xi',\zeta')}}

Set $q=p^m$.  From \eqref{0839}, we can write 
\al{\eqref{1113}=S_1\cdot S_2,}
with $S_1$ and $S_2$ as follows.  The factor $S_1$ consists of factors not involving $r$:
\al{\label{0558}\nonumber S_1=&\frac{1}{p^{2m}}(p^m,u^2L^2)(p^m,u^2L^2\fd_\gamma^2\Delta)^{\frac{1}{2}}(p^m,u^2L^2\fd_{\gamma'}^2\Delta)^{\frac{1}{2}}i^{2\epsilon\left(\frac{p^m}{(p^m,u^2L^2)}\right)} \\& i^{\epsilon\left(\frac{p^m}{(p^m,u^2L^2\fd_\gamma^2\Delta)}\right)}i^{\epsilon\left(\frac{p^m}{(p^m,u^2L^2\fd_{\gamma'}^2\Delta)}\right)} 
e\left(\frac{u^*(\xi'-\xi)}{L}\right)}

For $S_2$, we have  
\al{\label{0112}\nonumber S_2=&\sideset{}'\sum_{r(p^m)}e_{p^m}(r(\fd_\gamma-\fd_{\gamma'}))
 (-1)^{\upsilon(ru^2L^2\ff_{M\gamma})}
 (-1)^{\upsilon(ru^2L^2\ff_{M\gamma'})}
\cdot e_{p^m}\left(\frac{\tilde{\ff}_{M\gamma}(\zeta,-\xi)}{ru^2L^2\Delta\fd_\gamma^2}\right) \\
&e_{p^m}\left(-\frac{\tilde{\ff}_{M\gamma'}(\zeta',-\xi')}{ru^2L^2\Delta\fd_{\gamma'}^2}\right)\cdot \chi(*)
}
We can bound $S_1$ directly from \eqref{0558}:
\al{\label{0513}\vert S_1\vert\leq \frac{(p^m,u^2L^2)(p^m,u^2L^2\fd_\gamma^2\Delta)^{\frac{1}{2}}(p^m,u^2L^2\fd_{\gamma'}^2\Delta)^{\frac{1}{2}}}{p^{2m}} .
}

We note that $(-1)^{\upsilon(ru^2L^2\ff_{M\gamma})},(-1)^{\upsilon(ru^2L^2\ff_{M\gamma'})}$ are multiplicative over $r$.  
Moreover, from \eqref{0451} and \eqref{0452}, we observe that with all other parameters fixed, $\chi$ is a periodic function over $r$ with period dividing $(L,p^m)$.  Therefore the function $\chi(*)$ can be viewed as a function on $(\ZZ/(p^m,L)\ZZ)^*$ bounded by $1$, so can be written as at most $(p^m,L)$ linearly combined multiplicative characters on $\ZZ/p^m\ZZ$ with coefficients bounded by 1. Therefore, the factor $S_2$ is a combination of at most $(p^m,L)$ Kloosterman-Sali\'e sums.

If $\fd_\gamma\neq\fd_{\gamma'}$, applying Kloosterman's elementary $3/4$ bound for this type of sum (Lemma 3.4.1, \cite{XZ14}), we obtain
\al{\label{0514}\vert S_2\vert \ll (p^m,L) p^{\frac{3}{4}m+\epsilon}(p^m, \fd_\gamma-\fd_{\gamma'})^{\frac{1}{4}}.}

If $\fd_\gamma=\fd_{\gamma'}$ but $\ff_{M\gamma}(\zeta,-\xi)\neq \ff_{M\gamma'}(\zeta',-\xi')$, then we can use the last two factors in the summand of \eqref{0112} to obtain a bound for $S_2$.  It can be checked that if $S_2\neq0$, then the condition that $\chi(r;*)=1$ in \eqref{0112} leads to 
$$\textrm{deg}_{p^m}\left(\frac{\ff_{M\gamma}(\zeta,-\xi)}{ru^2L^2\Delta\fd_\gamma^2}\right),\textrm{deg}_{p^m}\left(\frac{\ff_{M\gamma'}(\zeta',-\xi')}{ru^2L^2\Delta\fd_{\gamma'}^2}\right)\geq 0. $$

Therefore, the elementary Kloosterman $3/4$ bound in this case gives 
\al{\label{0512}
\vert S_2\vert \ll  (p^m,L)p^{\frac{3}{4}m+\epsilon}(p^m,\ff_{M\gamma}(\zeta,-\xi)-\ff_{M\gamma'}(\zeta',-\xi'))^{\frac{1}{4}}.
}

Collecting \eqref{0513}, \eqref{0514}, \eqref{0512}, using the multiplicativity of $\mathcal{S}(q,u,r,\gamma,\xi,\zeta,\gamma',\xi',\zeta')$, and absorbing $\Delta,L$ in the $\ll$ relation,  we obtain the following two lemmas in the case $q$ is odd.

\begin{lem}\label{0856}  
  If $\fd_\gamma\neq\fd_{\gamma'}$, then \aln{\vert\mathcal{S}(q,u,r,\gamma,\xi,\zeta,\gamma',\xi',\zeta')\vert \ll u^4q^{-\frac{5}{4}+\epsilon}(q,\fd_\gamma-\fd_{\gamma'})^{\frac{1}{4}}(q,\fd_\gamma^2)^{\frac{1}{2}}(q,\fd_{\gamma'}^2)^{\frac{1}{2}}.
}
\end{lem}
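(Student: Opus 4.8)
The plan is to assemble the estimate from the ingredients prepared above. By the multiplicativity of the sum $\mathcal{S}(q,u,\gamma,\xi,\zeta,\gamma',\xi',\zeta')$ from \eqref{1113} in $q$, it suffices to bound a single prime-power factor $q=p^m$ with $p$ odd and then multiply over the prime powers $p^m\Vert q$; when $p^m\mid u^2L^2$ one has $|\mathcal{S}_\gamma|\le1$ trivially, so assume $p^m\nmid u^2L^2$. I would insert the explicit Gauss-sum evaluation of $\mathcal{S}_\gamma(p^m,u,r,\xi,\zeta)$ coming from Lemma \ref{0626} into the definition of $\mathcal{S}$ and separate the factors not depending on $r$: this gives the factorization $\mathcal{S}(p^m,\dots)=S_1\cdot S_2$ displayed above, and $S_1$ is bounded immediately, yielding \eqref{0513}.

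The substantive step is the bound on $S_2$. I would observe that the sign factors $(-1)^{\upsilon(ru^2L^2\ff_{M\gamma})}$ and $(-1)^{\upsilon(ru^2L^2\ff_{M\gamma'})}$ are multiplicative in $r$ (they are Jacobi-symbol-type characters, by the description of $\upsilon$ in Lemma \ref{0626}), while $\chi(*)$, being invariant under unit rescaling of the quadratic and linear parts \eqref{0451} and under translation of the variables \eqref{0452}, is periodic in $r$ with period dividing $(p^m,L)$ and hence is a combination of at most $(p^m,L)$ Dirichlet characters mod $p^m$ with coefficients of modulus $\le1$. Therefore $S_2$ is a combination of at most $(p^m,L)$ twisted Kloosterman--Sali\'e sums whose additive component has the shape $e_{p^m}\!\big(r(\fd_\gamma-\fd_{\gamma'})+a\overline{r}\big)$ for some integer $a$. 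Since $\fd_\gamma\neq\fd_{\gamma'}$ by hypothesis, the coefficient $\fd_\gamma-\fd_{\gamma'}$ is a nonzero integer, so each of these is a genuine (twisted) Kloosterman sum and Kloosterman's elementary $3/4$ bound (Lemma 3.4.1 of \cite{XZ14}) applies uniformly, giving $|S_2|\ll(p^m,L)\,p^{\frac{3}{4}m+\epsilon}\,(p^m,\fd_\gamma-\fd_{\gamma'})^{1/4}$, i.e. \eqref{0514}.

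Finally I would multiply \eqref{0513} and \eqref{0514}, take the product over $p^m\Vert q$, and tidy the gcd factors exactly as in the closing display of the proof of Lemma \ref{1121}: the $p^{-2m}$ from $S_1$ and the $p^{3m/4}$ from $S_2$ combine to $q^{-5/4+\epsilon}$, the factor $(p^m,L)$ together with $\Delta$ and $L$ is absorbed into the implied constant, and $(q,u^2L^2)\,(q,u^2L^2\fd_\gamma^2\Delta)^{1/2}(q,u^2L^2\fd_{\gamma'}^2\Delta)^{1/2}$ is bounded by a constant multiple of $u^4(q,\fd_\gamma^2)^{1/2}(q,\fd_{\gamma'}^2)^{1/2}$, which leaves precisely the asserted inequality. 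The case $q$ even follows from the identical argument with the even-modulus form of Lemma \ref{0626}, as indicated at the end of this section.

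The \emph{main obstacle} is the reduction of $S_2$ to a short sum of Kloosterman--Sali\'e sums: one must verify that $\upsilon$ contributes a multiplicative sign in $r$ and that $\chi$ has conductor dividing $(p^m,L)$, so that Kloosterman's $3/4$ bound applies with only a bounded loss, and that the gcd appearing in that bound is exactly $(q,\fd_\gamma-\fd_{\gamma'})^{1/4}$ --- the hypothesis $\fd_\gamma\neq\fd_{\gamma'}$ being precisely what keeps the $r$-linear character nontrivial. Everything downstream is gcd bookkeeping identical to the proof of Lemma \ref{1121}.
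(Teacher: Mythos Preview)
Your proposal is correct and follows essentially the same approach as the paper: the paper also factors $\mathcal{S}(p^m,\ldots)=S_1\cdot S_2$, bounds $S_1$ by \eqref{0513}, recognizes $S_2$ as a bounded combination of Kloosterman--Sali\'e sums (using exactly your observations that $\upsilon$ is multiplicative in $r$ and that $\chi$ has period dividing $(p^m,L)$), applies Kloosterman's $3/4$ bound to obtain \eqref{0514}, and then multiplies over prime powers and absorbs $\Delta,L$ into the implied constant. Your identification of the main obstacle---reducing $S_2$ to a short combination of Kloosterman--Sali\'e sums so that the $3/4$ bound applies with only bounded loss---matches the paper's emphasis precisely.
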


\begin{lem}\label{0857}
If $\fd_\gamma=\fd_{\gamma'}$ and $\ff_{M\gamma}(\zeta,-\xi)\neq \ff_{M\gamma'}(\zeta',-\xi')$, then

\aln{\vert\mathcal{S}(q,u,r,\gamma,\xi,\zeta,\gamma',\xi',\zeta')\vert
 \ll & u^4q^{-\frac{5}{4}+\epsilon}\vert\ff_{M\gamma}(\zeta,-\xi)-\ff_{M\gamma'}(\zeta',-\xi'  )\vert^{\frac{1}{4}}(q,\fd_\gamma^2).
}
\end{lem}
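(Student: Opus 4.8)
The plan is to reduce to a prime power $q=p^m$ and then assemble the bounds on the two factors $S_1,S_2$ of the decomposition set up just before the statement. By the Chinese Remainder Theorem, $\mathcal{S}_\gamma(q,u,r,\xi,\zeta)$ is multiplicative in $q$, hence so is the product $\mathcal{S}_{\gamma}(q,u,r,\xi,\zeta)\overline{\mathcal{S}_{\gamma'}(q,u,r,\xi',\zeta')}$ and therefore so is $\mathcal{S}(q,u,r,\gamma,\xi,\zeta,\gamma',\xi',\zeta')$; thus it suffices to treat $q=p^m$ with $p$ odd (the even case is commented on at the end). If $p^m\mid u^2L^2$ the trivial bound $|\mathcal{S}_\gamma|\le 1$ already gives the claim, so assume $p^m\nmid u^2L^2$ and insert the explicit evaluation \eqref{0839} of $\mathcal{S}_\gamma(p^m,u,r,\xi,\zeta)$ together with the conjugate version for $\gamma'$. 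Collecting the $r$-independent prefactors into $S_1$ as in \eqref{0558} and the $r$-sum into $S_2$ as in \eqref{0112}, one has $\mathcal{S}(p^m,\cdots)=S_1 S_2$, and $S_1$ is bounded directly from \eqref{0558} by \eqref{0513}.

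The heart of the matter is $S_2$. Since $\fd_\gamma=\fd_{\gamma'}$, the term $e_{p^m}(r(\fd_\gamma-\fd_{\gamma'}))$ in \eqref{0112} is identically $1$, so after combining the two inverse-$r$ phases $S_2$ reduces, up to the signs $(-1)^{\upsilon(ru^2L^2\ff_{M\gamma})}(-1)^{\upsilon(ru^2L^2\ff_{M\gamma'})}$ and the indicator $\chi(*)$, to a sum of the shape $\sideset{}'\sum_{r(p^m)}\psi(r)e_{p^m}(a\bar r)$, where $a$ is a $p$-adic integer (this integrality is exactly what $\chi(*)=1$ enforces, by the remark after \eqref{0112}) whose $p^m$-valuation is at most that of $\tilde{\ff}_{M\gamma}(\zeta,-\xi)-\tilde{\ff}_{M\gamma'}(\zeta',-\xi')=\ff_{M\gamma}(\zeta,-\xi)-\ff_{M\gamma'}(\zeta',-\xi')$ (the shifts $\fd$ cancel). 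The sign factor is multiplicative in $r$, and by \eqref{0451} and \eqref{0452} the function $\chi(*)$ is periodic in $r$ with period dividing $(L,p^m)$, so it expands as a combination of at most $(p^m,L)$ multiplicative characters mod $p^m$ with coefficients of modulus at most $1$. Hence $S_2$ is a combination of at most $(p^m,L)$ Kloosterman-Sali\'e-type sums, and Kloosterman's elementary $3/4$ bound (Lemma 3.4.1 of \cite{XZ14}) applied uniformly yields \eqref{0512}:
\[
  |S_2|\ll (p^m,L)\,p^{\frac34 m+\epsilon}\,(p^m,\ff_{M\gamma}(\zeta,-\xi)-\ff_{M\gamma'}(\zeta',-\xi'))^{1/4}.
\]

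Finally one multiplies. Combining \eqref{0513} and \eqref{0512},
\begin{align*}
  |\mathcal{S}(p^m,\cdots)| &\ll \frac{(p^m,u^2L^2)\,(p^m,u^2L^2\fd_\gamma^2\Delta)^{1/2}(p^m,u^2L^2\fd_{\gamma'}^2\Delta)^{1/2}}{p^{2m}}\\
  &\qquad\times (p^m,L)\,p^{\frac34 m+\epsilon}\,(p^m,\ff_{M\gamma}(\zeta,-\xi)-\ff_{M\gamma'}(\zeta',-\xi'))^{1/4}.
\end{align*}
Crudely bounding $(p^m,u^2L^2)\le u^2L^2$, $(p^m,u^2L^2\fd_\gamma^2\Delta)^{1/2}\le uL|\Delta|^{1/2}(p^m,\fd_\gamma^2)^{1/2}$ and likewise for $\gamma'$, and $(p^m,L)\le L$, then using $\fd_\gamma=\fd_{\gamma'}$ to write $(p^m,\fd_\gamma^2)^{1/2}(p^m,\fd_{\gamma'}^2)^{1/2}=(p^m,\fd_\gamma^2)$, and absorbing the fixed powers of $\Delta$ and $L$ into the implied constant, gives
\[
  |\mathcal{S}(p^m,\cdots)|\ll u^4 (p^m)^{-5/4+\epsilon}\,(p^m,\ff_{M\gamma}(\zeta,-\xi)-\ff_{M\gamma'}(\zeta',-\xi'))^{1/4}(p^m,\fd_\gamma^2).
\]
Multiplying over $p^m\parallel q$, using $\prod_{p^m\parallel q}(p^m,\fd_\gamma^2)=(q,\fd_\gamma^2)$ and (since the hypothesis forces $\ff_{M\gamma}(\zeta,-\xi)-\ff_{M\gamma'}(\zeta',-\xi')\neq 0$) $\prod_{p^m\parallel q}(p^m,\ff_{M\gamma}(\zeta,-\xi)-\ff_{M\gamma'}(\zeta',-\xi'))^{1/4}\le |\ff_{M\gamma}(\zeta,-\xi)-\ff_{M\gamma'}(\zeta',-\xi')|^{1/4}$, yields the lemma for odd $q$. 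For even $q$ the $2$-part is handled by the separate $2$-adic Gauss-sum evaluation used for the even case of Lemma~\ref{1121}; since that evaluation produces an exponential sum of the same shape up to bounded factors, the bound is unchanged. The one genuinely delicate step is the treatment of $S_2$ — confirming that, after diagonalizing the quadratic part and expanding $\chi(*)$, one is left with exactly a combination of $O((p^m,L))$ Sali\'e-type sums whose inverse argument has the $p^m$-valuation prescribed by $\ff_{M\gamma}(\zeta,-\xi)-\ff_{M\gamma'}(\zeta',-\xi')$ — after which the rest is routine bookkeeping of gcds.
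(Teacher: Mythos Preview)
Your approach is essentially identical to the paper's: decompose $\mathcal{S}(p^m,\cdots)=S_1S_2$ via \eqref{0558}--\eqref{0112}, bound $S_1$ by \eqref{0513}, recognize $S_2$ as a combination of at most $(p^m,L)$ Kloosterman--Sali\'e sums and apply the elementary $3/4$ bound to obtain \eqref{0512}, then assemble by multiplicativity. The paper does exactly this.

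There is one bookkeeping slip in your final assembly. You bound $(p^m,u^2L^2)\le u^2L^2$, $(p^m,L)\le L$, etc., \emph{at each prime} and then multiply over $p^m\parallel q$. Literally executed, this produces a factor $(u^4L^5|\Delta|)^{\omega(q)}$ rather than $u^4$, which is not absorbed by $q^{\epsilon}$ since $u$ grows with $N$. The correct order (which the paper follows, cf.\ the passage after \eqref{0512} and the analogous step \eqref{1024}) is to first multiply the prime-power bounds with the gcd factors intact, using multiplicativity of $\gcd$ to obtain
\[
|\mathcal{S}(q,\cdots)|\ll q^{-5/4+\epsilon}(q,u^2L^2)(q,u^2L^2\fd_\gamma^2\Delta)^{1/2}(q,u^2L^2\fd_{\gamma'}^2\Delta)^{1/2}(q,L)(q,\ff_{M\gamma}-\ff_{M\gamma'})^{1/4},
\]
and \emph{only then} bound $(q,u^2L^2)\le u^2L^2$ and so on, absorbing the single powers of $L,|\Delta|$ into the implied constant. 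With this reordering your argument is complete and matches the paper's.
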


We briefly explain how to extend Lemmas \ref{0343} through \ref{0857} when $q$ is even. It is enough to consider $q=2^m$ by multiplicativity.  The extra complication arises in Lemma \ref{0343} when we complete squares for some exponential sums (e.g., $\sum_{x=0}^7e_{8}(x^2+x)$):  we encounter certain ``restricted" Gauss sums, meaning the sum index is restricted to certain congruence classes mod 2.  This slightly alters the statement of Lemma \ref{0343} for $q = 2^m$.  We can handle this by writing an indicator function of the allowed congruence classes.  In Lemmas \ref{1121}, \ref{0856} and \ref{0857}, we can handle the extra indicator function by writing it as a linear combination of two additive characters to the modulus 2.  We obtain a linear combination of more Kloosterman-Sali\'{e} sums in Lemmas \ref{0856} and \ref{0857}, and this eventually gives an extra constant factor to the bound on $|S_2|$.  The rest of the proof is the same.

\subsection{\label{sectionminor1} Minor arc analysis, part I}

We begin by estimating $\II_1$.  First  we take the Fourier transform of $\rnu$ (defined at \eqref{0922}):

\al{\label{0549}\hrnu(\theta)=\sum_{\substack{u<U \\ (u,L)=1}}\mu(u)\sum_{\gamma\in\FF_T}\mathcal{R}_{u,\gamma}(\theta),
}
where 
\al{
\mathcal{R}_{u,\gamma}(\theta)=\sum_{x,y\in\ZZ} \psi\left(\frac{\BB ux +uu^{*}}{X}\right)\psi\left(\frac{\BB uy}{X}\right)e(\ff_{M\gamma}(\BB ux+uu^{*},{ \BB uy})\theta ).
}

We will first give an $L^{\infty}$ bound for $\mathcal{R}_{u,\gamma}$ (see \eqref{1205}). \\

Write $\theta=\frac{r}{q}+\beta$ and rearrange the order of $x,y$ according to the congruence classes mod $q$: 
\al{\label{0844}\nonumber
\mathcal{R}_{u,\gamma}\left(\frac{r}{q}+\beta\right)=&\sum_{x_0,y_0(q)}e\left(\ff_{M\gamma}(\BB ux_0+uu^{*}, \BB uy_0)\frac{r}{q}\right)\\
&\cdot\left[\sum_{\substack{x\equiv x_0(q)\\y\equiv y_0(q)} } \psi\left(\frac{\BB ux +uu^{*}}{X}\right)\psi\left(\frac{\BB uy}{X}\right) e\left(\ff_{M\gamma}\left(\BB ux+uu^{*},\BB uy\right)\beta\right)\right]
}
Applying Poisson summation to the $x,y$ sum in the bracket $[\cdot]$, we obtain
\al{\label{0845}\nonumber
[\cdot]=&\sum_{\xi,\zeta\in\ZZ}\int_{-\infty}^{\infty} \int_{-\infty}^{\infty}\psi\left(\frac{\BB u(x_0+qx)+uu^{*}}{X}\right)\psi\left(\frac{\BB u(y_0+qy)}{X}\right)\\
\nonumber& \cdot e\left(\ff_{M\gamma}\left(\BB u(x_0+qx)+uu^{*},\BB u(y_0+qy)\right)\beta\right)e(-x\xi-y\zeta)dxdy     \\
\nonumber
=&\frac{X^2}{q^2\BB^2 u^2}\sum_{\xi,\zeta\in\ZZ}\int_{-\infty}^{\infty} \int_{-\infty}^{\infty}\psi(x)\psi(y)e\left(\ff_{M\gamma}\left(Xx,Xy\right)\beta-\frac{X\xi}{qu\BB}x-\frac{X\zeta}{qu\BB} y\right)e\left(\frac{u^*\xi}{\BB q}\right)dxdy\cdot \\
&\cdot e_q(x_0\xi+y_0\zeta)
}
Plugging \eqref{0845} back into \eqref{0844}, we have 

\al{\label{0543}
\mathcal{R}_{u,\gamma}\left(\frac{r}{q}+\beta\right)=\frac{X^2}{\BB^2 u^2}\sum_{\xi,\zeta\in\ZZ}\mathcal{S}_\gamma(q,u,r,\xi,\zeta)\mathcal{J}_\gamma(\beta; q,u,\xi,\zeta),
}
where 
\al{\mathcal{S}_\gamma(q,u,r,\xi,\zeta)=\frac{1}{q^2}\sum_{x_0,y_0(q)}e_q\left(r\ff_{M\gamma}\left(\BB ux+uu^{*}, \BB uy_0\right)+x_0\xi+y_0\zeta\right)
}
and 
\al{\mathcal{J}_\gamma(\beta; q,u,\xi,\zeta)=\int_{-\infty}^{\infty} \int_{-\infty}^{\infty}\psi(x)\psi(y)e\left(\ff_{M\gamma}\left(Xx,Xy\right)\beta-\frac{X\xi}{qu\BB}x-\frac{X\zeta}{qu\BB} y\right)e\left(\frac{u^*\xi}{\BB q}\right)dxdy
}

Note that the sum in (\ref{0543}) is principally supported on a few terms, since the $\mathcal {J}_\gamma$ term decays quickly.  We will use non-stationary and stationary phase methods to give bounds for the $\mathcal{J}_\gamma$ terms.  We review the statements here, for reference.

\begin{prop}[\cite{Zh14}, Page 24, Non-stationary phase]
  Let $\phi$ be a smooth compactly supported function on $(-\infty,\infty)$ and $f$ be a function which, in the support of $\phi$, satisfies 
  \begin{enumerate}
    \item $|f'(x)| > A > 0$,
    \item $A \ge |f^{(2)}(x)|, \ldots, |f^{(n)}(x)|$.
  \end{enumerate}
  Then
  \[
    \int_{-\infty}^{\infty} \phi(x) e(f(x))dx \ll_{\phi,N} A^{-N}.
  \]
\end{prop}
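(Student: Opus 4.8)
The plan is to prove this by the classical device of repeated integration by parts against the first-order operator that has $e(f(x))$ as an eigenfunction. Since $\frac{d}{dx}e(f(x)) = 2\pi i f'(x)\,e(f(x))$, the operator $Lg := \frac{1}{2\pi i f'}\,g'$ fixes $e(f)$, and its formal transpose is $L^{*}g = -\frac{1}{2\pi i}\left(\frac{g}{f'}\right)'$. Because $f'$ is non-vanishing on $\operatorname{supp}\phi$ by hypothesis (1), $L^{*}$ is well defined there, and because $\phi$ has compact support no boundary terms arise when we integrate by parts. Iterating $N$ times gives
\[
  \int_{-\infty}^{\infty}\phi(x)\,e(f(x))\,dx = \int_{-\infty}^{\infty}\bigl((L^{*})^{N}\phi\bigr)(x)\,e(f(x))\,dx ,
\]
so it suffices to establish the pointwise bound $\bigl|(L^{*})^{N}\phi\bigr| \ll_{\phi,N} A^{-N}$ on $\operatorname{supp}\phi$ (the support cannot grow under $L^{*}$), after which the trivial estimate on the last integral — bounding $|e(f)|=1$ and using the finite measure of the support — finishes the proof.

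For the pointwise bound I would induct on $N$, tracking the precise shape of the terms produced. One shows that $(L^{*})^{N}\phi$ is a finite linear combination, with constants depending only on $N$, of terms
\[
  T = \phi^{(j_0)}\cdot\prod_{i=1}^{r} f^{(j_i+1)}\Big/(f')^{\ell}, \qquad j_i\ge 1\ \ (1\le i\le r),\quad j_0\ge 0,
\]
subject to the bookkeeping invariants $\ell = N+r$ and $j_0 + \sum_{i=1}^{r} j_i = N$. Granting this structure, hypotheses (1) and (2) give $|T|\le \|\phi\|_{C^{N}}\,A^{r}\,A^{-(N+r)} = \|\phi\|_{C^{N}}\,A^{-N}$, since each numerator factor $f^{(j_i+1)}$ has order $j_i+1\ge 2$ and so is bounded by $A$, while each of the $\ell$ denominator factors $f'$ is bounded below by $A$; summing the finitely many terms yields the claim. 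Also, every term retains a factor $\phi^{(j_0)}$, which is why the support does not grow.

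The inductive step is a routine check: applying $L^{*} = -\frac{1}{2\pi i}\left(\frac{(\cdot)'}{f'} - \frac{(\cdot)\,f''}{(f')^{2}}\right)$ to a term $T$ and using the product and quotient rules produces only terms of the same shape with $N$ replaced by $N+1$ — differentiating the $\phi^{(j_0)}$ factor raises $j_0$; differentiating a factor $f^{(j_i+1)}$ raises that $j_i$; and differentiating the power of $f'$, or picking up the extra $f''/(f')^{2}$, creates a new numerator factor $f^{(2)}$, i.e.\ increments $r$. In each case one verifies that $\ell=N+r$ and $j_0+\sum j_i=N$ are preserved.

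I do not expect a genuine obstacle: the statement is standard non-stationary phase, and the only mild subtlety is precisely the bookkeeping above, which is what guarantees that the higher derivatives of $1/f'$ never degrade the exponent beyond $A^{-N}$. One should, however, note the book-keeping caveat that integrating by parts $N$ times uses control of $f^{(k)}$ for $k$ up to $N+1$; thus hypothesis (2) is implicitly invoked for derivatives up to that order, and if only finitely many derivative bounds are available the conclusion holds for $N$ at most that order.
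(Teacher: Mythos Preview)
The paper does not supply a proof of this proposition; it is quoted as a black box from \cite{Zh14} (``We review the statements here, for reference''), so there is nothing in the paper to compare against. Your argument is the standard non-stationary phase proof via iterated integration by parts, and it is correct: the key bookkeeping invariants $\ell = N+r$ and $j_0+\sum j_i = N$ are exactly what ensures each term is $O_{\phi,N}(A^{-N})$, and your closing caveat that one needs control of $f^{(k)}$ for $k\le N+1$ is the right reading of hypothesis~(2).
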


\begin{prop}[\cite{Zh14}, Page 25, Stationary phase]
Let $f$ be a quadratic polynomial of two variables $x$ and $y$ whose homogeneous part has discriminant $-D$ with $D > 0$.  Let $\phi(x,y)$ be a smooth compactly supported function on $\RR^2$, then
\[
  \int_{-\infty}^{\infty} 
  \int_{-\infty}^{\infty}
  \phi(x,y) e(f(x,y))dxdy
  \ll_\phi
  \frac{1}{\sqrt{D}}.
\]
\end{prop}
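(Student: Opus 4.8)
The plan is to reduce to the purely homogeneous quadratic phase by completing the square, and then to estimate the resulting oscillatory integral by Parseval together with the explicit (Fresnel-type) Fourier transform of a Gaussian.

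First I would write $f(x,y)=Q(x,y)+\ell(x,y)+\gamma_{0}$, where $Q$ is the homogeneous quadratic part, $\ell$ is linear, and $\gamma_{0}\in\RR$. Let $M_{Q}$ be the symmetric matrix with $Q(v)=v^{T}M_{Q}v$; since the discriminant of $Q$ equals $-D<0$ we have $\det M_{Q}=D/4\neq 0$, so $M_{Q}$ is invertible, $\nabla f$ vanishes at a unique real point $v_{0}$, and completing the square gives $f(v)=Q(v-v_{0})+(\gamma_{0}-Q(v_{0}))$. Hence
\[
  \dinfint \phi(x,y)\,e(f(x,y))\,dx\,dy
  = e\!\left(\gamma_{0}-Q(v_{0})\right)\dinfint \phi(w+v_{0})\,e(Q(w))\,dw ,
\]
so it suffices to bound $\left|\dinfint \Phi(w)\,e(Q(w))\,dw\right|$ for an arbitrary Schwartz function $\Phi$, and we want the bound to depend on $\Phi$ only through $\|\widehat{\Phi}\|_{L^{1}}$, which is translation invariant; then $\|\widehat{\phi(\cdot+v_{0})}\|_{L^{1}}=\|\widehat{\phi}\|_{L^{1}}\ll_{\phi}1$ because $\phi$ is smooth and compactly supported.

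For the homogeneous integral I would regularize and use Parseval. For $\varepsilon>0$ the function $g_{\varepsilon}(w)=e(Q(w))e^{-2\pi\varepsilon|w|^{2}}$ lies in $L^{1}(\RR^{2})$, and a standard Gaussian integral gives
\[
  \widehat{g_{\varepsilon}}(\eta)=\det\!\left(2(\varepsilon I-iM_{Q})\right)^{-1/2}
  \exp\!\left(-\tfrac{\pi}{2}\,\eta^{T}(\varepsilon I-iM_{Q})^{-1}\eta\right),
\]
where the branch of the square root is immaterial since we only use moduli. Diagonalizing $M_{Q}$ by an orthogonal matrix shows that $\operatorname{Re}\!\left((\varepsilon I-iM_{Q})^{-1}\right)$ is positive definite, so the exponential factor has modulus at most $1$; therefore $\|\widehat{g_{\varepsilon}}\|_{L^{\infty}}\le\left|\det\!\left(2(\varepsilon I-iM_{Q})\right)\right|^{-1/2}$, which tends to $\left|4\det M_{Q}\right|^{-1/2}=D^{-1/2}$ as $\varepsilon\to 0$. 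By the multiplication formula $\dinfint \Phi\,g_{\varepsilon}=\int \widehat{\Phi}(\eta)\,\widehat{g_{\varepsilon}}(-\eta)\,d\eta$, so $\left|\dinfint \Phi\,g_{\varepsilon}\right|\le\|\widehat{\Phi}\|_{L^{1}}\,\|\widehat{g_{\varepsilon}}\|_{L^{\infty}}$; letting $\varepsilon\to 0$ (dominated convergence on the left, since $\Phi\in L^{1}$) yields $\left|\dinfint \Phi\,e(Q)\right|\le\|\widehat{\Phi}\|_{L^{1}}\,D^{-1/2}$, and combining with the reduction above gives the proposition.

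The one delicate point is the uniformity of the implied constant over the whole family of phases $f$: both $v_{0}$ and $M_{Q}$ depend on $f$, but the entire argument is phrased through the translation-invariant quantity $\|\widehat{\phi}\|_{L^{1}}$ and through $\det M_{Q}=D/4$, so no dependence on the individual coefficients of $f$ survives. A more hands-on alternative, avoiding the Fourier transform entirely, is to diagonalize $Q$ orthogonally as $\lambda_{1}u^{2}+\lambda_{2}v^{2}$ with $\lambda_{1}\lambda_{2}=D/4$ and apply the one-dimensional van der Corput second-derivative estimate twice — first in $u$, then in $v$, after checking that the $v$-derivative of the inner $u$-integral is controlled in the same way — which again produces the factor $(\lambda_{1}\lambda_{2})^{-1/2}=2D^{-1/2}$.
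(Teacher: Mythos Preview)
Your argument is correct. Completing the square to reduce to a homogeneous quadratic phase, regularising by a Gaussian, computing the Fourier transform explicitly, and then passing to the limit via dominated convergence is a clean and standard route; the uniformity check at the end (that only $\|\widehat{\phi}\|_{L^{1}}$ and $\det M_{Q}=D/4$ enter) is exactly the point that matters for the application.

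The paper itself does not supply a proof of this proposition: it is quoted verbatim from \cite{Zh14} as a black-box input to the minor-arc analysis, so there is no in-paper argument to compare against. Your Fresnel/Parseval approach is one of the usual proofs; the van der Corput alternative you sketch at the end is the other common one, and either is adequate here.
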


We apply the non-stationary phase to $\mathcal{J}_\gamma$.  We can obtain a bound $A$ as required in the statement by taking
\[
  \frac{X\xi}{qu} \text{ or } \frac{X\zeta}{qu} \gg T^2X^2|\beta|.
\]
(Note that the discriminant of $\mathfrak{f}_{M\gamma}$ is bounded above by $T^4$.)  Using the former for example, the value of $A$ is then $XU/quL$, which is $>1$ since $u < U$, $q < Q_0$ and by \eqref{eqn:growing-par} and \eqref{0737}.

Therefore,  the main contribution of the $\xi,\zeta$ sum in \eqref{0543} comes from the $\xi,\zeta$ terms such that $\frac{X\xi}{qu}\ll T^2X^2|\beta|$ and $\frac{X\zeta}{qu}\ll T^2X^2|\beta|$, or in other words the terms $\xi,\zeta$ such that
$$\xi,\zeta\ll quT^2X|\beta|\ll uT^2X/J =  u<U,$$ where we used $|\beta|\leq \frac{1}{qJ}$ and $J=T^2X$ (by \eqref{eqn:setJ}). \\

For the terms $\xi,\zeta\ll u$, we have an upper bound for $\mathcal{J}_\gamma$ using the stationary phase:
\al{\label{0600}|\mathcal{J}_\gamma(\beta; q,u,\xi,\zeta)|\ll \min\left\{1,\frac{1}{T^2X^2|\beta|}\right\}
}
Lemma \ref{1121} and \eqref{0600} together lead to a bound for $\mathcal{R}_{u,\gamma}\left(\frac{r}{q}+\beta\right)$ and hence for $\hrnu\left(\frac{r}{q}+\beta\right)$:
\al{\label{1205}
\left\vert\hrnu\left(\frac{r}{q}+\beta\right)\right\vert\ll \frac{T^{2\delta-2}U}{|\beta|}.
}
Now we are ready to give an estimate for $\mathcal{I}_1$.  We rewrite $\mathcal{I}_1$ as
\al{\mathcal{I}_1=\sum_{q<Q_0}\sideset{}'\sum_{r(q)}\int_{-\frac{1}{qJ}}^{\frac{1}{qJ}}\left\vert\left(1-\Tf\left(\frac{r}{q}+\beta\right)\right)\hrnu\left(\frac{r}{q}+\beta\right)\right\vert^2d\beta.
}

We now split the integral $\int_{-\frac{1}{qJ}}^{\frac{1}{qJ}}$ above into three parts $\int_{-\frac{K_0}{N}}^{\frac{K_0}{N}}$, $\int_{\frac{K_0}{N}}^{\frac{1}{qJ}}$ and $\int_{-\frac{1}{qJ}}^{-\frac{K_0}{N}}$.  For each integral we use \eqref{1205} to bound the $\hrnu$ term.  In the first integral, we use $$\left\vert\left(1-\Tf\left(\frac{r}{q}+\beta\right)\right)\right\vert^2=\frac{N^2\beta^2}{K_0^2},$$ and in the second and third integral, we trivially bound $\left\vert\left(1-\Tf\left(\frac{r}{q}+\beta\right)\right)\right\vert^2$ above by 1. 
 Altogether, we have 
 \begin{lem}
 \al{
 \mathcal{I}_1\ll \frac{U^2T^{4\delta-4}NQ_0^2}{K_0}
  }
\end{lem}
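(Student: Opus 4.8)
The plan is to substitute the pointwise bound \eqref{1205} on $\hrnu$ directly into the definition of $\mathcal{I}_1$, exploiting the vanishing of $1-\Tf$ near the centre of each arc to control the $1/|\beta|$ singularity of \eqref{1205}. Concretely, I would start from
\[
  \mathcal{I}_1=\sum_{q<Q_0}\sideset{}'\sum_{r(q)}\int_{-\frac{1}{qJ}}^{\frac{1}{qJ}}\left\vert\left(1-\Tf\left(\tfrac{r}{q}+\beta\right)\right)\hrnu\left(\tfrac{r}{q}+\beta\right)\right\vert^2d\beta
\]
and split the $\beta$-integral at $|\beta|=K_0/N$, i.e. into the three pieces $\int_{-K_0/N}^{K_0/N}$, $\int_{K_0/N}^{1/(qJ)}$ and $\int_{-1/(qJ)}^{-K_0/N}$, exactly as indicated in the text above. (One first records the harmless bookkeeping fact that $K_0/N<1/(qJ)$ for every $q<Q_0$, which follows from $qJ\le Q_0T^2X$, $N=T^2X^2$ and $K_0=Q_0^3$ being a very small power of $N$ via \eqref{eqn:growing-par} and \eqref{0737}, so that all three pieces are genuine subintervals.)

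On the central piece $|\beta|\le K_0/N$ I would use the exact identity $\bigl|1-\Tf(\tfrac{r}{q}+\beta)\bigr|^2=N^2\beta^2/K_0^2$ together with \eqref{1205}; the two powers of $\beta$ cancel, so the integrand is $\ll N^2T^{4\delta-4}U^2/K_0^2$, and integrating over an interval of length $\asymp K_0/N$ gives a contribution $\ll NT^{4\delta-4}U^2/K_0$. On the two outer pieces I would bound $\bigl|1-\Tf\bigr|^2\le 1$ trivially, apply \eqref{1205} again so that the integrand is $\ll T^{4\delta-4}U^2/\beta^2$, and extend the range of integration to $\int_{K_0/N}^{\infty}\beta^{-2}\,d\beta=N/K_0$, which again yields $\ll NT^{4\delta-4}U^2/K_0$. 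Thus each individual arc contributes $\ll NT^{4\delta-4}U^2/K_0$, uniformly in $q$ and $r$. Summing over the arcs and using $\#\{(q,r):q<Q_0,\ (r,q)=1\}\ll\sum_{q<Q_0}\varphi(q)\ll Q_0^2$ then gives $\mathcal{I}_1\ll U^2T^{4\delta-4}NQ_0^2/K_0$, which is the claim.

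I do not expect a genuine obstacle here: the real work has already been done in deriving \eqref{1205} from the non‑stationary/stationary phase analysis of $\mathcal{J}_\gamma$ and from Lemma~\ref{1121}. The only point requiring care is that \eqref{1205} is useless on its own as $\beta\to0$, so one must genuinely use the quadratic vanishing of $1-\Tf$ at $\beta=0$ — this is precisely why the split threshold $K_0/N$ (the half-width of the bump in $\Tf$) is the correct one, and it makes the central and outer contributions balance. Beyond that, the estimate is a routine dyadic/direct integration, and the only thing to watch is that the constants implied by $\ll$ absorb $L$, $|\Delta|$ and the fixed data as in Lemmas \ref{1121}--\ref{0857}.
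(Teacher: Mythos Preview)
Your proposal is correct and follows essentially the same approach as the paper: split the $\beta$-integral at $|\beta|=K_0/N$, use the exact formula $|1-\Tf|^2=N^2\beta^2/K_0^2$ on the central piece and the trivial bound $|1-\Tf|^2\le 1$ on the outer pieces, apply \eqref{1205} throughout, and then sum over the $\ll Q_0^2$ arcs. Your added remarks (verifying $K_0/N<1/(qJ)$ so the split is well-defined, and noting that the central and outer contributions balance) make the argument slightly more explicit than the paper's sketch, but the method is identical.
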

Since $K_0\gg Q_0^3\gg Q_0^2 U^2N^{\epsilon}$ (see \eqref{1145}), we have $\mathcal{I}_1\ll T^{4\delta-4}N^{1-\epsilon}$.

\subsection{\label{sectionminor2} Minor arc analysis, part II}
In this section we give an upper bound for 
\al{\label{0916}\mathcal{I}_Q=\sum_{Q<q\leq 2Q}\int_{-\frac{1}{qJ}}^{\frac{1}{qJ}}\sideset{}'\sum_{r(q)}\left|\hrnu\left(\frac{r}{q}+\beta\right)\right|^2d\beta}
for $Q_0 < Q < X$ and show the following.
\begin{lem}\label{lemmaI2}\aln{\mathcal{I}_2\ll T^{4\delta-4}N^{1-\eta}}
\end{lem}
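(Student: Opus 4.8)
The plan is to split the range $Q_0\le q<X$ into dyadic blocks $Q<q\le 2Q$ and to prove $\mathcal{I}_Q\ll T^{4\delta-4}N^{1-\eta}$ for each such block, summing the $O(\log N)$ contributions at the end (the logarithm being absorbed by shrinking $\eta$ slightly). Fix such a $Q$. First I would expand $\hrnu$ at $\theta=\frac{r}{q}+\beta$ via the Fourier identity \eqref{0543} of Section~\ref{sectionminor1},
\[
  \hrnu\Big(\tfrac{r}{q}+\beta\Big)=\frac{X^2}{L^2}\sum_{\substack{u<U\\(u,L)=1}}\frac{\mu(u)}{u^2}\sum_{\gamma\in\FF_T}\sum_{\xi,\zeta\in\ZZ}\mathcal{S}_\gamma(q,u,r,\xi,\zeta)\,\mathcal{J}_\gamma(\beta;q,u,\xi,\zeta).
\]
As in Section~\ref{sectionminor1}, the non-stationary phase estimate discards, up to a negligible tail, all $\xi,\zeta$ with $|\xi|,|\zeta|\gg u$: the resulting parameter $A$ exceeds $1$ once $|\xi|$ or $|\zeta|\gg u$, precisely because $q<X$, and this is the one place where the hypothesis $q<X$ (rather than the larger range $q\le J$ treated in Section~\ref{sectionminor3}) is used. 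One also records $|\mathcal{J}_\gamma(\beta;q,u,\xi,\zeta)|\ll\min\{1,\frac{1}{T^2X^2|\beta|}\}$, whose square integrates to $O(\frac{1}{T^2X^2})$ over $|\beta|\le\frac{1}{qJ}$ in this range of $q$.

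Next I would apply a Cauchy--Schwarz in $u$ weighted by $u^{-2}$ (so that no power of $U$ is lost here), reducing the problem to bounding, for each fixed $u<U$, the quantity
\[
  \sum_{q\sim Q}\,\sideset{}'\sum_{r(q)}\int_{|\beta|\le\frac{1}{qJ}}\Big|\sum_{\gamma\in\FF_T}\sum_{|\xi|,|\zeta|\ll u}\mathcal{S}_\gamma(q,u,r,\xi,\zeta)\,\mathcal{J}_\gamma(\beta;q,u,\xi,\zeta)\Big|^2 d\beta .
\]
Opening the square and carrying out the $r$-sum turns the inner object into the averaged exponential sum $\mathcal{S}(q,u,\gamma,\xi,\zeta,\gamma',\xi',\zeta')$ of \eqref{1113}, weighted by $\mathcal{J}_\gamma\overline{\mathcal{J}_{\gamma'}}$ and summed over $(\gamma,\xi,\zeta,\gamma',\xi',\zeta')$. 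I would split this into an \emph{off-diagonal} part --- where $\fd_\gamma\ne\fd_{\gamma'}$, or $\fd_\gamma=\fd_{\gamma'}$ but $\ff_{M\gamma}(\zeta,-\xi)\ne\ff_{M\gamma'}(\zeta',-\xi')$ --- and a \emph{diagonal} part, where $\fd_\gamma=\fd_{\gamma'}$ and $\ff_{M\gamma}(\zeta,-\xi)=\ff_{M\gamma'}(\zeta',-\xi')$.

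On the off-diagonal part I would invoke Lemmas~\ref{0856} and \ref{0857}: the gain of $q^{-5/4+\epsilon}$ in place of the trivial $q^{-1}$ yields, after summing $q\sim Q\ge Q_0$, a saving of a fixed positive power of $Q_0$, hence of $N$, while the accompanying gcd factors $(q,\fd_\gamma^2)$, $(q,\fd_\gamma-\fd_{\gamma'})$ are controlled by summing them over $\gamma$ (respectively $\gamma,\gamma'$) and invoking the equidistribution bound Lemma~\ref{bk1} for $\fd_\gamma$ modulo divisors of $q$; the net effect is a genuine power-of-$N$ gain over the trivial estimate, which dominates the remaining (tiny) power of $U$ and fixes the admissible $\eta$. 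On the diagonal part Kloosterman cancellation is unavailable, so I would bound $|\mathcal{S}(q,u,\gamma,\xi,\zeta,\gamma',\xi',\zeta')|\le\phi(q)\max_r|\mathcal{S}_\gamma|\max_r|\mathcal{S}_{\gamma'}|\ll u^4/q$ via Lemma~\ref{1121}, and instead count: applying Lemma~\ref{bk1} with a modulus slightly larger than $\max_{\gamma\in\FF_T}|\fd_\gamma|\ll T^2$ gives $\#\{\gamma\in\FF_T:\fd_\gamma=v\}\ll T^{2\delta-2\eta_0}$, hence $\#\{(\gamma,\gamma')\in\FF_T^2:\fd_\gamma=\fd_{\gamma'}\}\ll T^{4\delta-2\eta_0}$, and for each such pair together with $(\xi,\zeta)$ the equation $\ff_{M\gamma'}(\zeta',-\xi')=\ff_{M\gamma}(\zeta,-\xi)$ confines $(\xi',\zeta')$ to $O(N^\epsilon)$ values by the divisor bound for representations of an integer by a fixed positive definite binary quadratic form (the degenerate case $\widetilde{\ff}_{M\gamma}(\zeta,-\xi)=0$ contributing even fewer). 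Assembling the two pieces, summing over $u<U$ and over the dyadic scales $Q$, and absorbing all $\log N$ and $N^\epsilon$ factors into the saving gives $\mathcal{I}_2\ll T^{4\delta-4}N^{1-\eta}$ for a suitable $\eta>0$ (explicitly, any $\eta$ small enough relative to $\eta_0$ and $\delta-\Theta$). The main obstacle is the bookkeeping in the off-diagonal step: the bounds of Lemmas~\ref{0856}--\ref{0857} beat the trivial estimate only by $q^{-1/4}$, so one must show this modest saving survives the summation over all of $\gamma,\gamma',\xi,\zeta,\xi',\zeta'$ and over $q\sim Q$, which forces the careful use of Lemma~\ref{bk1} to stop the gcd factors from eating the gain; a secondary point is verifying the $\xi,\zeta$-truncation uniformly for all $q$ strictly below $X$.
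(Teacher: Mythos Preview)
Your outline follows the paper's strategy closely---dyadic decomposition in $Q$, Cauchy--Schwarz in $u$, expansion via \eqref{0543}, truncation of $\xi,\zeta$ by non-stationary phase, and then a diagonal/off-diagonal split treated by Lemma~\ref{bk1} and the Kloosterman bounds respectively. Two points, however, separate your sketch from a complete proof.

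\textbf{The semi-diagonal case.} You place the case $\fd_\gamma=\fd_{\gamma'}$ with $\ff_{M\gamma}(\zeta,-\xi)\neq\ff_{M\gamma'}(\zeta',-\xi')$ in the off-diagonal bucket and invoke Lemma~\ref{0857}. But that lemma carries the factor $|\ff_{M\gamma}(\zeta,-\xi)-\ff_{M\gamma'}(\zeta',-\xi')|^{1/4}$, and in this section (where $|\xi|,|\zeta|\ll u\le U$) one has $|\ff_{M\gamma}(\zeta,-\xi)|\ll T^2U^2$, so the factor is of size $T^{1/2}U^{1/2}$. The Kloosterman saving $q^{-1/4}\le Q_0^{-1/4}$ is only a tiny power of $T$ (recall $Q_0=T^{(2\delta-2\Theta)/80}$), so it cannot absorb $T^{1/2}$; even combining with the Lemma~\ref{bk1} saving for the constraint $\fd_\gamma=\fd_{\gamma'}$ does not suffice, since $\eta_0$ is in general much smaller than $1/2$. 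The paper avoids this by \emph{not} splitting on $\ff$-values at all in Part~II: the entire block $\fd_\gamma=\fd_{\gamma'}$ is handled as $\mathcal{I}_Q^{(=)}$, using only the trivial bound $|\mathcal{S}|\ll u^4(q,\fd_\gamma^2)/q$ together with Lemma~\ref{bk1}, which already gives $T^{-\eta_0}$. (The finer split on $\ff$-values does work in Part~III, because there $|\xi|,|\zeta|\ll uq/X$ and the extra $X^{-1/2}$ rescues the estimate; that is why Lemma~\ref{0857} appears only in Section~\ref{sectionminor3}.)

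\textbf{The $H$-parameter.} For the genuine off-diagonal $\fd_\gamma\neq\fd_{\gamma'}$, you propose to tame the gcd factors $(q,\fd_\gamma^2)^{1/2}(q,\fd_{\gamma'}^2)^{1/2}(q,\fd_\gamma-\fd_{\gamma'})^{1/4}$ directly by Lemma~\ref{bk1}. The obstruction is that when summing over $q\sim Q$ with the constraint $[d_1,d_2,d_3]\mid q$ (where $d_1\mid\fd_\gamma^2$, $d_2\mid\fd_{\gamma'}^2$, $d_3\mid\fd_\gamma-\fd_{\gamma'}$), one only knows $[d_1,d_2,d_3]\ge(d_1d_2d_3)^{1/3}$ in general, and the resulting loss eats the $q^{-1/4}$ saving. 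The paper resolves this by introducing a threshold $H$ and splitting on whether $(\fd_\gamma,\fd_{\gamma'})\ge H$: when large, discard Kloosterman and use Lemma~\ref{bk1} with modulus $h\ge H$; when small, the three divisors are nearly coprime up to $H^2$, so $[d_1,d_2,d_3]\ge d_1d_2d_3/H^2$ and the Kloosterman saving survives. This balancing, with $H=Q_0^{\eta_0/4}$, is what fixes $\eta$ in \eqref{1145}; your sketch does not yet contain an equivalent mechanism.
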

\begin{proof}
Going back to \eqref{0549}, we apply Cauchy-Schwartz to the $u$ sum to get an upper bound for $\hrnu(\frac{r}{q}+\beta)$:
\al{\label{cauchy-schwartz}
  \left\vert\hrnu\left(\frac{r}{q}+\beta\right)\right\vert^2\ll U\sum_{u<U}\sum_{\gamma\in\FF_T}\sum_{\gamma'\in\FF_T}\mathcal{R}_{u,\gamma}\left(\frac{r}{q}+\beta\right)\overline{\mathcal{R}_{u,\gamma'}\left(\frac{r}{q}+\beta\right)}
}
Using \eqref{0543}, we obtain 
\al{\label{0731}\nonumber\sideset{}'\sum_{r(q)}^{}{}\left\vert\hrnu\left(\frac{r}{q}+\beta\right)\right\vert^2\ll &U\sum_{u<U}\frac{X^4}{u^4}\sum_{\xi,\zeta\in\ZZ}\sum_{\xi',\zeta'\in\ZZ}\sum_{\gamma\in\FF_T}\sum_{\gamma'\in\FF_T}\mathcal{S}(q,u,\gamma,\xi,\zeta,\gamma',\xi',\zeta')\\&\cdot \mathcal{J}_\gamma(\beta;q,u,\xi,\zeta)\overline{\mathcal{J}_{\gamma'}(\beta;q,u,\xi',\zeta')}
}
By the non-stationary phase, the main contribution to \eqref{0731} comes from the terms $\xi,\zeta,\xi',\zeta'\ll U$, and for these terms, we have 
\al{\label{0858}\mathcal{J}_\gamma(\beta;q,u,\xi,\zeta)\overline{\mathcal{J}_{\gamma'}(\beta;q,u,\xi',\zeta')}\ll\min\left\{1,\frac{1}{T^4X^4\beta^2}\right\}
}
Using Lemma \ref{0856} together with \eqref{0858}, we obtain
\al{\label{0915}\sideset{}'\sum_{r(q)}\left\vert\hrnu\left(\frac{r}{q}+\beta\right)\right\vert^2\ll \sum_{\gamma\in\FF_T}\sum_{\gamma'\in\FF_T}U^6X^4q^{-\frac{5}{4}+\epsilon}(q,\fd_\gamma-\fd_{\gamma'})^{\frac{1}{4}}(q,\fd_\gamma^2)^{\frac12}(q,\fd_{\gamma'}^2)^\frac12\min\left\{1,\frac{1}{T^4X^4\beta^2}\right\}
}
Observe that $qJ \le T^2X^2$, so that
\al{\label{0859}\int_{-\frac{1}{qJ}}^{\frac{1}{qJ}}\min\left\{1,\frac{1}{T^4X^4\beta^2}\right\}d\beta\ll \frac{1}{T^{2}X^2}  }
Plug  \eqref{0915} and \eqref{0859} into \eqref{0916}, and we obtain
\al{\label{0946}\mathcal{I}_Q\ll\frac{N^{\epsilon}U^6X^2}{T^2Q^{\frac{5}{4}}}\sum_{Q<q\leq 2Q}\sum_{\gamma\in\FF_T}\sum_{\gamma'\in\FF_T}(q,\fd_\gamma-\fd_{\gamma'})^{\frac{1}{4}}(q,\fd_\gamma^2)^{1/2}(q,\fd_{\gamma'}^2)^{1/2}
}
 We split \eqref{0946} into two parts $\mathcal{I}_Q^{(=)}$ and $\mathcal{I}_Q^{(\neq)}$ according to whether $\fd_\gamma=\fd_{\gamma'}$ or not.
 We first estimate $\mathcal{I}_Q^{(=)}$:
\al{\label{0756}\nonumber\mci_Q^{(=)}\ll&\frac{N^{\epsilon}U^6X^2}{QT^2}\sum_{Q<q\leq 2Q}\sum_{\gamma\in\FF_T}\sum_{\substack{\gamma'\in\FF_T\\   \fd_{\gamma'}=\fd_{\gamma} }}(q,\fd_\gamma^2)\\
\nonumber\ll& \frac{N^{\epsilon}U^6X^2}{QT^2}\sum_{\gamma\in\FF_T}\sum_{\substack{\gamma'\in\FF_T\\ \fd_{\gamma'}=\fd_{\gamma} }}\sum_{d|\fd_\gamma^2}d\sum_{Q<q\leq 2Q}\bd{1}\{d|q\}\\
\nonumber\ll& \frac{N^{\epsilon}U^6X^2}{QT^2}\sum_{\gamma\in\FF_T}\sum_{\substack{\gamma'\in\FF_T\\ \fd_{\gamma'}=\fd_{\gamma} }}\sum_{d|\fd_\gamma^2}Q\\
\nonumber\ll& \frac{N^{\epsilon}U^6X^2T^{2\delta}}{T^2}\sum_{\substack{\gamma'\in\FF_T\\ \fd_{\gamma'}=\fd_{\gamma} }}1\\
\ll& \frac{N^{1+\epsilon}U^6T^{4\delta-4}}{T^{\eta_0}},
 }
 where we have bounded the number of divisors of $\fd_\gamma^2$ by $N^\epsilon$ and in the last step we used Lemma \ref{bk1} to estimate the sum (using modulus $T$ in the statement of the Lemma), with reference to \eqref{eqn:growing-par}.

Now we estimate $\mathcal{I}_Q^{(\neq)}$.  We introduce a new parameter $H$ and we further split $\mathcal{I}_Q^{(\neq)}$ into two $\mathcal{I}_Q^{(\neq,\geq)}$ and $\mathcal{I}_Q^{(\neq,<)}$ according to $(\fd_\gamma,\fd_{\gamma'})\geq H$ or not.

We first estimate $\mathcal{I}_Q^{(\neq,\geq)}$.  Recall \eqref{0915}.  Then we have

\al{\nonumber
\mathcal{I}_Q^{(\neq,\geq)}\ll &\frac{N^{\epsilon}U^6X^2}{QT^2}\sum_{Q<q\leq 2Q}\sum_{\gamma\in\FF_T}\sum_{\substack{\gamma'\in\FF_T\\(\fd_{\gamma'},\fd_{\gamma})\geq H }}(q,\fd_\gamma^2)^{\frac{1}{2}}(q,\fd_{\gamma'}^2)^{\frac{1}{2}}\\
 \label{1147}\ll &\frac{N^{\epsilon}U^6X^2}{QT^2}\sum_{\gamma\in\FF_T}\sum_{\substack{h|\fd_\gamma\\h\geq H}}\sum_{\substack{\gamma'\in\FF_T\\\fd_{\gamma'}\equiv\fd_{\gamma}(h) }}\sum_{q_1|\fd_\gamma^2}\sum_{q_2|\fd_{\gamma'}^2}(q_1q_2)^{\frac{1}{2}}\sum_{Q<q\leq2Q}\bd{1}\{[q_1,q_2]|q\}.
}

Notice that $[q_1,q_2]\geq (q_1q_2)^{\frac{1}{2}}$.  Therefore,
\al{\label{1151}
\eqref{1147}\ll \frac{N^{\epsilon}U^6X^2}{T^2}\sum_{\gamma\in\FF_T}\sum_{\substack{h|\fd_\gamma\\h\geq H}}\sum_{\substack{\gamma'\in\FF_T\\\fd_{\gamma'}\equiv\fd_{\gamma}(h) }}1}
Again using Lemma \ref{bk1}, we have 
\al{\label{1155}\eqref{1151}\ll \frac{N^{\epsilon}U^6X^2}{T^2H^{\eta_0}}\sum_{\gamma\in\FF_T}\sum_{\substack{h|\fd_\gamma\\h\geq H}}1\ll \frac{N^{1+\epsilon}U^6T^{4\delta-4}}{H^{\eta_0}}
}
Now we estimate $\mathcal{I}_Q^{(\neq,<)}$.  
Using \eqref{0915} and replacing $(q,\fd_\gamma^2)^{\frac{1}{2}},(q,\fd_{\gamma'}^2)^{\frac{1}{2}}$ by $(q,\fd_\gamma),(q,\fd_{\gamma'})$, we have
\al{\label{0233}\nonumber\mathcal{I}_Q^{(\neq,<)}\ll& \frac{N^{\epsilon}U^6X^2}{T^2Q^{\frac{5}{4}}}\sum_{Q<q\leq 2Q}\sum_{\gamma\in\FF_T}\sum_{\substack{\gamma'\in\FF_T\\(\fd_{\gamma'},\fd_{\gamma})< H }}(q,\fd_\gamma-\fd_{\gamma'})^{\frac{1}{4}}(q,\fd_\gamma)(q,\fd_{\gamma'})\\
\ll& \frac{N^{\epsilon}U^6X^2}{T^2Q^{\frac{5}{4}}}\sum_{\gamma\in\FF_T} \sum_{d_1|\fd_\gamma}\sum_{\substack{d_3\leq 2Q}}\sum_{\substack{\gamma'\in\FF_T\\\fd_\gamma'\equiv\fd_\gamma(d_3)}} \sum_{d_2|\fd_{\gamma'}} d_1d_2d_3^{\frac{1}{4}}\sum_{Q<q\leq 2Q}\bd{1}\{[d_1,d_2,d_3]|q\}
}
Writing $h=(\fd_\gamma,\fd_{\gamma'})$, then $\frac{d_1}{h},\frac{d_2}{h},\frac{d_3}{h}$ are mutually relatively prime.  Since $h<H$, we have the estimate 
\al{\sum_{Q<q\leq 2Q}\bd{1}\{[d_1,d_2,d_3]|q\}\leq \frac{QH^2}{d_1d_2d_3}  }
Therefore,  
\al{\label{0755}\nonumber
\eqref{0233}\ll & \frac{N^{\epsilon}U^6X^2H^2}{T^2Q^{\frac{1}{4}}}\sum_{\gamma\in\FF_T} \sum_{d_1|\fd_\gamma}\sum_{\substack{d_3\leq 2Q}}\sum_{\substack{\gamma'\in\FF_T\\\fd_\gamma'\equiv\fd_\gamma(d_3)}} \sum_{d_2|\fd_{\gamma'}}d_3^{-\frac{3}{4}}\\
\nonumber\ll &\frac{N^{\epsilon}U^6X^2H^2T^{4\delta}}{T^2Q^{\frac{1}{4}}}\sum_{d_3\leq 2Q}d_3^{-\frac{3}{4}-\eta_0}\\
 \ll &\frac{N^{1+\epsilon}U^6H^2T^{4\delta-4}}{Q^{\eta_0}}
}
To make the terms at \eqref{0756}, \eqref{1155}, \eqref{0755}, and later on at \eqref{0757} $\ll T^{4\delta-4}N^{1-\eta}$ for an appropriate positive $\eta$, we can set
\al{\boxed{\label{1145}H=Q_0^{\frac{\eta_0}{4}},U=H^{\frac{\eta_0}{20}}}
}
Thus we have proven Lemma~\ref{lemmaI2}.
\end{proof}

\subsection{\label{sectionminor3} Minor arc analysis, part III}

In this section we give an upper bound for $\mathcal{I}_Q$ when $Q$ is large, i.e. $X<Q\leq J$.  We keep all notation from the previous sections.  Namely, we show the following.
\begin{lem}\label{lemmaI3}\aln{\mathcal{I}_3\ll T^{4\delta-4}N^{1-\eta}.}
\end{lem}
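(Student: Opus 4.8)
The plan is to estimate $\mathcal{I}_3$ by the same three-step scheme used for $\mathcal{I}_2$ in Section~\ref{sectionminor2}, exploiting the fact that the range $X\le q\le J$ is in fact more favourable than the intermediate range. First I would note that distinct reduced fractions with denominator below $Q_0$ are separated by more than $2K_0/N$ (because $Q_0^5\ll N$), so the spike $\Tf$ satisfies $0\le\Tf\le 1$ everywhere; hence $|1-\Tf|\le 1$ on the arcs in question and
\[
\mathcal{I}_3\le\sum_{X\le q\le J}\sideset{}'\sum_{r(q)}\int_{-\frac{1}{qJ}}^{\frac{1}{qJ}}\Big|\hrnu\big(\tfrac rq+\beta\big)\Big|^2\,d\beta .
\]
Splitting $[X,J]$ dyadically into $O(\log N)$ ranges $Q<q\le 2Q$, it then suffices to prove $\mathcal{I}_Q\ll T^{4\delta-4}N^{1-\eta}$ for each $X\le Q\le J$, the logarithm being absorbed into a slightly smaller $\eta$ (which, per the standing convention, we take no larger than the value already fixed in Lemma~\ref{lemmaI2}).

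The key structural point distinguishing this range from Section~\ref{sectionminor2} is that for $q\ge X$ one has $|\beta|\le\frac{1}{qJ}\le\frac{1}{XJ}=\frac{1}{T^2X^2}$, so on the whole range of integration the stationary-phase factor $\min\{1,\frac{1}{T^2X^2|\beta|}\}$ equals $1$, $|\mathcal{J}_\gamma|\ll 1$, and $\int_{-1/(qJ)}^{1/(qJ)}|\mathcal{J}_\gamma\overline{\mathcal{J}_{\gamma'}}|\,d\beta\ll\frac{1}{qJ}$ --- which, since $qJ\ge XJ=T^2X^2$, is at least as good as the bound $\frac{1}{T^2X^2}$ used at \eqref{0859}. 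Running the Cauchy--Schwarz step \eqref{cauchy-schwartz}, the Poisson expansion \eqref{0543}, and the non-stationary-phase truncation to $\xi,\zeta,\xi',\zeta'\ll U$ exactly as in Section~\ref{sectionminor2}, I would arrive at a bound of the shape
\[
\mathcal{I}_Q\ll U^2X^4N^\epsilon\sum_{\xi,\zeta,\xi',\zeta'\ll U}\ \sum_{\gamma,\gamma'\in\FF_T}\ \frac{1}{QJ}\sum_{Q<q\le 2Q}\big|\mathcal{S}(q,u,\gamma,\xi,\zeta,\gamma',\xi',\zeta')\big| ,
\]
the $u$-dependence on the right being harmless once the $u$-sum is restored.

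It then remains to bound $\sum_{Q<q\le 2Q}|\mathcal{S}(q,u,\dots)|$, splitting the $(\gamma,\gamma')$-sum exactly as in the proof of Lemma~\ref{lemmaI2}: when $\fd_\gamma\ne\fd_{\gamma'}$ apply Lemma~\ref{0856}; when $\fd_\gamma=\fd_{\gamma'}$ but $\ff_{M\gamma}(\zeta,-\xi)\ne\ff_{M\gamma'}(\zeta',-\xi')$ apply Lemma~\ref{0857}. In both cases the Kloosterman saving $q^{-5/4}$ is enormous because $q\asymp Q\ge X$ is almost of size $N^{1/2}$, and after the elementary divisor estimate $\sum_{Q<q\le 2Q}(q,m)\ll QN^\epsilon$ (valid since $\fd_\gamma^2\ll T^4\ll Q$) and, where convenient, Lemma~\ref{bk1} giving $\#\{\gamma':\fd_{\gamma'}=\fd_\gamma\}\ll T^{2\delta-\eta_0}$, a short computation with $J=T^2X$, $N=T^2X^2$ and $Q\ge X$ makes these contributions $\ll T^{4\delta-4}N^{1-\eta}$ with a genuine power of $N$ to spare. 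The only genuinely new ingredient is the \emph{diagonal} case $\fd_\gamma=\fd_{\gamma'}$ and $\ff_{M\gamma}(\zeta,-\xi)=\ff_{M\gamma'}(\zeta',-\xi')$, where neither Kloosterman lemma applies. Here I would \emph{not} bound $\mathcal{S}_\gamma$ by the crude Lemma~\ref{1121}, but instead use the explicit evaluation \eqref{0839}, which after summing over $r$ gives $|\mathcal{S}|\ll u^4(q,\fd_\gamma^2)/q$ with only a \emph{single} power of the gcd. Combining this with $\sum_{Q<q\le 2Q}(q,\fd_\gamma^2)\ll QN^\epsilon$, with the classical fact that a fixed positive-definite integral binary quadratic form represents a fixed integer in $\ll N^\epsilon$ ways (which bounds the number of $(\xi',\zeta')$ with $\widetilde{\ff}_{M\gamma'}(\zeta',-\xi')$ equal to the given value $\widetilde{\ff}_{M\gamma}(\zeta,-\xi)$), and with Lemma~\ref{bk1} controlling $\#\{\gamma':\fd_{\gamma'}=\fd_\gamma\}$ when $\gamma\ne\gamma'$ (for $\gamma=\gamma'$ the count $\#\FF_T\ll T^{2\delta}$ suffices), one obtains a diagonal contribution $\ll U^4N^{1+\epsilon}T^{4\delta-4-\eta_0}$ in general, and $\ll U^4N^{1+\epsilon}T^{2\delta-4}$ in the sub-case $\gamma=\gamma'$; both are $\ll T^{4\delta-4}N^{1-\eta}$, the latter precisely because $\delta>1$ makes $T^{2\delta}$ a positive power of $N$. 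Summing over the $O(\log N)$ dyadic $Q$ and over the three cases finishes the proof.

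The hard part will be this diagonal term: one must resist the lossy trivial bound on $\mathcal{S}_\gamma$, extract the sharp power of $(q,\fd_\gamma^2)$ from \eqref{0839}, and then notice that the remaining powers of $T$ balance only thanks to $\delta>1$ together with the spectral-gap rigidity $T^{-\eta_0}$ of the values $\fd_\gamma$ from Lemma~\ref{bk1}. Everything else is a routine rerun of Section~\ref{sectionminor2} in a regime where the estimates are strictly easier than in Lemma~\ref{lemmaI2}.
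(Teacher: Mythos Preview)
Your proposal has two genuine gaps, one technical and one structural.

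\textbf{The truncation is wrong for $q\ge X$.} You invoke ``the non-stationary-phase truncation to $\xi,\zeta,\xi',\zeta'\ll U$ exactly as in Section~\ref{sectionminor2}''. But that truncation relies on the bound $A=\frac{X\xi}{quL}>1$ established in Section~\ref{sectionminor1} \emph{under the hypothesis $q<Q_0$}. For $q\ge X$ one has $A=\frac{X\xi}{quL}\le\frac{\xi}{uL}$ only once $\xi\gg\frac{qu}{X}$; in the range $u\ll\xi\ll\frac{qu}{X}$ the phase derivative is below $1$ and non-stationary phase gives nothing. The correct cutoff in this regime is therefore $\xi,\zeta\ll\frac{uq}{X}$, not $\ll U$, and for $q$ near $J=T^2X$ this is of size $\asymp uT^2$, vastly larger than $U$. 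The paper accounts for this by inserting a discrete Fourier expansion modulo $q$ (equation \eqref{1032}), obtaining harmonics $m,n\ll\frac{uq}{X}$; your Poisson approach would give the same cutoff if carried out correctly.

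\textbf{The diagonal count is too crude, even with the right truncation.} In the case $\fd_\gamma=\fd_{\gamma'}$ and $\tilde\ff_{M\gamma}(\zeta,-\xi)=\tilde\ff_{M\gamma'}(\zeta',-\xi')$ you propose to bound the number of $(\gamma',\xi',\zeta')$ by (divisor bound for $(\xi',\zeta')$) $\times$ (Lemma~\ref{bk1} for $\gamma'$), i.e.\ by $N^\epsilon\cdot T^{2\delta-\eta_0}$. With the correct frequency range this yields a contribution of order $U^4N^{1+\epsilon}T^{4\delta-2-2\eta_0}$ for $Q$ near $J$, which exceeds the target $T^{4\delta-4}N^{1-\eta}$ whenever $\eta_0<1$. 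The paper avoids this overcount via Lemma~\ref{0832}: because all $\gamma'$ with $\fd_{\gamma'}=\fd_\gamma$ give forms $\tilde\ff_{M\gamma'}$ of the \emph{same} discriminant $\Delta\fd_\gamma^2$, the pairs $(\gamma',(m',n'))$ representing a fixed value are controlled jointly (essentially by the class number and divisor function), yielding $\ll N^\epsilon(\tilde\ff_{M\gamma}(n,-m),\fd_\gamma^2)^{1/2}$ with no factor of $T^{2\delta}$. This, together with the auxiliary Lemma~\ref{0833lem}, is the key new ingredient in the large-$q$ range that is absent from your plan; the remaining cases $\II_Q^{(\neq)}$ and $\II_Q^{(=,\neq)}$ are indeed handled essentially as you describe, but with the larger frequency range.
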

\begin{proof}
Recall
\al{\label{1051}
\hrnu\left(\frac{r}{q}+\beta\right)=\sum_{\substack{u<U \\ (u,L)=1}}\mu(u)\sum_{\gamma\in\FF_T}\mathcal{R}_{u,\gamma}\left(\frac{r}{q}+\beta\right),
}
where 
\al{\label{1033}
\mathcal{R}_{u,\gamma}\left(\frac{r}{q}+\beta\right)=\sum_{x,y,\in\ZZ}\psi\left(\frac{\BB ux +uu^{*}}{X}\right)\psi\left(\frac{\BB uy}{X }\right)e\left(\ff_{M\gamma}\left(\BB ux+uu^{*},uLy\right)\left(\frac{r}{q}+\beta\right)\right).
}
We insert extra harmonics by writing  $e_q\left(r\ff_{M\gamma}\left(\BB ux+uu^{*},uLy\right)\right)$ into its Fourier expansion:

\al{\label{1032}\nonumber
&e_q\left(r\ff_{M\gamma}\left(\BB ux+uu^{*},uLy\right)\right)\\
\nonumber=&\frac{1}{q^2}\sum_{m(q)}\sum_{n(q)}\sum_{s(q)}\sum_{t(q)}e_q\left(r\ff_{M\gamma}\left(\BB us+uu^{*},\BB ut\right)+sm+tn\right) e\left(-\frac{mx}{q}-\frac{ny}{q}\right)\\=& \sum_{m(q)} \sum_{n(q)} \mathcal{S}_\gamma(q,u,r,m,n)e\left(-\frac{mx}{q}-\frac{ny}{q}\right)
}

Inserting \eqref{1032} into \eqref{1033}, we obtain
\al{
\mathcal{R}_{u,\gamma}    \left(\frac{r}{q}+\beta\right)= \sum_{m(q)} \sum_{n(q)} \mathcal{S}_\gamma(q,u,r,m,n)\lambda_\gamma\left(\beta,X,u,\frac{m}{q},\frac{n}{q}\right)
}
where 
\al{\label{0143}\lambda_\gamma(\beta,X,u,s,t)=\sum_{x,y\in\ZZ}\psi\left(\frac{\BB ux +uu^{*}}{X}\right)\psi\left(\frac{\BB uy}{X}\right)e\left(\ff_{M\gamma}\left(\BB ux+uu^{*},{\BB uy}\right)\beta-sx-ty\right)
}

Now we apply Cauchy-Schwartz to the $u$ sum for $\hrnu$ (see \eqref{cauchy-schwartz}, \eqref{1051}), and insert it back into $\mathcal{I}_Q$ at \eqref{0916}.  We have 
\al{\label{0136}\nonumber\II_Q=&\sum_{Q< q \leq 2Q}\sideset{}'\sum_{r(q)}\int_{-\frac{1}{qJ}}^{\frac{1}{qJ}}\left|\hrnu\left(\frac{r}{q}+\beta\right)\right|^2d\beta\\
\nonumber\ll& U\sum_{u<U}\sum_{Q< q\leq 2Q}\sideset{}'\sum_{r(q)}\int_{-\frac{1}{qJ}}^{\frac{1}{qJ}}\left|\sum_{\gamma\in\FF_T}\mathcal{R}_{u,\gamma}\left(\frac{r}{q}+\beta\right)\right|^2d\beta\\
\nonumber\ll& U\sum_{u<U}\sum_{\gamma\in\FF_T}\sum_{\gamma'\in\FF_T}\sum_{Q< q\leq2Q}\sum_{m,n,m',n'(q)}\left(\sideset{}'\sum_{r(q)}\mathcal{S}_{\gamma}(q,u,r,m,n)\overline{\mathcal{S}_{\gamma'}(q,u,r,m',n')}\right)\\
  &\nonumber \cdot \int_{-\frac{1}{qJ}}^{\frac{1}{qJ}}\lambda_{\gamma}\left(\beta,X,u,\frac{m}{q},\frac{n}{q}\right)\overline{\lambda_{\gamma'}\left(\beta,X,u,\frac{m'}{q},\frac{n'}{q}\right)}d\beta\\
=& \nonumber U\sum_{u<U}\sum_{\gamma\in\FF_T}\sum_{\gamma'\in\FF_T}\sum_{Q< q\leq2Q}\sum_{m,n,m',n'(q)}\mathcal{S}(q,u,\gamma, m,n,\gamma',m',n')\\
& \cdot \int_{-\frac{1}{qJ}}^{\frac{1}{qJ}}\lambda_{\gamma}\left(\beta,X,u,\frac{m}{q},\frac{n}{q}\right)\overline{\lambda_{\gamma'}\left(\beta,X,u,\frac{m'}{q},\frac{n'}{q}\right)}d\beta
}
Applying Poisson summation and non-stationary phase to $\lambda_\gamma$ and $\lambda_{\gamma'}$, we see that the main contribution to \eqref{0136} comes from the terms  $m$,$n$,$m'$,$n'$ $\ll\frac{qu}{X}$.   For these terms, we use the trivial bound:
\al{\label{0148}\left|\lambda_\gamma\left(\beta,X,u,\frac{m}{q},\frac{n}{q}\right)\right|,\left|\lambda_{\gamma'}\left(\beta,X,u,\frac{m'}{q},\frac{n'}{q}\right)\right|\ll \frac{X^2}{u^2}} 

From \eqref{0136} and \eqref{0148} we have
\al{\label{0211}
\II_Q\ll \frac{UX^4}{QJ}\sum_{u<U}\frac{1}{u^4}\sum_{\gamma\in\FF_T}\sum_{\gamma'\in\FF_T}\sum_{Q<q\leq 2Q}\sum_{m,n,m',n'\ll \frac{uq}{X}}\vert\mathcal{S}(q,u,\gamma,m,n,\gamma',m',n')  \vert
}
We split \eqref{0211}  into ${\mathcal{I}_Q^{(=)}}$ and ${\mathcal{I}_Q^{(\neq)}}$ according to whether $\fd_\gamma=\fd_{\gamma'}$ or not:
\al{\label{05121}
{\mathcal{I}_Q^{(=)}}= \frac{UX^4}{QJ}\sum_{u<U}\frac{1}{u^4}\sum_{\gamma\in\FF_T}\sum_{\substack{\gamma'\in\FF_T\\ \fd_{\gamma'}=\fd_\gamma }}\sum_{Q<q\leq 2Q}\sum_{m,n,m',n'\ll \frac{uq}{X}}\vert\mathcal{S}(q,u,\gamma,m,n,\gamma',m',n')  \vert
}
and
\al{
{\mathcal{I}_Q^{(\neq)}}= \frac{UX^4}{QJ}\sum_{u<U}\frac{1}{u^4}\sum_{\gamma\in\FF_T}\sum_{\substack{\gamma'\in\FF_T\\ \fd_{\gamma'}\neq\fd_\gamma }}\sum_{Q<q\leq 2Q}\sum_{m,n,m',n'\ll \frac{uq}{X}}\vert\mathcal{S}(q,u,\gamma,m,n,\gamma',m',n')  \vert
}
We first deal with $\mathcal{I}_Q^{(\neq)}$.  Using Lemma \ref{0856} to bound $|\mathcal{S}|$, we obtain:
\al{\label{1134}
\mathcal{I}_Q^{(\neq)}\ll \frac{UX^4}{QJ}\sum_{u<U}\frac{1}{u^4}\sum_{\gamma\in\FF_T}\sum_{\substack{\gamma'\in\FF_T\\ \fd_{\gamma'}\neq\fd_\gamma }}\sum_{Q<q\leq 2Q}\sum_{m,n,m',n'\ll \frac{uq}{X}}u^4q^{-\frac{5}{4}+\epsilon}(q,\fd_\gamma-\fd_{\gamma'})^{\frac{1}{4}}(q,\fd_\gamma^2)^{\frac{1}{2}}(q,\fd_{\gamma'}^2)^{\frac{1}{2}}
}
Bounding $(q,\fd_\gamma-\fd_{\gamma'})$, $(q,\fd_\gamma^2)$ $(q,\fd_{\gamma'}^2)$ by $T,T^2,T^2$ respectively, we obtain 
\al{
\II_Q^{(\neq)}\ll U^6T^{4\delta+\frac{1}{4}}X^{-1}Q^{\frac{11}{4}+\epsilon}\ll T^{4\delta-4}N^{1+\epsilon}\left(U^6T^{\frac{9}{4}}X^{-3}Q^{\frac{11}{4}}\right).
}
Since $Q\leq J=T^2X$, the term in the parentheses above is $\ll U^2T^{\frac{17}{4}}X^{-\frac{1}{4}}$, and thus we have obtained a significant power saving for $\II_Q^{(\neq)}$. \par
Now we deal with $\II_Q^{(=)}$.  We split $\II_Q^{(=)}$ into $\II_Q^{(=,=)}$ and $\II_Q^{(=,\neq)}$ according to whether $\ff_{M\gamma}(n,-m)=\ff_{M\gamma'}(n',-m')$ or not. We first give an upper bound for $\II_Q^{(=,\neq)}$.  From Lemma \ref{0857},
\al{\label{1135}\nonumber
\II_Q^{(=,\neq)}\ll &\frac{UX^4}{QJ}\sum_{u<U}\frac{1}{u^4}\sum_{m,n,m',n'\ll \frac{uq}{X}
}\sum_{\gamma\in\FF_T}\sum_{\substack{\gamma'\in\FF_T\\ \fd_{\gamma'}=\fd_\gamma \\\ff_{M\gamma}(\xi,-\zeta)\neq\ff_{M\gamma'}(\xi',-\zeta')}}\sum_{Q<q\leq 2Q}u^4q^{-\frac{5}{4}}(q,\fd_\gamma^2)^{\frac{1}{2}}(q,\fd_{\gamma'}^2)^{\frac{1}{2}}\\
&\cdot \vert\ff_{M\gamma}(n,-m)-\ff_{M\gamma'}(n',-m'  )\vert^{\frac{1}{4}}.
}
We bound $(q,\fd_{\gamma}^2), (q,\fd_{\gamma'}^2)$ by $T^2$ and $ \vert\ff_{M\gamma}(n,-m)-\ff_{M\gamma'}(n',-m')\vert$ by $\frac{T^2u^2q^2}{X^2}$, so that we have
\al{\label{0614}
\II_Q^{(=,\neq)}\ll U^{\frac{13}{2}}Q^{\frac{13}{4}+\epsilon}T^{4\delta+\frac{1}{2}}X^{-\frac{3}{2}}\ll T^{4\delta-4}N^{1+\epsilon}(U^{\frac{13}{2}}T^{9}X^{-\frac{1}{4}}),
}
where we have used $Q\leq T^2X$.  Again we obtain a significant power saving from \eqref{0614}.\\

Finally we estimate $\II_Q^{(=,=)}$.  From Lemma \ref{0856} and \eqref{05121}, we have
\al{\label{0803}
\II_Q^{(=,=)}\ll\frac{N^{\epsilon}UX^4}{QJ}\sum_{u<U}\frac{1}{u^4}\sum_{\gamma\in\FF_T}\sum_{Q< q\leq2Q}\sum_{m,n\ll\frac{uq}{X}}\frac{(\fd_\gamma^2,q)}{q}u^4\sum_{\sk{{\gamma'}\in\FF_T\\\fd_{\gamma'}=\fd_{\gamma}}}\sum_{\sk{m',n'\ll \frac{uq}{X}\\ \ff_{M\gamma'}(n',-m')=\ff_{M\gamma}(n,-m)}}1
}

To analyze \eqref{0803}, we introduce the following two lemmata, the proofs of which are minor adaptions of the proofs of Lemma 3.15 and Lemma 3.16 from \cite{Zh14}.
\begin{lem}\label{0832}
 Fix $\gamma \in \FF_T$.  Then we have
 \aln{\sum_{\sk{{\gamma'}\in\FF_T\\\fd_{\gamma'}=\fd_{\gamma}}}\sum_{\sk{m',n'\ll \frac{uq}{X}\\ \ff_{M\gamma'}(n',-m')=\ff_{M\gamma}(n,-m)}}1 \ll N^{\epsilon}\left(\tilde\ff_{M\gamma}(n,-m),\fd_\gamma^2\right)^{\frac{1}{2}}}
\end{lem}

\begin{lem}\label{0833lem} For any $\gamma\in\FF_T$, $d|\fd_\gamma^2$ and any integer $W>0$, we have 
  \aln{\sum_{\substack{m,n\leq W\\ \tilde\ff_{M\gamma}(n,-m)\equiv 0(d)}}1\ll W^2d^{-\frac{1}{2}}+W.}
\end{lem}

We now return to \eqref{0803}. From Lemma \ref{0832} and Lemma \ref{0833lem}, we have
\al{\label{0757}
\nonumber\mathcal{I}_Q^{(=,=)}\ll& \frac{N^{\epsilon}UX^4}{QJ}\sum_{u<U}\sum_{\gamma\in\FF_T}\sum_{Q< q\leq2Q}\sum_{m,n\ll\frac{uq}{X}}\frac{(\fd_\gamma^2,q)}{q}\left(\tilde\ff_{M\gamma}(m,-n),\fd_\gamma^2\right)^{\frac{1}{2}}\\
\nonumber\ll &\frac{N^{\epsilon}UX^4}{QJ}\sum_{u<U}\sum_{\gamma\in\FF_T}\sum_{Q< q\leq2Q}\frac{(\fd_\gamma^2,q)}{q}\sum_{d|\fd_\gamma^2}d^{\frac{1}{2}}\sum_{m,n\ll\frac{uq}{X}}\bd{1}\{\tilde\ff_{M\gamma}(m,-n)\equiv 0(d)\}\\
\nonumber\ll &\frac{N^{\epsilon}UX^4}{QJ}\sum_{u<U}\sum_{\gamma\in\FF_T}\sum_{Q< q\leq2Q}\frac{(\fd_\gamma^2,q)}{q}\sum_{d|\fd_\gamma^2}d^{\frac{1}{2}}\left(\frac{u^2q^2}{X^2d^{\frac{1}{2}}}+\frac{uq}{X}\right)\\
\nonumber \ll &\frac{N^{\epsilon}U^4X^4}{QJ}\sum_{\gamma\in\FF_T}\sum_{Q< q\leq2Q}\frac{(\fd_\gamma^2,q)}{q}\cdot\frac{T^2q}{X}\\
\nonumber\ll & \frac{N^{\epsilon}U^4X^3T^2}{QJ}\sum_{\gamma\in\FF_T}\sum_{d|\fd_\gamma^2}d \sum_{\substack{Q<q\leq 2Q\\ q\equiv0(d)}}1\\
\nonumber\ll&  {N^{\epsilon}U^4X^2T^{2\delta}}\ll N^{1+\epsilon}T^{4\delta-4}(U^4T^{2-2\delta})\\\ll& N^{1-\eta}T^{4\delta-4}
}
Thus we have a power saving here, too.  \\

Put together, \eqref{1134}, \eqref{1135}, \eqref{0757} lead to the desired bound in Lemma~\ref{lemmaI3}.
\end{proof}

With the bounds on $\mathcal I_1$, $\mathcal I_2,$ and $\mathcal I_3$ that we have obtained here, Theorem \ref{0430} and the main Theorem \ref{mainthm} follow.  

\section{Spectral Gap for a Class of Kleinian Groups}\label{spectralgapsec}

In this section, we prove Theorem \ref{mainspectralthm}, which will in particular imply that a familial group $\mathcal{A}$ has a geometric spectral gap. 
Theorem \ref{mainspectralthm} concerns more generally an infinite-covolume, geometrically finite, Zariski dense Kleinian group $\mathcal{A} < \textrm{PSL}_2(\mathcal{O}_K)$ containing a Zariski dense subgroup $\Gamma < \textrm{PSL}_2(\ZZ)$.  

We first simplify the situation by moving to $\textrm{SL}_2$ instead of $\textrm{PSL}_2$.  In particular, if let $\mathcal{A}'$ be the preimage of $\mathcal{A}$ in $\textrm{SL}_2$, then the quotients $\mathcal{A}' \backslash \mathbb{H}$ and $\mathcal{A} \backslash \mathbb{H}$ are the same.  Therefore, their geometric spectral theories agree.  The properties of being geometrically finite, infinite-covolume, Zariski dense and having a Zariski dense surface subgroup are preserved. 

Assume also that $\mathcal{A}$ is not itself contained in $\textrm{SL}_2(\ZZ)$ (in which case it has a spectral gap in the senses described below by \cite{BV12}).

Assume also that $\Gamma$ has a multiplicative structure, in the sense that
for any $q=\prod_{i}p_i^{n_i}$,
$${\Gamma}/{\Gamma(q)}\cong \prod_i{\Gamma}/{\Gamma}(p_i^{n_i}).$$
For, if $\Gamma$ does not have this multiplicative structure, we replace $\Gamma$ by $\widehat{\Gamma} := \Gamma \cap \Lambda$, where $\Lambda$ is a principal congruence subgroup of $\textrm{SL}_2(\ZZ)$, so that $\widehat{\Gamma}$ has a multiplicative structure.  The existence of this subgroup is guaranteed by the strong approximation property.  Then $\widehat{\Gamma}$ still has Zariski closure $\textrm{SL}_2(\RR)$ as it is finite index.

As a byproduct, we prove a version of strong approximation for $\mathcal{A}$, as follows.

\begin{thm}\label{thm:explicit-strong}
 Let $\mathcal A$ and $\Gamma$ be as above.  There exists an integer $P_{\textrm{bad}}$ depending on $\mathcal A$, such that if $q\in\ZZ$, with $q=q_{\textrm{bad}}\cdot q_1$ where $(q_1,P_{\textrm{bad}})=1$, we have

\begin{enumerate}
\item
$$\mathcal A/\mathcal{A}(q)\cong \mathcal{A}/\mathcal A(q_{\textrm{bad}})\times \mathcal{A}/\mathcal A({q_{1}})$$
 \item 
   \aln{
     \mathcal{A}/\mathcal A(q_1)&= {\textrm{SL}_2( \mathcal O_K)}/\textrm{SL}_2(\mathcal O_K)(q_1)\\
     &\cong \prod_{p_i^{n_i}||q_1} \textrm{SL}_2(\mathcal{O}_K)/\textrm{SL}_2(\mathcal O_K)(p_i^{n_i})
   }
\item For each $p|P_{\textrm{bad}}$, there exists $m_p\geq 1$ such that for all $k_p \ge m_p$,
   \[
     \left. \mathcal{A}\left(p^{m_p}\right)\middle/\mathcal{A}\left(p^{k_p}\right) \right.
     =
     \left. \textrm{SL}\left(2,\mathcal{O}_K\right)\left(p^{m_p}\right)\middle/\textrm{SL}\left(2,\mathcal{O}_K\right)\left(p^{k_p}\right) \right..
   \]
     Moreover, we can choose $m_p$ so that $m_p\leq m_p'+\iota_p$, where  $m_p'$ is the smallest positive power $m$ of $p$ such that for all $k_p' \ge m_p'$,
    \[
      \Gamma(p^{m_p'})/\Gamma(p^{k_p'}) = \textrm{SL}_2(\ZZ)(p^{m_p'})/\textrm{SL}_2(\ZZ)(p^{k_p'}).
  \]
and $\iota_p$ is the smallest non-negative integer such that \[
    p^{\iota_p} \mathfrak{sl}(2, \ZZ_p \otimes_\ZZ \mathcal{O}_K) \subset \operatorname{Span}_{\ZZ_p}( \mathcal{A} \cdot \mathfrak{sl}(2,\ZZ_p)  ).
  \]
  In this notation, the action of $\mathcal{A}$ is the restriction of the adjoint action of the Lie group $\textrm{SL}$ on its Lie algebra $\mathfrak{sl}$, i.e. conjugation.
\item If $p|P_{\textrm{bad}}$, then $p$ can only be possibly one of the following:
  \begin{enumerate}
    \item $p=2,3$, or
    \item $p$ is such that $\Gamma/\Gamma(p) \neq \textrm{SL}_2(\ZZ/p\ZZ)$, or
    \item $p$ is a common factor of all curvatures in the associated orbit $\mathcal{A} \cdot \mathbb{P}^1(\RR)$ (after scaling all raw curvatures by $\frac{1}{\sqrt{-\Delta}}$).
  \end{enumerate}
\end{enumerate}
\end{thm}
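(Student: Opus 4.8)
The plan is to follow the standard strategy for proving strong approximation with explicit bad-prime control, in the spirit of the work of Matthews–Vaserstein–Weisfeiler, Nori, and the appendix of \cite{BK14}, but tracking constants. First I would reduce to a local statement at each prime: the multiplicativity assertions (1) and (2) follow from a lifting/surjectivity statement at each prime power $p^k$ together with the Chinese Remainder Theorem, so the entire theorem comes down to understanding, for each prime $p$, the image $\mathcal{A}/\mathcal{A}(p^k)$ inside $\textrm{SL}_2(\mathcal{O}_K/p^k)$ and showing it stabilizes (equals the full group for $p$ good, equals a fixed coset-block structure for $p$ bad) once $k$ is large. The key input for ``large $k$ implies uniform lifting'' is a Lie-algebra / Hensel argument: one shows that the reduction $\mathcal{A}(p^{m})/\mathcal{A}(p^{m+1})$, viewed inside the abelian group $\mathfrak{sl}(2,\mathcal{O}_K/p) \cong \mathfrak{sl}(2,\mathcal{O}_K\otimes\ZZ/p)$, is all of it once $m \ge m_p$, and then a standard commutator/Hensel bootstrap propagates this to all higher $k$. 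This is exactly where the quantity $\iota_p$ enters: it measures how far the $\ZZ_p$-span of the $\mathcal{A}$-orbit (under the adjoint action) of $\mathfrak{sl}(2,\ZZ_p)$ is from filling out $\mathfrak{sl}(2,\ZZ_p\otimes_\ZZ\mathcal{O}_K)$.

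The core of the argument is the following. We are given a Zariski dense $\Gamma \le \textrm{PSL}_2(\ZZ)$ inside $\mathcal{A}$; by hypothesis (after passing to $\widehat\Gamma$) it has a multiplicative structure, and by Weisfeiler/Nori-type strong approximation applied to $\Gamma \le \textrm{SL}_2(\ZZ)$ there is a well-defined $m_p'$ (essentially governed by the level of the congruence subgroup $\Gamma$ contains, together with $p=2,3$) so that $\Gamma(p^{m_p'})/\Gamma(p^{k}) = \textrm{SL}_2(\ZZ)(p^{m_p'})/\textrm{SL}_2(\ZZ)(p^{k})$ for $k\ge m_p'$. Thus at the Lie-algebra level mod $p$, $\mathcal{A}$ surjects onto $\mathfrak{sl}(2,\ZZ/p)$ via the $\Gamma$-part. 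To get all of $\mathfrak{sl}(2,\mathcal{O}_K/p)$, I use that $\mathcal{A}$ is Zariski dense in $\PGL_2$ over $K$ (equivalently, $\mathcal{A}$ contains an element not conjugate over $\QQ$ into $\textrm{SL}_2$): conjugating the $\ZZ_p$-lattice $\mathfrak{sl}(2,\ZZ_p)$ by such elements of $\mathcal{A}$ and taking the $\ZZ_p$-span produces a sublattice of $\mathfrak{sl}(2,\ZZ_p\otimes\mathcal{O}_K)$ of finite, $p$-power, index; $\iota_p$ is by definition the exact $p$-adic valuation of that index, and $\iota_p = 0$ for all but finitely many $p$ because the relevant determinant is a fixed nonzero element of $K$. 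Combining these two facts via a commutator-generation lemma (the derived subalgebra of a sublattice of $\mathfrak{sl}_2$ of $p$-power index recovers the full lattice after at most boundedly many brackets) gives $m_p \le m_p' + \iota_p$, and a Hensel/commutator bootstrap then yields (3), hence (1) and (2) with $P_{\textrm{bad}} = \prod_{p : \iota_p > 0 \text{ or } m_p' > 0} p$.

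Finally, part (4) is a matter of identifying which primes can contribute. A prime $p$ divides $P_{\textrm{bad}}$ only if $m_p' > 0$ or $\iota_p > 0$. The condition $m_p' > 0$ is the statement that $\Gamma/\Gamma(p) \neq \textrm{SL}_2(\ZZ/p\ZZ)$, together with the classical small-prime exceptions $p=2,3$ coming from strong approximation for $\textrm{SL}_2(\ZZ)$ (this is case (a) and case (b)). The condition $\iota_p > 0$ says the adjoint $\mathcal{A}$-orbit of $\mathfrak{sl}(2,\ZZ_p)$ fails to span $\mathfrak{sl}(2,\ZZ_p\otimes\mathcal{O}_K)$ mod $p$; unwinding what this means for the orbit $\mathcal{A}\cdot \mathbb{P}^1(\RR)$ — one relates the span condition to a congruence satisfied by all the curvatures $\kappa(M\gamma(\widehat\RR))/\sqrt{-\Delta}$ — shows this forces $p$ to divide every (scaled) curvature in the orbit, which is case (c). The main obstacle I expect is the commutator-generation / bootstrap lemma with fully explicit constants: making the passage from ``$\mathcal{A}$ surjects onto $\mathfrak{sl}(2,\mathcal{O}_K/p)$'' to ``uniform lifting holds at level $m_p' + \iota_p$'' quantitatively tight, rather than merely with some unspecified bound, requires care about how commutators of congruence-subgroup elements interact with the non-maximality measured by $\iota_p$; I would isolate this as a self-contained lemma about sublattices of $\mathfrak{sl}_2$ over $\ZZ_p$ and prove it by an explicit bracket computation before assembling the pieces above.
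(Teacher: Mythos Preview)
Your Lie-algebra/Hensel argument for part (3) is essentially the paper's approach (its Lemma~\ref{lem:badprimes}): build a $\QQ_p$-basis of $\mathfrak{sl}(2,\QQ_p\otimes K)$ from $H,R,L$ together with adjoint translates by elements of $\mathcal{A}$, then lift inductively; $\iota_p$ measures exactly the integral defect you describe, and the bound $m_p\le m_p'+\iota_p$ comes out the same way.

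The gap is in your claim that (3) yields (1) and (2). Your argument gives at best that $\mathcal{A}(p^{m_p})/\mathcal{A}(p^k)$ is full for $m_p\ge 1$; for a good prime this is $m_p=1$. But part (2) requires the bottom layer $\mathcal{A}/\mathcal{A}(p)=\textrm{SL}_2(\mathcal{O}_K/p)$, and fullness of the graded pieces from level $1$ upward says nothing about this: the image of $\mathcal{A}$ mod $p$ contains $\textrm{SL}_2(\ZZ/p)$ but could a priori be a proper subgroup of $\textrm{SL}_2(\mathcal{O}_K/p)$ (a twisted diagonal in $\textrm{SL}_2(\mathbb{F}_p)\times\textrm{SL}_2(\mathbb{F}_p)$ in the split case, or $\textrm{SL}_2(\mathbb{F}_p)$ inside $\textrm{SL}_2(\mathbb{F}_{p^2})$ in the inert case). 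The paper handles this with a separate, constructive argument (Lemma~\ref{lem:goodprimes}): for $p$ outside a controlled finite set one manufactures an element of $\Gamma T\Gamma T^{-1}\Gamma$ fixing $\infty$ modulo $p^m$ with eigenvalues not in $\ZZ/p^m$, uses it to generate all unipotents in $\textrm{SL}_2(\mathcal{O}_K/p^m)$, and thereby shows $\mathcal{A}/\mathcal{A}(p^m)$ is full directly. This same construction is what produces the list in (4): the primes one must exclude are those dividing the denominator of an auxiliary translation $\gamma$, which divides the curvature of $T^{-1}\cdot\PP^1(\RR)$, so by varying $T\in\mathcal{A}$ only primes dividing \emph{every} curvature survive. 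Your proposed route to (4)(c), namely that $\iota_p>0$ forces $p$ to divide all curvatures, is a different (and unproved) implication; the paper never connects $\iota_p$ to curvatures.
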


Note that in the case of a familial group $\mathcal A$ (which is the object of this paper), $\Gamma$ can be taken to be the principal congruence subgroup of $\textrm{SL}_2(\mathbb Z)$ contained in $\mathcal A$, in which case the bad primes of the second kind in the theorem above are simply those dividing the level of this congruence subgroup.

We begin with the definitions of geometric and combinatorial spectral gaps for any Kleinian group $H$. 
Let $\Delta$ be the hyperbolic Laplacian operator associated to the metric $ds^2=\frac{dx^2+dy^2+dz^2}{z^2}$ on $\mathbb{H}^3$:
$$\Delta=-z^2\left(\frac{\partial^2}{\partial x^2}+\frac{\partial^2}{\partial y^2}+\frac{\partial^2}{\partial z^2}\right)+z\frac{\partial}{\partial z}.$$
For any integer $q > 1$, let $H(q)$ denote the kernel of reduction modulo $q$.
The operator $\Delta$ is symmetric and positive definite on $L^2(H(q)\backslash \mathbb{H}^3)$ with the standard inner product.  From Lax-Phillips \cite{LP82}, the discrete spectrum consists of finitely many eigenvalues \[\lambda_0(q)=\delta(2-\delta)<\lambda_1(q)\leq\lambda_2(q)\cdots. \]
We assume $\lambda_0(q)\not=\lambda_1(q)$ in order to be able to define the spectral gap; this is guaranteed in the case that $H=\mathcal A$ by geometric finiteness of $\mathcal A$.  If there exists $\epsilon>0$ independent of $q$ such that $\lambda_1(q)-\lambda_0(q)\geq\epsilon$ for all $q$ then $\epsilon$ is called a geometric spectral gap and $H$ is said to have a \emph{geometric spectral gap}.

We now recall the definition of a combinatorial spectral gap for $H$.  Suppose $H$ has a finite symmetric generating set $S$.
Let \[ \lambda_n'(H,S) \le \cdots \le \lambda_1'(H,S) \le \lambda_0'(H,S) = 1\] denote the eigenvalues of the averaging operator $T_{H,S} := 1 - \Delta_{H,S}/|S|$ where $\Delta_{H,S}$ is the discrete Laplacian operator
\[
        \left( \Delta_{H,S} f \right) (g) = \sum_{h \in S}  \left( f(g) - f( h g ) \right).
\]

We say that $H$ has a \emph{combinatorial spectral gap} if there is a finite symmetric collection of generators $S$ and a positive $\epsilon$ such that 
        \[
	  \lambda_1'(H/H(q), \overline{S} ) < 1 - \epsilon
        \]
	for all positive integers $q$, where $\epsilon$ is independent of $q$ (here $\overline{S}$ denotes the image of $S$ modulo $q$).  Informally, a spectral gap for $H/H(q)$ gives a measure of how quickly a random walk on the Cayley graph of $H/H(q)$ reaches the whole graph.  A spectral gap for $H$ indicates a uniform rate for all $q$.

	We now give an overview of the proof of Theorem \ref{mainspectralthm}.
Let $T$ be an element of $\mathcal{A}$ which does not normalize $\textrm{SL}_2(\RR)$, i.e., $T\not\in \textrm{SL}_2(\RR)\cup i\textrm{SL}_2(\RR)$.  
Write $\Gamma^{'}=T\Gamma T^{-1}$, and let $\mathcal{A}^{'}=\langle \Gamma,\Gamma^{'}\rangle < \mathcal{A}$.  We first prove a combinatorial spectral gap for $\mathcal{A}^{'}$, using ideas similar to those of Varj\'u in the appendix of \cite{BK14}, some of which have also been used by Sarnak in \cite{Sa90}, Shalom in \cite{Sh99}, and Kassabov-Lubotzky-Nikolov in \cite{KLN06}.  We then convert this to a combinatorial spectral gap for $\mathcal{A}$.
Finally, we use the fact that a {combinatorial spectral gap} for $\mathcal{A}$ implies a geometric spectral gap if the Hausdorff dimension of the limit set of $\mathcal A$ is greater than $1$ via a variant of \cite[Theorem 1.2]{BGS11}, which states that geometric and combinatorial spectral gaps co-occur as long as the Hausdorff dimension of the limit set of the group is greater than $1$.  

	\begin{prop}
	  \label{prop:spectral-aprime}
	  $\mathcal{A}'$ has a combinatorial spectral gap.
	\end{prop}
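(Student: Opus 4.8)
The plan is to exploit the fact that $\mathcal{A}' = \langle \Gamma, \Gamma' \rangle$ is generated by two conjugate copies of a Zariski-dense subgroup of $\mathrm{SL}_2(\ZZ)$, one of which ($\Gamma$) already has a combinatorial spectral gap by the theorem of Bourgain–Gamburd (the $\mathrm{SL}_2(\ZZ)$ case, see \cite{BV12} or the expander machinery used by Varj\'u in the appendix of \cite{BK14}), while the conjugate $\Gamma' = T\Gamma T^{-1}$ has a combinatorial spectral gap obtained by transporting that of $\Gamma$ via the fixed element $T$. The key point is that while each of $\Gamma$, $\Gamma'$ sees only a ``Fuchsian'' quotient modulo $q$ (roughly $\mathrm{SL}_2(\ZZ/q)$ or a conjugate thereof inside $\mathrm{SL}_2(\mathcal{O}_K/q)$), the group they generate surjects onto the full $\mathrm{SL}_2(\mathcal{O}_K/q)$ for almost all $q$, by strong approximation for $\mathcal{A}'$ (which follows from Zariski density of $\mathcal{A}'$, a consequence of $T$ not normalizing $\mathrm{SL}_2(\RR)$). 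So we must upgrade a spectral gap for the two ``slices'' to a spectral gap for the whole group. This is exactly the situation handled by the Burger-type / relative-expansion arguments of Sarnak \cite{Sa90}, Shalom \cite{Sh99}, and Kassabov–Lubotzky–Nikolov \cite{KLN06}, as used by Varj\'u.

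First I would fix a finite symmetric generating set $S = S_\Gamma \cup TS_\Gamma T^{-1}$ of $\mathcal{A}'$, where $S_\Gamma$ generates $\Gamma$. For each $q$, consider the Cayley graph of $\mathcal{A}'/\mathcal{A}'(q)$ and the averaging operator $T_{\mathcal{A}'/\mathcal{A}'(q), \overline{S}}$. The spectral gap of $\Gamma$ modulo $q$ gives that the averaging operator for $\overline{S_\Gamma}$ acting on $\ell^2$ of any coset space of $\overline{\Gamma}$ inside $\mathcal{A}'/\mathcal{A}'(q)$ contracts the orthogonal complement of the functions constant on $\overline{\Gamma}$-orbits; the same holds for $\overline{\Gamma'}$ with its orbits. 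The point then is a geometric/combinatorial lemma: since $\overline{\Gamma}$ and $\overline{\Gamma'}$ together generate $\mathcal{A}'/\mathcal{A}'(q)$, a function orthogonal to the constants that is simultaneously nearly invariant under both $\overline{\Gamma}$ and $\overline{\Gamma'}$ must be small — this is a spectral-gap/angle-between-subgroups estimate, and the uniformity in $q$ is where the work is. One standard route is to use that $\overline{\Gamma}$, $\overline{\Gamma'}$ generate together with an explicit bound on the diameter (or on the number of times one must alternate between the two subgroups to reach everything), coming from strong approximation with effective constants; combined with the individual gaps this yields a gap for $\mathcal{A}'$. For the finitely many bad primes $p \mid P_{\mathrm{bad}}$ one handles $q$ a power of such $p$ separately — the quotients stabilize to $\mathrm{SL}_2(\mathcal{O}_K)(p^{m_p})/\mathrm{SL}_2(\mathcal{O}_K)(p^{k})$ by Theorem \ref{thm:explicit-strong}, which is a quotient of an $\ell$-adic group for a fixed prime, and a gap there is classical (property $(\tau)$ for $\mathrm{SL}_2$ over local rings); the general $q$ follows by tensoring the gaps over prime powers via the multiplicativity in Theorem \ref{thm:explicit-strong}(1)–(2) and a standard tensor-product argument for Cayley graphs of direct products.

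The main obstacle, and the heart of the proof, is proving the relative expansion statement with a constant independent of $q$: that a near-eigenfunction of $T_{\mathcal{A}'/\mathcal{A}'(q),\overline S}$ with eigenvalue close to $1$, being then nearly constant along both $\overline\Gamma$- and $\overline{\Gamma'}$-orbits, is forced to be nearly globally constant. This requires quantitative control over how the two subgroups ``spread out'' — concretely, an effective bounded-generation or bounded-diameter statement for the double coset structure of $\overline\Gamma, \overline{\Gamma'}$ in $\mathrm{SL}_2(\mathcal{O}_K/q)$, uniform in $q$, together with the uniform Fuchsian gaps. I would follow Varj\'u's argument in the appendix of \cite{BK14} closely: he proves exactly such a statement for the analogous configuration inside the Apollonian group, and the only genuinely new input needed here is that the element $T$ exists (guaranteed since $\mathcal{A}$ is not contained in $\mathrm{SL}_2(\RR) \cup i\,\mathrm{SL}_2(\RR)$) and that $\langle\Gamma,\Gamma'\rangle$ is Zariski dense in $\mathrm{SL}_2$ over $K$ — which gives the strong approximation needed. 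The transition from a gap for $\mathcal{A}'$ to a gap for the larger group $\mathcal{A}$, and then from combinatorial to geometric via \cite{BGS11} under the Hausdorff-dimension hypothesis, is carried out in the subsequent parts of Section \ref{spectralgapsec} and is not part of this proposition.
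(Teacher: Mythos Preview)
Your proposal is essentially correct and follows the same approach as the paper: reduce to a uniform bounded-generation statement for $\mathcal{A}'/\mathcal{A}'(q)$ in terms of alternating products from $\overline{\Gamma}$ and $\overline{\Gamma'}$, then invoke Varj\'u's lemma (Lemma~\ref{lem:varju}/\ref{lem:varju2}) together with the known spectral gap for $\Gamma$. The paper makes this precise via Lemma~\ref{lem:spectral-aprime-third}, proved by an explicit geometric argument for good primes (Lemma~\ref{lem:goodprimes}, giving $k_0=14$) and a Lie-algebra lifting argument for the finitely many bad primes (Lemma~\ref{lem:badprimes}), then combining via the multiplicative structure.

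One minor point: you invoke Theorem~\ref{thm:explicit-strong} to handle bad primes, but in the paper that theorem is a \emph{byproduct} of the proof of the spectral gap, not an input to it; the paper instead establishes bounded generation directly at bad primes (Lemma~\ref{lem:badprimes}) without appealing to property~$(\tau)$ for $\mathrm{SL}_2$ over local rings. Your alternative route (property~$(\tau)$ at each bad $p$, then tensoring) would also work, but beware the circularity in citing Theorem~\ref{thm:explicit-strong} as stated.
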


As a Zariski-dense subgroup of $\textrm{SL}_2(\ZZ)$, $\Gamma$ is known to have a spectral gap (see  \cite{BV12}), and therefore so does $\Gamma'$.  We will show that $\mathcal{A}'/\mathcal{A}^{'}(q)$ is made up of a bounded number of copies of $\Gamma/\Gamma(q)$ and $\Gamma'/\Gamma'(q)$, which will imply a spectral gap for $\mathcal{A}^{'}/\mathcal{A}^{'}(q)$.  To be precise, we quote a Lemma of Varj\'u:

	\begin{lem}[{\cite[Lemma A.4]{BK14}}]
	  \label{lem:varju}
        Let $G$ be a finite group with a finite symmetric generating set $S$.  Suppose $G_1, \ldots, G_k < G$, and that for each $g \in G$, there exist $g_i \in G_i$ such that $g = g_1g_2\ldots g_k$.  Then,
        \[
                1 - \lambda_1'(G,S) \ge \min_{1 \le i \le k} 
                \left\{
                        \frac{ |S \cap G_i| }{ |S| }
                        \cdot
                        \frac{ 1 - \lambda_1'(G_i, S \cap G_i) }{2k^2}
                \right\} .
        \]
\end{lem}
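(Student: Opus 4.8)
The plan is to deduce Lemma~\ref{lem:varju} from a variational inequality for the Dirichlet form together with a ``bounded generation'' Poincar\'e inequality for the averaging projections attached to the $G_i$. Recall that $1-\lambda_1'(G,S)$ is the smallest nonzero eigenvalue of $\Delta_{G,S}/|S|$, so by the Rayleigh quotient it equals the infimum of $\mathcal{E}_{G,S}(f)/(|S|\,\|f\|_2^2)$ over all $f\colon G\to\RR$ with $\sum_{g\in G}f(g)=0$, where $\|\cdot\|_2$ is the $\ell^2(G)$-norm and $\mathcal{E}_{G,S}(f):=\langle \Delta_{G,S}f,f\rangle=\tfrac12\sum_{h\in S}\sum_{g\in G}\bigl(f(g)-f(hg)\bigr)^2$. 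For each $i$ let $P_i$ denote the orthogonal projection of $\ell^2(G)$ onto the subspace of functions constant on every right coset $G_ig$, so $(P_if)(g)=|G_i|^{-1}\sum_{h\in G_i}f(hg)$. I would fix a mean-zero $f$; the whole problem is then to bound $\mathcal{E}_{G,S}(f)$ from below by a multiple of $\|f\|_2^2$.

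\emph{Step A (localising the energy to the subgroups).} Since $S\cap G_i\subseteq S$, one has $\mathcal{E}_{G,S}(f)\ge\mathcal{E}_{G,S\cap G_i}(f)$, and decomposing $G$ into right $G_i$-cosets and restricting $f$ to each coset writes $\mathcal{E}_{G,S\cap G_i}(f)$ as a sum of Dirichlet forms for the Cayley graph of $(G_i,S\cap G_i)$. Applying the definition of $\lambda_1'(G_i,S\cap G_i)$ coset by coset yields
\[
\mathcal{E}_{G,S}(f)\ \ge\ \bigl(1-\lambda_1'(G_i,S\cap G_i)\bigr)\,|S\cap G_i|\cdot\|(I-P_i)f\|_2^2
\]
for every $i$; when $S\cap G_i$ fails to generate $G_i$ this just reads $\mathcal{E}_{G,S}(f)\ge 0$, which is true and harmless.

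\emph{Step B (bounded generation forces a Poincar\'e inequality).} I would prove that $\|f\|_2^2\le 2k\sum_{i=1}^k\|(I-P_i)f\|_2^2$ for every mean-zero $f$. One starts from the identity $\sum_{a\in G}\sum_{g\in G}\bigl(f(g)-f(ag)\bigr)^2=2|G|\,\|f\|_2^2$, valid because $f$ has mean zero. For each $a\in G$ fix a factorisation $a=a_1(a)\cdots a_k(a)$ with $a_i(a)\in G_i$ (possible by hypothesis), telescope $f(g)-f(ag)$ along the partial products, and apply Cauchy--Schwarz to get $\bigl(f(g)-f(ag)\bigr)^2\le k\sum_{i=1}^k\bigl(f(b_ig)-f(a_i(a)b_ig)\bigr)^2$ for appropriate $b_i\in G$ depending on $a$. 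Summing over $g$ (after the harmless substitution $g\mapsto b_ig$) and then over $a$, the $i$-th term becomes $\sum_{c\in G_i}w_i(c)\sum_{g\in G}\bigl(f(g)-f(cg)\bigr)^2$, where $w_i(c)=\#\{a\in G:\ a_i(a)=c\}$ satisfies $\sum_{c\in G_i}w_i(c)=|G|$; rewriting this as $2|G|\,\langle f,(I-\widehat{W}_i)f\rangle$ with $\widehat{W}_i:=|G|^{-1}\sum_{c\in G_i}w_i(c)L_c$ a convex combination of left translations $L_c$, we arrive at $\|f\|_2^2\le k\sum_{i=1}^k\langle f,(I-\widehat{W}_i)f\rangle$. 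The point is that $\widehat{W}_i$ acts as the identity on $\operatorname{Range}(P_i)$, hence $(I-\widehat{W}_i)P_i=0$; replacing $\widehat{W}_i$ by its self-adjoint part $R_i=\tfrac12(\widehat{W}_i+\widehat{W}_i^{*})$, which still fixes $\operatorname{Range}(P_i)$ and still satisfies $\|R_i\|\le1$, gives $\langle f,(I-\widehat{W}_i)f\rangle=\langle(I-P_i)f,(I-R_i)(I-P_i)f\rangle\le 2\|(I-P_i)f\|_2^2$, which yields the desired inequality.

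Finally, feeding Step A into $\|f\|_2^2\le 2k\sum_i\|(I-P_i)f\|_2^2$ gives
\[
\|f\|_2^2\ \le\ \frac{2k^2}{\min_{1\le i\le k}\bigl(1-\lambda_1'(G_i,S\cap G_i)\bigr)|S\cap G_i|}\,\mathcal{E}_{G,S}(f),
\]
which is precisely the claimed lower bound on $\mathcal{E}_{G,S}(f)/(|S|\,\|f\|_2^2)$ after dividing by $|S|$ and taking the infimum over mean-zero $f$. The main obstacle is Step B: the hypothesis $G=G_1\cdots G_k$ only produces the \emph{weighted} averages $\widehat{W}_i$ rather than the uniform projections $P_i$ (the multiplicities $w_i(c)$ need not be equidistributed over $G_i$), and the device of passing to the self-adjoint part and exploiting $\widehat{W}_i|_{\operatorname{Range}(P_i)}=\mathrm{id}$ is exactly what lets one absorb this non-uniformity while keeping the constant $2k^2$.
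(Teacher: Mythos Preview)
The paper does not give its own proof of this lemma: it is simply quoted verbatim from Varj\'u's appendix to \cite{BK14} and used as a black box. So there is nothing in the paper to compare your argument against.

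That said, your proof is correct and is essentially the standard argument (and, modulo notation, the one Varj\'u gives). Step~A is the routine observation that restricting the Dirichlet form to $S\cap G_i$ and decomposing $G$ into right $G_i$-cosets reduces to the Rayleigh quotient for $(G_i,S\cap G_i)$ on each coset, yielding $\mathcal{E}_{G,S}(f)\ge (1-\lambda_1'(G_i,S\cap G_i))\,|S\cap G_i|\,\|(I-P_i)f\|_2^2$. Step~B is where the content lies, and your treatment is clean: the telescoping along a fixed factorisation $a=a_1\cdots a_k$ together with Cauchy--Schwarz gives $\|f\|_2^2\le k\sum_i\langle f,(I-\widehat W_i)f\rangle$, and the observation that $\widehat W_i$ (hence its self-adjoint part $R_i$) fixes $\operatorname{Range}(P_i)$ lets you replace $\langle f,(I-R_i)f\rangle$ by $\langle(I-P_i)f,(I-R_i)(I-P_i)f\rangle\le 2\|(I-P_i)f\|_2^2$. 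This is exactly the mechanism that absorbs the non-uniform weights $w_i(c)$ and produces the clean constant $2k^2$. Your handling of the degenerate case $S\cap G_i$ not generating $G_i$ (forcing $1-\lambda_1'(G_i,S\cap G_i)=0$) is also correct.
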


As a consequence, we have immediately:
\begin{lem}
  \label{lem:varju2}
Suppose $G$ is a group with a finite symmetric generating set $S$ and a tuple $(G_1, G_2, \ldots, G_k)$ of subgroups of $G$ such that:
\begin{enumerate}
        \item each $G_i$ has a spectral gap;
        \item $S \cap G_i \neq \emptyset$ for each $G_i$;
        \item for each integer $q$, for each $g \in \rho_q(G)$, it is possible to write $g = g_1 g_2 \cdots g_k$ where $g_i \in \rho_q(G_i)$, i.e.
                \[
                        \rho_q(G) = \prod_{i=1}^k \rho_q(G_i).
                \]
\end{enumerate}
		Then $G$ has a combinatorial spectral gap.
	      \end{lem}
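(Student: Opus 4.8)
The plan is to derive Lemma~\ref{lem:varju2} from Lemma~\ref{lem:varju} by applying the latter to each finite quotient $G/G(q)$. Fix a positive integer $q$ and write $\rho_q$ for reduction modulo $q$. Since $S$ is a finite symmetric generating set of $G$, its image $\overline{S} = \rho_q(S)$ is a finite symmetric generating set of $G/G(q) = \rho_q(G)$. Set $G_i^{(q)} := \rho_q(G_i) < \rho_q(G)$. By hypothesis~(3), every $g \in \rho_q(G)$ can be written as $g = g_1 g_2 \cdots g_k$ with $g_i \in G_i^{(q)}$, so the tuple $(G_1^{(q)}, \ldots, G_k^{(q)})$ satisfies the factorization hypothesis of Lemma~\ref{lem:varju}. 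Applying that lemma,
\[
  1 - \lambda_1'\bigl(G/G(q), \overline{S}\bigr)
  \;\ge\;
  \min_{1 \le i \le k}
  \left\{
    \frac{|\overline{S} \cap G_i^{(q)}|}{|\overline{S}|}
    \cdot
    \frac{1 - \lambda_1'\bigl(G_i^{(q)}, \overline{S} \cap G_i^{(q)}\bigr)}{2k^2}
  \right\}.
\]

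The next step is to bound each factor on the right from below uniformly in $q$. For the combinatorial factor $|\overline{S} \cap G_i^{(q)}| / |\overline{S}|$: by hypothesis~(2) there is some $s_i \in S \cap G_i$, and then $\rho_q(s_i) \in \overline{S} \cap G_i^{(q)}$ is nonempty, so the numerator is at least $1$; since $|\overline{S}| \le |S|$ is a fixed finite number, this factor is $\ge 1/|S|$ for every $q$. For the spectral factor: each $G_i$ has a combinatorial spectral gap, meaning there is a finite symmetric generating set $S_i$ of $G_i$ and $\epsilon_i > 0$ with $\lambda_1'\bigl(G_i/G_i(q), \rho_q(S_i)\bigr) < 1 - \epsilon_i$ for all $q$. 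I must reconcile the generating set $\overline{S} \cap G_i^{(q)}$ appearing above with the set $\rho_q(S_i)$ witnessing the gap of $G_i$; this is the one point requiring a little care. The clean way is to invoke the standard fact that having a combinatorial spectral gap is independent of the choice of finite symmetric generating set (a change of generators changes the gap only by a bounded multiplicative constant — see, e.g., the discussion in \cite{BGS11} or Lemma~\ref{lem:varju} itself, applied with $k=1$ and the two generating sets playing the roles of ambient and subgroup up to passing through their union). Hence $G_i$, equipped with the generators $S \cap G_i$, also has a uniform spectral gap $\epsilon_i' > 0$: $\lambda_1'\bigl(\rho_q(G_i), \rho_q(S \cap G_i)\bigr) < 1 - \epsilon_i'$ for all $q$. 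Noting $\rho_q(G_i) = G_i^{(q)}$ and $\rho_q(S \cap G_i) = \overline{S} \cap G_i^{(q)}$, the spectral factor is $\ge \epsilon_i'$ for all $q$.

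Combining, setting $\epsilon := \min_{1 \le i \le k} \dfrac{\epsilon_i'}{2k^2 |S|} > 0$, we obtain $1 - \lambda_1'\bigl(G/G(q), \overline{S}\bigr) \ge \epsilon$, i.e. $\lambda_1'\bigl(G/G(q), \overline{S}\bigr) < 1 - \epsilon$, for every positive integer $q$, with $\epsilon$ independent of $q$. This is precisely the definition of a combinatorial spectral gap for $G$ with respect to the generating set $S$, completing the proof.

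The main obstacle is the bookkeeping with generating sets in the paragraph above: Lemma~\ref{lem:varju} feeds the subgroup $G_i^{(q)}$ the generating set $\overline{S} \cap G_i^{(q)}$ it inherits from $G$, whereas the assumed spectral gap for $G_i$ comes with its own generators, so one needs the (standard but worth stating) independence of the spectral gap property from the choice of finite symmetric generating set. Everything else — verifying the hypotheses of Lemma~\ref{lem:varju} hold in each quotient, and taking a minimum over the finitely many indices $i$ — is routine, and crucially the resulting $\epsilon$ is manifestly uniform in $q$ because it is built only from the finitely many data $|S|, k, \epsilon_1', \ldots, \epsilon_k'$.
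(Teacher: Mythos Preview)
Your approach is exactly what the paper intends; it merely writes ``As a consequence, we have immediately'' after Lemma~\ref{lem:varju}, and you have correctly filled in the details. Two small points are worth tightening. First, the claimed equality $\rho_q(S\cap G_i)=\overline S\cap G_i^{(q)}$ is in general only the inclusion $\subseteq$ (an $s\in S\setminus G_i$ may reduce into $\rho_q(G_i)$); this is harmless, since a larger generating set for $G_i^{(q)}$ can only help the gap. Second, and more substantively, your change-of-generators step requires $S\cap G_i$ to \emph{generate} $G_i$, which hypothesis~(2) does not guarantee; without it the Cayley graph of $G_i^{(q)}$ on $\overline S\cap G_i^{(q)}$ may be disconnected, $\lambda_1'=1$, and Lemma~\ref{lem:varju} yields nothing. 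The fix is one line: since the combinatorial spectral-gap property is independent of the choice of finite symmetric generating set, replace $S$ at the outset by $S\cup S_1\cup\cdots\cup S_k$, where $S_i$ witnesses the gap for $G_i$; then $S\cap G_i\supseteq S_i$ generates $G_i$ and your argument goes through verbatim. In the paper's only application (Proposition~\ref{prop:spectral-aprime}) the set $S$ is chosen precisely so that $S\cap G_i$ already generates each $G_i$, so the issue is invisible there.
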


	      To verify the hypotheses of Lemma \ref{lem:varju2} for $G = \mathcal{A}'$, we will use $k=2k_0$, and the tuple $(G_1,G_2, \ldots, G_{2k_0}) = (\Gamma, \Gamma', \Gamma, \Gamma', \ldots, \Gamma')$.  Write
$$A_k(q)=\{g_1h_1\cdots g_kh_k: g_1,\cdots g_k \in \Gamma/\Gamma(q), h_1\cdots h_k\in \Gamma^{'}/\Gamma^{'}(q)\}.$$  
Then, for the third hypothesis of Lemma \ref{lem:varju2}, we need to show:

\begin{lem}\label{lem:spectral-aprime-third}
There exists some $k_0$ such that $A_{k_0}(q)=\mathcal{A}^{'}/\mathcal{A}^{'}(q)$ for every $q$.
\end{lem}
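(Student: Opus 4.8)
The plan is to reduce to prime powers, use strong approximation to identify the ambient group, dispatch the level-$p$ case by a ``general position'' argument, and then lift to level $p^n$ \emph{uniformly in $n$}; the uniform lift is where the real work lies. First, since (after the reductions already made in this section) $\Gamma$, and hence $\Gamma'$ and $\mathcal{A}'$, have multiplicative structure, both $A_k(q)$ and $\mathcal{A}'/\mathcal{A}'(q)$ factor as products over the prime-power divisors of $q$, so it suffices to produce $k_0$, independent of the prime power $p^n$, with $A_{k_0}(p^n)=\mathcal{A}'/\mathcal{A}'(p^n)$. By Theorem~\ref{thm:explicit-strong} applied to $\mathcal{A}'$, for every $p\nmid P_{\textrm{bad}}$ one has $\mathcal{A}'/\mathcal{A}'(p^n)=\textrm{SL}_2(\mathcal{O}_K/p^n\mathcal{O}_K)$ and $\Gamma/\Gamma(p^n)=\textrm{SL}_2(\ZZ/p^n\ZZ)$, so the claim becomes: the subgroups $\textrm{SL}_2(\ZZ/p^n)$ and $T\,\textrm{SL}_2(\ZZ/p^n)\,T^{-1}\pmod{p^n}$ boundedly generate $\textrm{SL}_2(\mathcal{O}_K/p^n)$, with the bound uniform in $p$ and $n$. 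The finitely many bad primes are handled separately at the end.

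For the base case $n=1$, I would show $A_{k_1}(p)=\textrm{SL}_2(\mathcal{O}_K/(p))$ for an absolute $k_1$. The ring $R:=\mathcal{O}_K/(p)$ is $\ZZ/p\ZZ\times\ZZ/p\ZZ$, $\mathbb{F}_{p^2}$, or $\ZZ/p\ZZ[\varepsilon]$ according to the splitting of $p$, and in each case $\textrm{SL}_2(R)$ is boundedly generated by elementary matrices over $R$. The hypothesis that $T$ does not normalize $\textrm{SL}_2(\RR)$ forces, for all but finitely many $p$, the two Lie subalgebras $\mathfrak{sl}(2,\ZZ/p\ZZ)$ and $T\,\mathfrak{sl}(2,\ZZ/p\ZZ)\,T^{-1}$ to span $\mathfrak{sl}(2,R)$ (this is the mod-$p$ reduction of the span condition that controls $\iota_p$ in Theorem~\ref{thm:explicit-strong}, which is an equality for good $p$). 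From a spanning family of one-parameter unipotent subgroups, a bounded number of multiplications and commutators $[g,h]$ with $g$ in the first copy of $\textrm{SL}_2(\ZZ/p\ZZ)$ and $h$ in the $T$-conjugate produce every elementary matrix over $R$, and hence all of $\textrm{SL}_2(R)$, in an absolutely bounded number of steps.

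The heart of the matter is the lift from level $p$ to level $p^n$, uniform in $n$. Write $G=\textrm{SL}_2(\mathcal{O}_K/p^n)$ with congruence subgroups $G(p^j)$; for $p$ odd, $G(p)$ is a $p$-group whose graded pieces $G(p^j)/G(p^{j+1})\cong\mathfrak{sl}(2,\mathcal{O}_K/(p))$ carry the Lie bracket via commutators. Since every element of $G$ is (a lift of an element of $\textrm{SL}_2(\mathcal{O}_K/(p))$) times an element of $G(p)$, the base case reduces the problem to filling $G(p)$ by a bounded number of products of the images of $\Gamma(p)$ and $\Gamma'(p)=T\Gamma(p)T^{-1}$ (for good $p$ these are $\textrm{SL}_2(\ZZ/p^n)(p)$ and its $T$-conjugate). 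Layer-by-layer induction costs $\sim n$ factors, which is not allowed; instead one uses that $\Gamma(p)$ and $\Gamma'(p)$ modulo $p^n$ already surject onto the \emph{entire} first graded piece $G(p)/G(p^2)$ (span condition again), and then runs a commutator calculus in $\textrm{SL}_2$ over $\ZZ/p^n\ZZ$: a bounded word in these two subgroups, together with a bounded number of commutators of such words, realizes $p^j\cdot(\text{every direction in }\mathfrak{sl}(2,\mathcal{O}_K\otimes\ZZ/p^{n-j}\ZZ))$ simultaneously across all layers $j$, because the bracket relations produce the ``$\mathcal{O}_K$-direction'' components not already present. This combinatorial argument — an effective, uniform-in-$n$ version of the statement that an open subgroup of $\textrm{SL}_2(\mathcal{O}_K\otimes\ZZ_p)$ containing $\textrm{SL}_2(\ZZ_p)$ whose Lie algebra spans is boundedly generated — is the step I expect to be the main obstacle, and is the analogue here of Varj\'u's appendix argument in \cite{BK14}. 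Finally, for the finitely many bad primes $p\mid P_{\textrm{bad}}$, Theorem~\ref{thm:explicit-strong}(3) shows that for $n\ge m_p$ the quotient $\mathcal{A}'/\mathcal{A}'(p^n)$ is a fixed extension of $\mathcal{A}'/\mathcal{A}'(p^{m_p})$ by $\textrm{SL}_2(\mathcal{O}_K)(p^{m_p})/\textrm{SL}_2(\mathcal{O}_K)(p^n)$, so the same lifting argument started at level $p^{m_p}$ gives a bound depending only on the finite bad set; taking $k_0$ to be the maximum of the good-prime bound and these finitely many local bounds finishes the proof.
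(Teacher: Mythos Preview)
Your overall architecture (reduce to prime powers via multiplicativity, separate good from bad primes) matches the paper, but there are two substantive issues and one important difference in method.

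\textbf{Circularity.} You invoke Theorem~\ref{thm:explicit-strong} for $\mathcal{A}'$, but in the paper that theorem is proved \emph{after} and \emph{using} Lemmas~\ref{lem:goodprimes} and~\ref{lem:badprimes}, which are precisely the ingredients of Lemma~\ref{lem:spectral-aprime-third}. What you actually need is only strong approximation for $\Gamma$ (an input from \cite{BV12}) together with the trivial containment $\mathcal{A}'/\mathcal{A}'(p^n)\subseteq\textrm{SL}_2(\mathcal{O}_K/p^n)$; once you show $A_{k_0}(p^n)=\textrm{SL}_2(\mathcal{O}_K/p^n)$ the intermediate equality follows. So this is fixable, but as written it is circular.

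\textbf{The lifting step.} You write that ``layer-by-layer induction costs $\sim n$ factors, which is not allowed,'' and then propose a simultaneous commutator calculus you flag as the main obstacle. In fact the layer-by-layer induction \emph{does} work with a bounded number of alternations, and this is exactly what the paper does for bad primes (Lemma~\ref{lem:badprimes}): having written $g\equiv L_{1,k}\,(\gamma_2TL_{2,k}T^{-1}\gamma_2^{-1})\cdots\pmod{p^k}$ with $L_{i,k}\in\Gamma$, one corrects to level $p^{k+1}$ by setting $L_{i,k+1}=L_{i,k}\beta_i$ with $\beta_i\in\Gamma$ chosen so that $I+p^k u$ factors through the fixed Lie-algebra basis. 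Since $L_{i,k}\beta_i\in\Gamma$, the number of $\Gamma/\Gamma'$ alternations stays at $4$ throughout the induction. So the obstacle you anticipate is not there; you missed the ``adjust within each block'' trick.

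\textbf{Where the real difficulty lies, and how the paper handles it.} The paper's Lie-algebra induction yields $k_p=4+p^{6m_p}$, which is uniform in $m$ but \emph{not} in $p$: even with $m_p=1$ one gets $4+p^6$. Your plan to run Lie-algebra methods for all primes therefore shifts the burden to a uniform-in-$p$ bound for the level-$p$ base case, which you sketch via spanning unipotent directions but do not prove. The paper sidesteps this entirely: for good primes it does \emph{not} use the Lie algebra at all, but gives a direct matrix construction (Lemma~\ref{lem:goodprimes}). One produces, via a $2\times 2$ representation of $\mathcal{O}_K/(p^m)$ attached to a rational intersection point of $T^{-1}\cdot\PP^1(\RR)$ with the imaginary axis, an element of $\Gamma T\Gamma T^{-1}\Gamma$ that is upper-triangular mod $p^m$ with a diagonal entry whose square is not in $\ZZ/(p^m)$; conjugating unipotents by this element manufactures all upper-triangular matrices in $A_5(p^m)$, and a Bruhat-type step gives $A_{14}(p^m)=\textrm{SL}_2(\mathcal{O}_K/p^m)$ uniformly in both $p\notin\mathcal{S}$ and $m$. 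This geometric argument is the genuinely new idea you are missing for the good-prime case.
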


Our approach is to break $q$ into prime powers, and prove a universal bound for prime powers for all but finitely many `bad primes'.  We therefore break the proof into two lemmata dealing with the good primes and bad primes, respectively.  The first lemma uses some geometric arguments to construct elements of $\mathcal{A}'/\mathcal{A}'(p^m)$ in terms of $\Gamma$ and $\Gamma'$.  The second lemma works prime-by-prime, and uses the Lie algebra $\mathfrak{sl}_2$ to lift to higher powers of $p$ uniformly.

\begin{lem}
  \label{lem:goodprimes}
  There exists a finite set of primes $\mathcal{S}$ such that, for $p \notin \mathcal{S}$, and for all $m \ge 1$, we have
  \[
    A_{14}(p^m) = \mathcal{A}'/\mathcal{A}'(p^m).
  \]
\end{lem}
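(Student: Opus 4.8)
The plan is to descend, using strong approximation, to a statement purely about $\textrm{SL}_2$ over the finite rings $\OK/(p^m)$, handle the case $m=1$ over the residue field by an explicit row-reduction argument, and then climb from $p^m$ to $p^{m+1}$ using the Lie algebra $\mathfrak{sl}_2$ to control the (abelian) kernel; the constant $14$ is what falls out of the resulting bookkeeping.

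\emph{Fixing the bad set.} As $\mathcal{A}'=\langle\Gamma,\Gamma'\rangle$ is finitely generated and Zariski dense in $\textrm{SL}_2$, strong approximation (in the form valid for all moduli) gives a finite set of primes outside of which $\mathcal{A}'/\mathcal{A}'(p^m)=\textrm{SL}_2(\OK/(p^m))$ and $\Gamma/\Gamma(p^m)=\textrm{SL}_2(\ZZ/p^m\ZZ)$ for all $m\ge1$. Enlarge this set to contain $2$, $3$, the primes dividing the conductor of $\ZZ[\sqrt{-d}]$ in $\OK$ (so that $\OK/(p^m)=\ZZ[\sqrt{-d}]/(p^m)$ is free of rank $2$ over $\ZZ/p^m\ZZ$ on $1,\sqrt{-d}$), and the finitely many primes discussed in the next sentence; call the resulting finite set $\mathcal{S}$. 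Since $\mathcal{A}'$ is Zariski dense it acts irreducibly on $\mathfrak{sl}_2$, so the $\ZZ_p$-span of the $\mathrm{Ad}(\mathcal{A}')$-orbit of $\mathfrak{sl}_2(\ZZ_p)$ is all of $\mathfrak{sl}_2(\OK\otimes\ZZ_p)$; thus there is a \emph{fixed} finite family $g_1,\dots,g_r\in\mathcal{A}'$, each a word of bounded length in $\Gamma\cup\Gamma'$, with $\sum_j\mathrm{Ad}(g_j)\bigl(\mathfrak{sl}_2(\ZZ)\otimes\ZZ_p\bigr)=\mathfrak{sl}_2(\OK\otimes\ZZ_p)$ for every $p$ away from the (finite) vanishing locus of an associated integer determinant — those $p$ are put into $\mathcal{S}$, as are the finitely many $p$ for which $\langle\textrm{SL}_2(\ZZ/p\ZZ),\ \bar T\,\textrm{SL}_2(\ZZ/p\ZZ)\,\bar T^{-1}\rangle\ne\textrm{SL}_2(\OK/(p))$ (here $\bar T$ denotes $T$ mod $p$).

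\emph{The finite-group statement and the base case.} Fix $p\notin\mathcal{S}$, write $\mathbf{G}_m=\textrm{SL}_2(\OK/(p^m))$, and let $G_m\le\mathbf{G}_m$ be the image of $\Gamma$ (equal to $\textrm{SL}_2(\ZZ/p^m\ZZ)$) and $H_m=\bar T G_m\bar T^{-1}$ the image of $\Gamma'$; set $A_{14}(p^m)=(G_mH_m)^{7}$. By strong approximation $\langle G_m,H_m\rangle=\mathbf{G}_m$, so the only issue is the \emph{uniform} bound, which I would obtain by induction on $m$. For $m=1$, $\mathbf{G}_1$ is $\textrm{SL}_2(\mathbb{F}_{p^2})$ (if $p$ is inert) or $\textrm{SL}_2(\mathbb{F}_p)\times\textrm{SL}_2(\mathbb{F}_p)$ (if $p$ splits); either way one writes an arbitrary element as a product of at most four elementary matrices $E_{12}(t),E_{21}(t)$ over $\OK/(p)$ (row reduction in a ring with enough units), and then writes $E_{12}(t)$ with $t=t_0+t_1\sqrt{-d}$ as $E_{12}(t_0)$ (which lies in $G_1$) times a bounded product of the root subgroups $\bar g_j E_{12}(\mathbb{F}_p)\bar g_j^{-1}$ and $\bar g_j E_{21}(\mathbb{F}_p)\bar g_j^{-1}$, each of which is a short alternating $G_1$--$H_1$ word; the span condition on the $\bar g_j$ (the last clause of the definition of $\mathcal{S}$) is exactly what makes this possible, and counting factors after merging adjacent same-type ones yields $14$.

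\emph{The lift $p^m\to p^{m+1}$, and the main difficulty.} The kernel $N$ of $\mathbf{G}_{m+1}\twoheadrightarrow\mathbf{G}_m$ is abelian, and $X\mapsto I+p^mX$ is an isomorphism $\mathfrak{sl}_2(\OK/(p))\xrightarrow{\ \sim\ }N$ taking $\mathrm{Ad}$ to conjugation. Given $M\in\mathbf{G}_{m+1}$, pick by induction $a_i\in G_{m+1}$, $b_i\in H_{m+1}$ ($1\le i\le7$) with $\prod_i a_ib_i\equiv M\pmod{p^m}$; perturbing $a_i\mapsto a_i(I+p^mY_i)$ with $Y_i\in\mathfrak{sl}_2(\ZZ/p\ZZ)$ and $b_i\mapsto b_i(I+p^mZ_i)$ with $Z_i\in\mathrm{Ad}(\bar T)\mathfrak{sl}_2(\ZZ/p\ZZ)$ changes the product by $I+p^m\sum_i\bigl(\mathrm{Ad}(u_i)Y_i+\mathrm{Ad}(v_i)Z_i\bigr)\in N$, where $u_i,v_i$ are the tails of the word following $a_i,b_i$; choosing the inductive word so that these tails realise (conjugates of) the fixed $g_j$ makes every element of $N$ a realisable correction — this is where $\sum_j\mathrm{Ad}(\bar g_j)\mathfrak{sl}_2(\mathbb{F}_p)=\mathfrak{sl}_2(\OK/(p))$ is used — and hence $M\in A_{14}(p^{m+1})$. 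The main obstacle is to arrange that a \emph{single} number of alternating factors, the same for all good $p$ and all $m$, simultaneously (i) suffices to cover the residue-field group via its elementary decomposition together with the ``imaginary'' root directions coming from $\bar T$, and (ii) leaves enough freedom in the inductive word that the perturbation directions at each stage span all of $\mathfrak{sl}_2(\OK/(p))$; reconciling (i) and (ii) is the delicate part and is what pins down the constant and forces the particular choice of $g_1,\dots,g_r$ and of the elementary factorisations. A secondary subtlety, also the reason for working with the $g_j$ rather than with $T$ alone, is that for some good primes $T$ by itself need not move $\mathfrak{sl}_2(\RR)$ onto a subspace complementary to it (for instance if $T$ normalises a real Borel), so conjugation by genuinely longer words of $\mathcal{A}'$ is needed.
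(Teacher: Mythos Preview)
Your architecture—reduce to $\textrm{SL}_2(\OK/(p^m))$ by strong approximation, handle $m=1$ directly, then lift via the abelian $\mathfrak{sl}_2$-kernel—is essentially the strategy the paper uses for the \emph{bad} primes (Lemma~\ref{lem:badprimes}), where the output constant $k_p$ is allowed to depend on $p$. For Lemma~\ref{lem:goodprimes} the paper does not induct on $m$ at all: it gives a single construction uniform in $m$, and this matters because your base case has a real gap. You claim that $E_{12}(t_1\sqrt{-d})$ lies in a bounded product of the subgroups $\bar g_j E_{12}(\mathbb{F}_p)\bar g_j^{-1}$ and $\bar g_j E_{21}(\mathbb{F}_p)\bar g_j^{-1}$, justified by the $\mathfrak{sl}_2$-span condition. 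But that span condition is additive, while these conjugate one-parameter subgroups do not commute in $\textrm{SL}_2(\OK/(p))$; a product of boundedly many of their elements has no description in terms of Lie-algebra sums, and there is no mechanism forcing it to contain the specific upper-unipotent $E_{12}(t_1\sqrt{-d})$. Your inductive step $m\to m+1$ is sound precisely because the kernel there \emph{is} abelian and the exponential \emph{is} a homomorphism on it; at $m=1$ neither holds. You flag this yourself as ``the delicate part'' but do not resolve it, and without it the constant $14$ is never actually derived.

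The paper's key idea, which replaces both your base case and your induction, is to produce for each good $p$ and every $m$ an element $T_1\in\Gamma\Gamma'\Gamma$ that \emph{normalizes} the upper-unipotent subgroup modulo $p^m$ (i.e.\ fixes $\infty$) and whose eigenvalue $a_0$ satisfies $a_0^2\notin\ZZ/(p^m)$. Then $T_1E_{12}(n)T_1^{-1}=E_{12}(na_0^2)$ remains upper-unipotent, so $E_{12}(\ZZ/(p^m))$ and its $T_1$-conjugate \emph{commute} and together fill $E_{12}(\OK/(p^m))$: the additive structure is recovered because the conjugator stays inside the Borel. Constructing such a $T_1$ uniformly is the substance of the proof—one lifts to $\Gamma$ a matrix $\phi_s(x+y\sqrt{-d})\in\textrm{SL}_2(\ZZ/(p^m))$ with $x^2+dy^2\equiv1$, $p\nmid 2xy$, whose fixed points $\pm s\sqrt{-d}$ lie on a rational translate of $T^{-1}\cdot\PP^1(\RR)$, so that after conjugation by $T$ and by an element of $\Gamma$ one lands at $\infty$. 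With upper and lower unipotents each in $\Gamma\Gamma'\Gamma\Gamma'\Gamma$, the factorization $\left(\begin{smallmatrix}1&a\\0&1\end{smallmatrix}\right)\left(\begin{smallmatrix}1&0\\b&1\end{smallmatrix}\right)\left(\begin{smallmatrix}1&c\\0&1\end{smallmatrix}\right)$ puts every matrix with $b\not\equiv0\pmod p$—more than half of $\textrm{SL}_2(\OK/(p^m))$—inside $A_7(p^m)$, whence $A_{14}(p^m)$ is everything.
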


\begin{proof}
  Throughout the proof we assume $p \not\in \mathcal{S}$, and we augment $\mathcal{S}$ as necessary while preserving its finiteness.  

  Consider $C_T = T^{-1} \cdot \PP^1(\RR)$.  If $C_T$ is a line, let $\gamma$ be the identity matrix.  Otherwise it is a circle, and we write $r\sqrt{d}$ for its radius, $x_0 + \sqrt{-d}y_0$ for its center, and let
  \[
    \gamma = \begin{pmatrix} 1 & -x_0 \\ 0 & 1 \end{pmatrix}.
  \]
  Note that $x_0, y_0, r$ are rational numbers which may be written with denominator $b$, the curvature of $C_T$ (formulae for these integers in terms of the entries of $T$ are given in \cite[Proposition 3.7]{StangeVis2}).
  Then the intersection points of $\gamma T^{-1} \cdot \PP^1(\RR)$ with the imaginary axis are of the form $\sqrt{-d}s$ where, in the case that $C_T$ is a line, $\sqrt{-d}s$ is the height of the line, and if $C_T$ is a circle, $s = y_0 \pm r$, and $r\sqrt{d}$ is the radius of $T^{-1} \cdot \PP^1(\RR)$.  In any case, choose such an $s$, and remark that $\gamma$ and $s$ are defined over $\QQ$.

Consider reduction modulo $p^m \mathcal{O}_K$ on the projective line: 
\[
  \rho_{p^m}: \PP^1(\mathcal{O}_K) \rightarrow \PP^1(\mathcal{O}_K/(p^m)).
\]
Then the reduction map
\[
  \rho_{p^m}: \textrm{SL}_2(\mathcal{O}_K) \rightarrow \textrm{SL}_2(\mathcal{O}_K/(p^m))
\]
is equivariant with respect to reduction on the projective line.
Let $\mathcal{S}$ contain any primes where
\[
  \rho_{p^m}: \Gamma \rightarrow \textrm{SL}_2(\ZZ/(p^m))
\]
is not surjective for some $m \ge 1$ (there are finitely many such, by strong approximation for $\Gamma$). 
We allow for $p$ to be inert, split, or ramified.

Let $\mathcal{S}$ also contain any primes appearing in denominators of $\gamma$, so that $\rho_{p^m}(\gamma)$ is defined, and has a lift in $\Gamma$.
By expanding $\mathcal{S}$, we may assume $p$ and $s$ are coprime.

Therefore $s$ is invertible modulo $p^m$ and there is a representation $\phi_s: \mathcal{O}_K/(p^m) \rightarrow M(2,\ZZ/(p^m))$ given by
\[
  x + y\sqrt{-d} \mapsto
  \begin{pmatrix}
    x & -y d s \\
    ys^{-1} & x 
  \end{pmatrix}.
\] 
In particular, the eigenvalues of the matrix are $x \pm y \sqrt{-d}$, and the determinant is the norm $N(x+y\sqrt{-d})$.
It has exactly two fixed points modulo $p^m$, namely $\pm s\sqrt{-d}$.  

Let $x$ and $y$ be a solution to $x^2 + dy^2 \equiv 1 \pmod{p^m}$ having $\gcd(xy,p)=1$.  The existence of such is a consequence of an argument with Gauss sums \cite[Exercise 13(v), p. 32]{Cassels}, if $p \ge 5$.  Therefore let $2,3 \in \mathcal{S}$.  
Therefore, $x+y\sqrt{-d}$ is of norm $1$ modulo $p^m$, so that $\phi_s(x+y\sqrt{-d})$ is in $\textrm{SL}_2(\ZZ/(p^m))$, and therefore has a lift, call it $T_0$, in $\Gamma$.  
We guarantee that neither of $(x\pm y\sqrt{-d})^2$ are equivalent to integers modulo $p^m$ (i.e. in the subring $\ZZ/(p^m) \subset \mathcal{O}_K/(p^m)$), since $p \nmid 2xy$ by construction.

Therefore $T\gamma^{-1} T_0 \gamma T^{-1}$, considered modulo $p^m$, has a fixed point in $\ZZ/(p^m)$.  Since $\textrm{SL}_2(\ZZ/(p^m))$ is transitive on $\PP^1(\ZZ/(p^m))$, we can conjugate this fixed point to $\infty$ modulo $p^m$.
Therefore, we find an element $T_1$ in $\Gamma T \Gamma T^{-1} \Gamma$ which fixes $\infty$ modulo $p^m$.

So $T_1$ has the form
\[
  T_1 \equiv \begin{pmatrix}
    a_0 & b \\
    0 & a_1
  \end{pmatrix} \pmod{p^m},
\]
where $a_0, a_1 \in \mathcal{O}_K$, $a_0 a_1 \equiv 1 \pmod{p^m}$.  As $a_0$ and $a_1$ are the eigenvalues of $T_1$ and hence $T_0$, they are $x \pm y \sqrt{-d}$.  In particular, we have arranged that $a_0^2 \not\in \ZZ/(p^m)$. 

Now take
\[
  T_{2,n} = T_1 \begin{pmatrix} 1 & n\\ 0 & 1 \end{pmatrix} T_1^{-1}
  \equiv \begin{pmatrix}
    1 & na_0^2 \\
    0 & 1
  \end{pmatrix}.
\]
We know $a_0^2 \notin \ZZ/(p^m)$ and $a_0^2$ is invertible.  Now, this implies that $a_0^2\ZZ/(p^m) + \ZZ/(p^m) = \mathcal{O}_K/(p^m)$.  This implies that all upper triangular matrices are in $\Gamma T\Gamma T^{-1} \Gamma T \Gamma T^{-1} \Gamma$ modulo $p^m$.

The rest of the proof follows Varj\'u.  Specifically, an exactly analogous argument shows that the lower triangular matrices with $1$'s on the diagonal are also in $\Gamma T\Gamma T^{-1} \Gamma T \Gamma T^{-1} \Gamma$ modulo $p^m$.  Therefore, in $A_{7}(p^m)$ we obtain all elements of the form
\[
  \mat{1&a\\0&1}\cdot\mat{1&0\\b&1}\cdot\mat{1&c\\0&1}=\mat{1+ab&a+c+abc\\b&1+bc}.
\]
This includes any matrix $\gamma$ with lower-left entry not congruent to $0$ modulo $p$, since it is possible to solve for $a,b,c$ modulo $p^m$ in that circumstance.  As this is more than half of the group $\rho_{p^m}(\mathcal{A}')$, the Lemma is proved.
\end{proof}

\begin{lem}
  \label{lem:badprimes}
  Let $p$ be any prime.  Then there exists some positive integers $k_p$ and $m_p$ such that
  \[
    A_{k_p}(p^m)=\mathcal{A}^{'}/\mathcal{A}^{'}(p^m)
  \]
  for all $m \ge m_p$.
\end{lem}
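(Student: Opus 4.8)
The plan is to reduce the lemma to a finite base case together with a uniform lift to higher prime powers, the Lie algebra $\mathfrak{sl}_2$ carrying the lift. For the base case, note that $\mathcal{A}'/\mathcal{A}'(p^{m_p})$ is a finite group generated by the images of $\Gamma$ and $\Gamma'$, so its Cayley graph has finite diameter and there is an integer $k_0 = k_0(p,m_p)$ with $A_{k_0}(p^{m_p}) = \mathcal{A}'/\mathcal{A}'(p^{m_p})$. Everything then reduces to choosing $m_p$ once and for all and lifting through the congruence tower $\mathcal{A}'(p^{m_p}) \supseteq \mathcal{A}'(p^{m_p+1}) \supseteq \cdots$ using a number of additional $\Gamma$- and $\Gamma'$-letters that does not grow with $m$.

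For the lift I would work in $\textrm{SL}_2(\OK \otimes_\ZZ \ZZ_p)$ and enlarge $m_p$ so that, for all $j \ge m_p$: (i) $j$ exceeds the strong-approximation thresholds for $\Gamma$ (hence for $\Gamma' = T\Gamma T^{-1}$), so that every matrix $\exp(p^j X)$ with $X \in \mathfrak{sl}(2,\ZZ_p)$ is, modulo every $p^m$, represented by a single element of $\Gamma$, and likewise $\exp(p^j\,\mathrm{Ad}(T)X)$ by a single element of $\Gamma'$; and (ii) the filtration $\{\mathcal{A}'(p^j)\}_{j \ge m_p}$ is uniform, so that there is a fixed $\ZZ_p$-Lie lattice $\mathfrak{L} \subset \mathfrak{sl}(2,\OK \otimes_\ZZ \ZZ_p)$, stable under $\mathrm{Ad}(\mathcal{A}')$, with $\mathcal{A}'(p^j) = \exp(p^j\mathfrak{L})$. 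The essential input is that $\mathfrak{L}$ has finite index, say $p^{\iota_p}$, in $\mathfrak{sl}(2,\OK \otimes_\ZZ \ZZ_p)$: this is a form of strong approximation for $\mathcal{A}'$, which holds because $\mathcal{A}' \supseteq \langle\Gamma,\Gamma'\rangle$ is Zariski dense in $\textrm{SL}_2$ and --- as $T$ does not normalize $\textrm{SL}_2(\RR)$ --- its $p$-adic closure is not cut out by a $\QQ_p$-rational form, so $\mathfrak{L}\otimes\QQ_p = \mathfrak{sl}(2, K\otimes_\QQ\QQ_p)$; one then enlarges $m_p$ past $\iota_p$. Granting this, I would realize an arbitrary $n \in \mathcal{A}'(p^{m_p})/\mathcal{A}'(p^m)$ as a single alternating word $g_1 h_1 \cdots g_c h_c$ of bounded length $c$ (independent of $m$), the letters being specific single elements of various $\Gamma(p^a)$ and $\Gamma'(p^b)$, the depths $a,b$ being chosen so that the commutators appearing in the Baker--Campbell--Hausdorff expansion of the word fall into the prescribed graded piece of the filtration. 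Iterated commutators of bounded depth suffice here because $\dim_{\QQ_p}\mathfrak{sl}(2, K\otimes_\QQ\QQ_p) = 6$ forces $\mathfrak{L}$ to be spanned over $\ZZ_p$ by brackets of bounded length in $\mathfrak{sl}(2,\ZZ_p)$ and $\mathrm{Ad}(T)\mathfrak{sl}(2,\ZZ_p)$. Expanding $\log(g_1 h_1 \cdots g_c h_c)$ by BCH produces a system in the logarithms of the letters whose differential, assembled from $\mathfrak{sl}(2,\ZZ_p)$, $\mathrm{Ad}(T)\mathfrak{sl}(2,\ZZ_p)$ and the relevant bracket terms, surjects onto $\mathfrak{L}$; existence of a solution modulo $p^m$ --- for every $m$, with $c$ fixed --- then follows from Hensel's lemma (equivalently the $p$-adic implicit function theorem). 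The iteration lives only in the existence proof for the letters, never in the length of the word.

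Combining the two parts: given $g \in \mathcal{A}'/\mathcal{A}'(p^m)$, reduce modulo $p^{m_p}$ to land in $A_{k_0}(p^{m_p})$, lift that word to honest elements of $\Gamma$ and $\Gamma'$, and left-multiply $g$ by its inverse; the remaining factor lies in $\mathcal{A}'(p^{m_p})/\mathcal{A}'(p^m)$ and hence is a word of length $\le 2c$. This exhibits $g \in A_{k_p}(p^m)$ with $k_p = k_0 + c$, valid for all $m \ge m_p$.

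The step I expect to be the main obstacle is forcing $c$, and hence $k_p$, to be independent of $m$: a naive layer-by-layer lift yields a word whose length grows (logarithmically) in $m$, which is useless downstream, since Lemma~\ref{lem:spectral-aprime-third} needs a single exponent working for all prime powers simultaneously. The one-shot Hensel argument is what avoids this, and it rests entirely on the finite-index statement $\mathfrak{L}\otimes\QQ_p = \mathfrak{sl}(2, K\otimes_\QQ\QQ_p)$ --- the $p$-adic non-degeneracy of $\mathcal{A}'$ coming from Zariski density and the choice of $T$ --- together with the verification that the BCH differential of the word map is onto $\mathfrak{L}$ at a suitable base point; choosing $m_p$ to absorb $\iota_p$, the strong-approximation thresholds and the bracket depths, and checking that the commutators sit at the correct level, are the routine parts.
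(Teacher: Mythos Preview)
Your approach is correct and closely parallels the paper's, but the paper's execution is more elementary and avoids two of your complications. Both arguments hinge on the same Lie-algebraic fact: since $T \notin \textrm{SL}_2(\RR) \cup i\textrm{SL}_2(\RR)$, one has $\mathrm{Ad}(T)\mathfrak{sl}(2,\RR) \cap \mathfrak{sl}(2,\RR) = \{0\}$, so $\mathfrak{sl}(2,\QQ_p) + \mathrm{Ad}(T)\mathfrak{sl}(2,\QQ_p)$ already equals $\mathfrak{sl}(2, K \otimes_\QQ \QQ_p)$ by dimension count --- no iterated brackets are needed, contrary to your worry --- and one may fix a six-element $\QQ_p$-basis $W = \{H,R,L,\mathrm{Ad}(\gamma_2 T)w, \mathrm{Ad}(\gamma_3 T)w, \mathrm{Ad}(\gamma_4 T)w\}$ with $\gamma_i \in \Gamma$ and $w \in \mathfrak{sl}(2,\ZZ)$. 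The paper then targets not $\mathcal{A}'(p^{m_p})/\mathcal{A}'(p^m)$ but the larger and structurally transparent $\textrm{SL}_2(\mathcal{O}_K)(p^{m_p})/\textrm{SL}_2(\mathcal{O}_K)(p^m)$, which sidesteps your need to establish a uniform lattice $\mathfrak{L}$ for $\mathcal{A}'$ itself. Most notably, rather than invoke BCH and Hensel, the paper carries out exactly the naive layer-by-layer lift you dismissed: it writes $g \equiv L_1 \cdot (\gamma_2 T L_2 T^{-1}\gamma_2^{-1}) \cdot (\gamma_3 T L_3 T^{-1}\gamma_3^{-1}) \cdot (\gamma_4 T L_4 T^{-1}\gamma_4^{-1}) \pmod{p^k}$ and, to pass to $p^{k+1}$, expands the error $p^k u$ in the basis $W$ and \emph{modifies each $L_i$ in place} by right-multiplying by some $\beta_i \in \Gamma(p^{k-\iota_p})$. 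The word length stays at four throughout; the iteration lives only in the choice of letters, exactly as you wanted, so your concern about logarithmic growth was misplaced. Your Hensel/implicit-function-theorem argument is this same induction packaged abstractly, and the routine checks you list (absorbing $\iota_p$ and the strong-approximation threshold into $m_p$, surjectivity of the differential) match the paper's one-for-one. The paper's version buys explicitness (and the sharper $k_p$ and $m_p$ needed for Theorem~\ref{thm:explicit-strong}) while avoiding any appeal to the $p$-adic analytic structure of $\mathcal{A}'$; your version buys a cleaner conceptual wrapper at the cost of heavier machinery.
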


\begin{proof}
  Let $SL_2$ act on $\mathfrak{sl}_2$ via the standard adjoint action of a Lie group on its Lie algebra by conjugation, i.e.
  \[
    SL_2 \times \mathfrak{sl}_2 \rightarrow \mathfrak{sl}_2, \quad g \times v \mapsto g \cdot v := g v g^{-1}.
  \]
  We will first find a $\QQ_p$-basis of $\mathfrak{sl}(2,\QQ_p \otimes_{\QQ} K_d)$ formed of elements from $\mathfrak{sl}(2,\QQ)$ and $\Gamma T~\cdot~\mathfrak{sl}(2,\QQ)$.  Using this basis, we will apply an inductive argument to show that, for all $m \ge m_p$ (where $m_p$ will be defined below), a finite-index subgroup of $\textrm{SL}_2(\mathcal{O}_K)/\textrm{SL}_2(\mathcal{O}_K)(p^m)$, whose index is independent of $m$, is contained in $A_4(p^m)$.

  To find the aforementioned basis, we begin with the standard basis for the real Lie algebra $\mathfrak{sl}(2,\RR)$:
\[
  H=\mat{1&0\\0&-1}, \quad R=\mat{0&1\\0&0}, \quad L=\mat{0&0\\1&0}.
\]
The above is also a $\QQ_p$-basis for $\mathfrak{sl}(2,\QQ_p)$ for any $p$, and a $\QQ$-basis for $\mathfrak{sl}(2,\QQ)$.

First, we remark that $\Gamma(v)$ spans $\textrm{SL}_2(\RR)(v)$ over $\RR$ for any non-zero $v \in \mathfrak{sl}(2,\CC)$.  For, since $\Gamma$ is Zariski dense in $SL_2$, and the adjoint action is Zariski continuous, the Zariski closure of the orbit $\Gamma(v)$ in $\mathfrak{sl}(2,\RR)$ is $\textrm{SL}_2(\RR)(v)$.  

Next, we claim that the orbit $\textrm{SL}_2(\RR)(v)$ must be of real dimension $3$.  This follows from irreducibility of the adjoint action of $\textrm{SL}_2(\RR)$ on $\mathfrak{sl}(2,\RR)$ in the case that $v \in \mathfrak{sl}(2,\RR)$.  In fact, the same elementary irreducibility argument shows that the orbit $\textrm{SL}_2(\RR)(v)$ for any $v \in \mathfrak{sl}(2,\CC)$ is at least $3$-dimensional (any $v$ can be conjugated to be diagonal, hence $\lambda H$ with $\lambda \in \CC$; then conjugations and linear combinations yield $\lambda R$ and $\lambda L$).

Furthermore, for $v \notin \mathfrak{sl}(2,\RR)$, we have $\textrm{SL}_2(\RR)(v) \cap \mathfrak{sl}(2,\RR) = \{0\}$.
By dimensional considerations, then, in this case
\[
        \operatorname{Span}_\RR( \Gamma(v), \mathfrak{sl}(2,\RR) ) = \mathfrak{sl}(2,\CC).
\]

Next, we show that the stabilizer of $\mathfrak{sl}(2,\RR)$ under the adjoint action of $\textrm{SL}_2(\CC)$ is exactly $\textrm{SL}_2(\RR) \cup i\textrm{SL}_2(\RR)$.  For, suppose $M$ is in the stabilizer.  Then, taking $\mathfrak{m} \in \mathfrak{sl}(2,\RR) \cap \textrm{SL}_2(\RR)$ (for example, an elliptic element of order $2$ with fixed points on $\PP^1(\RR)$), we find that it must stabilize the circle $M(\PP^1(\RR))$, which is only possible if $M(\PP^1(\RR)) = \PP^1(\RR)$.  Hence the stabilizer of $\mathfrak{sl}(2,\RR)$ is contained in the stabilizer of $\PP^1(\RR)$ under the $\textrm{SL}_2(\CC)$ action on $\PP^1(\CC)$.

We have assumed $T \not\in \textrm{SL}_2(\RR) \cup i\textrm{SL}_2(\RR)$.  Hence, by simplicity, $T(\mathfrak{sl}(2,\RR)) \cap \mathfrak{sl}(2,\RR) = \{0\}$.  In particular, we may take any ${w}\in \mathfrak{sl}(2,\QQ)$, and obtain $T(w)\not\in \mathfrak{sl}(2,\RR)$.  We may now conclude that for some appropriate choice of $\gamma_2, \gamma_3, \gamma_4 \in \Gamma$, we have:
\[
  Span_\RR\{H,R,L, w_2=\gamma_2(T(w)), w_3=\gamma_3(T(w)),w_4=\gamma_4(T(w)) \} =\mathfrak{sl}(2,\CC).
\]
Let $W$ denote this basis, where we have chosen $w \in \mathfrak{sl}(2,\ZZ)$.  

We may actually conclude that $W$ is a $\QQ$-basis of $\mathfrak{sl}(2,K_d)$, which is $3$ $K_d$-dimensional, hence $6$ $\QQ$-dimensional.  
We may extend scalars and find that $W$ is also a $\QQ_p$-basis of $\mathfrak{sl}(2,\QQ_p \otimes_{\QQ} K_d)$.

We have therefore found the desired $\QQ_p$-basis of $\textrm{SL}_2(\QQ_p \otimes_{\QQ} K_d)$, namely $W$.

Next we define $m_p$.
Since $\Gamma$ is Zariski dense, for each $p$ we can find a positive $m_p'$ such that for all $m\geq m_p'$, $\Gamma(p^m)$ is dense in $\textrm{SL}_2(\ZZ_p)(p^m)$.  For technical reasons, we take $m_p = m_p' + \iota_p$ where $\iota_p$ is the smallest non-negative integer so that
\begin{equation}
  \label{eqn:technical}
  p^{\iota_p} \mathfrak{sl}(2,\ZZ_p \otimes_{\ZZ} \mathcal{O}_K) \subset \operatorname{Span}_{\ZZ_p}(W).
\end{equation}
This $\iota_p$ is necessarily finite.
In the case that $W$ is a $\ZZ_p$-integral basis of $\mathfrak{sl}(2,\ZZ_p \otimes_{\ZZ} \mathcal{O}_K)$, then $\iota_p=0$, and the technical condition may be dropped in the sense that $m_p = m_p'$.  

Next, we prove the following \textbf{claim:}  For any $g\in \textrm{SL}_2(\mathcal{O}_K)(p^{m_p})/\textrm{SL}_2(\mathcal{O}_K)(p^{m})$ and any $m\geq m_{p}$, we 
may express $g$ as 
\aln{
  g\equiv L_1 (\gamma_2T L_2 T^{-1} \gamma_2^{-1}) (\gamma_3T L_3 T^{-1}\gamma_3^{-1}) (\gamma_4T L_4 T^{-1}\gamma_4^{-1}) \pmod{p^m}
}
for some $L_1,L_2,L_3,L_4\in\Gamma$.

This would imply $g\in A_4(p^m)$.  

We prove this by induction.  The base case $m=m_p$ is trivial.  Suppose for $m=k$ we can find $L_{1,k},L_{2,k},L_{3,k},L_{4,k} \in \Gamma$ such that 
\aln{
  g\equiv L_{1,k} (\gamma_2 T L_{2,k} T^{-1}\gamma_2^{-1}) (\gamma_3T L_{3,k} T^{-1}\gamma_3^{-1}) (\gamma_4T L_{4,k} T^{-1}\gamma_4^{-1}) \pmod{p^k}
}
Then 
$$
g = L_{1,k} (\gamma_2 T L_{2,k} T^{-1}\gamma_2^{-1}) (\gamma_3T L_{3,k} T^{-1}\gamma_3^{-1}) (\gamma_4T L_{4,k} T^{-1}\gamma_4^{-1}) + p^k u 
$$ 
for some $u\in\mathfrak{sl}(2,\ZZ_p \otimes_{\ZZ} \mathcal{O}_K)$.  Therefore, using the basis $W$, and the fact that $H$, $R$, $L$ give a $\ZZ_p$-integral basis for $\mathfrak{sl}(2,\ZZ_p)$, we can find $u_1 \in \mathfrak{sl}(2,\ZZ_p)$, and $t_2, t_3, t_4 \in \QQ_p$ so that
\[
  u = u_1 + t_2w_2 + t_3w_3 + t_4w_4.
\]
If $W$ forms a $\ZZ_p$-integral basis for $\mathfrak{sl}(2,\ZZ_p \otimes_{\ZZ} \mathcal{O}_K)$, then $t_i \in \ZZ_p$ for $i=2,3,4$.  Otherwise,
\[
  t_i \in p^{-\iota_p}\ZZ_p,
\]
(by \eqref{eqn:technical}).  This implies
\[
  t_ip^{k}w \in p^{k - \iota_p}\mathfrak{sl}(2,\ZZ_p).
\]

Since $\Gamma(p^{k-\iota_p})$ is dense in $\textrm{SL}_2(\ZZ_p)(p^{k-\iota_p})$, we can find $\beta_1,\beta_2,\beta_3,\beta_4\in\Gamma$ such that  $\beta_i \equiv I \pmod {p^{k-\iota_p}}$ and
\aln{&\beta_1\equiv I+p^{k}u_1 \pmod{p^{k+1}}\\
&\beta_2\equiv I+t_2p^{k}w \pmod{p^{k+1}}\\
&\beta_3\equiv I+t_3p^{k}w \pmod{p^{k+1}}\\
&\beta_4\equiv I+t_4p^{k}w \pmod{p^{k+1}}
 }
Then we set $L_{i,k+1}=L_{i,k}\beta_i$ for $i=1,2,3,4$.  
This is enough to prove the statement for $m = k+1$ (here, we rely on the fact that $k, k-\iota_p \ge 1$):
\begin{align*}
  L_{1,k+1}& (\gamma_2 T L_{2,k+1} T^{-1}\gamma_2^{-1}) (\gamma_3T L_{3,k+1} T^{-1}\gamma_3^{-1}) (\gamma_4T L_{4,k+1} T^{-1}\gamma_4^{-1}) \pmod{p^{k+1}} \\
  \equiv& L_{1,k} (\gamma_2 T L_{2,k} T^{-1}\gamma_2^{-1}) (\gamma_3T L_{3,k} T^{-1}\gamma_3^{-1}) (\gamma_4T L_{4,k} T^{-1}\gamma_4^{-1}) + p^ku \equiv g \pmod{p^{k+1}}.
\end{align*}
This completes the induction.
Therefore, we have $g\in A_4(p^m)$ for any $g\in \textrm{SL}_2(\mathcal{O}_K)(p^{m_p})/\textrm{SL}_2(\mathcal{O}_K)(p^{m})$ and any $m\geq m_{p}$.  

Now, $[\textrm{SL}_2(\mathcal{O}_K):\textrm{SL}_2(\mathcal{O}_K)(p^{m_p})]\leq p^{6m_p}$.  It must be that $A_{1}(p^m)$ contains something outside $\textrm{SL}_2(\mathcal{O}_K)(p^{m_p})/\textrm{SL}_2(\mathcal{O}_K)(p^m)$.  Therefore, $A_{4+1}(p^m)$ contains at least two cosets; $A_{4+2}(p^m)$ contains at least $3$ cosets and so forth.  So if we set $k_p=4+p^{6m_p}$, we have $A_{k_p}(p^{m})=\mathcal{A}^{'}/\mathcal{A}^{'}(p^m)$ for all $m \ge m_p$.
\end{proof}

\begin{proof}[Proof of Lemma \ref{lem:spectral-aprime-third}]

For each $p$ and $m$, there is a $k_p$ such that
\[
  \mathcal{A}'/\mathcal{A}'(p^m) = A_{k_{p,m}}(p^m).
\]
For $p \in \mathcal{S}$, this $k_{p,m}$ is uniform with respect to $p$ (Lemma \ref{lem:goodprimes}), while for any fixed $p \notin \mathcal{S}$, this $k_{p,m}$ is uniform for $m \ge m_p$ (Lemma \ref{lem:badprimes}).  As $\mathcal{S}$ is finite, the supremum of the $k_{p,m}$ is finite, say $k_0$.  Therefore,
\[
  \mathcal{A}'/\mathcal{A}'(p^m)
  = A_{k_0}(p^m)
\]
for any $p$, $m$.

We have assumed $\Gamma$ and therefore $\Gamma^{'}$ have a multiplicative structure.  In other words, for any $q=\prod_{i}p_i^{n_i}$, we have 
\begin{align*}
  {\Gamma}/{\Gamma(q)}&\cong \prod_i{\Gamma}/{\Gamma}(p_i^{n_i}),\\
{\Gamma}^{'}/{\Gamma}^{'}(q)&\cong \prod_i{\Gamma}^{'}/{\Gamma}^{'}(p_i^{n_i}).
\end{align*}

A direct corollary is that $\mathcal{A}^{'}$ also has a multiplicative structure
\begin{equation}\label{multiplicative}
  \mathcal{A}^{'}/\mathcal{A}^{'}(q)\cong \prod_i\mathcal{A}^{'}/\mathcal{A}^{'}(p_i^{n_i}),
\end{equation}
since $\mathcal{A}^{'}$ is generated by $\Gamma$ and $\Gamma^{'}$, and that $A_{k_0}$ has a multiplicative structure:
\[
  A_{k_0}(q)\cong \prod_i A_{k_0}(p_i^{n_i}).
\]

These isomorphisms are compatible so that the composition of isomorphisms
\[
  A_{k_0}(q)\cong \prod_i A_{k_0}(p_i^{n_i})
  = \prod_i \mathcal{A}'/\mathcal{A}'(p_i^{n_i})
  \cong 
  \mathcal{A}^{'}/\mathcal{A}^{'}(q)
\]
is the identity map.  Therefore,
\[
  A_{k_0}(q)=\mathcal{A}^{'}/\mathcal{A}^{'}(q)
\]
as desired.
\end{proof}

\begin{proof}[Proof of Proposition \ref{prop:spectral-aprime}]
  We verify the hypotheses of Lemma \ref{lem:varju2} for $G = \mathcal{A}'$, $S = S' \cup T S' T^{-1}$, where $S$ is a finite set of generators for $\Gamma$, $k=2k_0$, and $(G_1,G_2, \ldots, G_{2k_0}) = (\Gamma, \Gamma', \Gamma, \Gamma', \ldots, \Gamma')$.  
  The group $\Gamma$ has a spectral gap as a Zariski dense subgroup of $\textrm{SL}_2(\ZZ)$, by  \cite{BV12}; hence $\Gamma'$ does also.  The second hypothesis is immediate, and the third is verified by Lemma \ref{lem:spectral-aprime-third}.  Therefore $\mathcal{A}'$ has a combinatorial spectral gap.
\end{proof}

Next, we wish to pass from $\mathcal{A}'$ to $\mathcal{A}$.

\begin{prop}
  \label{prop:spectral-a}
  $\mathcal{A}$ has a combinatorial spectral gap.
\end{prop}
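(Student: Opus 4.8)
The plan is to obtain the combinatorial spectral gap for $\mathcal{A}$ from the one for $\mathcal{A}'$ just proved in Proposition~\ref{prop:spectral-aprime}, exploiting that $\mathcal{A}'$ sits inside $\mathcal{A}$ with bounded index modulo every $q$. Writing $\rho_q$ for reduction modulo $q$, so that $\rho_q(\mathcal{A})=\mathcal{A}/\mathcal{A}(q)$, $\rho_q(\mathcal{A}')=\mathcal{A}'/\mathcal{A}'(q)$ and $\rho_q(\mathcal{A}')\le\rho_q(\mathcal{A})$, the key claim is that there is a constant $C$, independent of $q$, with $[\rho_q(\mathcal{A}):\rho_q(\mathcal{A}')]\le C$ for all $q\ge1$. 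Granting this, a standard finite-index transfer turns the spectral gap of the family $\{\textrm{Cay}(\rho_q(\mathcal{A}'),\overline{S'})\}_q$ into one for $\{\textrm{Cay}(\rho_q(\mathcal{A}),\overline{S})\}_q$.

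To prove the index bound I would argue prime by prime and then combine multiplicatively. If $p\notin\mathcal{S}$, where $\mathcal{S}$ is the finite set of bad primes of Lemma~\ref{lem:goodprimes}, that lemma places every matrix with lower-left entry not divisible by $p$ inside $A_{14}(p^m)\subseteq\rho_{p^m}(\mathcal{A}')$; since such matrices make up more than half of $\textrm{SL}_2(\mathcal{O}_K/(p^m))$, the subgroup $\rho_{p^m}(\mathcal{A}')$ must be all of $\textrm{SL}_2(\mathcal{O}_K/(p^m))$, hence equal to $\rho_{p^m}(\mathcal{A})$, so the local index is $1$. If $p\in\mathcal{S}$, Lemma~\ref{lem:badprimes} shows $\rho_{p^m}(\mathcal{A}')$ contains the image of the principal congruence subgroup $\textrm{SL}_2(\mathcal{O}_K)(p^{m_p})$ for all $m\ge m_p$, so its index in $\textrm{SL}_2(\mathcal{O}_K/(p^m))$, and a fortiori in $\rho_{p^m}(\mathcal{A})$, is at most $|\textrm{SL}_2(\mathcal{O}_K/(p^{m_p}))|\le p^{6m_p}$; for $m<m_p$ the index is no larger, since $\rho_{p^m}$ factors through $\rho_{p^{m_p}}$ and indices cannot grow under quotients. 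Finally, by the multiplicative structure \eqref{multiplicative} one has $\rho_q(\mathcal{A}')=\prod_{p^{n}\Vert q}\rho_{p^{n}}(\mathcal{A}')$ as a subgroup of $\prod_{p^{n}\Vert q}\rho_{p^{n}}(\mathcal{A})$, which contains $\rho_q(\mathcal{A})$ via the Chinese Remainder Theorem; hence the local indices multiply and
\[
  [\rho_q(\mathcal{A}):\rho_q(\mathcal{A}')]\;\le\;\prod_{p\in\mathcal{S}}p^{6m_p}\;=:\;C,
\]
uniformly in $q$. (One may freely replace $\mathcal{A}$ by a finite-index, e.g. congruence, subgroup throughout, which does not affect the spectral gap, so no multiplicative structure need be assumed of $\mathcal{A}$ itself.)

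For the transfer, fix a finite symmetric generating set $S'$ of $\mathcal{A}'$ realizing its gap $\epsilon$ and enlarge it to a finite symmetric generating set $S\supseteq S'$ of $\mathcal{A}$ (possible since $\mathcal{A}$, being geometrically finite, is finitely generated). In $\textrm{Cay}(\rho_q(\mathcal{A}),\overline{S})$, left multiplication by $\rho_q(\mathcal{A}')$ preserves the at most $C$ right cosets $\rho_q(\mathcal{A}')x_i$; on each coset the subgraph spanned by the $\overline{S'}$-edges is isomorphic to $\textrm{Cay}(\rho_q(\mathcal{A}'),\overline{S'})$ and so carries gap $\ge\epsilon$, while the remaining $\overline{S}$-edges join the cosets into a connected graph on at most $C$ vertices. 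Decomposing a mean-zero function $f$ on $\rho_q(\mathcal{A})$ into its per-coset averages plus within-coset fluctuations, bounding the fluctuations by the $\mathcal{A}'$-Dirichlet form and the averages by the uniform spectral gap possessed by every connected graph on at most $C$ vertices, one gets a lower bound for $\langle(I-T_{\rho_q(\mathcal{A}),\overline{S}})f,f\rangle$ in terms of $\Vert f\Vert_{2}^{2}$ with constant depending only on $\epsilon$, $C$ and $|S|$; this is the desired $q$-uniform combinatorial spectral gap for $\mathcal{A}$. Alternatively, this step can be phrased through Lemma~\ref{lem:varju2} applied to finitely many conjugates of $\mathcal{A}'$ inside $\mathcal{A}$. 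The main obstacle I anticipate is the bookkeeping in the second paragraph — assembling the good-prime equality, the bad-prime index estimate, and multiplicativity into a single $q$-uniform constant; the transfer itself is a routine Cheeger-type estimate that can essentially be quoted.
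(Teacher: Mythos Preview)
Your proposal is correct and follows essentially the same route as the paper: establish a uniform bound on $[\rho_q(\mathcal{A}):\rho_q(\mathcal{A}')]$ and then transfer the gap from $\mathcal{A}'$ to $\mathcal{A}$ via a Cheeger-type coset argument. The paper obtains the index bound by a one-line appeal to strong approximation for $\mathcal{A}$ and $\mathcal{A}'$, whereas you derive it explicitly from Lemmas~\ref{lem:goodprimes} and~\ref{lem:badprimes}; and the paper carries out the transfer in the expansion-ratio language (a case split on the maximal size discrepancy $\kappa$ between connected cosets) rather than in your Dirichlet-form language, but these are the same argument up to the Cheeger inequality.
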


Before proving this, we note that our main spectral theorem follows immediately.

\begin{proof}[Proof of Theorem~\ref{mainspectralthm}]
  The theorem follows from the fact that $\mathcal{A}$ has a combinatorial spectral gap (Proposition \ref{prop:spectral-a}) and a version of \cite[Theorem 1.2]{BV12} for $\textrm{SL}_2(\mathcal O_K)$ giving equivalence of geometric and spectral gaps when the Hausdorff dimension of the limit set of the group is greater than $1$, which would follow from the arguments in \cite{BV12} modified as described in the paragraph preceding Theorem 2.1 in \cite{BV12}.
\end{proof}

To prove Proposition~\ref{prop:spectral-a}, we recall an equivalent condition for a combinatorial spectral gap to the one given at the beginning of this section.  Given a graph $G$ and subset $V$, write $\partial V$ for the set of edges joining $V$ to its complement in $G$.  Then define the \emph{expansion ratio} of $G$ to be
\[
  h_G := \min_{V \subset G, |V| \leq \frac{1}{2}|G| } \frac{ |\partial V| }{ |V| }.
\]
Let $\epsilon_G$ be the gap between the two biggest eigenvalues of the discrete Laplacian operator on $G$. It is known the expansion ratio of $G$ is related to $\epsilon_G$ by the inequalities \cite[Propositions 3.2.31, 3.2.33]{Ko13}:
\[
 \frac{h_G^2}{2M_G^2}\leq \epsilon_G\leq 2M_Gh_G,
\]
where $M_G$ is the maximum valence of vertices in $G$.
In particular, $h_{\rho_q G}$ is bounded away from $0$ uniformly with respect to $q$ if and only if $G$ satisfies a combinatorial spectral gap.

\begin{proof}[Proof of Proposition \ref{prop:spectral-a}]
We will demonstrate the existence of a positive constant $h$ such that for any positive integer $q$, and any $V\subset \mathcal{A}/\mathcal{A}(q)$ with $\vert V\vert\leq \frac{1}{2} \vert\mathcal{A}/\mathcal{A}(q)\vert$, we have 
\al{\label{0833}\vert\partial{V}\vert \geq h \vert V \vert.}

We use the corresponding property for $\mathcal{A}'$ (which has a combinatorial spectral gap by Proposition \ref{prop:spectral-aprime}).
Let $h_0$ be such that for any positive integer $q$ and any $V \subset \mathcal{A}'/\mathcal{A}'(q)$ with $| V| \leq \frac{1}{2} | \mathcal{A}'/\mathcal{A}'(q) |$, we have
\al{\label{0833b}\vert\partial{V}\vert \geq h_0 \vert V \vert.}

By the strong approximation property for $\mathcal{A}$ and $\mathcal{A}'$,  there is a universal $M$ such that the index $[\mathcal{A}/\mathcal{A}(q):\mathcal{A}^{'}/\mathcal{A}^{'}(q)]\leq M$.  
Let $S$ be a finite generating set for $\mathcal{A}$, which is symmetric under inverses (this exists since we assume $\mathcal{A}$ is geometrically finite, hence finitely generated).  
We say two cosets $a\mathcal{A}^{'}/\mathcal{A}^{'}(q) $ and $a'\mathcal{A}^{'}/\mathcal{A}^{'}(q)$ are connected if there exists some $s\in S$ such that $sa \mathcal{A}^{'}/\mathcal{A}^{'}(q)=a'\mathcal{A}^{'}/\mathcal{A}^{'}(q)$.  By the symmetry of ${S}$, this connectedness is an equivalence relation. 

Fix $q$.  Let $a_1\mathcal{A}^{'}/\mathcal{A}^{'}(q),\cdots, a_l \mathcal{A}^{'}/\mathcal{A}^{'}(q) $ be the cosets of $\mathcal{A}^{'}/\mathcal{A}^{'}(q)$ in $\mathcal{A}/\mathcal{A}(q)$, with $l\leq M$.  If $l = 1$, then \eqref{0833} follows trivially from \eqref{0833b}, with $h = h_0$, for this value of $q$.  Therefore, assume $l \ge 2$.

Let $V_i=V\cap a_i \mathcal{A}^{'}/\mathcal{A}^{'}(q)$ and define
$$\kappa=\max\left\{ \big\vert |V_i|-|V_j| \big\vert: a_i  \mathcal{A}^{'}/\mathcal{A}^{'}(q) \text{ and } a_j\mathcal{A}^{'}/\mathcal{A}^{'}(q) \text{ are connected}  \right \}. $$

Case 1:  $\kappa\leq \frac{|V|}{10l^2}$.  Then we have $$\max\{\vert V_i\vert\}-\min \{\vert V_i \vert\}\leq l\kappa\leq \frac{\vert V \vert}{10l}.$$  From this, one finds that for each $i$,  
\al{\label{0901}\frac{9}{10}\frac{|V|}{l}\leq |V_i| \leq \frac{11}{10}\frac{|V|}{l}.}

Case 1a:  If $|V|\leq \frac{10}{22}\vert \mathcal{A}/\mathcal{A}(q) \vert$, then each $|V_i|\leq \frac{1}{2} \vert \mathcal{A}^{'}/\mathcal{A}^{'}(q) \vert $.  Applying \eqref{0833b}, we have $\vert Eg(V_i, a_i \mathcal{A}^{'}/\mathcal{A}^{'}(q)-V_i)\vert \geq h_0 |V_i|$. Therefore, 
\al{\label{1054}|\partial V| \geq \sum_i \vert Eg(V_i, a_i \mathcal{A}^{'}/\mathcal{A}^{'}(q)-V_i)\vert \geq h_0 \vert V\vert . }

Case 1b:  If $|V|\geq \frac{10}{22}\vert \mathcal{A}/\mathcal{A}(q) \vert$, then from \eqref{0901} we have 
$$\frac{9}{22} \frac{|\mathcal{A}^{'}/\mathcal{A}^{'}(q)|}{l}\leq |V_i|\leq \frac{11}{22}\frac{|\mathcal{A}^{'}/\mathcal{A}^{'}(q)|}{l}.$$
And then it can be worked out that 
\al{\label{1055} |\partial V|\geq\frac{9h_0}{22}|\mathcal{A}/\mathcal{A}(q)|\geq \frac{9h_0}{11}|V| }

Case 2:  $\kappa\geq \frac{|V|}{10l^2}$.  There exists $s\in{S}$ such that $sa_i \mathcal{A}^{'}/\mathcal{A}^{'}(q)=a_j \mathcal{A}^{'}/\mathcal{A}^{'}(q)$ and  $\big\vert |V_i|-|V_j| \big\vert =\kappa$.  Since multiplication by $s$ is a bijection between $a_i \mathcal{A}^{'}/\mathcal{A}^{'}(q)$ and $a_j \mathcal{A}^{'}/\mathcal{A}^{'}(q)$, by the pigeon hole principle, multiplication by $s$ must map at least $\kappa$ elements from the bigger set, say $V_i$, to $a_j\mathcal{A}^{'}/\mathcal{A}^{'}(q)-V_j$, so we have at least 
\al{\label{1056}|\partial V| \geq \kappa = \frac{|V|}{10l^2}\geq \frac{|V|}{10M^2}}
Combining \eqref{1054},\eqref{1055} and \eqref{1056}, we find we can set $h=\min\{\frac{9h_0}{11}, \frac{1}{10M^2}\}$. 

\end{proof}

Lastly, we prove the statement of explicit strong approximation, with reference to the proof of the spectral gap just completed.

\begin{proof}[Proof of Theorem \ref{thm:explicit-strong}]
  First, we isolate the primes $p$ for which $\mathcal{A}/\mathcal{A}(p) \neq \textrm{SL}_2(\mathcal{O}_K)/\textrm{SL}_2(\mathcal{O}_K)(p)$.  Lemma \ref{lem:goodprimes} shows that $\mathcal{A}/\mathcal{A}(p) = \textrm{SL}_2(\mathcal{O}_K)/\textrm{SL}_2(\mathcal{O}_K)(p)$ for `good' primes, but in the course of the proof, we throw a variety of primes into $\mathcal{S}$, for which we do not prove this; they are to be dealt with as bad primes.  The first class of primes placed in $\mathcal{S}$ are those arising from the denominator of $\gamma$.  The denominator of $\gamma$ is always a divisor of the curvature of $T^{-1} \cdot \mathbb{P}^1(\RR)$ (\cite[Proposition 3.7]{StangeVis2}).  Therefore, by choice of $T$ (applying an element of $\Gamma$ to $T^{-1}$), we can avoid any prime not dividing all curvatures in $\mathcal{A} \cdot \mathbb{P}^1(\RR)$.  The second class of primes removed are those not coprime to $s$.  However, by choice of $s$, we can again avoid any odd prime not dividing the curvature $b$ (since $r = 1/b$, so that $s = (y \pm 1)/b$ for some integer $y$ \cite[Proposition 3.7]{StangeVis2}).  Other primes moved to $\mathcal{S}$ during the proof are those $p$ for which $\Gamma/\Gamma(p^m) \neq \textrm{SL}_2(\ZZ/p^m\ZZ)$ for some $m \ge 1$, and the special primes $p=2,3$.  Note that if $\Gamma/\Gamma(p) = \textrm{SL}_2(\ZZ/p\ZZ)$, then by Lemma~3 on page IV-23 of J-P.Serre in \cite{Serre} one automatically has that $\Gamma/\Gamma(p^m) = \textrm{SL}_2(\ZZ/p^m\ZZ)$.  Hence the statement in part 3(b) of Theorem~\ref{thm:explicit-strong} is equivalent to $\Gamma/\Gamma(p^m) \neq \textrm{SL}_2(\ZZ/p^m\ZZ)$ for some $m\geq 1$.  For all primes not contained in $\mathcal{S}$, the proof demonstrates that $\mathcal{A}'/\mathcal{A}'(p) = \textrm{SL}_2(\mathcal{O}_K)/\textrm{SL}_2(\mathcal{O}_K)(p)$, which implies $\mathcal{A}/\mathcal{A}(p) = \textrm{SL}_2(\mathcal{O}_K)/\textrm{SL}_2(\mathcal{O}_K)(p)$.  

Now let $P_{bad}$ be the product of the primes of $\mathcal{S}$ as above.  We obtain parts (1) and (2) immediately from the fact that $\mathcal{A}/\mathcal{A}(p) = \textrm{SL}_2(\mathcal{O}_K)/\textrm{SL}_2(\mathcal{O}_K)(p)$ for all other primes.  Part (4) is by definition.  

It remains to prove part (3).
Let $p|P_{\textrm{bad}}$.
  In the course of the proof of Lemma \ref{lem:badprimes}, we find that $\mathcal{A}'(p^{m_p})/\mathcal{A}'(p^k) = \textrm{SL}_2(\mathcal{O}_K)(p^{m_p})/\textrm{SL}_2(\mathcal{O}_K)(p^k)$, where by judicious choice of the basis $W$ in the proof, $m_p = m_p' + \iota_p$ where $\iota_p$ is as defined as the smallest non-negative integer so that
  \[
    p^{\iota_p} \mathfrak{sl}(2, \ZZ_p \otimes_\ZZ \mathcal{O}_K) \subset \operatorname{Span}_{\ZZ_p}( \mathcal{A}' \cdot \mathfrak{sl}(2,\ZZ_p)  ).
  \]
  
  However, if the goal is only that $\mathcal{A}(p^{m_p})/\mathcal{A}(p^k) = \textrm{SL}_2(\mathcal{O}_K)(p^{m_p})/\textrm{SL}_2(\mathcal{O}_K)(p^k)$, and not a spectral gap for $\mathcal{A}$, the proof of Lemma \ref{lem:badprimes} can be modified for $\mathcal{A}$ instead of $\mathcal{A}'$, as follows.  Using the same justification, we find that $\operatorname{Span}_{\QQ_p}( \mathcal{A} ( \mathfrak{sl}(2,\QQ_p) ) )$ is of dimension $6$, hence we can find a $\QQ_p$-basis of $\mathfrak{sl}(2, \QQ_p \otimes_\QQ K_d)$ of the form
  \[
  H,R,L, w_2=a_2(w), w_3=a_3(w),w_4=a_4(w),
  \]
  where $w \in \mathfrak{sl}(2,\ZZ_p )$.  We may choose $w$ and $a_i$ such that we have a $\ZZ_p$-basis for $\operatorname{Span}_{\ZZ_p}(\mathcal{A}( \mathfrak{sl}(2,\ZZ_p)))$.  Running the rest of the proof with $a_i$ in place of $\gamma_iT$, we no longer obtain a spectral gap but we obtain surjectivity with the stated $\iota_p$.
\end{proof}

\section{Example packings}\label{sec:examples}

As discussed in the introduction, Kontorovich and Nakamura present a collection of examples which satisfy the hypotheses of Theorem \ref{mainthm}.  Here we first present one explicit example appearing in Kontorovich and Nakamura satisfying the hypotheses of Theorem \ref{mainthm}.  Second, we verify that the hypotheses hold for the entire family of $K$-Apollonian packings.

\subsection{A cuboctohedral packing}\label{sec:cuboct}

The packing presented here is neither the Apollonian packing, nor any $K$-Apollonian packing, but it appears as an example of a super-integral polyhedral packing of Kontorovich and Nakamura \cite{KN}.  The packing is shown in Figure \ref{fig:cuboct}, where cuboctahedral symmetry is evident.

Define
\begin{align*}
    G_1 &= \left\langle
  c_1(z) = \overline{z} + \sqrt{-6}, \quad
  c_2(z) = \frac{\overline{z}}{ \frac{-\sqrt{-6}}{6}\overline{z} + 1}, \quad
  c_3(z) = \frac{(1+\sqrt{-6})\overline{z} - 3\sqrt{-6}}{ \frac{\sqrt{-6}}{3}\overline{z} + 1-\sqrt{-6}}  
  \right\rangle, \\
  G_2 &= \left\langle
  a_1(z) = - \overline{z}, \quad
  a_2(z) = - \overline{z}+6, \quad
  a_3(z) = \frac{\overline{z}}{\overline{z}-1}, \quad
  a_4(z) = \frac{5\overline{z}-12}{2\overline{z}-5}
  \right\rangle.
\end{align*}
Define $\mathcal{A}''$
as a group generated by the fourteen reflections:
\begin{gather*}
  \mathcal{A}'' = \left\langle
  a_1, \quad a_2, \quad a_3, \quad a_4, \quad
  c_1 a_3 c_1, \quad
  c_1 a_4 c_1, \quad
  c_2 a_4 c_2, \quad
  c_3 a_3 c_3, \quad
  c_1 c_3 a_3 c_3 c_1,  \quad
  c_3 a_1 c_3, \right. \\ \left.
  c_3 c_2 a_4 c_2 c_3, \quad
  c_2 c_3 a_3 c_3 c_2, \quad
  c_2 c_3 a_1 c_3 c_2, \quad
  c_1 c_2 c_3 a_1 c_3 c_2 c_1
  \right\rangle.
\end{gather*}
Note that
\[
  G_2 < \mathcal{A}'' < G_1 G_2 G_1^{-1} <
  M\left( \PGL_2(\ZZ[\sqrt{-6}])\rtimes \mathfrak{c} \right)M^{-1}, \quad
   M = \begin{pmatrix}
     \sqrt{-6} & 0 \\
     0 & 1
   \end{pmatrix},
\]

These 14 reflections correspond to the 14 faces of a cuboctahedron.  The fundamental domain therefore consists of hyperbolic upper half 3-space minus $14$ tangent geodesic hemispheres.  This shows that $\mathcal{A}''$ is of infinite covolume but geometrically finite.

Let $\mathcal{A} = \mathcal{A}'' \cap \PSL_2(\mathcal{O}_K)$.  The limit set of $\mathcal{A}''$ is shown in Figure \ref{fig:cuboct}.  Since $[\mathcal{A}'':\mathcal{A}]$ is finite, this limit set is the closure of a union of finitely many $K$-rational M\"obius images of a single circle orbit; in this case, of $\mathcal{A}C$ where $C = \widehat{\RR} + \sqrt{-6}$.  Therefore we aim to demonstrate that $\mathcal{A}$ is an infinite-covolume, geometrically finite, Zariski dense, familial Kleinian group.

The geometric finiteness and infinite covolume are inherited by $\mathcal{A}$ from $\mathcal{A}''$, as it is finite index.  By arguments exactly analogous to those in \cite[Theorems 9.3-9.4]{StangeKApp}, the limit sets of $\mathcal{A}''$ and $\mathcal{A}$ have Hausdorff dimension greater than $1$ and are Zariski dense.

It simply remains to prove the following lemma.

\begin{lemma}
  \label{lem:cong}
  The group $G_2$ is a congruence subgroup of $\PGL_2(\ZZ)$.
\end{lemma}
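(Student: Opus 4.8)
The plan is to exhibit $G_2$ concretely as a finite-index reflection subgroup of $\PGL_2(\ZZ)$, compute its index by a hyperbolic–area argument, and then force the congruence property by matching that index against the index of the image of $G_2$ modulo $6$.

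First I would record that each reflection $a_i(z)=\frac{a\overline z+b}{c\overline z+d}$ is represented, with no complex–conjugation ambiguity, by the determinant $-1$ integer matrix $A_i=\begin{pmatrix}a&b\\c&d\end{pmatrix}$ acting as an orientation-reversing isometry of $\mathbb{H}$; explicitly
\[
  A_1=\begin{pmatrix}-1&0\\0&1\end{pmatrix},\quad
  A_2=\begin{pmatrix}-1&6\\0&1\end{pmatrix},\quad
  A_3=\begin{pmatrix}1&0\\1&-1\end{pmatrix},\quad
  A_4=\begin{pmatrix}5&-12\\2&-5\end{pmatrix},
\]
so $G_2=\langle A_1,A_2,A_3,A_4\rangle\le\PGL_2(\ZZ)$. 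Geometrically $a_1,a_2,a_3,a_4$ are the reflections in the geodesics $\{\Re z=0\}$, $\{\Re z=3\}$, $\{|z-1|=1\}$, $\{|z-\tfrac52|=\tfrac12\}$, and one checks directly that these bound a convex ideal hyperbolic quadrilateral $Q$ with ideal vertices $0,2,3,\infty$: consecutive sides are tangent at the shared vertex and non-consecutive sides are disjoint. By the Poincar\'e polygon theorem $G_2$ is then discrete with fundamental domain $Q$, hence a lattice of covolume $\operatorname{area}(Q)=2\pi$; as $\PGL_2(\ZZ)$ (the $(2,3,\infty)$-triangle reflection group) has covolume $\pi/6$, this gives $[\PGL_2(\ZZ):G_2]=12$.

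It now suffices to show $\Gamma(6)\le G_2$. Reducing $A_1,\dots,A_4$ modulo $6$ is a finite matrix-group computation: modulo $2$ the image is the order-$2$ group generated by $\overline{A_3}$, and modulo $3$ it is a Borel subgroup ($\cong S_3$, order $6$) of $\PGL_2(\mathbb{F}_3)$ generated by $\overline{A_1}$ and $\overline{A_3}$; hence the image $\overline{G_2}$ of $G_2$ in $\PGL_2(\ZZ/6\ZZ)$ has order $12$, i.e. index $12$. Since reduction $\PGL_2(\ZZ)\to\PGL_2(\ZZ/6\ZZ)$ is surjective with kernel $\Gamma(6)$, a normal subgroup of index $144$, we get
\[
  [\PGL_2(\ZZ):G_2\cap\Gamma(6)]=[\PGL_2(\ZZ):G_2]\,[G_2:G_2\cap\Gamma(6)]=12\cdot 12=144=[\PGL_2(\ZZ):\Gamma(6)],
\]
forcing $G_2\cap\Gamma(6)=\Gamma(6)$, i.e. $\Gamma(6)\le G_2$; thus $G_2$ is a congruence subgroup of level dividing $6$ (and, intersecting with $\PSL_2(\ZZ)$, this is exactly the principal congruence subgroup needed to verify that $\mathcal{A}$ is familial).

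The routine parts are checking the tangency and disjointness hypotheses of Poincar\'e's theorem for $Q$ and running the reduction modulo $6$. The step I would be most careful about is the index bookkeeping at the end: knowing only that $G_2$ has finite index and that $\overline{G_2}$ has index $12$ in $\PGL_2(\ZZ/6\ZZ)$ is not enough — it is precisely the equality of the \emph{geometric} index $12$ with the \emph{mod-$6$} index $12$ that upgrades the statement ``$G_2\Gamma(6)$ has index $12$'' to the statement ``$\Gamma(6)\le G_2$''.
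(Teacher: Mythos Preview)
Your argument is correct and takes a genuinely different route from the paper's. The paper proceeds by direct verification: it quotes from Hsu a list of four generators for $\Gamma(6)$ inside $\PSL_2(\ZZ)$ (namely $L^6$, $R^6$, $L^2R^3L^{-2}R^{-3}$, $L^3R^2L^{-3}R^{-2}$) and then writes each one explicitly as a word in $a_1a_2$, $a_1a_3$, $a_1a_4$. Your approach instead computes the index $[\PGL_2(\ZZ):G_2]=12$ geometrically via Poincar\'e's polygon theorem and the area of the ideal quadrilateral, computes the index of the mod-$6$ image as $12$ by a short finite-group calculation, and matches the two to force $\Gamma(6)\le G_2$. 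The paper's method is more elementary and self-contained once one has Hsu's generating set in hand; your method is more conceptual, requires no external list of generators, and yields the exact index $12$ as a bonus. Both ultimately establish the same containment $\Gamma(6)\le G_2$, which is what the application needs.
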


This implies $G_2 \cap \mathcal{A}$ is a congruence subgroup of $\PSL_2(\ZZ)$.

\begin{proof}[Proof of Lemma \ref{lem:cong}]
  We will show that $G_2$ contains the principal congruence subgroup $\Gamma(6)$.  Let
  \[
    L = \begin{pmatrix} 1 & 1 \\ 0 & 1 \end{pmatrix},
    \quad
    R = \begin{pmatrix} 1 & 0 \\ 1 & 1 \end{pmatrix}.
  \]
  These matrices generate $\PSL_2(\ZZ)$. 
  We will use the fact that $\Gamma(6)$ is the subgroup generated by the following elements \cite[p. 1357]{Hsu}:
    $L^6, R^6, L^2R^3L^{-2}R^{-3}, L^3R^2L^{-3}R^{-2}$.
  It suffices now to verify that
  \begin{align*}
    (a_1 a_2)^{-1} &= L^6, \\
    (a_1 a_3)^{-6} &= R^6,  \\
    (a_1 a_4)^{-1}(a_1a_3)^{4} &= L^2R^3L^{-2}R^{-3}, \\
    (a_1a_2)^{-1}a_1a_4(a_1a_3)^2 &= L^3R^2L^{-3}R^{-2}.
  \end{align*}
\end{proof}

Finally, we apply Theorem \ref{thm:explicit-strong}.  The potential bad primes are exactly $p=2,3$, since the curvatures of the packing are coprime and the congruence subgroup is of level $6$.  Letting $T_0 = a_3a_1 \in \mathcal{A}$ and $V = c_1a_3c_1a_1 \in \mathcal{A}$, and using the notation $H,L,R$ for the basis of $\mathfrak{sl}(2,\mathbb{Z})$ as in the proof of Lemma \ref{lem:badprimes}, one can compute the following elements of $\mathcal{A} \cdot \mathfrak{sl}(2,\mathbb{Z})$:
\[
  VHV^{-1}, \quad
  VLV^{-1}, \quad
  VRV^{-1}, \quad
  T_0VRV^{-1}T_0^{-1}, \quad
  T_0^{-1}VRV^{-1}T_0.
\]
These are enough to verify that $\iota_2 \leq 1$ and $\iota_3 = 0$.  Therefore the modulus of the congruence obstruction divides $12$.  As experimental confirmation, computing curvatures $\le 159$ appearing in the limit set packing (Figure \ref{fig:cuboct}), we find that the curvatures missing are exactly those $\equiv 7,9,11 \pmod{12}$ plus the exceptional absentees $13$ and $16$.

\subsection{$K$-Apollonian packings}\label{sec:kapp}

In this section we show that all $K$-Apollonian circle packings satisfy the hypotheses of Theorem \ref{mainthm}.  For an example of a $K$-Apollonian packing, see Figure \ref{fig:kapp}. 

The (strong) $K$-Apollonian groups defined in \cite{StangeKApp} are shown there to be finitely generated Zariski dense subgroups of $\PSL_2(\OK)$ containing congruence subgroups (either $\Pi(2)$ or $\Gamma^3$ in the notation of \cite[Section 10]{StangeKApp}).  They are of infinite covolume since they are of infinite index, and each packing contains the horizontal line $\widehat{\RR} + \sqrt{\Delta}/{2}$.  Therefore all the hypotheses of Theorem \ref{mainthm} are satisfied save geometric finiteness.  For that, it suffices to consider the remark following Theorem \ref{mainthm}.  

However, it may be useful to give an explicit description of a group associated to the packing.  
For each imaginary quadratic field $K$, we may use an adaptation of the weak $K$-Apollonian group given in \cite[Theorem 9.2]{StangeKApp}:
\[
  \mathcal{A}' =
  \left\langle
   S = \begin{pmatrix}
	0 & 1 \\
	-1 & 0
      \end{pmatrix}, \quad
      T = \begin{pmatrix}
	1 & 1 \\
	0 & 1
      \end{pmatrix}, \quad
   V = \begin{pmatrix}
    -1 & \tau \\ 0 & 1 
  \end{pmatrix}
  \right\rangle  < \PGL_2(\mathcal{O}_K)
\]
This group has the $K$-Apollonian packing as a limit set, and this limit set is of the form $\mathcal{A}' \widehat{\RR} = \mathcal{A}'( \widehat{\RR} + \sqrt{\Delta}/{2})$.  It has the following fundamental domain, given here as a list of the boundaries in $\widehat{\CC}$ of its geodesic walls:
\begin{align*}
  A&: \Re(z) = 0, \; \Im(z) \le \Im(\tau)/2 \quad \\
  B&: \Im(z) = \Im(\tau)/2, \; 0 \le \Re(z) \le 1\quad \\
  C&: \Re(z) = 1, \; \Im(z) \le \Im(\tau)/2 \quad \\
  D&: |z-1/2| = 1/2.
\end{align*}
It is straightforward to verify that this region satisfies the Poincar\'e Polyhedron Theorem and is therefore a fundamental domain for $\mathcal{A}$; it is therefore geometrically finite and of infinite covolume.  It has $\PSL_2(\ZZ) < \mathcal{A}$ (in the form of the first two generators above).  It is Zariski dense by the same arguments as in \cite[Section 10]{StangeKApp}.

In order to apply Theorem \ref{mainthm}, we need only pass to the finite-index subgroup $\mathcal{A} = \mathcal{A}' \cap \PSL_2(\mathcal{O}_K)$, by replacing $V$ with \[
  V_0 = VST^{-1}SV = \begin{pmatrix} \tau-1 & -\tau^2 \\ 1 & -\tau -1 \end{pmatrix}.
\]

The curvatures of the $K$-Apollonian circle packings are primitive integral (after scaling by $\sqrt{-\Delta}$).  Therefore, with this choice of group, Theorem \ref{thm:explicit-strong} tells us immediately that the only primes of bad reduction for strong approximation are $2$ and $3$.  In fact, it tells us more.  Write $L,R,H$ for the usual generators of $\mathfrak{sl}(2,\ZZ)$ as in the proof of Lemma \ref{lem:badprimes}.  Then following matrices are among $\mathcal{A} \cdot \mathfrak{sl}(2,\ZZ)$:
\[
 V_0RV_0^{-1}, \quad
 SV_0RV_0^{-1}S, \quad
 TV_0RV_0^{-1}T^{-1}, \quad
 STV_0RV_0^{-1}T^{-1}S, \quad
 TSV_0RV_0^{-1}ST^{-1}.
\]
Using these suffices to verify that for $\Delta \equiv 0 \pmod 4$, $\iota_2 \le 2$ and $\iota_3 = 0$; while for $\Delta \not\equiv 0 \pmod 4$, $\iota_2 \le 1$ and $\iota_3 = 0$.  Then Theorem \ref{thm:explicit-strong} tells us that the modulus of the congruence obstruction for $K$-Apollonian packings is a divisor of $24$ in all cases, and in fact a divisor of $12$ if $\Delta \not\equiv 0 \pmod 4$.  This is in accordance with \cite[Conjecture 1.4]{StangeKApp}, which gives an explicit prediction for the modulus for the congruence obstruction.

\newpage
\section{Notations}\label{notations}
\begin{longtable}{lll}
\caption[Table of Notation used in Sections~\ref{integralitysec} through \ref{sectionminor}]{Table of Notation used in Sections~\ref{integralitysec} through \ref{sectionminor}}\label{notationtable}\\
$\mathcal A$ & a familial Kleinian group in $\PSL_2(K)$, &\\
& \quad\quad assumed from Section 3 onwards to be in $\PSL_2(\mathbb{Z}[\sqrt{-d}])$ & \\
$\mathcal A(q)$& elements of $\mathcal A$ congruent to identity modulo $q$& \\
$\beta$ & $\theta-\frac{r}{q}$; $|\beta|\leq\frac{K_0}{N}$&\\ 
$B_q(n)$& $\frac{1}{[\Aa:\Aa(q)]}\sum_{\gamma_0\in \Aa/\Aa(q)}c_q\left(\ff_{M\gamma_0}(Lx+1,Ly)-n\right)$&\\
$C$& a circle tangent to the real line& \\
$\widehat{\CC}$ & the extended complex plane&\\
$c_q(n)$&$\sideset{}'\sum_{r(q)}e\left(\frac{rn}{q}\right)$&\\
$\delta$& hausdorff dimension of limit set of $\mathcal A$&\\
$\Delta$& discriminant of $K$&\\
 $\epsilon(n)$& $0$ if $n\equiv 1 \pmod{4}$ and $1$ if $n\equiv 3 \pmod{4}$&\\
$\mathfrak{d}_\gamma$& $2\frac{\Im (\overline{C_{M\cdot \gamma}} D_{M\cdot \gamma})}{\sqrt{-\Delta}}$ (i.e., the shift of the shifted form)&\\ 
$e(x)$& $e^{2\pi i x}$&\\
$e_q(x)$& $e^{\frac{2\pi i x}{q}}$& \\
$\epsilon$& small positive number&\\
$\eta$& small positive number depending on $M$, $\mathcal A$, and $C$&\\
$\enn$& minor arcs (error term) defined in \eqref{errorterm}&\\
$\mathcal{E}_N^U(n)$ & modification of error term defined in \eqref{errortermu}&\\
$f\ll g$ & $f=O(g)$& \\
$f\asymp g$ & $f\ll g$ and $g\ll f$&\\
$\FF, \FF_T$&growing region in $\mathcal A$ defined in \eqref{FF}&\\
$\mathfrak{\widehat{f}}_{M\gamma}(a,c)$ & shifted binary form $\sqrt{-\Delta}\left\vert C_{M \cdot\gamma} a+D_{M\gamma} c\right\vert^2+2\Im (\overline{C_{M\gamma} }D_{M\gamma})$&\\
$F_1,F_2,F_3$& defined in \eqref{F1} and \eqref{F2F3}&\\
$\gamma$&element of $\mathcal A$&\\
$h$&$(\fd_\gamma,\fd_{\gamma'})$&\\
$\mathcal{I}_1$&$\sum_{q<Q_0}\sideset{}'\sum_{r(q)}\int_{r/q-1/qJ}^{r/q+1/qJ}|(1-\Tf(\theta))\hrnu(\theta)|^2d\theta$&\\
$\mathcal{I}_2$&$\sum_{Q_0\leq q<X}\sideset{}'\sum_{r(q)}\int_{r/q-1/qJ}^{r/q+1/qJ}|(1-\Tf(\theta))\hrnu(\theta)|^2d\theta$&\\
$\mathcal{I}_3$&$\sum_{X\leq q\leq J}\sideset{}'\sum_{r(q)}\int_{r/q-1/qJ}^{r/q+1/qJ}|(1-\Tf(\theta))\hrnu(\theta)|^2d\theta$&\\
$\mathcal{I}_Q$&$\sum_{Q<q\leq 2Q}\int_{-1/qJ}^{1/qJ}\sideset{}'\sum_{r(q)}\left|\hrnu\left(\frac{r}{q}+\beta\right)\right|^2d\beta$&\\
$\mathcal{J}_\gamma(\beta; q,u,\xi,\zeta)$&$\int_{-\infty}^{\infty} \int_{-\infty}^{\infty}\psi(x)\psi(y)e\left(\ff_{M\gamma}\left(Xx,Xy\right)\beta-\frac{X\xi}{qu\BB}x-\frac{X\zeta}{qu\BB} y\right)e\left(\frac{u^*\xi}{\BB q}\right)dxdy$&\\
$\Im$ &imaginary part &\\
$J$ & $T^2X$, depth of approximation; see \eqref{eqn:setJ}\\
$K$ & $\QQ(\sqrt{-d})$&\\
$\kappa(\cdot)$& curvature of circle $\cdot$ &\\
$\mathcal{K}$ & the set of curvatures in integral packing&\\
$\mathcal{K}_a$& $\{n\in\ZZ\;\;\vert\;\; \forall q\in\ZZ, \exists k\in\mathcal{K}, \text{such that }n\equiv k(\textrm{mod }q)\}$&\\
$\mathcal{K}_a(N)$& $\mathcal{K}_a\cap [0,N]$&\\
$K_0$& small power of $N$ given in \eqref{0737}, depending on spectral gap &\\
$L$& the level of the congruence subgroup of $\textrm{PSL}_2(\mathbb Z)$ contained in $\mathcal A$&\\
$L_0$ & positive integer such that $\mathcal{K}_a$ is union of some congruence classes mod $L_0$& \\
$\lambda_\gamma(\beta,X,u,s,t)$&$\sum_{x,y\in\ZZ}\psi\left(\frac{\BB ux +uu^{*}}{X}\right)\psi\left(\frac{\BB uy}{X}\right)e\left(\ff_{M\gamma}\left(\BB ux+uu^{*},{\BB uy}\right)\beta-sx-ty\right)$&\\
$M$ & Moebius transformation in $\PSL_2(K)$&\\
$\mathcal{M}_N(n)$& major arcs (main term) defined in \eqref{mainterm}&\\
$\mathcal{M}_N^U(n)$& modification of main term defined in \eqref{maintermu}&\\
$\mathfrak{M}(n)$&$\frac{K_0}{N}\sum_{\gamma\in\FF}\hat{\mathfrak{t}}\left(\frac{K_0}{N}(\ff_{M\gamma}(Lx+1,Ly)-n)\right)$&\\
$N$& a growing parameter; see Section \ref{sec:setup}  &\\
$\mathcal O_K$& ring of integers in $K$&\\
$p$, $p_i$ & prime numbers & \\
$p^j|| n$& $p^j| n$ and $p^{j+1}\nmid n$&\\
$\psi$ & smooth function supported on  $[1,2]$, with $\psi\geq 0$ and $\int_\RR \psi (x)dx=1$&\\
$P_{\textrm{bad}}$ &product of bad primes&\\
$q$ & positive integer&\\
$Q_0$& small power of $N$ given in \eqref{0737}, depending on spectral gap &\\
$\widehat{\RR}$ & the extended real line, manifest as the horizontal axis in $\widehat{\CC}$&\\
$\Re$ &real part &\\
$\mathcal{R}_N(n)$&representation number of $n$ in packing defined in (\ref{RNdef})&\\
$\rnu(n)$& modification of $\mathcal{R}_N(n)$ defined in (\ref{0922})&\\
$\hrnu(\frac{r}{q}+\beta)$&$\sum_{u<U}\mu(u)\sum_{\gamma\in\FF_T}\mathcal{R}_{u,\gamma}(\frac{r}{q}+\beta)$&\\
$\frac{r}{q}$& rational number of small denominator&\\
$\sum_{r(q)}^{'}$ & sum over all $0 \le r < q$ where $(r,q)=1$& \\ 
$\fs_{Q_0}(n)$ &$\sum_{q<Q_0} \frac{1}{[\Aa:\Aa(q)]}\sum_{\gamma_0\in \Aa/\Aa(q)}c_q\left(\ff_{M\cdot \gamma_0}(Lx+1,Ly)-n\right)$&\\
$\fs(n)$&$\sum_{q=1}^{\infty}B_q(n)$&\\
${S}(q,A,B,C,D,E)$& $\sum_{x,y(q)}e(Ax^2+Bxy+Cy^2+Dx+Ey)$&\\
$\mathcal{S}_\gamma(q,u,r,\xi,\zeta)$&$\frac{1}{q^2}\sum_{x_0,y_0(q)}e_q\left(r\ff_{M\gamma}\left(\BB ux_0+uu^{*},\BB uy_0\right)+x_0\xi+y_0\zeta\right)$&\\
$\mathcal{S}(q,u,\gamma,\xi,\zeta,\gamma',\xi',\zeta')$&$\sideset{}'\sum_{r(q)}\mathcal{S}_{\gamma}(q,u,r,\xi,\zeta)\overline{\mathcal{S}_{\gamma'}(q,u,r,\xi',\zeta')}$&\\
$T$& $N^{1/200}$; see Section \ref{sec:setup}& \\
$T_1,T_2$&growing parameters used to define $\FF_T$ in (\ref{FF})&\\
$\tf(x)$&$\text{max}\{0, 1 - |x|\}$, a hat function used in definition of major arcs&\\
$\mathfrak{T}$ & spike function in (\ref{spike}) used to define major arcs&\\
$\tau_q(r)$&$\frac{1}{[\Aa:\Aa(q)]}\sum_{\gamma_0\in\Aa/\Aa(q)}\bd{1}\{\ff_{M\gamma_0}(Lx+1,Ly)=r\}$&\\
$\theta$& number in $[0,1]$&\\
$\Theta$& max of $\Theta_1,\Theta_2$ in Lemma \ref{bk2} and Lemma \ref{bk3} in context of $\mathcal A$&\\
$U$& small power of $N$; see Section \ref{sec:setup}&\\
$u$& positive number less than $U$&\\
$u^{*}$ &integer such that $uu^{*}\equiv 1(\BB)$&\\
$X$& $N^{99/200}$; see Section \ref{sec:setup}&\\
$\# \cdot $& cardinality of finite set $\cdot$&\\
$\mathbf{1}\left\{ \cdot \right\}$ &characteristic function \\
$|| \cdot ||$ &Frobenius norm \\
$( \cdot , \cdot )$ &$\gcd( \cdot, \cdot )$ \\
\end{longtable}

\bibliographystyle{plain}
\bibliography{LGC}

\end{document}